\documentclass[a4paper,12pt,reqno]{amsart}

\usepackage[T1]{fontenc}
\usepackage[utf8]{inputenc}
\usepackage{amsmath}
\usepackage{amssymb}
\usepackage{amsthm}
\usepackage[style=alphabetic]{biblatex}
\usepackage{comment}
\usepackage{enumitem}
\usepackage{geometry}
\usepackage{graphicx} 
\usepackage{hyperref}
\usepackage{libertine}
\usepackage{mathtools}
\usepackage{ulem}
\usepackage[dvipsnames]{xcolor}
\usepackage{cleveref}

\DeclareSourcemap{
  \maps[datatype=bibtex]{
    \map{
      \step[fieldset=issn, null]
      }
    }
  }

\numberwithin{equation}{section}

\theoremstyle{plain}
\newtheorem{theorem}{Theorem}[section]
\newtheorem{lemma}[theorem]{Lemma}
\newtheorem{proposition}[theorem]{Proposition}
\newtheorem{corollary}[theorem]{Corollary}
\newtheorem{conjecture}[theorem]{Conjecture}
\newtheorem{assumption}{Assumption}

\theoremstyle{remark}

\newcommand{\E}{{\mathbb{E}}}

\renewcommand{\P}{{\mathbb{P}}}
\newcommand{\Q}{{\mathbb{Q}}}
\newcommand{\R}{{\mathbb{R}}}
\newcommand{\Z}{{\mathbb{Z}}}

\newcommand{\cF}{{\mathcal F}}

\newcommand*{\ges}{\geqslant}
\newcommand*{\les}{\leqslant}
\let\leq\leqslant
\let\geq\geqslant
\let\le\leqslant
\let\ge\geqslant

\newcommand*{\indf}{1\negmedspace\mathrm{I}}
\newcommand*{\lm}{\mu_{\rm min}} 
\newcommand*{\lM}{\mu_{\rm max}}
\newcommand*{\On}{\mathrm{On}}
\newcommand*{\TAP}{\mathrm{TAP}}
\newcommand*{\Es}{E} 

\newcommand*{\Var}{\mathrm{Var}}

\newcommand*{\e}{\epsilon}
\newcommand*{\vf}{\varphi}
\newcommand*{\mbP}{\mathbb{P}}
\newcommand*{\mbR}{\mathbb{R}}

\newcommand{\supp}{\textup{supp}}

\newcommand{\Flim}[1]{F_{#1}}
\newcommand{\Ffin}[1]{\hat{F}_{#1}}
\newcommand{\logpotential}[1]{L_{#1}}
\newcommand{\Lstar}[1]{L^\ast_{#1}}

\DeclarePairedDelimiter{\abs}{\lvert}{\rvert}
\DeclarePairedDelimiter{\norm}{\lVert}{\rVert}

\bibliography{main}

\newcommand{\simon}[1]{\textcolor{pink}{[\textbf{Simon:} #1]}}

\newcommand{\DB}[1]{{\color{red}\textsl{DB: #1}}}

\usepackage{array}
\makeatletter
\def\dsarray@name{array}
\renewcommand{\array}{\col@sep\arraycolsep
  \def\d@llarbegin{$\ifx\@currenvir\dsarray@name\displaystyle\fi}%
  \def\d@llarend{$}%
  \def\@halignto{}%
  \@tabarray}
\makeatother
  
\title[TAP Formula for Spherical 2-Spin Models]{TAP variational formula for $2$-spin models with general interaction matrix and marginality}

\author{David Belius$^*$, Simon Briend, Volodymyr Fomichov$^*$} 
\email{david.belius@cantab.net}
\email{simon.briend@unidistance.ch}
\email{volodymyr.fomichov@unidistance.ch}

\thanks{$^*$ D. B. and V. F. were supported by the SNSF grants 176918 and 206148.}
\address{Faculty of Mathematics and Computer Science, UniDistance Suisse, 3900 Brig, Switzerland}

\begin{document}

\begin{abstract}
    We establish a Thouless--Anderson--Palmer (TAP) variational formula for the free energy of spherical $2$-spin models whose interaction matrices have a compactly supported limiting spectral distribution. The formula holds at every inverse temperature and, in particular, extends the TAP representation previously obtained for the spherical Sherrington--Kirkpatrick model.

    Our main contribution is a new microcanonical proof designed to handle marginality. We partition the sphere according to the smallest collection of top eigendirections carrying a prescribed fraction of the norm. On every element of this partition, the remaining degrees of freedom are effectively at high temperature and their contribution can be computed through spherical-integral asymptotics. The resulting auxiliary variational problem separates the macroscopic low-temperature contribution from the high-temperature fluctuation term; its maximization recovers the usual TAP functional and Plefka condition. This decomposition also shows why neighborhoods of the TAP maximizers alone do not capture the Gibbs mass: all eigendirections associated with large eigenvalues contribute on the same exponential scale, a manifestation of the marginality of spherical $2$-spin states.
\end{abstract}

\maketitle

\section{Introduction}

Mean-field spin glasses are models of large interacting systems in which disorder and frustration produce complex collective behavior. Originating in the study of disordered magnetic materials, they now also play an important role in probability, theoretical computer science, and high-dimensional statistics; see, for example, \cite{mezard1987spin,talagrand2010mean,panchenko2013sherrington}. A central problem is to determine the limiting free energy and to understand which regions of the configuration space contribute to it.

The Thouless--Anderson--Palmer (TAP) approach addresses both questions through an auxiliary variational functional on magnetization vectors \cite{thouless1977solution}. At a heuristic level, the TAP functional separates two contributions. A magnetization $m$ records the macroscopic part of a configuration, while fluctuations around $m$ form a residual system that should be effectively at high temperature. The TAP correction, or Onsager term, is the free energy of this residual system. A TAP formula therefore does more than give the value of the free energy: it expresses that value through a decomposition of the underlying configurations into  two parts, respectively at low effective temperature and high effective temperature.

Turning this principle into a rigorous proof is delicate. Several versions of the TAP representation have been developed in recent years. Generalized TAP representations for mixed $p$-spin models were established in \cite{MR4588318}, while a TAP-based computation of the free energy of spherical pure $p$-spin models was obtained in \cite{subag2023free}.
A geometric, microcanonical TAP upper bound, based on a covering of the configuration space, was developed in \cite{belius2022high}. For the spherical Sherrington--Kirkpatrick model with a linear external field, a TAP variational principle valid at every temperature was proven in \cite{BK}.

The purpose of the present paper is to establish such a formula for spherical $2$-spin models with a general limiting spectral distribution, and to do so by a different, microcanonical argument. The main difficulty is marginality: at low temperature, many directions contribute to the partition function on the same exponential scale as the top eigendirection. This is why a decomposition using only neighborhoods of the TAP maximizers does not capture all the configurations contributing to the free energy.

\subsection{The model and the TAP formula}

Let $M_N$ be a real symmetric $N\times N$ matrix and let
\begin{equation}\label{eq:def_eigenvalues_MN}
    \lambda_1\les \cdots\les \lambda_N
\end{equation}
be its eigenvalues. We denote by $B_N$ the closed unit ball, by $B_N^\circ$ its interior, and by $S^{N-1}$ the unit sphere,
\begin{equation}
  S^{N-1}=\{\sigma\in\mathbb R^N:|\sigma|=1\},
\end{equation}
For $\sigma\in B_N$ we consider the Hamiltonian
\begin{equation}\label{eq:def_HN}
  H_N(\sigma)=N\sigma^{\mathsf T}M_N\sigma
\end{equation}
and the free energy
\begin{equation}\label{eq:def_FE_no_ext_field}
    F_N(\beta)
    =
    \frac1N
    \log
    \Es \left[
        \exp(
            \beta H_N(\sigma)
        )
    \right]
    =
    \frac1N
    \log Z_N(\beta)
    ,
\end{equation}
where $\Es$ denotes expectation with respect to the uniform probability measure on $S^{N-1}$. More generally, we consider
\begin{equation}\label{eq:def_HN_ext_field}
    H_N^f(\sigma)=H_N(\sigma)+Nf_N(\sigma)
\end{equation}
and the corresponding free energy
\begin{equation}\label{eq:def_FE_with_ext_field}
    F^f_N(\beta)
    \coloneqq
    \frac{1}{N}
    \log
    \Es[ \exp(\beta H^f_N(\sigma))]
    =
    \frac{1}{N}\log Z_N^f(\beta),
\end{equation}
for the class of generalized external fields specified in Theorem~\ref{thm:main_with_ext_field}.

All our results hold under the following assumption.
\begin{assumption}\label{main_assumption}
    \mbox{}
    \begin{enumerate}[label=(\arabic*)]
        \item\par
        \noindent The empirical spectral distribution $\mu_N=\frac{1}{N}\sum_{i=1}^N \delta_{\lambda_i}$ converges weakly to a probability measure $\mu$.
        
        \item
        $\mu$ is compactly supported on an interval $I = [\lm,\lM]$, and has a density with respect to Lebesgue measure which is positive on the interior of this interval.

        \item The minimal and maximal eigenvalues converge to $\lm$ and $\lM$ respectively, that is $\lambda_1 \to \lm$ and $\lambda_N \to \lM$ as $N\to\infty$.        
        
        \item The Stieltjes transform of $\mu$ at the right end-point of the support is finite, i.e.
        \begin{equation}\label{eq:def_smu}
            s_\mu \coloneqq S_\mu(\lM)<\infty,
        \end{equation}
        where
        \begin{equation}\label{eq:def_stieltjes_transform}
            S_\mu(\lambda)
            \coloneqq
            \int_I \frac{\mu(dx)}{\lambda-x},\quad\lambda\notin (\lm,\lM).
        \end{equation}
        
    \end{enumerate}
\end{assumption}
\noindent The critical inverse temperature is defined from $s_\mu$,
\begin{equation}\label{eq:def_beta_c}
      \beta_c=\frac{s_\mu}{2}.
\end{equation}
This setting includes, in particular, the Sherrington--Kirkpatrick model, where the Hamiltonian is given by $N \sigma^T G \sigma$ for $G$ a GOE matrix. There, $\mu$ is the semicircle law on $[-\sqrt{2},\sqrt{2}]$ and $s_\mu = \sqrt{2}$. To define the Onsager term, introduce the log-potential
\begin{equation}\label{eq:def_log_potential}
    L_\mu(x)=\frac12\int \log(x-y)\,\mu(dy),\qquad x\geq \lM,
\end{equation}
and its Legendre transform 
\begin{equation}\label{eq:LegendreLstar}
      L_\mu^*(z)=\sup_{x\geq\lM}\{L_\mu(x)-zx\},\qquad z>0.
\end{equation}
The high-temperature limiting free energy associated with $\mu$ is
\begin{equation}\label{eq:F_def}
  F_\mu(z)=-L_\mu^*(z)-\frac12\log(2z)-\frac12, \qquad z>0,
\end{equation}
with $F_\mu(0)=0$. Accordingly, the Onsager correction at overlap $q$ is
\begin{equation}\label{eq:OnsagerDefinition}
    \operatorname{On}(\beta,q)=F_\mu\bigl(\beta(1-q)\bigr),
\end{equation}
and the TAP functional is
\begin{equation}\label{eq:FTAP_first_def}
  F_{\mathrm{TAP}}(m)
  =\frac{\beta}{N}H_N^f(m)
   +\frac12\log(1-|m|^2)
   +\operatorname{On}(\beta,|m|^2),
  \qquad m\in B_N^\circ,
\end{equation}
Here and below, the normalization of $H_N^f(m)$ is chosen so that every term in \eqref{eq:FTAP_first_def} is of order one. We also define the TAP free energy as a function of Hamiltonian energy per spin $E$ and norm squared $q$ as
\begin{equation}\label{eq:ftapDefinition}
    f_\TAP(\beta,E,q)
    =
    \beta E
    +
    \frac{1}{2}\log(1-q)
    +
    \On(\beta,q).
\end{equation}
Note that the usual TAP energy as a function of $m$ in \eqref{eq:FTAP_first_def} then equals
\begin{equation}\label{eq:FtapDefinition}
    F_{\rm TAP}(m)
    =
    f_\TAP
    \left(
        \beta,
        N^{-1}H^f_N(m),
        |m|^2
    \right)
\end{equation}

\subsection{Main contribution}

Our main result states that, under the assumptions above,
\begin{equation}\label{eq:intro_TAP_FE_formula}
  F_N^f(\beta)
  =\sup_{\substack{m\in B_N:\\\beta(1-|m|^2)\les\beta_c}}
       F_{\mathrm{TAP}}(m)+o(1).
\end{equation}
When $f_N=0$, the variational problem reduces to one scalar parameter. Setting $q^*(0)=0$ and for $\beta>0$
\begin{equation}\label{eq:def_q_star}
    q^*
    :=
    q^*(\beta)
    =
    \left(1-\frac{\beta_c}{\beta}\right)_+,
\end{equation}
we obtain
\begin{equation}\label{eq:intro-zero-field-result-revised}
  F_N(\beta)
  =\sup_{\substack{q\in[0,1):\\ \beta(1-q)\les\beta_c}}
    \left\{\beta q\lM
      +\frac12\log(1-q)+F_\mu\bigl(\beta(1-q)\bigr)\right\}+o(1),
\end{equation}
which is the statement of our main theorem.

\begin{theorem}\label{thm:main}
    Let $M_N$ be a sequence of $N\times N$ symmetric matrices such that \Cref{main_assumption} holds. Then for any $B>0$, it holds uniformly for any $0\les \beta \les B$ that as $N\to\infty$
    \begin{equation}
        F_N(\beta)
        =
        \sup_{
        q \in [0,1)
        :
        \;
        \beta(1-q)\les\beta_c
        }
        f_\TAP(\beta,q \lM,q)
        +o(1)
    \end{equation}
    and the supremum is attained at $q=q_*(\beta)$.
\end{theorem}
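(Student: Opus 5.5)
The plan is to prove two independent facts and then combine them. First, a deterministic analysis of the scalar variational problem, which locates the maximizer at $q_*(\beta)$ and evaluates the supremum. Second, matching upper and lower bounds for $F_N(\beta)$ by that value.

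\emph{The scalar problem.} For $0<z\les\beta_c$, the supremum in \eqref{eq:LegendreLstar} is attained at the unique $x(z)\ges\lM$ solving $\tfrac12 S_\mu(x)=z$. This solution exists because $S_\mu$ is decreasing on $[\lM,\infty)$ with $S_\mu(\lM)=s_\mu=2\beta_c$. Moreover $x(z)=\lM$ exactly when $z=\beta_c$. By the envelope theorem, $(L_\mu^*)'(z)=-x(z)$, so $F_\mu'(z)=x(z)-\frac1{2z}$. Differentiating $q\mapsto f_\TAP(\beta,q\lM,q)$ on the admissible set $\{q:\beta(1-q)\les\beta_c\}$, the terms $-\frac{1}{2(1-q)}$ and $+\frac{\beta}{2\beta(1-q)}$ cancel. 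What remains is
\begin{equation*}
\frac{d}{dq}f_\TAP(\beta,q\lM,q)=\beta\bigl(\lM-x(\beta(1-q))\bigr)\les 0 .
\end{equation*}
Hence the function is nonincreasing, and the supremum is attained at the smallest admissible $q$, which is $q_*(\beta)$. The value is $F_\mu(\beta)$ for $\beta\les\beta_c$, and $\beta q_*\lM+\frac12\log(\beta_c/\beta)+F_\mu(\beta_c)$ for $\beta>\beta_c$. Continuity in $\beta$ of this value is clear, so uniformity over $\beta\in[0,B]$ will follow from pointwise convergence plus equicontinuity. Equicontinuity holds because $F_N$ is $\sup_i|\lambda_i|$-Lipschitz in $\beta$, and this is bounded by Assumption~\ref{main_assumption}(3).

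\emph{Upper bound.} I would use the Gaussian representation. Diagonalize $M_N$ and write $\sigma=g/|g|$ with $g$ standard Gaussian. For $x>\lambda_N$, integrating $\exp(N\beta\, y^{\mathsf T}(M_N-x)y)$ over $\R^N$ in polar coordinates gives
\begin{equation*}
\prod_i\bigl(x-\lambda_i\bigr)^{-1/2}\propto\int_0^\infty r^{N-1}e^{-N\beta x r^2}Z_N(\beta r^2)\,dr .
\end{equation*}
Restricting the radial integral to a window $r^2\in[1,1+\delta]$ bounds $Z_N(\beta)$ up to $e^{o(N)}$, using that $Z_N(\beta r^2)/Z_N(\beta)$ varies by at most $e^{N\beta\delta\max|\lambda_i|}$. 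One then optimizes over $x>\lambda_N$. The left side converges to $\exp(-N L_\mu(x))$ by weak convergence, since $\log(x-\cdot)$ is bounded and continuous near $I$ once $x>\lM+\epsilon$. Letting $\epsilon,\delta\to0$ then yields $\limsup F_N(\beta)\les \inf_{x\ges\lM}\{\beta x-L_\mu(x)\}-\frac12\log(2\beta)-\frac12$ plus constants. This simplifies to $F_\mu(\beta)$ in the high-temperature regime and to the value above when $\beta>\beta_c$, where the infimum sticks at $x=\lM$.

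\emph{Lower bound, which I expect to be the main obstacle.} In the low-temperature regime the Gaussian dual degenerates at the edge, so a direct construction is needed; this is where marginality enters. For $\beta\les\beta_c$, a change of measure tilting $g$ to covariance $(2\beta(x(\beta)-M_N))^{-1}$ together with a concentration estimate for $|g|^2$ gives the matching lower bound. For $\beta>\beta_c$, fix $\epsilon>0$ and let $K_\epsilon$ be the set of indices with $\lambda_i\ges\lM-\epsilon$. By Assumption~\ref{main_assumption}(2),(3), $|K_\epsilon|$ is at least $c_\epsilon N$ with $c_\epsilon>0$. I would restrict $\sigma$ to configurations whose projection onto the span of $K_\epsilon$ has squared norm $\approx q_*$, placing no constraint on its direction within that span. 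The remaining component, of squared norm $1-q_*$, lies in the complementary eigenspace. On this set the energy is at least $N\beta(q_*(\lM-\epsilon)+(1-q_*)\,\cdot)$. The log-volume of the set is $\frac12\log(1-q_*)$ plus $o(1)$ as $\epsilon\to0$. The complementary component then sees the matrix at effective inverse temperature $\beta(1-q_*)=\beta_c$, and its free energy is the critical high-temperature value $F_\mu(\beta_c)$, computed by the same tilting argument applied to the spectral measure with $K_\epsilon$ removed. Handling the loss from removing $c_\epsilon N$ eigenvalues, and the fact that the relevant $x(\beta_c)=\lM$ sits exactly at the edge, requires continuity of $F_\mu$ and $L_\mu$ in both the measure and $\epsilon$. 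Finiteness of $s_\mu$ from Assumption~\ref{main_assumption}(4) is what makes this limit $\epsilon\to0$ controllable. Letting $\epsilon\to0$ then matches the upper bound.
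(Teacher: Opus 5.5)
Your argument is correct, but for $\beta>\beta_c$ it takes a genuinely different route from the paper.

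\emph{Upper bound.} The paper partitions the sphere into the sets $A_{K,\epsilon}^{\tau_L,\tau_R}$. It bounds each piece by truncated free energies minus a relative entropy (Proposition~\ref{prop:sec4main}, Proposition~\ref{prop:aux}), and then proves that the auxiliary functional collapses, $\sup M_{\mathrm{TAP}}=M_{\mathrm{TAP}}(\beta,1-q^*,0,0)$ (Corollary~\ref{cor:f_TAP_alpha_ineq}). The Gaussian dual bound is applied to the full spectrum essentially only for $\beta\le\beta_c$. You instead apply that dual bound at every $\beta$. This is legitimate, since Lemma~\ref{lem:fe_upper_bound} and Theorem~\ref{thm:sec2_main}(1) never use $\beta<\beta_c$. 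For $\beta>\beta_c$ the infimum is pinned at $x=\lM$, which gives $\beta\lM-L_\mu(\lM)-\frac12\log(2\beta)-\frac12=F_\mu(\beta)$. This equals $f_{\mathrm{TAP}}(\beta,q_*\lM,q_*)$ because $\beta q_*+\beta_c=\beta$; you only assert this simplification, so write out the one-line check. Your route is much shorter, but it says nothing about which configurations carry the Gibbs mass, and exhibiting that is the purpose of the paper's partition.

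\emph{Lower bound.} The paper uses the single-direction slice $A_{0,\epsilon}^{1-q^*,0}$. The residual spectrum then sits exactly at its critical temperature, so Theorem~\ref{thm:sec2_main} must be invoked at $\beta_c$ through its Lipschitz extension. You spread the norm $q_*$ over the $c_\epsilon N$ directions with $\lambda_i\ge\lM-\epsilon$. The truncated limit measure then has positive density up to its new edge $\lM-\epsilon$, hence infinite critical temperature, so tilting works directly. The price is an $\epsilon\to0$ continuity step for $(1-c_\epsilon)F_{\mu_{L,c_\epsilon}}(\beta_c/(1-c_\epsilon))$ and for $-\frac12 D(c_\epsilon\|1-q_*,q_*)$. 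This step holds because $\log(\lM-\cdot)$ is $\mu$-integrable, which follows from $s_\mu<\infty$. Up to replacing the top-block free energy by its trivial lower bound $\beta q_*(\lM-\epsilon)$, your region is the paper's $M_{\mathrm{TAP}}(\beta,1-q_*,q_*,\alpha)$ at $\alpha=c_\epsilon$. The fact that its value tends to the TAP value as $\alpha\to0$ is exactly the marginality the paper emphasizes.

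Two minor points. The radial window costs $e^{O(\delta N)}$, not $e^{o(N)}$, until $\delta\to0$. At $\beta=\beta_c$ the tilting at $x(\beta_c)=\lM$ degenerates, but your equicontinuity argument covers that single point.
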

The constraint
\begin{equation}
    \beta(1-|m|^2)\les\beta_c
\end{equation}
is the analogue of Plefka's condition (see \cite{MR663708}): it says precisely that the system remaining after the magnetization $m$ has been extracted is at or below its critical inverse temperature. Note that, if we are already at low inverse temperature ($\beta\les \beta_c$) then $q^*=0$ and the supremum is reached at $q=0$, $m=0$ in the absence of an external field. The complete result, with external field, is stated in Theorem~\ref{thm:main_with_ext_field}.

For the semicircle law, \eqref{eq:intro_TAP_FE_formula} becomes the TAP formula of \cite{BK}: the Onsager correction is $\frac12\beta^2(1-q)^2$ and the Plefka condition takes its familiar form. In the absence of an external field, the limiting free energy can also be obtained directly from the asymptotics of rank-one spherical integrals established by Guionnet and Maïda \cite{MR2132396}. Our contribution is a new microcanonical TAP proof that describes how the different regions of the configuration space contribute to this limit. Finally, we build on a partition of the sphere introduced in \cite{belius2022high} to deal with the case with external field.

\begin{theorem}\label{thm:main_with_ext_field}
     Let $M_N$ be a sequence of $N\times N$ symmetric matrices such that \Cref{main_assumption} holds. Let $U_N$ be a sequence of subspaces of $\mathbb{R}^N$ such that $\dim(U_N) = O(1)$, and let $f=f_N$ be a sequence of functions $f: B_N \to \mathbb{R}$ such that $f(\sigma) = f(P^{U_N}\sigma)$ for all $\sigma \in B_N$. Assume furthermore that $f$ has finite Lipschitz constant $c$, uniformly in $N$. Then for any $\beta>0$, as $N\to \infty$
    \begin{equation}    
            F^f_N(\beta)
            =
            \sup_{m \in B_N \colon\beta(1-|m|^2) \les\beta_c} F^f_{{\rm TAP}}(m)
        +o(1)
    \end{equation}
\end{theorem}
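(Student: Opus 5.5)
We prove matching lower and upper bounds, both organized around a decomposition of $\sigma$ into a part in a low-dimensional subspace and an orthogonal remainder. The remainder is treated as a high-temperature system whose free energy is given by rank-one spherical integral asymptotics \cite{MR2132396}.

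\emph{Lower bound.} Fix $m\in B_N^\circ$ with $\beta(1-|m|^2)\les\beta_c$, and set $q=|m|^2$. Let $W$ be the span of $U_N$, $m$, and the top $k$ eigenvectors of $M_N$, with $k$ fixed but large. Restrict the expectation to configurations
\begin{equation*}
\sigma=m+\sqrt{1-q}\,\tau, \qquad \tau\perp W,\ |\tau|=1,
\end{equation*}
thickened to a small neighborhood. The volume of this tube costs $\frac12\log(1-q)+o(1)$ per spin.

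On the tube, $f(\sigma)=f(m)$ exactly. The cross term $2N\sqrt{1-q}\,m^{\mathsf T}M_N\tau$ needs care. Add $M_Nm$ to $W$ as well, so that $m^{\mathsf T}M_N\tau=0$ for $\tau\perp W$. What remains is $\beta H_N(m)+\beta N(1-q)\tau^{\mathsf T}M_N\tau$ with $\tau$ uniform on a sphere of codimension $O(1)$. The compression of $M_N$ to $W^\perp$ has the same limiting spectral distribution $\mu$ and, by interlacing, top eigenvalue converging to $\lM$. Guionnet--Maïda then gives
\begin{equation*}
\frac1N\log\Es\bigl[e^{\beta(1-q)N\tau^{\mathsf T}M\tau}\bigr]\to F_\mu(\beta(1-q))
\end{equation*}
in the regime $\beta(1-q)\les\beta_c=s_\mu/2$. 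This is exactly where the Plefka condition enters, and the limit is uniform on compacts. Optimizing over $m$ yields the lower bound.

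\emph{Upper bound.} This is the main obstacle: marginality means the mass does not concentrate near the TAP maximizers. Following the paper's strategy, fix $\delta>0$ and partition the sphere. For each $\sigma$, let $k(\sigma)$ be the smallest $k$ such that the projection of $\sigma$ onto the span $V_k$ of $U_N$ and the top $k$ eigenvectors satisfies
\begin{equation*}
|P^{V_k}\sigma|^2\ges q_{\mathrm{cut}},
\end{equation*}
where the threshold $q_{\mathrm{cut}}$ is chosen so that the residual effective temperature $\beta(1-q)$ stays at most $\beta_c+\delta$.

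On each cell, write $\sigma=m+\rho$ with $m=P^{V_k}\sigma$. Cover $m$ by a net, which is affordable only while $k$ is $o(N)$ or, more precisely, while the entropy cost is accounted for. The residual $\rho$ lies in $V_k^\perp$, where all remaining eigenvalues are at most $\lambda_{N-k}$. The upper bound on $\rho$'s contribution comes from the spherical integral for the compressed matrix, again at temperature at most $\beta_c+\delta$.

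The key difficulty is that $k$ may grow with $N$, since many top eigendirections are needed. So rather than using a net on $m$, we would bound the cell's contribution directly by a product of the $m$-integral and the $\rho$-integral, both computed via spherical integrals. This produces an auxiliary variational problem in the energy $E$, the overlap $q$, and the spectral profile of $m$. Its value is bounded by $\sup f_{\TAP}(\beta,E,q)$: the macroscopic energy $\frac1N m^{\mathsf T}M_Nm\cdot N$ is at most $q\lambda_{N}$ plus the field, and the fluctuation term is concave.

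Summing over the polynomially many cells ($k\le N$) costs nothing exponentially. Letting $\delta\to0$ and using continuity of $F_\mu$ finishes the argument. Uniformity in $\beta\in[0,B]$ follows from Lipschitz continuity in $\beta$ of both sides.
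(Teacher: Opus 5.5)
Your lower bound is correct and is essentially the paper's. On the thickened slice $m+V$ with $V\perp U_N,m,M_Nm$, the field is constant up to $c\epsilon$ and the cross term vanishes. Guionnet--Ma\"ida then plays the role of \Cref{thm:sec2_main}; apply it along a near-maximizing sequence $m_N$ with $|m_N|\le 1-\rho$ to get uniformity.

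\textbf{First gap: the cross term in the upper bound.} The upper bound fails exactly where the external field matters. With $V_k=\mathrm{span}(U_N,e_N,\dots,e_{N-k+1})$, $m=P^{V_k}\sigma$ and $\rho=\sigma-m$, one has $H_N(\sigma)=H_N(m)+H_N(\rho)+2N\langle P^{V_k^\perp}M_Nm,\rho\rangle$. Because $U_N$ is not $M_N$-invariant, $P^{V_k^\perp}M_Nm$ is generically of order one. So $\rho$ feels an effective linear field of order $N$. The cell then does not factor into an $m$-integral times a $\rho$-integral, and the compressed-matrix spherical integral does not bound the residual. Nor is the $m$-integral a spherical integral: it carries $f$ and the non-diagonal form of $M_N$ on $V_k$.

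In the lower bound you could adjoin $M_Nm$ because $m$ was fixed. In the upper bound $m$ ranges over the cell, and adjoining $M_NV_k$ only creates new cross terms with $M_N^2V_k$, and so on. No $O(1)$-dimensional enlargement closes this chain. The iterated affine-slice covering of \cite{belius2022high} (\Cref{subsec:slice_partition}) is what supplies this. It gives $N_\epsilon$ thickened slices of codimension at most $K_\epsilon+1$ on which $f$ is constant up to $c\epsilon$ and the recentering cross term vanishes. Each slice is then a zero-field model at temperature $\beta(1-|m|^2)$, and non-Plefka slices are discarded via \Cref{lem:energy_at_norm_q_increases}.

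Once slices are in place, your spectral cut is unnecessary for the value. Since $x^*(z)=\lM$ for $z\ge\beta_c$, the map $q\mapsto f_{\mathrm{TAP}}(\beta,q\lM,q)$ is constant on $[0,q^*]$ (proof of \Cref{lem:MTAP_fTAP}). So $F_\mu$ is the zero-field limit at every temperature, and Guionnet--Ma\"ida handles each slice directly.

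\textbf{Second gap: the collapse of your auxiliary problem.} You assert this step, and the reason offered (energy at most $q\lambda_N$ plus concavity) does not work. Consider a cell in which the norm $\tau_R$ is spread over $K=\alpha N$ top directions. Its volume is about $e^{-\frac N2D(\alpha\|\tau_L,\tau_R)}$, which exceeds that of the $\alpha=0$ cell by a factor $\exp\bigl(\tfrac N2\alpha\log(1/\alpha)(1+o(1))\bigr)$ as $\alpha\to0$. The residual, having lost the top $\alpha$-mass of the spectrum, drops in free energy only by roughly $\frac{\alpha}{2(1+\kappa)}\log(1/\alpha)$, for an edge density $\asymp(\lM-x)^\kappa$ with $\kappa>0$. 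So bounding the top block by its maximal energy, i.e.\ replacing $\alpha F_{\mu_{R,\alpha}}(\beta\tau_R/\alpha)$ by $\beta\tau_R\lM$, overshoots the TAP value by order $\alpha\log(1/\alpha)$ for small $\alpha$.

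The paper closes this with an exact computation, in three steps:
\begin{enumerate}
\item The $-\frac12D$ term cancels the $-\frac12\log(2z)$ parts of both truncated free energies, leaving $G_L+G_R$ (\Cref{lem:ftap_explicit_pre}, \Cref{lem:f_TAP_q_star_alpha=0}).
\item Monotonicity in $(\tau_L,\tau_R)$ reduces to the point $(1-q^*,q^*)$ (\Cref{lem:F_TAP_max_tau}).
\item One has $G_L(\beta,1-q^*,\alpha)+G_R(\beta,q^*,\alpha)\le\inf_{\eta\ge\lM}\{\beta\eta-\frac12\int\log(\eta-x)\,\mu(dx)\}$, and this infimum sits at $\eta=\lM$ precisely because $\beta(1-q^*)=\beta_c$ (\Cref{lem:GL_plus_GR_alpha_ineq}).
\end{enumerate}
The internal entropy of the top block must be kept and cancelled exactly against the cell volume, not traded for an energy bound.
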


\subsection{The high-temperature/low-temperature decomposition}

We next explain the mechanism behind the proof. Since the reference measure on the sphere is invariant under orthogonal transformations, we may diagonalize $M_N$ and write
\begin{equation}\label{eq:intro-diagonal-revised}
  H_N(\sigma)=N\sum_{i=1}^N\lambda_i\sigma_i^2.
\end{equation}
At high temperature, no single coordinate is macroscopic: the leading configurations have $\sigma_i^2$ of order $N^{-1}$ throughout the spectrum and the magnetization associated is $m=0$. The free energy can then be computed from the limiting spectral distribution, giving $F_\mu(\beta)$ for $\beta\les\beta_c$.

At low temperature, by contrast, a positive fraction $q$ of the norm is carried by directions whose eigenvalues are close to $\lM$, hence the magnetization associated is expected to be $\sqrt{q}e_N$. Once this fraction is removed, the remaining norm is $1-q$ and the residual system has effective inverse temperature $\beta(1-q)$. Choosing $q\geq q_*(\beta)$ makes this residual system high-temperature. This is the TAP decomposition in its simplest form:
\begin{equation}
  \underbrace{\beta q\lM}_{\text{macroscopic energy}}
  +\underbrace{\frac12\log(1-q)}_{\text{entropy cost}}
  +\underbrace{F_\mu(\beta(1-q))}_{\text{high-temperature fluctuations}}.
\end{equation}
The three terms are exactly those appearing in \eqref{eq:intro-zero-field-result-revised}.


A natural geometric interpretation of the TAP functional associates a magnetization vector $m$ with a \textit{slice} $A$ of configurations, centered at $m$ and satisfying $A=\{\sigma\in S^{N-1}; \ \langle \sigma,m\rangle\approx|m|^2\}$. Within this slice, a configuration decomposes approximately as $\sigma=m+\widehat{\sigma}$, where $\widehat{\sigma}$ is orthogonal to $m$ and has squared norm $1-|m|^2$. The TAP functional describes the contribution of this slice through the energy of $m$ (that is $N^{-1} \beta H_N^f(m)$), the entropy cost of fixing its projection (that is $\tfrac{1}{2}\log (1-q)$), and the free energy of the remaining fluctuations (that is $\Flim{\mu}(\beta(1-q))$), and so, for $F_N(\beta,A)$ the free energy restricted to $A$

\begin{equation}\label{eq:intro_decompo_explained_A}
    F_N(\beta,A)
    \approx 
    \beta N^{-1} H_N^f(m) +\frac{1}{2}\log(1-q) +\Flim{\mu}\big(\beta(1-q)\big)
    =
    F_{\TAP}(m).
\end{equation}
By covering the $S^{N-1}$ by a collection of slices $A_1,\dots,A_K$, it is direct that

\begin{equation}
    \sup_{i} F_N(\beta,A_i)
    \les 
    F_N(\beta)
    \les
    \sup_{i} F_N(\beta,A_i)
    +
    \frac{1}{N}\log K,
\end{equation}
which together with \eqref{eq:intro_decompo_explained_A} hints at the results of Theorem \ref{thm:main} and \ref{thm:main_with_ext_field}. Further, in the zero-field model, the usual candidates for slices maximizing the TAP free energy at low temperature are the slices associated with $m=\pm\sqrt{q^*}\,e_N$, where $e_N$ is a top eigenvector.

Marginality creates an obstruction to obtaining an exhaustive decomposition from these bands alone. For each fixed $K$, the eigenvalue $\lambda_{N-K}$ also converges to $\lM$. Consequently, bands associated with $\pm\sqrt{q^*}\,e_{N-K}$ have the same limiting restricted free energy as those associated with the top eigendirection, although they describe different regions of the sphere. Thus, at exponential scale, configurations outside the bands associated with the TAP maximizers contribute to the total free energy. 

We generalize the notion of a TAP neighborhood by specifying a spectral mass profile rather than a single magnetization vector. For each configuration, we expose the eigendirections in decreasing order of their eigenvalues until their cumulative squared norm reaches $q^*$. We then record the squared norms carried by the coordinates above and below the threshold coordinate. This produces sets on which the distribution of the norm between the two spectral blocks is approximately fixed, while the directions within each block remain free. When the threshold is reached at the top coordinate, these sets recover, up to the choice of sign, the usual bands associated with the top eigendirection. Allowing the threshold to occur farther into the spectrum extends this construction to configurations whose norm is distributed over several eigendirections. The resulting sets cover the sphere and lead to an auxiliary variational problem whose optimum recovers the usual TAP value.



For each $\sigma$, let $K(\sigma)$ be the smallest integer such that the last $K(\sigma)+1$ coordinates carry at least the critical norm $q_*(\beta)$:
\begin{equation}\label{eq:intro-K-revised}
  K(\sigma)=\min\left\{K\in\{0,\ldots,N-1\}:
    \sum_{i=N-K}^N\sigma_i^2\geq q_*(\beta)\right\}.
\end{equation}
We then refine the resulting partition according to the norm below and above the threshold coordinate. If $K/N\to\alpha$, these pieces distinguish three quantities: the proportion $\alpha$ of top eigendirections that have been exposed, the norm $\tau_R$ carried by those directions, and the remaining norm $\tau_L$. Conditional on these quantities, the two groups of coordinates are uniform on lower-dimensional spheres. Their free energies can therefore be estimated through truncated spectral measures, while the measure of the piece contributes an explicit relative-entropy cost.

This produces an auxiliary functional
\begin{equation}
    M_{\mathrm{TAP}}(\beta,\tau_L,\tau_R,\alpha),
\end{equation}
defined precisely in Section~\ref{sec:partition}. It retains the full range of marginal near-edge configurations: the standard slice around $e_N$ is the boundary case $\alpha=0$, whereas $\alpha>0$ accounts for configurations spread over a growing family of top eigendirections. The key deterministic optimization in the proof shows that
\begin{equation}\label{eq:intro-aux-collapse-revised}
  \sup_{\alpha,\tau_L,\tau_R}
   M_{\mathrm{TAP}}(\beta,\tau_L,\tau_R,\alpha)
  = M_{\mathrm{TAP}}(\beta,1-q_*,0,0)
  =f_\TAP(\beta,\lM q^\ast,q^\ast)
\end{equation}
and the right-hand side is exactly the usual TAP value at $q_*$. In other words, marginal configurations must be included in the decomposition, but they do not increase the final variational value.

This distinction is useful. The value of the free energy alone can be obtained from spherical-integral methods. Our partition explains, on the level of configurations, how the free energy separates into low-temperature spectral concentration and high-temperature fluctuations, and why marginal directions cannot be discarded, even at exponential scale.

\subsection{Generalized external fields}

To pass from the zero-field model to $H_N^f$, we combine the preceding partition with the construction of another partition of \cite{belius2022high}. The sphere is covered by a subexponential collection of thickened affine sections on which the external field is nearly constant and the linear term created by recentering the quadratic Hamiltonian vanishes. If $m$ is the center of such a section, its orthogonal fluctuations lie on a sphere of radius $\sqrt{1-|m|^2}$ and hence evolve at effective inverse temperature $\beta(1-|m|^2)$. Applying the zero-field decomposition inside each section gives the Onsager term in \eqref{eq:intro_TAP_FE_formula}. A final optimization shows that sections violating Plefka's condition cannot improve the supremum, which yields \eqref{eq:intro_TAP_FE_formula}.

Although the proof uses the diagonalizability of spherical $2$-spin Hamiltonians and therefore does not directly extend to general mixed $p$-spin models, the obstruction it isolates is not specific to the calculation of spherical integrals. Any microcanonical TAP approach that aims to control marginal states must account for families of configurations that are distinct from the TAP maximizers but indistinguishable from them at the leading exponential scale. The cumulative decomposition above gives one concrete way to do so in a setting where the resulting variational problem remains tractable.

\subsection{Organization of the paper}

Section \ref{sec:legendre_facts} collects properties of the log-potential $L_\mu$, its Legendre transform, and the high-temperature free energy $F_\mu$. Section \ref{sec:Legendre_FE_high_temp} proves the high-temperature estimates, uniformly enough to apply them to truncated empirical spectral distributions. Section \ref{sec:partition} introduces the cumulative top-eigenspace partition and bounds the contribution of each of its elements. Section \ref{sec:intermediate_TAP} passes to the limiting auxiliary functional $M_{\mathrm{TAP}}$. Section \ref{sec:where_is_TAP_max} solves the resulting variational problem and proves the zero-field theorem. Finally, Section \ref{sec:external_field_elimination} combines the spectral decomposition with the affine-slice construction to treat generalized external fields.

\newpage

\section{Preliminary results on the Legendre transform}
\label{sec:legendre_facts}

In this short section we gather some useful results about the Legendre transform defined in \eqref{eq:LegendreLstar}.

\begin{lemma}\label{lem:Legendre_trans_trivial_bounds}
     For any probability distribution $\mu$ with bounded support
    \begin{equation}
        -z\lM\le L^{\ast}_{\mu}(z)-\left(-\frac{1}{2}\log(2z)-\frac{1}{2}\right)\le-z\lm
        \qquad
        \forall z>0.
    \end{equation}
\end{lemma}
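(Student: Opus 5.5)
Throughout, write $L^*_\mu(z)=\sup_{x\ges\lM}\{L_\mu(x)-zx\}$ and $\lambda=x$. The plan is to sandwich $L_\mu(x)$ between the log-potentials of the point masses at the endpoints of the support, namely
\begin{equation}
    \frac12\log(x-\lM)\les L_\mu(x)\les \frac12\log(x-\lm),\qquad x>\lM .
\end{equation}
This holds because $\log$ is increasing and $\lm\les y\les\lM$ on the support of $\mu$. Since $\mu$ is a probability measure, integrating $\frac12\log(x-y)$ against $\mu(dy)$ preserves these pointwise bounds. At $x=\lM$ the lower bound reads $-\infty$, which is harmless for a supremum.

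Next, I take the supremum over $x\ges\lM$ of each side after subtracting $zx$. For the lower bound I substitute $u=x-\lM>0$:
\begin{equation}
    \sup_{u>0}\Bigl\{\tfrac12\log u-zu\Bigr\}-z\lM .
\end{equation}
The inner supremum is attained at $u=1/(2z)$ and equals $-\frac12\log(2z)-\frac12$. This gives $L^*_\mu(z)\ges -\frac12\log(2z)-\frac12-z\lM$, which is the left inequality.

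For the upper bound I substitute $v=x-\lm$. The constraint $x\ges\lM$ becomes $v\ges\lM-\lm\ges0$, and I enlarge the range to all $v>0$, which can only increase the supremum:
\begin{equation}
    L^*_\mu(z)\les\sup_{v>0}\Bigl\{\tfrac12\log v-zv\Bigr\}-z\lm=-\tfrac12\log(2z)-\tfrac12-z\lm .
\end{equation}
This is the right inequality.

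There is no real obstacle. The only points needing care are the value $-\infty$ of $L_\mu(\lM)$ when $\mu$ has an atom at $\lM$, and the enlargement of the domain in the upper bound; neither affects the argument.
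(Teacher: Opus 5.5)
Your proof is correct and follows the paper's argument: the paper also sandwiches $L_\mu$ between $\tfrac12\log(x-\lM)$ and $\tfrac12\log(x-\lm)$, passes to Legendre transforms, and uses $\sup_{x>m}\{\tfrac12\log(x-m)-zx\}=-mz-\tfrac12\log(2z)-\tfrac12$. Your explicit remark that the upper bound requires enlarging the range from $x\ges\lM$ to $x>\lm$ spells out a step the paper leaves implicit.
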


\begin{proof}
    Letting $l_m(x)=\tfrac{1}{2}\log(x - m)$ we have $l_{\lM}(x)\leq L_\mu(x) \leq l_{\lm}(x)$ for $x \ge \lM$. Thus $l^\ast_{\lM}(z)\leq L^\ast_\mu(z) \leq l_{\lm}^\ast(z)$. Lastly $l^\ast_m(z) = -m z - \frac12 \log(2z) - \frac12$.
\end{proof}

\begin{corollary}\label{cor:free_energy_trivial_bounds}
     For any probability distribution $\mu$ with bounded support and any $\beta > 0$
    \begin{equation}
        \beta\lm
        \leq
        \Flim{\mu}(\beta)
        \leq
        \beta \lM.
    \end{equation}
    In particular, $F_\mu(\beta)$ is continuous at $0$.
\end{corollary}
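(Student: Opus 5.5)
The plan is to substitute the definition \eqref{eq:F_def} of $\Flim{\mu}$ into the bounds of Lemma~\ref{lem:Legendre_trans_trivial_bounds}. That lemma, applied at $z=\beta>0$, states
\begin{equation*}
    -\beta\lM\le L^{\ast}_{\mu}(\beta)+\frac{1}{2}\log(2\beta)+\frac{1}{2}\le-\beta\lm .
\end{equation*}
By \eqref{eq:F_def}, the middle expression equals exactly $-\Flim{\mu}(\beta)$. Multiplying the chain by $-1$ reverses the inequalities and gives $\beta\lm\le \Flim{\mu}(\beta)\le \beta\lM$, which is the claimed two-sided bound.

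For continuity at $0$, recall that $\Flim{\mu}(0)=0$ by convention. Since $\mu$ has bounded support, both $\lm$ and $\lM$ are finite. The bound therefore gives $\abs{\Flim{\mu}(\beta)}\le \beta\max(\abs{\lm},\abs{\lM})$ for $\beta>0$. The right-hand side tends to $0$ as $\beta\downarrow 0$, so $\Flim{\mu}(\beta)\to 0=\Flim{\mu}(0)$. Because $\Flim{\mu}$ is only defined for $\beta\ge0$, this one-sided limit is all that continuity at $0$ requires.

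There is no real obstacle here. The only care needed is the sign flip when passing from $L^\ast_\mu$ to $\Flim{\mu}=-L^\ast_\mu-\frac12\log(2\cdot)-\frac12$, and the observation that continuity at $0$ uses the convention $\Flim{\mu}(0)=0$ rather than the formula \eqref{eq:F_def}, which is undefined at $z=0$.
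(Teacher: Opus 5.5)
Your proposal is correct and follows the paper's own route: you substitute the definition of $F_\mu$ into the bounds of Lemma~\ref{lem:Legendre_trans_trivial_bounds} and flip signs to get the two-sided bound. You then deduce continuity at $0$ from the resulting linear bound together with the convention $F_\mu(0)=0$, which is the step the paper leaves implicit.
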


\begin{proof}
    Follows directly from \Cref{lem:Legendre_trans_trivial_bounds} and \eqref{eq:F_def}.
\end{proof}

\begin{lemma}\label{lem:xstar_legendre}
    Let $\mu$ be a probability distribution with bounded support. Then for any $z>0$ there is a unique maximizer $x^*(z)$ of the supremum in \eqref{eq:LegendreLstar}, and it satisfies
    \begin{equation}
        \left(\frac{1}{2z}+\lm\right)\vee \lM
        \leq
        x^*(z)
        \leq
        \frac{1}{2z}+\lM
        \qquad
        \forall z> 0.
    \end{equation}
\end{lemma}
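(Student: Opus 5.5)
The plan is to study the function $g_z(x)=L_\mu(x)-zx$ on $[\lM,\infty)$ directly, establishing existence, uniqueness, and then the two-sided bound through the derivative. Since $\log$ is strictly concave and $\mu$ is a probability measure, $L_\mu$ is strictly concave on $(\lM,\infty)$, so $g_z$ is strictly concave there. It is continuous on $[\lM,\infty)$ as an extended-real function; at $x=\lM$ it may equal $-\infty$ if $\mu$ charges $\lM$, and otherwise it is finite or $-\infty$ by monotone convergence. Also $L_\mu(x)\le \tfrac12\log(x-\lm)$, so $g_z(x)\to-\infty$ as $x\to\infty$, because the linear term dominates for $z>0$. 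Hence a maximizer exists on a compact interval. It is unique by strict concavity, which holds on $(\lM,\infty)$ and extends to the endpoint: if $\lM$ were one of two maximizers, concavity would force $g_z$ to be constant between them, contradicting strict concavity.

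For the bounds, differentiate under the integral for $x>\lM$:
\begin{equation}
g_z'(x)=\frac12\int\frac{\mu(dy)}{x-y}-z=\frac12 S_\mu(x)-z .
\end{equation}
Since $y\in[\lm,\lM]$, we have $\frac{1}{x-\lm}\le S_\mu(x)\le\frac{1}{x-\lM}$. For the upper bound, for $x>\frac1{2z}+\lM$ we get $g_z'(x)\le \frac{1}{2(x-\lM)}-z<0$, so $g_z$ is strictly decreasing there and $x^*(z)\le\frac1{2z}+\lM$. For the lower bound, for $\lM<x<\frac1{2z}+\lm$ we get $g_z'(x)\ge\frac{1}{2(x-\lm)}-z>0$, so $g_z$ is strictly increasing on that range. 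Therefore $x^*(z)\ge \frac1{2z}+\lm$ whenever this number exceeds $\lM$. The bound $x^*\ge\lM$ is automatic from the domain, which gives $x^*(z)\ge(\frac1{2z}+\lm)\vee\lM$.

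The main obstacle is being careful near $x=\lM$, where $L_\mu$ may be $-\infty$ or have infinite derivative. I will handle it by working only with monotonicity on open intervals $(\lM,\cdot)$ together with continuity of $g_z$ at $\lM$ in the extended sense; this suffices to locate the maximizer. Differentiation under the integral for $x>\lM$ is justified by dominated convergence, since the integrand is bounded by $1/(x-\lM)$ locally uniformly.
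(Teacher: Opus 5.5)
Your proof is correct and follows essentially the same route as the paper: both rest on $g_z'(x)=\frac12 S_\mu(x)-z$ being bracketed between $\frac{1}{2(x-\lm)}-z$ and $\frac{1}{2(x-\lM)}-z$. The only difference is that you use the sign of $g_z'$ on intervals together with right-continuity at $\lM$, where the paper splits into the boundary case $x^*=\lM$ (when $s_\mu<\infty$ and $\frac12 s_\mu\le z$) and the interior critical-point case; you also supply the existence argument (coercivity and upper semicontinuity), which the paper's appeal to strict concavity alone leaves implicit.
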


\begin{proof}
    The function $x\mapsto \logpotential{\mu}(x)-zx$ is strictly concave, which implies that there is a unique maximizer. It is also differentiable on $(\lM,\infty)$, with $\tfrac{\partial}{\partial x}\left\{\logpotential{\mu}(x)-zx\right\} = \frac12 S_\mu(x)-z$. If $s_\mu < \infty$ this also holds at $x=\lM$. If $s_\mu<\infty$ and $\tfrac{\partial}{\partial x}\left\{\logpotential{\mu}(x)-zx\right\}|_{x=\lM}\leq 0$, then $x^*(z) = \lM$, so the upper bound holds trivially. The lower bound also holds since $z \geq \frac12 s_{\mu}\geq \tfrac{1}{2}\tfrac{1}{\lM-\lm}$. On the other hand, if $s_\mu<\infty$ and $\tfrac{\partial}{\partial x}\left\{\logpotential{\mu}(x)-zx\right\}|_{x=\lM}>0 $, or if $s_\mu=\infty$, then $x^*(z)>\lM$ and $\tfrac{\partial}{\partial x}\left\{\logpotential{\mu}(x)-zx\right\}|_{x=x^*}=0$. The claim then follows since for $x > \lM$
    \begin{equation}
        0
        =
        \frac{\partial}{\partial x}\left\{\logpotential{\mu}(x)-zx\right\}\overset{\eqref{eq:def_log_potential}}{=}\frac{1}{2}\int_{\lm}^{\lM}\frac{\mu(dy)}{x-y} -z \in \left[ \frac{1}{2(x-\lm)}-z, \frac{1}{2(x-\lM)}-z  \right].
    \end{equation}    
\end{proof}

\begin{lemma}\label{lem:legendre_diff}
    Let $\mu$ be a probability with bounded support. Then $\logpotential{\mu}^*(z)$ is differentiable on $(0,\infty)$, and
    \begin{equation}\label{eq:legndre_diff}
        \logpotential{\mu}^{*'}(z)
        =
         -x^*(z),
    \end{equation}
    where $x^*(z)$ is the  unique maximizer in \eqref{eq:LegendreLstar}.
\end{lemma}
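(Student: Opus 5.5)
This is an envelope-theorem (Danskin) argument for a concave-in-$x$, linear-in-$z$ family. Set
\begin{equation*}
\phi(x,z)=L_\mu(x)-zx,
\qquad
L_\mu^*(z)=\sup_{x\ges\lM}\phi(x,z).
\end{equation*}

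\textbf{Convexity and one-sided derivatives.} For each fixed $x$ the map $z\mapsto\phi(x,z)$ is affine, so $L_\mu^*$ is a pointwise supremum of affine functions. By \Cref{lem:Legendre_trans_trivial_bounds} it is finite on $(0,\infty)$, hence convex there. Its one-sided derivatives $D^\pm L_\mu^*(z)$ therefore exist at every $z>0$. Differentiability will follow once both are shown to equal $-x^*(z)$, where $x^*(z)$ is the unique maximizer from \Cref{lem:xstar_legendre}.

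\textbf{Lower bound on the increment.} Fix $z>0$ and $h$ small with $z+h>0$. Testing the supremum at $x^*(z)$ gives
\begin{equation*}
L_\mu^*(z+h)\ges \phi(x^*(z),z+h)=L_\mu^*(z)-h\,x^*(z).
\end{equation*}
So $-x^*(z)$ is a subgradient, which yields $D^+L_\mu^*(z)\ges -x^*(z)\ges D^-L_\mu^*(z)$.

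\textbf{Upper bound on the increment.} Testing instead at $x^*(z+h)$ gives
\begin{equation*}
L_\mu^*(z)\ges \phi(x^*(z+h),z)=L_\mu^*(z+h)+h\,x^*(z+h),
\end{equation*}
that is, $L_\mu^*(z+h)-L_\mu^*(z)\les -h\,x^*(z+h)$. Dividing by $h>0$ and by $h<0$ respectively squeezes both difference quotients between $-x^*(z)$ and $-x^*(z+h)$. The result therefore follows once $x^*$ is continuous at $z$.

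\textbf{Continuity of $x^*$ (the main step).} This is the only step needing care. By \Cref{lem:xstar_legendre}, $x^*(z+h)$ lies in the compact interval $[\lM,\tfrac{1}{2(z-|h|)}+\lM]$ for small $h$. Let $\bar x$ be any subsequential limit of $x^*(z+h_n)$ along $h_n\to0$. Two facts are needed here.
\begin{itemize}
\item $\phi$ is jointly continuous on $[\lM,\infty)\times(0,\infty)$. At $x=\lM$ this holds because $\log(x-y)$ is monotone in $x$, so monotone convergence gives $L_\mu(x)\to L_\mu(\lM)\in[-\infty,\infty)$.
\item $L_\mu^*$ is continuous, being convex and finite.
\end{itemize}
Passing to the limit in $\phi(x^*(z+h_n),z+h_n)=L_\mu^*(z+h_n)$ then gives $\phi(\bar x,z)=L_\mu^*(z)$, and the right side is finite. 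Hence $\bar x$ is a maximizer, and by uniqueness $\bar x=x^*(z)$.

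Alternatively, continuity of $x^*$ can be read off from its explicit description in the proof of \Cref{lem:xstar_legendre}. There $x^*(z)=\lM$ on $z\ges s_\mu/2$, and otherwise $x^*$ solves $\tfrac12 S_\mu(x)=z$ with $S_\mu$ continuous and strictly decreasing. Combining continuity with the squeeze gives $L_\mu^{*\prime}(z)=-x^*(z)$.
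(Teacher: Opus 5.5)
Your argument is correct, but it takes a different route from the paper.

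\textbf{The paper's route.} The paper uses the explicit description of the maximizer from \Cref{lem:xstar_legendre} and splits into two cases.
\begin{itemize}
\item For $z\ge s_\mu/2$ one has $x^*(z)=\lM$, so $L_\mu^*(z)=L_\mu(\lM)-\lM z$ is affine there.
\item For $z<s_\mu/2$ the maximizer is an interior critical point solving $\tfrac12 S_\mu(x)=z$. The implicit function theorem makes $x^*$ differentiable, and the chain rule with the first-order condition gives $L_\mu^{*\prime}=-x^*$.
\end{itemize}

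\textbf{Your route.} You use the envelope (Danskin) squeeze $-h\,x^*(z)\le L_\mu^*(z+h)-L_\mu^*(z)\le -h\,x^*(z+h)$. This reduces everything to continuity of $x^*$, which you obtain from compactness plus uniqueness of the maximizer.

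\textbf{What each buys.} Your argument needs no smoothness of $L_\mu$ and no implicit function theorem, and it treats the interior and boundary regimes uniformly. In particular it covers the transition point $z=s_\mu/2$. There the paper's affine-piece argument by itself gives only the right derivative. The left derivative also requires $x^*(z)\to\lM$ as $z\uparrow s_\mu/2$, which is exactly the continuity you prove; your alternative via the strictly decreasing $S_\mu$ gives it too. The paper's route is shorter once \Cref{lem:xstar_legendre} is available, and it also shows that $x^*$ is differentiable on $(0,s_\mu/2)$.
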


\begin{proof}
    We have $\logpotential{\mu}^*(z) = \logpotential{\mu}(x^*(z)) - z x^*(z)$ for all $z>0$. Note that $x^*(z) = \lM$ for all $z \ge \frac12 s_\mu$. Thus $L^*(z) = L_\mu(\lM) - \lM z$ for such $z$, which is clearly differentiable in $z$ and satisfies \eqref{eq:legndre_diff}. If on the other hand $z \in (0,s_\mu/2)$, then $x^*(z) > \lM$, and $x^*(z)$ is the unique solution of $\tfrac{\partial}{\partial x}\big\{L_{\mu}(x)-zx\big\}(x^*)= 0$. The implicit function theorem then implies that $x^{*}(z)$ is differentiable. The chain rule then implies that $L_\mu^*(z)$ is differentiable for $z<s_\mu/2$, and that \eqref{eq:legndre_diff} holds for such $z$.


\end{proof}

\begin{lemma}\label{lem:FE_lipschitz}
    Let $\mu$ be a probability with bounded support. Then $\Flim{\mu}$ is $|\lm|\vee |\lM|$-Lipschitz.
\end{lemma}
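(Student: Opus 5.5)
The plan is to bound the derivative of $\Flim{\mu}$ on $(0,\infty)$ using the two preceding lemmas, and then use continuity at $0$ to cover the closed half-line $[0,\infty)$.

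\emph{Step 1: compute the derivative.} By \Cref{lem:legendre_diff}, $\Lstar{\mu}$ is differentiable on $(0,\infty)$ with derivative $-x^*(z)$. The function $z\mapsto -\tfrac12\log(2z)-\tfrac12$ is smooth there with derivative $-\tfrac{1}{2z}$. Hence, by \eqref{eq:F_def}, $\Flim{\mu}$ is differentiable on $(0,\infty)$ and
\begin{equation*}
    \Flim{\mu}'(z) = x^*(z) - \frac{1}{2z}, \qquad z>0 .
\end{equation*}

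\emph{Step 2: bound the derivative.} \Cref{lem:xstar_legendre} gives $\tfrac{1}{2z}+\lm \le x^*(z) \le \tfrac{1}{2z}+\lM$. Subtracting $\tfrac{1}{2z}$ yields
\begin{equation*}
    \lm \le \Flim{\mu}'(z) \le \lM \qquad \text{for all } z>0,
\end{equation*}
so $|\Flim{\mu}'(z)|\le |\lm|\vee|\lM|$ on $(0,\infty)$.

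\emph{Step 3: Lipschitz bound on $[0,\infty)$.} For $0<z_1<z_2$, the mean value theorem gives $|\Flim{\mu}(z_2)-\Flim{\mu}(z_1)|\le (|\lm|\vee|\lM|)\,|z_2-z_1|$. By \Cref{cor:free_energy_trivial_bounds}, $\Flim{\mu}$ is continuous at $0$ with $\Flim{\mu}(0)=0$. Letting $z_1\downarrow 0$ therefore extends the inequality to $z_1=0$, which proves the lemma on all of $[0,\infty)$.

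The only delicate point I expect is Step 1 at the kink $z=s_\mu/2$, where $x^*$ switches from an interior critical point to the endpoint $\lM$. This is already handled by \Cref{lem:legendre_diff}, which asserts differentiability on all of $(0,\infty)$. Even if one only had one-sided derivatives there, both would satisfy the same bounds, and a piecewise mean value argument would give the same Lipschitz constant.
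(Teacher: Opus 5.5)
Your proof is correct and follows the paper's argument essentially verbatim: differentiate via the Legendre-derivative lemma to get $F_\mu'(z)=x^*(z)-\frac{1}{2z}$ on $(0,\infty)$, bound this between $\lm$ and $\lM$ using the bounds on $x^*(z)$, and use continuity at $0$ from the trivial free-energy bounds. Your explicit mean-value step and the limit $z_1\downarrow 0$ simply spell out what the paper leaves implicit.
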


\begin{proof}
    The function $\Flim{\mu}$ is continuous on $[0,\infty)$ by \Cref{cor:free_energy_trivial_bounds}. By the definition of \eqref{eq:F_def} and Lemma \ref{lem:legendre_diff} it is also differentiable on $(0,\infty)$, with
    \begin{equation}
        \Flim{\mu}'(z)
        =
        x^*(z) - \frac{1}{2z}.
    \end{equation}
    Thus Lemma \ref{lem:xstar_legendre} concludes the proof of the lemma. 
\end{proof}

\section{Free-energy computation via the Legendre transform}\label{sec:Legendre_FE_high_temp}

The following result is a generalization of formula~(8) in~\cite[Theorem, p.~4]{genovese2015}, which was proved for the case when $\mu$ is the semicircle law. The proof is different to that in \cite{genovese2015}.

Let $\nu$ be an atomic measure with $N$ atoms 

\begin{equation}
    \nu=\frac{1}{N}\sum_{i=1}^N \delta_{\lambda_i}; \qquad \lambda_1\leq \dots \leq \lambda_N,
\end{equation}
and define

\begin{equation}\label{eq:F_nu_def}
    \Ffin{\nu}(\beta)
    :=
    \frac{1}{N}
    \log
    \Es\left[
        \exp\left(
            N \beta \sum_{i=1}^N \lambda_i \sigma_i^2
        \right)
    \right],
\end{equation}
where $\Es$ denotes the uniform measure on the sphere $S^{N-1}$. Also, let $W_1$ denote the Wasserstein-1 distance between probability measures. We suppose that there exists $\Lambda>0$ such that, uniformly in $N$, $\supp(\nu)\subset [-\Lambda,\Lambda]$. In particular, because $\mu$ satisfies Assumption \ref{main_assumption} we can choose $\Lambda$ such that, uniformly in $N$,

\begin{equation}\label{eq:assumption_sec3}
    \supp(\mu)\cup \supp(\nu) \subset[-\Lambda,\Lambda].
\end{equation}


\begin{theorem}\label{thm:sec2_main}
Let $\mu$ satisfy Assumption \ref{main_assumption}, and let $\Lambda\ge1$
be such that $\supp(\mu)\cup \supp(\nu) \subset[-\Lambda,\Lambda]$. Define

\begin{equation}
    \Delta_N(\nu,\mu)
    :=\sqrt{W_1(\nu,\mu)}
      +|\nu_{\max}-\lM|+\frac1{\sqrt N}.
\end{equation}
Then the following statements hold.
\begin{enumerate}
    \item For every $B>0$, there exists $C=C(B,\Lambda)>0$
    such that
    \begin{equation}
        \widehat F_\nu(\beta)
        \le F_\mu(\beta)+C\Delta_N(\nu,\mu)
        \qquad\text{for all }\beta\in[0,B].
    \end{equation}

    \item For every fixed $\beta\in(0,\beta_c)$, there exists
    $C=C(\mu,\beta,\Lambda)>0$ such that
    \begin{equation}
        \widehat F_\nu(\beta)
        \ge F_\mu(\beta)-C\Delta_N(\nu,\mu).
    \end{equation}
\end{enumerate}
Consequently, if $\mu_N$ is the empirical spectral measure of
a sequence satisfying Assumption~1, then
\begin{equation}
    \sup_{\beta\in[0,\beta_c]}
    \bigl|\widehat F_{\mu_N}(\beta)-F_\mu(\beta)\bigr|
    \longrightarrow0.
\end{equation}
\end{theorem}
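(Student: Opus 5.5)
The starting point is the Gaussian representation of the uniform measure on the sphere. Write $\sigma = g/|g|$ with $g$ a standard Gaussian vector in $\R^N$. Equivalently, use the exact identity obtained by inserting a Lagrange multiplier: for $x>\nu_{\max}$,
\begin{equation}
\int_{\R^N} e^{-N\sum_i (x-\lambda_i)g_i^2}\,dg=\prod_i\Bigl(\frac{\pi}{N(x-\lambda_i)}\Bigr)^{1/2}.
\end{equation}
Passing to polar coordinates $g=r\sigma$ gives a Laplace transform in $r^2$ of $\Es[\exp(N\beta r^2\sum\lambda_i\sigma_i^2)]$. Taking $r^2$ near $1$, these identities yield:

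\textbf{Upper bound.} For every $x>\nu_{\max}$,
\begin{equation}
\widehat F_\nu(\beta)\le \beta x -\tfrac12\int\log(2\beta(x-y))\,\nu(dy)-\tfrac12+O(N^{-1}\log N).
\end{equation}
This is Chebyshev's inequality applied to the radial integral: restrict to the shell $r^2\in[1,1+1/N]$ and compare with the full Gaussian integral.

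\textbf{Lower bound.} For the reverse inequality, the law of $|g|^2$ under the tilted Gaussian measure $\propto e^{-N\beta\sum(x-\lambda_i)g_i^2}$ concentrates near its mean $\frac{1}{2N\beta}\sum_i (x-\lambda_i)^{-1}=\frac{1}{2\beta}S_\nu(x)$. Choosing $x$ so that $S_\nu(x)=2\beta$, which makes the mean equal to $1$, gives the matching lower bound up to an error controlled by the variance $\frac1{N}\int (x-y)^{-2}\nu(dy)$. Optimizing over $x$ reproduces $-\beta L^*_\nu(\beta)$-type expressions. Indeed, after substituting $z=\beta$ and using the definitions \eqref{eq:LegendreLstar}, \eqref{eq:F_def}, one recognizes $\inf_x\{\beta x-L_\nu(x)\}-\frac12\log(2\beta)-\frac12=F_\nu(\beta)$ in the sense of \eqref{eq:F_def} with $\mu$ replaced by $\nu$ and the sup taken over $x\ge\nu_{\max}$.

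\textbf{Upper bound, part (1).} I would take $x=\max(x^*_\mu(\beta),\nu_{\max})+\delta$ with $\delta=\Delta_N$. Then $x-y\ge\delta$ on $\supp\nu$, so the map $y\mapsto \log(x-y)$ is $\delta^{-1}$-Lipschitz there, and by Kantorovich duality
\begin{equation}
\Bigl|\int\log(x-y)\,(\nu-\mu)(dy)\Bigr|\le W_1/\delta\le\sqrt{W_1}.
\end{equation}
Moving $x$ from $x^*_\mu$ to $x^*_\mu+|\nu_{\max}-\lM|+\delta$ costs at most $\beta(|\nu_{\max}-\lM|+\delta)$ in the linear term. The log term only increases under this shift, so it helps. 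Uniformity in $\beta\in[0,B]$ follows from the bounds of \Cref{lem:xstar_legendre}. For $\beta$ small one needs $\beta\cdot\frac{1}{2\beta}$ to remain controlled, and it does, since $x^*\le\frac1{2\beta}+\lM$ only enters through $\beta x$. The $N^{-1}\log N$ error is absorbed into $1/\sqrt N$.

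\textbf{Lower bound, part (2).} For fixed $\beta<\beta_c$, \Cref{lem:xstar_legendre} and its proof give $x^*_\mu(\beta)>\lM$ strictly, with $\frac12S_\mu(x^*)=\beta$. Hence $x^*-\nu_{\max}\ge c>0$ for large $N$. If instead $\Delta_N$ is not small, the bound holds trivially by adjusting $C$, using \Cref{cor:free_energy_trivial_bounds}. I would then use the Gaussian tilt at the point $x_\nu$ solving $S_\nu(x_\nu)=2\beta$. By Lipschitz control of $S$ away from the support, $|x_\nu-x^*_\mu|\lesssim W_1+|\nu_{\max}-\lM|$. The variance is then $O(1/N)$, so the radial concentration costs $O(N^{-1}\log N)$. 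Comparing values at $x_\nu$ versus $x_\mu^*$ costs $O(\Delta_N)$ as in part (1).

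\textbf{Consequence.} For uniform convergence on $[0,\beta_c]$, use parts (1)–(2) on $[\e,\beta_c-\e]$, where the constant is uniform on compacts because $x^*$ stays bounded away from $\lM$. Near $0$, both sides are $O(\e)$ by \Cref{cor:free_energy_trivial_bounds}; the finite-$N$ analogue is the trivial bound $\nu_{\min}\beta\le\widehat F_\nu\le\nu_{\max}\beta$. Near $\beta_c$, use monotonicity and convexity of $\widehat F_\nu$ in $\beta$ (it is a log-moment generating function), together with the Lipschitz property from \Cref{lem:FE_lipschitz}. This sandwiches the value on $[\beta_c-\e,\beta_c]$: the upper bound comes from part (1), and the lower bound from $\widehat F_\nu(\beta)\ge\widehat F_\nu(\beta_c-\e)+\e\,\nu_{\min}$ or a similar bound.

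\textbf{Main obstacle.} I expect the main difficulty to be the lower bound, specifically the second-moment/local concentration of $|g|^2$ under the tilt. One needs a local rather than a large-deviation statement: the density of $|g|^2$ at $1$ must be at least $e^{-O(\log N)}$. I would first try a direct Chebyshev argument on a window $[1-N^{-1/2},1+N^{-1/2}]$. This produces the $1/\sqrt N$ term in $\Delta_N$.
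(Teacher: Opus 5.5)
Your proposal is correct. Part (2) and the passage to uniform convergence follow essentially the paper's route. The paper also tilts the Gaussian, using the measure $\mathbb{Q}_{\eta^\ast}$, whose parameter in the variable $\gamma=(1-2\eta)/(2\beta)$ is exactly your $x_\nu$ with $S_\nu(x_\nu)=2\beta$. It then secures a gap $\gamma^\ast\ge\lambda_N+\delta$ by comparing $\varphi_\nu'$ with $\varphi_\mu'$ through $W_1$ (\Cref{lem:away_from_lambda_max}), applies Chebyshev on a window of width $1/(\beta\delta\sqrt N)$, compares with $\mu$ by Kantorovich duality, and reaches $\beta_c$ through the common $\Lambda$-Lipschitz bound.

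The real difference is in part (1). The paper writes $\widehat F_\nu$ as a Gaussian expectation over all radii and discards $|X|^2\notin[N/A,NA]$. It covers the rest by $O(\epsilon^{-1}\log A)$ shells, applies an exponential Chebyshev bound with its own multiplier $\eta$ on each shell, and balances $\epsilon$ against $(N\epsilon)^{-1}$. That balancing is where its $1/\sqrt N$ and the constant $e^{8(1+\beta\Lambda)}$ come from.

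You instead use that radius and direction separate in polar coordinates. A single thin shell $r^2\in[1,1+1/N]$ then suffices: on it the spherical average $\Es[e^{N\beta r^2\sum_i\lambda_i\sigma_i^2}]$ is within a factor $e^{\beta\Lambda}$ of its value at $r=1$, and it is dominated by the explicit Gaussian integral. This gives the bound at every fixed $x>\nu_{\max}$ with an $O(N^{-1}\log N)$ error and constants polynomial in $\beta\Lambda$. Your choice $x=\max(x^*_\mu,\nu_{\max})+\Delta_N$, together with monotonicity of $L_\mu$, then does the job of \Cref{lem:limit_range_gamma}.

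What the paper's covering buys is that it works with the normalized Gaussian measure throughout. It therefore never needs the surface-area constant of $S^{N-1}$ or Stirling's formula; the normalization is absorbed by $\log R-R^2/2\le-\frac12$. Your route needs that one Stirling estimate, but it is shorter and gives sharper constants.

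Two small slips, neither affecting the result:

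\begin{itemize}
\item In part (2) the radial step does not cost $O(N^{-1}\log N)$. On a window $r^2\in[1-\epsilon,1+\epsilon]$ the factors $r^{N-1}$, $e^{-N\beta x r^2}$ and the spherical average vary by $e^{O(N\epsilon)}$, so Chebyshev costs $O(\epsilon)=O(N^{-1/2})$, as your final paragraph correctly says. Also, the window should have width of order $1/(\beta\delta\sqrt N)$ rather than exactly $N^{-1/2}$, so that Chebyshev leaves probability at least $\frac12$.
\item $\widehat F_\nu$ need not be monotone in $\beta$, since its derivative is a Gibbs average of $\sum_i\lambda_i\sigma_i^2$, which can be negative. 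You only use $\nu_{\min}\le\widehat F_\nu'\le\nu_{\max}$, and that holds.
\end{itemize}
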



\subsection{Upper bound}

We start by proving the upper bound part of Theorem \ref{thm:sec2_main}.

\begin{lemma}[Upper bound]\label{lem:fe_upper_bound}
    For any $\beta\ges 0$, if Assumption \eqref{eq:assumption_sec3} holds, then
    \begin{equation}\label{eq:fe_upper_bound}
        \Ffin{\nu}(\beta)
        \les
        \Flim{\mu}(\beta)
        + 
        C_\beta\left(
            \sqrt{W_1(\nu,\mu)}
            +
            \abs{\nu_{\max} - \lM}
            +
            \frac{1}{\sqrt{N}}
        \right),
    \end{equation}
    where $C_\beta=128[1+\beta\Lambda]e^{8(1+\beta\Lambda)}$.
\end{lemma}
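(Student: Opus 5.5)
The plan is to use the Gaussian representation of the uniform measure on the sphere and an exponential Chebyshev (Laplace) bound. Write $\sigma = g/|g|$ with $g$ standard Gaussian in $\mathbb{R}^N$. For any $x > \nu_{\max}$, the identity
\begin{equation*}
  \E_g\left[e^{\frac12\sum_i (x-\lambda_i)^{-1}\cdots}\right]
\end{equation*}
is not needed in that form. Instead we use the elementary bound: on the sphere $\sum_i \sigma_i^2 = 1$, so
\begin{equation*}
  N\beta\sum_i\lambda_i\sigma_i^2 = N\beta x - N\beta\sum_i (x-\lambda_i)\sigma_i^2 .
\end{equation*}
We compare the spherical integral with a Gaussian integral restricted to a thin shell $\{|g|^2\in[N(1-\delta),N(1+\delta)]\}$. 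Concretely, for $x>\nu_{\max}$ set $w_i = 2\beta(x-\lambda_i)>0$. Using the polar decomposition of Gaussian measure with radius variable $r^2 = |g|^2/N$, the spherical expectation is bounded by the Gaussian expectation
\begin{equation*}
  \E\left[\exp\left(-\tfrac N2\sum_i w_i \sigma_i^2\right)\right]
  \lesssim e^{N\cdot\text{(shell correction)}}\prod_i (1+ w_i/N\cdot N)^{-1/2}\cdots
\end{equation*}
After the shell correction, this gives
\begin{equation*}
  \widehat F_\nu(\beta)\le \beta x - \tfrac12\int\log(2\beta(x-y))\,\nu(dy) - \tfrac12\log(\cdot) + \text{error},
\end{equation*}
that is, $\widehat F_\nu(\beta)\le -\bigl(L_\nu(x)-\beta x\bigr) - \tfrac12\log(2\beta)-\tfrac12 + O(\cdot)$ for every $x>\nu_{\max}$. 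This is a finite-$N$ Legendre-type upper bound with $L_\nu$ in place of $L_\mu$. The $\tfrac12$ and $\log(2\beta)$ terms come from optimizing the radial Gaussian factor $e^{-Nr^2/2}r^{N}$, and the error is $O(\log N/N)\le O(N^{-1/2})$ from Stirling. I would derive this cleanly by introducing a Lagrange parameter $\theta$ and writing $1=\frac{N}{2\theta}\int_0^\infty \ldots$, or more simply by Chebyshev: multiply by $e^{\theta(N - |g|^2)}$ and integrate out the radius exactly via Gamma functions.

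Next, choose $x$ close to the maximizer $x^*(\beta)$ of $L_\mu(\cdot)-\beta x$ (Lemma \ref{lem:xstar_legendre}), but shifted to lie strictly above $\nu_{\max}$. Take $x = x^*(\beta)\vee\lM + \eta$ with $\eta = \sqrt{W_1(\nu,\mu)} + |\nu_{\max}-\lM| + N^{-1/2}$, which guarantees $x - y \ge \eta$ for all $y$ in the support of $\nu$ or $\mu$. Then $\widehat F_\nu(\beta)\le F_\mu(\beta) + \beta\eta + |L_\nu(x)-L_\mu(x)| + \bigl(L_\mu(x^*)-L_\mu(x)\bigr)$. The last term is nonpositive-plus-$O(\eta)$ by monotonicity of $L_\mu$. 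The main step is bounding $|L_\nu(x)-L_\mu(x)|$. The function $y\mapsto \tfrac12\log(x-y)$ is $\tfrac1{2\eta}$-Lipschitz on $[-\Lambda, x-\eta]$, so Kantorovich duality gives $|L_\nu(x)-L_\mu(x)|\le W_1(\nu,\mu)/(2\eta)\le \tfrac12\sqrt{W_1}$. This is exactly why $\sqrt{W_1}$ appears in the statement.

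The main obstacle is making the first step quantitative with constants of the form $C_\beta$. The radial factor must be handled uniformly in $x\in[\lM,\lM+O(1+\beta\Lambda)]$ and in small $\beta$, since $\log(2\beta)$ blows up. For $\beta$ small relative to $\eta$, I would instead use the trivial bound $\widehat F_\nu\le\beta\nu_{\max}\le \beta\lM + \beta|\nu_{\max}-\lM|$ together with Corollary \ref{cor:free_energy_trivial_bounds}. This avoids the degeneracy, and the exponential constant $e^{8(1+\beta\Lambda)}$ should absorb the Gaussian-shell tail estimates.
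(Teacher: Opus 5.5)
Your argument is correct and has the same architecture as the paper's proof. First, a Gaussian/Lagrange-multiplier bound $\widehat F_\nu(\beta)\le\inf_{x>\nu_{\max}}\{\beta x-L_\nu(x)\}-\frac12\log(2\beta)-\frac12+\text{error}$ (the paper's Lemma~\ref{lem:UB_putting_together_2}). Then a Kantorovich--Rubinstein comparison of $L_\nu$ and $L_\mu$ at distance $\gtrsim\sqrt{W_1}$ above both supports, with the cost of moving off the optimizer controlled by monotonicity of $L_\mu$; your evaluation at $x^*(\beta)+\eta$ is the pointwise form of Lemmas~\ref{lem:limit_range_gamma} and~\ref{lem:inf_phi_N<=inf_phi}.

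The real difference is the first step, which you flag as the main obstacle but which is simpler than the paper makes it. Since $|g|$ and $g/|g|$ are independent, you can condition on a single shell of width $O(N^{-1/2})$ (probability $\ge\frac12$). This replaces the paper's restriction to $|X|^2\in[NA^{-1},NA]$ followed by a sum over geometric shells. Alternatively, you can integrate the radius out exactly. For $Q(\sigma)=\sum_i(x-\lambda_i)\sigma_i^2$ with $x\ge\nu_{\max}$ and $c>2\beta$, this gives the identity $\E_\sigma[(1+cQ)^{-N/2}]=\prod_i(1+c(x-\lambda_i))^{-1/2}$. Combined with the pointwise bound $e^{-N\beta Q}\le e^{N(\frac12\log\frac{c}{2\beta}-\frac12+\frac{\beta}{c})}(1+cQ)^{-N/2}$, it yields $\widehat F_\nu(\beta)\le\beta x'-L_\nu(x')-\frac12\log(2\beta)-\frac12$ with $x'=x+1/c$. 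Choosing $x\in[\nu_{\max},x')$ with $c=(x'-x)^{-1}>2\beta$, this holds for every $x'>\nu_{\max}$, with no error term at all.

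Consequently the Stirling error and the small-$\beta$ fallback are unnecessary: the $\log(2\beta)$ cancels against the same term in $F_\mu$, and $\beta x$ stays bounded. You obtain
\[\widehat F_\nu(\beta)\le F_\mu(\beta)+(\beta+\tfrac12)\sqrt{W_1}+\beta|\nu_{\max}-\lM|+\beta N^{-1/2},\]
which implies the lemma whenever $\beta+\frac12\le C_\beta$. That holds for example when $\Lambda\ge1$, as in Theorem~\ref{thm:sec2_main}, and the paper's own constant bookkeeping needs the same. The factor $e^{8(1+\beta\Lambda)}$ is an artifact of the paper's multi-shell decomposition.

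One small slip: if $\nu_{\max}>\lM$, your choice of $x$ only gives $x-y\ge\sqrt{W_1}+N^{-1/2}$, not $x-y\ge\eta$, for $y$ near $\nu_{\max}$. That is still exactly what the bound $|L_\nu(x)-L_\mu(x)|\le\frac12\sqrt{W_1}$ requires.
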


Let $X=(X_1,\ldots,X_N)$ be a vector of i.i.d. standard Gaussian random variables. Then $X/\norm{X}$ is uniformly distributed on $S^{N-1}$, and we can rewrite \eqref{eq:F_nu_def} as
\begin{equation}\label{eq:replaceUniformByGaussian}
    \Ffin{\nu}(\beta)    
    =
    \frac{1}{N}
    \log
    \E\left[
        \exp\left(
            \beta H_N\left( \frac{X}{|X|}\right)
        \right)
    \right].
\end{equation}
The first step towards proving Lemma \ref{lem:fe_upper_bound} consists in restricting the range of possible values for $\norm{X}^2$. To this end, we introduce some large $A$ and consider the expression in \eqref{eq:replaceUniformByGaussian} restricted to $G_N$ and $G_N^c$, where $G_N=\{\norm{X}^2\in [NA^{-1},NA]\}$. Our first technical lemma checks that for $A$ chosen large enough only the set $G_N$ carries significant weight.

\begin{lemma}\label{lem:control_error_limit_A}
    Let $\beta \ges 0$ and $A = e^{4(1+\beta \Lambda)}$. Then for all $N\ges 1$
    \begin{equation}\label{eq:control_error_limit_A}
        \E\left[
            \exp\left(
                \beta
                N\sum_{i=1}^N
                \lambda_i\frac{X_i^2}{\norm{X}^2}
            \right)
            \indf_{G_N^c}
        \right]
        \les
        8e^{-N}
        \E\left[
            \exp\left(
                \beta N
                \sum_{i=1}^N
                \lambda_i\frac{X_i^2}{\norm{X}^2}
            \right)
            \indf_{G_N}
        \right].
    \end{equation}
\end{lemma}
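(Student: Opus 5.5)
Since $\lambda_i\in[-\Lambda,\Lambda]$ and $X/\norm{X}\in S^{N-1}$, the exponent $\beta N\sum_i\lambda_i X_i^2/\norm{X}^2$ lies in $[-\beta\Lambda N,\beta\Lambda N]$ deterministically. The left side of \eqref{eq:control_error_limit_A} is therefore at most $e^{\beta\Lambda N}\P(G_N^c)$, and the right side is at least $8e^{-N}e^{-\beta\Lambda N}\P(G_N)$. The lemma thus reduces to the purely Gaussian statement
\begin{equation*}
    \P(G_N^c)\les 8 e^{-(1+2\beta\Lambda)N}\,\P(G_N).
\end{equation*}
We do not need the independence of $X/\norm{X}$ and $\norm{X}$ here, although it is what makes the crude bound natural. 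The whole point is to choose $A=e^{4(1+\beta\Lambda)}$ so that both tails of $\norm{X}^2$ decay at rate at least $(2+2\beta\Lambda)N$.

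For the upper tail we will use the exponential Chebyshev bound with parameter $1/4$:
\begin{equation*}
    \P(\norm{X}^2> NA)\les e^{-NA/4}\,\E[e^{X_1^2/4}]^N=e^{-NA/4}2^{N/2}.
\end{equation*}
Since $A\ge e^4$, we have $A/4-\tfrac12\log 2\ge A/8\ge 2+2\beta\Lambda$, using $e^{4t}\ge 16t$ with $t=1+\beta\Lambda$.

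For the lower tail we will use, for $s=1$,
\begin{equation*}
    \P(\norm{X}^2<N/A)\les e^{N/A}\,\E[e^{-X_1^2}]^N=e^{N/A}3^{-N/2}.
\end{equation*}
This gives decay at rate only $\tfrac12\log 3-1/A$, which is not enough, so we take $s$ large instead. With $s=A/2$ we get $\P(\norm{X}^2<N/A)\les e^{sN/A}(1+2s)^{-N/2}=e^{N/2}(1+A)^{-N/2}$. The exponent is $-\tfrac N2(\log(1+A)-1)\le -\tfrac N2(4+4\beta\Lambda-1)$, which is at most $-(1.5+2\beta\Lambda)N$.

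To finish, we need a little more than $1.5$ in the lower-tail rate; it can be fixed by slack in the constants, for instance by using $\log(1+A)>4(1+\beta\Lambda)$ more carefully. Then $\P(G_N^c)\le 2e^{-(1+2\beta\Lambda)N}\cdot e^{-cN}$. Consequently $\P(G_N)\ge 1/2$ for all $N\ge1$ (the tail bounds are at most $2e^{-1}<1/2$ combined appropriately), and the factor $8$ absorbs both the $2$ from two tails and the $1/\P(G_N)\le 2$.

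The main obstacle is purely bookkeeping. We must make sure the lower-tail rate obtained from the Laplace transform of the $\chi^2$ genuinely beats $1+2\beta\Lambda$ for all $\beta\Lambda\ge0$ and all $N\ge1$, not only asymptotically. If $s=A/2$ is too tight, we will optimize $s=(A-1)/2$, which gives the Cramér rate $\tfrac12(\log A-1+1/A)\ge \tfrac12(3+4\beta\Lambda)$. If even that falls short of $1+2\beta\Lambda$ at the boundary, we will use the small-ball bound $\P(\norm{X}^2<r)\le (er/N)^{N/2}$ directly. These bounds hold nonasymptotically, so no condition on $N$ is needed.
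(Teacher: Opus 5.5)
Your proof is correct and follows the same route as the paper: you bound the exponent deterministically by $\pm\beta\Lambda N$, reduce to $\P(G_N^c)\le 8e^{-(1+2\beta\Lambda)N}\P(G_N)$, and control the two $\chi^2$ tails by Chernoff bounds. The paper quotes the optimal rates $I(A)\ge I(A^{-1})\ge\frac12(\log A-1)=\frac32+2\beta\Lambda$, whereas you choose the parameters $1/4$ and $A/2$ by hand.

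The only blemishes are in the bookkeeping. First, your lower-tail rate $\frac32+2\beta\Lambda$ already beats the required $1+2\beta\Lambda$, so the worry about needing ``more than $1.5$'' and the fallback options are unnecessary. The announced target $2+2\beta\Lambda$ for both tails is neither needed nor attainable for the lower tail, whose exact rate is $\frac32+2\beta\Lambda+\frac1{2A}$. Second, $2e^{-1}<1/2$ is false. Use $e^{-2}+e^{-3/2}<1/2$ instead, or settle for $\P(G_N)\ge 1/4$ as the paper does, since $8=2\cdot 4$.
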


\begin{proof}    
    Standard tail bounds for the chi-squared distribution (e. g. \cite[Theorems 1 and 2]{ghosh2021exponential}) yield
    \begin{equation*}
        \P(G_N^c) \les e^{-N I(A)} + e^{-N I(A^{-1})},
    \end{equation*}
    where
    \begin{equation}
        I(x) = \frac{1}{2}(x - 1 - \log x).
    \end{equation}
    Note that $I(x) \ges I(x^{-1}) \ges \frac{1}{2}(\log x - 1)$ for $x\ges 1$, and so $I(A)\ges I(A^{-1}) \ges 1 + 2\beta\Lambda$. Therefore, $\P(G_N^c) \les 2 e^{-N (1 + 2\beta\Lambda)}\les 2 e^{-1} \les \frac{3}{4}$, which implies that $\P(G_N) \ges \frac{1}{4}$ and so
    \begin{equation*}
        \P(G_N^c)
        \les
        8 e^{ -N (1 + 2 \beta \Lambda)}
        \P(G_N).
    \end{equation*}
    Using this together with
    \begin{equation*}
        \abs*{\beta N \sum_{i=1}^N \lambda_i\frac{X_i^2}{\norm{X}^2}} \les \beta N \Lambda\quad\text{a.s.},
    \end{equation*}
    we obtain \eqref{eq:control_error_limit_A}.
\end{proof}

In the following lemma a discretization of $\norm{X}^2$ is introduced.

\begin{lemma}\label{lem:discretisation}
    Let $\beta\ges 0$, $0<\e<1$, $A>1$, $G_N=\{|X|^2 \in [NA^{-1},NA]\}$ and suppose Assumption \eqref{eq:assumption_sec3} holds. Also, set
    \begin{gather*}
        R_k^2=(1+\e)^k,\quad k\in\Z,\\
        G_{N,k} = \{\norm{X}^2 \in [NR_k^2, NR_{k+1}^2)\},\quad k\in\Z,\\
        K_{\e,A}
        =
        \left\{
            k\in\Z
            :
            |k|\les 1 + \frac{\log A}{\log(1+\e)}
        \right\}.
    \end{gather*}
    Then for $N\ges 1$
    \begin{equation}\label{eq:discretisation}
            \E\left[
                    \exp\left(
                        \beta
                        N\sum_{i=1}^N
                        \lambda_i\frac{X_i^2}{\norm{X}^2}
                    \right)
                    \indf_{G_N}
            \right]
            \les            
            e^{ \e \beta N \Lambda }
            \sum_{k\in K_{\e,A}}
            \E\left[
                \exp\left(
                    \beta
                    \sum_{i=1}^N
                    \lambda_i\frac{X_i^2}{R_k^2}
                \right)
                \indf_{G_{N,k}}
            \right].
    \end{equation}
\end{lemma}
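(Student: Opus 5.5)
The plan is to split $G_N$ into the shells $G_{N,k}$, check that the shells indexed by $K_{\e,A}$ cover $G_N$, and on each shell replace $\norm{X}^2$ by $NR_k^2$ at a cost of $e^{\e\beta N\Lambda}$.

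\textbf{Step 1 (covering).} The sets $G_{N,k}$, $k\in\Z$, partition $\{\norm{X}^2>0\}$, which has probability one. If $\norm{X}^2\in[NA^{-1},NA]$ and $X\in G_{N,k}$, then
\begin{equation*}
(1+\e)^k\le A
\qquad\text{and}\qquad
(1+\e)^{k+1}>A^{-1}.
\end{equation*}
This gives
\begin{equation*}
-1-\frac{\log A}{\log(1+\e)}<k\le \frac{\log A}{\log(1+\e)},
\end{equation*}
so $k\in K_{\e,A}$. Hence $\indf_{G_N}\le\sum_{k\in K_{\e,A}}\indf_{G_{N,k}}$, and since the integrand is nonnegative, the left-hand side of \eqref{eq:discretisation} is bounded by the sum over $k\in K_{\e,A}$ of the expectations restricted to $G_{N,k}$.

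\textbf{Step 2 (pointwise comparison on a shell).} Fix $k$ and work on $G_{N,k}$. Write $s=\norm{X}^2$ and $T=\sum_i\lambda_iX_i^2$. The goal is
\begin{equation*}
\beta N\frac{T}{s}\le \beta\frac{T}{R_k^2}+\e\beta N\Lambda .
\end{equation*}
Since $\beta\ge0$, it suffices to show
\begin{equation*}
\frac{NT}{s}-\frac{T}{R_k^2}=T\,\frac{NR_k^2-s}{sR_k^2}\le \e N\Lambda .
\end{equation*}

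On $G_{N,k}$ we have $NR_k^2\le s<NR_k^2(1+\e)$, so
\begin{equation*}
|s-NR_k^2|\le \e NR_k^2 .
\end{equation*}
Assumption \eqref{eq:assumption_sec3} gives $|T|\le\Lambda s$. Combining,
\begin{equation*}
\left|T\,\frac{NR_k^2-s}{sR_k^2}\right|\le \Lambda\, \frac{\e NR_k^2}{R_k^2}=\e N\Lambda .
\end{equation*}
In fact, since $s\ge NR_k^2$, the factor $NR_k^2-s$ is nonpositive; the absolute-value bound is enough regardless of the sign of $T$.

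\textbf{Step 3 (conclusion).} Exponentiate the pointwise inequality of Step 2, multiply by $\indf_{G_{N,k}}$, take expectations, and sum over $k\in K_{\e,A}$ using Step 1. This yields \eqref{eq:discretisation}.

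The only point needing care is the sign and absolute-value bookkeeping in Step 2. Because $T$ may be negative, the argument uses $|T|\le\Lambda s$ rather than any one-sided bound, which costs nothing. All other steps are direct.
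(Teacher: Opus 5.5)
Your proof is correct and follows the paper's own argument. You cover $G_N$ by the shells $G_{N,k}$ with $k\in K_{\e,A}$, use $|\norm{X}^2-NR_k^2|\le \e NR_k^2$ on each shell together with $|\lambda_i|\le\Lambda$ to compare the exponents, and then sum over $k$; you also verify explicitly the index range of the covering, which the paper only asserts.
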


\begin{proof}
    We note that $G_N\subset \bigcup_{k\in K_{\e,A}} G_{N,k}$. On each $G_{N,k}$, we have
    \begin{equation}\label{eq:bound_on_Gnk}
        \abs{\norm{X}^2 - NR_k^2}\les NR_k^2\e,
    \end{equation}
    and so
    \begin{equation}
    \label{eq:bound_on_BNk}
        \left|
            N
            \sum_{i=1}^N
            \lambda_i
            \frac{X_i^2}{|X|^2}
            -
            \sum_{i=1}^N
            \lambda_i
            \frac{X_i^2}{R_k^2}
        \right|
        \les
        \e N \Lambda.
    \end{equation}    
    Summing the corresponding bounds, we obtain the required inequality.
\end{proof}

In the next lemma, an explicit upper bound for the expectation is given, and the proof is based on the introduction of a Lagrange multiplier.

\begin{lemma}\label{lem:discreteFE_UB}
    Let $\beta \ges 0$, $\e\in (0,1)$, and let $G_{N,k}$, $R_k$ be as in \Cref{lem:discretisation}. Then for any $N\ges 1$ and $k\in \Z$
    \begin{equation}\label{eq:lem_discreteFE}
        \mathbb{E}\left[
            \exp\left(
                \beta \sum_{i=1}^N \lambda_i\frac{X_i^2}{R_k^2}
            \right)
            \indf_{G_{N,k}}
        \right]
        \les
        \inf_{\eta} \left[e^{ N \e \abs{\eta} R_k^2 - N \eta R_k^2}
        \prod_{i=1}^N
        \frac{1}{\sqrt{1-\frac{2\beta\lambda_i}{R_k^2}-2\eta}}\right],
    \end{equation}
    where the infimum is taken over all $\eta<\tfrac{1}{2}-\tfrac{\beta\lambda_N}{R_k^2}$.
\end{lemma}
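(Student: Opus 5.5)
The idea is an exponential Chebyshev (Lagrange multiplier) bound on the constraint $\norm{X}^2\approx NR_k^2$, followed by an exact Gaussian integral. Fix $k$ and any admissible $\eta<\tfrac12-\tfrac{\beta\lambda_N}{R_k^2}$.

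On the event $G_{N,k}$, \eqref{eq:bound_on_Gnk} gives $\abs{\norm{X}^2-NR_k^2}\les \e NR_k^2$. Hence, regardless of the sign of $\eta$,
\begin{equation*}
    \eta\bigl(\norm{X}^2-NR_k^2\bigr)\ges -\abs{\eta}\,\e NR_k^2 .
\end{equation*}
It follows that on $G_{N,k}$
\begin{equation*}
    \indf_{G_{N,k}}
    \les
    \exp\Bigl(\eta\norm{X}^2-N\eta R_k^2+N\e\abs{\eta}R_k^2\Bigr).
\end{equation*}
This is the only place where the indicator is used.

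Multiplying by the integrand and dropping the indicator on the right (everything is nonnegative), we get
\begin{equation*}
    \E\Bigl[e^{\beta\sum_i\lambda_iX_i^2/R_k^2}\indf_{G_{N,k}}\Bigr]
    \les
    e^{N\e\abs{\eta}R_k^2-N\eta R_k^2}\,
    \E\Bigl[\exp\Bigl(\sum_{i=1}^N\Bigl(\tfrac{\beta\lambda_i}{R_k^2}+\eta\Bigr)X_i^2\Bigr)\Bigr].
\end{equation*}
By independence, the last expectation factorizes into one-dimensional Gaussian integrals. We have $\E[e^{aX_1^2}]=(1-2a)^{-1/2}$ for $a<\tfrac12$. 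Here $a_i=\tfrac{\beta\lambda_i}{R_k^2}+\eta$, and since $\beta\ges0$ and $\lambda_i\les\lambda_N$,
\begin{equation*}
    a_i\les \tfrac{\beta\lambda_N}{R_k^2}+\eta<\tfrac12 \qquad\text{for all } i,
\end{equation*}
by the restriction on $\eta$. This yields exactly the product $\prod_i\bigl(1-\tfrac{2\beta\lambda_i}{R_k^2}-2\eta\bigr)^{-1/2}$.

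Since the bound holds for every admissible $\eta$, taking the infimum gives \eqref{eq:lem_discreteFE}. There is no real obstacle. The only points needing care are handling both signs of $\eta$ through $\abs{\eta}$ in the first step, and checking that the admissible range of $\eta$ guarantees finiteness of every Gaussian factor. The infimum is allowed to be attained nowhere; it is a plain infimum of valid upper bounds.
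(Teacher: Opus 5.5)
Your proposal is correct and matches the paper's proof essentially step for step. Both arguments bound the indicator by $\exp\bigl(\eta(\norm{X}^2-NR_k^2)+N\e\abs{\eta}R_k^2\bigr)$ via \eqref{eq:bound_on_Gnk}, factorize the resulting expectation into one-dimensional Gaussian moment generating functions with exponents $\tfrac{\beta\lambda_i}{R_k^2}+\eta<\tfrac12$, and then take the infimum over admissible $\eta$.
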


\begin{proof}
    Using \eqref{eq:bound_on_Gnk}, we get
    \begin{equation}
        \indf_{G_{N,k}} \les \exp\left( \eta (\norm{X}^2-NR_k^2) + N \e |\eta| R_k^2 \right),
    \end{equation}
    and therefore
    \begin{equation}\label{eq:discrete_FE_UB_step1}
        \text{l.h.s. of }\eqref{eq:lem_discreteFE}
        \les
        \exp\left(
            N \e \abs{\eta} R_k^2
            -
            N\eta R_k^2
        \right)
        \E\left[
            \exp\left(\sum_{i=1}^N X_i^2 \left(\frac{\beta \lambda_i}{R_k^2}+\eta\right)\right)
        \right].
    \end{equation}
    Recall that $X_i$ are i.i.d. standard Gaussians and that for a standard Gaussian random variable $\xi$ we have $\E[\exp(\theta\xi^2)]=(1 - 2\theta)^{-1/2}$ for $\theta<1/2$. Using this with $\theta_i = \beta \lambda_i / R_k^2 + \eta < 1/2$ for each $i$ yields
    \begin{equation*}
        \E\left[\exp\left(\sum_{i=1}^N X_i^2\left(\frac{\beta\lambda_i}{R_k^2}+\eta\right)\right)\right]
        =        
        \prod_{i=1}^N \frac{1}{\sqrt{1-\frac{2\beta\lambda_i}{ R_k^2}-2\eta}}.
    \end{equation*}
    Taking the infimum over all possible values of $\eta$ completes the proof.
\end{proof}

\begin{lemma}\label{lem:UB_putting_together}
    Let $\beta\ges0$, $C=\beta \Lambda$ and suppose Assumption \eqref{eq:assumption_sec3} holds. Then for any $N\ges1$, $A\geq e^{4(1+\beta\Lambda)} $ and $0<\e<1$
    \begin{equation}\label{eq:UB_putting_together}
        Z_N(\beta)
        \les
        9 e^{N C \e}
        \sum_{k\in K_{\e,A}}
        \inf_{\eta < \frac{1}{2} - \frac{\beta \lambda_N}{R_k^2}}
        e^{N \e (1 + \e) |\eta| A - N \eta R_k^2}
        \prod_{i=1}^N
        \frac{1}{\sqrt{1-\frac{2\beta\lambda_i}{R_k^2}-2\eta}}.
    \end{equation}
\end{lemma}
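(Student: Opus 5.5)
This follows by chaining the three preceding lemmas. Here $Z_N(\beta)$ is the partition function of \eqref{eq:replaceUniformByGaussian}, i.e.\ $Z_N(\beta)=\E[\exp(\beta N\sum_i\lambda_i X_i^2/|X|^2)]$. Split this expectation into its parts on $G_N$ and on $G_N^c$.

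\emph{Step 1: discard $G_N^c$.} Lemma~\ref{lem:control_error_limit_A} was stated with $A=e^{4(1+\beta\Lambda)}$. Its proof only uses $I(A)\ge I(A^{-1})\ge \tfrac12(\log A-1)\ge 1+2\beta\Lambda$, which remains true for every $A\ge e^{4(1+\beta\Lambda)}$, so the same bound holds here. Since $8e^{-N}\le 8$, we get
\begin{equation*}
Z_N(\beta)\le (1+8e^{-N})\,\E\Bigl[\exp\Bigl(\beta N\sum_i\lambda_i\tfrac{X_i^2}{|X|^2}\Bigr)\indf_{G_N}\Bigr]\le 9\,\E\Bigl[\exp\Bigl(\beta N\sum_i\lambda_i\tfrac{X_i^2}{|X|^2}\Bigr)\indf_{G_N}\Bigr].
\end{equation*}

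\emph{Step 2: discretize and apply the Lagrange bound.} Lemma~\ref{lem:discretisation} bounds the $G_N$-part by $e^{\e\beta N\Lambda}=e^{NC\e}$ times a sum over $k\in K_{\e,A}$ of the expectations $\E[\exp(\beta\sum_i\lambda_iX_i^2/R_k^2)\indf_{G_{N,k}}]$. Lemma~\ref{lem:discreteFE_UB} bounds each of these by
\begin{equation*}
\inf_{\eta<\frac12-\frac{\beta\lambda_N}{R_k^2}} e^{N\e|\eta|R_k^2-N\eta R_k^2}\prod_{i=1}^N\Bigl(1-\tfrac{2\beta\lambda_i}{R_k^2}-2\eta\Bigr)^{-1/2}.
\end{equation*}

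\emph{Step 3: uniformize the error factor.} It remains to replace $R_k^2$ in the term $N\e|\eta|R_k^2$ by $(1+\e)A$. For $k\in K_{\e,A}$ we have $|k|\le 1+\log A/\log(1+\e)$, so $R_k^2=(1+\e)^k\le (1+\e)A$. Hence, for every admissible $\eta$,
\begin{equation*}
e^{N\e|\eta|R_k^2}\le e^{N\e(1+\e)|\eta|A}.
\end{equation*}
This pointwise bound survives taking the infimum over $\eta$, which gives \eqref{eq:UB_putting_together}.

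The only step needing any care is Step 1, namely checking that Lemma~\ref{lem:control_error_limit_A} extends from the specific $A$ to all $A\ge e^{4(1+\beta\Lambda)}$; this holds because $\tfrac12(\log A-1)$ is increasing in $A$. The remaining steps are direct substitutions.
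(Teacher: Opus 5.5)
Your proposal is correct and follows the paper's proof: Lemma~\ref{lem:control_error_limit_A} with $8e^{-N}\le 8$, then Lemmas~\ref{lem:discretisation} and~\ref{lem:discreteFE_UB}, and finally $R_k^2\le(1+\e)A$ for $k\in K_{\e,A}$. Your check that Lemma~\ref{lem:control_error_limit_A} holds for every $A\ge e^{4(1+\beta\Lambda)}$, not only for the equality case in which it is stated, fills in a point the paper leaves implicit.
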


\begin{proof}
    This follows by combining the above lemmas. First use \Cref{lem:control_error_limit_A} (and the trivial inequality $8e^{-N}\les 8$), then \Cref{lem:discretisation} (whose error gives rise to the constant $C$), and finally \Cref{lem:discreteFE_UB} (recalling that $R_k^2 \les A(1+\e)$ for all $k\in K_{\e,A}$).
\end{proof}

In the next technical lemma, the convergence at exponential scale of the right-hand side of \eqref{eq:lem_discreteFE} is studied.

\begin{lemma}\label{lem:change_of_var}
    Let $\beta>0$, $R>0$ and  $\eta<\tfrac{1}{2}-\tfrac{\beta\lambda_N}{R^2}$. Then for all $N\ges1$
    \begin{equation}\label{eq:log_simplification}
        \frac{1}{N}
        \log\left(
            e^{-N\eta  R^2}
            \prod_{i=1}^N
            \frac{1}{\sqrt{1-\frac{2\beta\lambda_i}{R^2}-2\eta}}
        \right)
        =
        \log R
        -
        \frac{R^2}{2}
        -
        \frac{1}{2}\log(2\beta)
        +
        \vf_{\nu}(\gamma;\beta),
    \end{equation}
    where $\gamma=\tfrac{(1-2\eta)R^2}{2\beta} > \lambda_N$ and 
    \begin{equation}\label{eq:vf_mu_def}
        \vf_{\nu}(\gamma;\beta)
        =
        \beta \gamma
        -        
        \logpotential{\nu}(\gamma),
    \end{equation}
    where $\logpotential{\nu}$ is defined in \eqref{eq:def_log_potential}.
\end{lemma}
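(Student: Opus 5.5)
This identity is a direct algebraic computation, so the plan is to rewrite each factor in terms of $\gamma$ and compare term by term. First substitute $\gamma=\tfrac{(1-2\eta)R^2}{2\beta}$, equivalently $1-2\eta = \tfrac{2\beta\gamma}{R^2}$ and $\eta = \tfrac12-\tfrac{\beta\gamma}{R^2}$. The hypothesis $\eta<\tfrac12-\tfrac{\beta\lambda_N}{R^2}$ is then exactly $\gamma>\lambda_N$. Since $\beta>0$, this guarantees that every factor $1-\tfrac{2\beta\lambda_i}{R^2}-2\eta$ is positive, so all logarithms below are well defined.

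Next, rewrite each factor of the product:
\begin{equation*}
    1-\frac{2\beta\lambda_i}{R^2}-2\eta
    =
    \frac{2\beta}{R^2}(\gamma-\lambda_i).
\end{equation*}
Taking $-\tfrac{1}{2N}\log$ and summing over $i$ gives
\begin{equation*}
    -\frac12\log(2\beta)+\log R-\frac{1}{2N}\sum_{i=1}^N\log(\gamma-\lambda_i).
\end{equation*}
The last term is exactly $-\logpotential{\nu}(\gamma)$, by the definition \eqref{eq:def_log_potential} applied to the atomic measure $\nu$. This is legitimate because $\gamma>\lambda_N=\nu_{\max}$.

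Then handle the exponential prefactor:
\begin{equation*}
    -\frac1N\cdot N\eta R^2
    =
    -\eta R^2
    =
    -\frac{R^2}{2}+\beta\gamma .
\end{equation*}
Adding the two contributions gives
\begin{equation*}
    \log R-\frac{R^2}{2}-\frac12\log(2\beta)+\beta\gamma-\logpotential{\nu}(\gamma),
\end{equation*}
which is \eqref{eq:log_simplification} with $\vf_\nu$ as in \eqref{eq:vf_mu_def}.

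There is no real obstacle. The only points needing care are confirming that the positivity constraint on $\eta$ translates into $\gamma>\lambda_N$, and keeping the factor $\tfrac12$ in $\logpotential{\nu}$ consistent with the square roots in the product.
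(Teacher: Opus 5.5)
Your proposal is correct and follows the paper's proof essentially verbatim: the same substitution $\eta=\tfrac12-\tfrac{\beta\gamma}{R^2}$ rewrites the product as $(R/\sqrt{2\beta})^N\prod_i(\gamma-\lambda_i)^{-1/2}=e^{N(\log R-\frac12\log(2\beta)-\logpotential{\nu}(\gamma))}$, and then $-\eta R^2=-\tfrac{R^2}{2}+\beta\gamma$ is substituted into the exponent. Your extra remark that $\eta<\tfrac12-\tfrac{\beta\lambda_N}{R^2}$ is equivalent to $\gamma>\lambda_N$, which keeps all logarithms well defined, is a welcome detail that the paper leaves implicit.
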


\begin{proof}
    Using the change of variables
    \begin{equation}\label{eq:change_of_var_eta_to_gamma}
        \eta = \frac{1}{2} - \frac{\beta \gamma}{R^2},
    \end{equation}
    we obtain that the product in \eqref{eq:log_simplification} equals
    \begin{equation}
        \left( \frac{R}{\sqrt{2\beta}} \right)^N
        \prod_{i=1}^N \frac{1}{\sqrt{\gamma - \lambda_i}}
        =
        \exp\left(
            N\left(
                \log R
                -
                \frac{1}{2}\log(2\beta)
                -
                \logpotential{\nu}(\gamma)
            \right)
        \right).       
    \end{equation}
    Substituting $-\eta R^2 = -\frac{R^2}{2} + \beta \gamma$ in the exponent in \eqref{eq:log_simplification} completes the proof.
\end{proof}

\begin{lemma}\label{lem:UB_putting_together_2}
    Suppose Assumption \eqref{eq:assumption_sec3} holds. Then, for any $\beta>0$, $A\geq e^{4(1+\beta\Lambda)}$ and $0<\e<1$, $N\ges1$
    \begin{equation}\label{eq:UB_putting_together_2}
        \Ffin{\nu}(\beta)
        \les
        -\frac{1}{2}
        -
        \frac{1}{2}\log(2\beta)
        +
        \inf_{\gamma > \lambda_N}
        \left[
            \vf_{\nu}(\gamma;\beta) + c_1 \abs{\gamma} \e
        \right]
        +
        c_2 \e
        +
        (N \e)^{-1} c_3,
    \end{equation}
    where $c_1 = 4 A^2 \beta$, $c_2 = \beta\Lambda + A$ and $c_3 = 5 + 4 \log A$.
\end{lemma}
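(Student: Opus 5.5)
The plan is to start from \Cref{lem:UB_putting_together}, take $\frac1N\log$ of both sides, and bound the finite sum over $k\in K_{\e,A}$ by its number of terms times its largest term. The number of terms is $|K_{\e,A}|\le 3+2\log A/\log(1+\e)\le 3+4\log A/\e$ for $\e<1$, using $\log(1+\e)\ge \e/2$. Hence
\begin{equation*}
\frac1N\log\bigl(9|K_{\e,A}|\bigr)\le \frac{\log 9+\log(3+4\log A)+\log(1/\e)}{N},
\end{equation*}
and since $\log(1/\e)\le 1/\e$ and $\log x\le x$, this is at most $(N\e)^{-1}(5+4\log A)$. This accounts for the $c_3$ term, and the prefactor $e^{NC\e}$ gives $\beta\Lambda\e$, which is part of $c_2\e$.

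For a fixed $k$, I would bound the infimum over $\eta$ by choosing $\eta$ through the change of variables \eqref{eq:change_of_var_eta_to_gamma}, $\eta=\frac12-\beta\gamma/R_k^2$, with $\gamma>\lambda_N$ arbitrary. This choice is admissible exactly when $\gamma>\lambda_N$. \Cref{lem:change_of_var} then turns the main part of the exponent into
\begin{equation*}
\log R_k-\frac{R_k^2}{2}-\frac12\log(2\beta)+\vf_\nu(\gamma;\beta).
\end{equation*}
The key observation is that $\log R-R^2/2\le -\frac12$ for every $R>0$, with equality at $R=1$. This removes the dependence on $k$ from the main term, giving $-\frac12-\frac12\log(2\beta)+\vf_\nu(\gamma;\beta)$.

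The remaining error is $\e(1+\e)A|\eta|$, which I must bound by $c_1|\gamma|\e+A\e$ (up to adjusting constants). Using $|\eta|\le \frac12+\beta|\gamma|/R_k^2$, $R_k^{-2}\le (1+\e)A$ on $K_{\e,A}$ (from the definition, $R_k^2\ge (1+\e)^{-1}A^{-1}$), and $1+\e\le 2$, I get
\begin{equation*}
\e(1+\e)A|\eta|\le \e A+\e\cdot 2A\cdot\beta|\gamma|\cdot 2A=A\e+4A^2\beta|\gamma|\e.
\end{equation*}
This matches $c_1=4A^2\beta$ and the $A$ in $c_2$.

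Since the resulting bound is uniform in $k$ and holds for every $\gamma>\lambda_N$, taking the infimum over $\gamma$ yields \eqref{eq:UB_putting_together_2}. The main obstacle I expect is bookkeeping, namely ensuring that the range of $R_k$ on $K_{\e,A}$ gives $R_k^{-2}\le 2A$ with the stated constants. If the constants are off by small factors, I would adjust the choice of $|k|$ bound accordingly; no conceptual difficulty remains once $\log R-R^2/2\le-1/2$ is used.
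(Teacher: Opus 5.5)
Your proposal is correct and follows the paper's proof essentially step for step. You use the substitution $\eta=\tfrac12-\beta\gamma/R_k^2$ via \Cref{lem:change_of_var}, the bound $\log R-R^2/2\le-\tfrac12$ to remove the $k$-dependence, $R_k^{-2}\le(1+\e)A\le 2A$ to produce $c_1=4A^2\beta$ and the $A\e$ in $c_2$, and the count $|K_{\e,A}|\le 3+4\log A/\e$. The one slip is minor: using $\log x\le x$ and $\log(1/\e)\le 1/\e$ literally only gives a constant of about $6.2+4\log A$ instead of $c_3=5+4\log A$. Using $\log(3+y)\le 2+y$ instead, so that $\frac1N\log|K_{\e,A}|\le (2+4\log A)/(N\e)$, together with $\log 9\le 3$ gives exactly the stated constant, as in the paper.
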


\begin{proof}
    Using the change of variables \eqref{eq:change_of_var_eta_to_gamma} and \Cref{lem:change_of_var} shows that the sum \eqref{eq:UB_putting_together} is at most
    \begin{equation}\label{eq:sum_UB_1}
        \sum_{k\in K_{\e,A}}
        \inf_{\gamma>\lambda_N}
        \exp\left(
            N\e(1+\e)A\left(\frac{1}{2}
            +
            \frac{\beta\abs{\gamma}}{R_k^2}
            \right)
            +
            N\left(
                \log R_k - \frac{R_k^2}{2}            
                -
                \frac{1}{2}\log(2\beta)
                +
                \vf_{\nu}(\gamma;\beta)
            \right)
        \right).
    \end{equation}
    Note that $\log R - R^2/2 \les -1/2$ for all $R>0$ (with equality iff $R=1$). Furthermore,
    \begin{equation*}
    R_k^{-2} = R_{-k}^2 \les (1 + \e) A \les 2 A
    \end{equation*}
    for $k \in K_{\e,A}$ (see Lemma \ref{lem:discretisation}). Thus, the sum \eqref{eq:sum_UB_1} is not greater than
    \begin{equation}\label{eq:sum_UB_2}
        |K_{\e,A}|
        \exp\left(            
            N\left(
                -\frac{1}{2}
                -
                \frac{1}{2}\log(2\beta)
                +
                \inf_{\gamma > \lambda_N}
                \left[
                    \vf_{\nu}(\gamma;\beta)
                    +
                    4 A^2 \beta |\gamma| \e
                \right]
                +
                A \e
            \right)
        \right).        
    \end{equation}
    Next, we note that
    \begin{equation}
        \frac{1}{2}\e\les\frac{\e}{1+\e}\les\log(1+\e)\les\e,
    \end{equation}
    and so
    \begin{equation}
        \abs{K_{\e,A}} \les 2\left(1 + \frac{\log A}{\log(1+\e)}\right) + 1\les 3 + \frac{4\log A}{\e},
    \end{equation}
    and
    \begin{equation}
        \frac{1}{N}\log\abs{K_{\e,A}}\les\frac{1}{N} \left(2 + \frac{4\log A}{\e}\right)\les\frac{2 + 4\log A}{N\e}.
    \end{equation}
    Also,
    \begin{equation}
        \frac{1}{N}\log 9\les\frac{3}{N\e}.
    \end{equation}
    First bounding the sum in \eqref{eq:UB_putting_together} by \eqref{eq:sum_UB_2} and then applying $\frac{1}{N}\log$ to both sides of the resulting inequality yields \eqref{eq:UB_putting_together_2}.
\end{proof}

\begin{lemma}\label{lem:phi_min_UB}
    For any, not necessarily discrete, probability measure $\nu$ with compact support $[\nu_{\min},\nu_{\max}]\subset (-\infty,a]$, it holds
    \begin{equation}
        \vf_\nu'(
        a
        +
        (2\beta)^{-1} ; \beta)
        \ges
        0.
    \end{equation}
\end{lemma}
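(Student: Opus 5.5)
We differentiate $\vf_\nu(\gamma;\beta)=\beta\gamma-\logpotential{\nu}(\gamma)$ in $\gamma$ on $(\nu_{\max},\infty)$ and show the derivative is nonnegative at the point $\gamma_0:=a+(2\beta)^{-1}$. Since $a\ges\nu_{\max}$ and $\beta>0$, we have $\gamma_0>\nu_{\max}$, so $\gamma_0$ lies in the open region where $\logpotential{\nu}$ is smooth.

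First we justify the formula for the derivative. For $\gamma>\nu_{\max}$ the integrand $y\mapsto\frac12\log(\gamma-y)$ is differentiable in $\gamma$, with derivative $\frac{1}{2(\gamma-y)}$. On a neighborhood of $\gamma_0$ this derivative is bounded uniformly for $y\in\supp(\nu)$, by $\frac{1}{2(\gamma_0-\delta-\nu_{\max})}$ for small $\delta>0$. Dominated convergence therefore gives
\begin{equation*}
    \vf_\nu'(\gamma;\beta)=\beta-\frac12\int\frac{\nu(dy)}{\gamma-y}=\beta-\frac12 S_\nu(\gamma),
\end{equation*}
which is the same computation as in the proof of Lemma \ref{lem:xstar_legendre}.

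Next we bound the Stieltjes transform. For every $y\in\supp(\nu)\subset(-\infty,a]$ we have $\gamma_0-y\ges\gamma_0-a=(2\beta)^{-1}>0$, hence $\frac{1}{\gamma_0-y}\les 2\beta$. Integrating against the probability measure $\nu$ gives $S_\nu(\gamma_0)\les 2\beta$. Therefore
\begin{equation*}
    \vf_\nu'(\gamma_0;\beta)\ges\beta-\frac12\cdot 2\beta=0,
\end{equation*}
which is the claim.

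There is no real obstacle here. The only point needing care is that $\gamma_0$ is strictly to the right of the support, so that $\vf_\nu$ is differentiable there; this is guaranteed because $(2\beta)^{-1}>0$.
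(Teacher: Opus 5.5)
Your proposal is correct and follows essentially the same route as the paper: write $\vf_\nu'(\gamma;\beta)=\beta-\frac12 S_\nu(\gamma)$ and use $\gamma_0-y\ges(2\beta)^{-1}$ on the support to get $S_\nu(\gamma_0)\les 2\beta$. Your dominated-convergence justification of the differentiation under the integral is a detail the paper leaves implicit.
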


\begin{proof}
    Remark that $\vf_{\nu}'=\beta-L_{\nu}'$. Moreover, $2L_{\nu}'$ is the derivative of the log-potential, that is, the integral of $x\mapsto (t-x)^{-1}$ with respect to $\nu$. The evaluation at $t = a + (2\beta)^{-1}$ ensures that for all $x\les\nu_{\max}$ we have $(a + (2\beta)^{-1}-x)^{-1}\les 2\beta$, which implies the result.
\end{proof}

\begin{lemma}\label{lem:sum_convergence_fixedR}
    Let $\delta>0$. Then for any $N\ges 1$
    \begin{equation}\label{eq:sum_convergence_fixedR}
        \abs*{
            \vf_\nu^{(k)}(\gamma;\beta)
            -
            \vf_\mu^{(k)}(\gamma;\beta)
        }
        \les
        \frac{1}{2\delta^{k+1}}
        W_1(\nu,\mu)
        \quad
        \text{if}
        \quad
        k=0,1,
        \quad
        \gamma
        \ges \max(\lambda_N,\lM)
        +
        \delta.
    \end{equation}
\end{lemma}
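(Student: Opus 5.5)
The plan is to use Kantorovich--Rubinstein duality for $W_1$. Recall $\vf_\nu(\gamma;\beta)=\beta\gamma-\logpotential{\nu}(\gamma)$. The linear term $\beta\gamma$ is the same for $\nu$ and $\mu$, so it cancels in the difference, and its derivative $\beta$ cancels in the $k=1$ case. Differentiation under the integral is legitimate because $\gamma$ is at distance at least $\delta$ from both supports. We therefore need to control
\begin{equation*}
    \vf_\nu^{(k)}(\gamma;\beta)-\vf_\mu^{(k)}(\gamma;\beta)
    =
    -\frac12\int g_k(y)\,(\nu-\mu)(dy),
\end{equation*}
where $g_0(y)=\log(\gamma-y)$ and $g_1(y)=(\gamma-y)^{-1}$. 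We have used that $\logpotential{\nu}'(\gamma)=\tfrac12\int(\gamma-y)^{-1}\nu(dy)$, as in the proof of Lemma~\ref{lem:phi_min_UB}.

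The one point needing care is that $g_k$ is not globally Lipschitz on $\R$, while duality requires a Lipschitz test function defined everywhere. Both $\nu$ and $\mu$ are supported in $(-\infty,\max(\lambda_N,\lM)]\subset(-\infty,\gamma-\delta]$. On this half-line, $|g_0'(y)|=(\gamma-y)^{-1}\le\delta^{-1}$ and $|g_1'(y)|=(\gamma-y)^{-2}\le\delta^{-2}$. We therefore replace $g_k$ by the truncations
\begin{equation*}
    \tilde g_0(y)=\log\bigl(\max(\gamma-y,\delta)\bigr),
    \qquad
    \tilde g_1(y)=\bigl(\max(\gamma-y,\delta)\bigr)^{-1}.
\end{equation*}
These agree with $g_k$ on $\supp(\nu)\cup\supp(\mu)$, and they are globally Lipschitz with constants $\delta^{-1}$ and $\delta^{-2}$ respectively. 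Indeed, each is a composition of the $1$-Lipschitz map $y\mapsto\max(\gamma-y,\delta)$ with $\log$ or $t\mapsto t^{-1}$ restricted to $[\delta,\infty)$.

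Applying Kantorovich--Rubinstein duality, $\bigl|\int h\,d(\nu-\mu)\bigr|\le \mathrm{Lip}(h)\,W_1(\nu,\mu)$, with $h=\tilde g_k$ then gives
\begin{equation*}
    \bigl|\vf_\nu^{(k)}(\gamma;\beta)-\vf_\mu^{(k)}(\gamma;\beta)\bigr|
    \le \frac{1}{2\delta^{k+1}}\,W_1(\nu,\mu).
\end{equation*}
This holds for $k=0,1$, uniformly in $N$ and $\beta$. There is no real obstacle; the only subtlety is the truncation step, which makes the test functions globally Lipschitz without changing the integrals.
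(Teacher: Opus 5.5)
Your proof is correct and follows the paper's argument. Both apply Kantorovich--Rubinstein duality to $y\mapsto\frac12\log^{(k)}(\gamma-y)$, which is $\delta^{-(k+1)}$-Lipschitz on $\supp(\nu)\cup\supp(\mu)$. Your truncation $\max(\gamma-y,\delta)$ just makes explicit the extension to a globally Lipschitz test function, which the paper leaves implicit.
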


\begin{proof}
    We notice that $\vf_{\nu}^{(k)}(\gamma;\beta)-\vf_\mu^{(k)}(\gamma;\beta)$ is the difference of the integrals of $x\mapsto \tfrac{1}{2}\log^{(k)}(\gamma-x)$ with respect to the measures $\nu$ and $\mu$. Because both of these measures have bounded support, the Kantorovich--Rubinstein duality theorem states that for any $1$-Lipschitz function $f$ we have
    \begin{equation}
        \abs*{\int f(x)\nu(dx) - \int f(x)\mu(dx)} 
        \les
        W_1(\nu,\mu).
    \end{equation}
    Since $\gamma\ges\lambda_N+\delta$ and $\gamma\ges\lM+\delta$, the function $x\mapsto \log^{(k)}(\gamma-x)$ is $1/\delta^{k+1}$-Lipschitz on the union of the supports of $\nu$ and $\mu$, and so \eqref{eq:sum_convergence_fixedR} follows.
\end{proof}

Finally, in the last technical lemma on the way to prove Lemma \ref{lem:fe_upper_bound} we study the error made by limiting the range over which the infimum is taken in the Lagrange transform of the log-potential.

\begin{lemma}\label{lem:limit_range_gamma}
Let $\nu$ be a compactly supported probability measure, and let
$a\ge\nu_{\max}$. Then, for every $\beta>0$ and $\delta>0$,
\begin{equation}
    0\le
    L_\nu^*(\beta)
    +\inf_{\gamma>a+\delta}\phi_\nu(\gamma;\beta)
    \le \beta(a+\delta-\nu_{\max}).
\end{equation}
If additionally $\delta<(2\beta)^{-1}$, then the same bounds hold
with the infimum restricted to $a+\delta<\gamma\le a+(2\beta)^{-1}$.
\end{lemma}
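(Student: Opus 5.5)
The argument is a direct comparison using the definitions $\phi_\nu(\gamma;\beta)=\beta\gamma-L_\nu(\gamma)$ from \eqref{eq:vf_mu_def} and $L_\nu^*(\beta)=\sup_{x\ge\nu_{\max}}\{L_\nu(x)-\beta x\}$ from \eqref{eq:LegendreLstar}. We have
\begin{equation*}
    -\inf_{\gamma>a+\delta}\phi_\nu(\gamma;\beta)=\sup_{\gamma>a+\delta}\{L_\nu(\gamma)-\beta\gamma\},
\end{equation*}
so the quantity to control is the gap between the supremum over $x\ge \nu_{\max}$ and the supremum over the smaller set $\gamma>a+\delta$. Since $a+\delta>a\ge\nu_{\max}$, the second supremum runs over a subset of the first. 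This gives the lower bound $0\le L_\nu^*(\beta)+\inf_{\gamma>a+\delta}\phi_\nu(\gamma;\beta)$ at once, and the same holds with the range restricted to $(a+\delta,a+(2\beta)^{-1}]$.

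For the upper bound I would use the unique maximizer $x^*=x^*(\beta)$ from \Cref{lem:xstar_legendre}, applied to $\nu$ with $\lM,\lm$ replaced by $\nu_{\max},\nu_{\min}$. The lemma only needs bounded support, so this is legitimate. There are two cases.

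\emph{Case $x^*>a+\delta$.} Then $x^*$ is an admissible $\gamma$, so the two suprema coincide and the difference is $0$.

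\emph{Case $x^*\le a+\delta$.} I use that $L_\nu$ is nondecreasing on $[\nu_{\max},\infty)$, since $\log$ is increasing. For every $\gamma>a+\delta\ge x^*$,
\begin{equation*}
    L_\nu(\gamma)-\beta\gamma\ge L_\nu(x^*)-\beta\gamma .
\end{equation*}
Letting $\gamma\downarrow a+\delta$ gives
\begin{equation*}
    \sup_{\gamma>a+\delta}\{L_\nu(\gamma)-\beta\gamma\}\ge L_\nu(x^*)-\beta(a+\delta).
\end{equation*}
Hence
\begin{equation*}
    L_\nu^*(\beta)-\sup_{\gamma>a+\delta}\{L_\nu(\gamma)-\beta\gamma\}\le \beta(a+\delta)-\beta x^*\le \beta(a+\delta-\nu_{\max}),
\end{equation*}
where the last step uses $x^*\ge\nu_{\max}$.

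For the restricted statement, assume $\delta<(2\beta)^{-1}$, so the interval $(a+\delta,a+(2\beta)^{-1}]$ is nonempty. The upper estimate of \Cref{lem:xstar_legendre} gives $x^*\le \nu_{\max}+(2\beta)^{-1}\le a+(2\beta)^{-1}$. In the first case, $x^*$ therefore lies in the restricted range and the difference is again $0$. In the second case, the approximating $\gamma\downarrow a+\delta$ eventually lie in the restricted range, so the same estimate holds.

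No step is a real obstacle. The only points needing care are the monotonicity of $L_\nu$, which replaces any continuity argument at the endpoint $a+\delta$, and checking that \Cref{lem:xstar_legendre} applies to a general compactly supported $\nu$ rather than only to $\mu$.
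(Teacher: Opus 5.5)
Your proof is correct and follows essentially the same route as the paper's. Both arguments split on whether the optimizer $x^*$ (the paper's minimizer $\gamma^*$ of $\phi_\nu$) lies beyond $a+\delta$, and your use of the monotonicity of $L_\nu$ is the same fact as the paper's $\phi_\nu'\le\beta$. The only difference is in the restricted statement: you place $x^*$ in $(a+\delta,a+(2\beta)^{-1}]$ via the upper bound of \Cref{lem:xstar_legendre}, whereas the paper uses \Cref{lem:phi_min_UB} to show the two infima coincide; both rest on the same estimate $S_\nu(x)\le (x-\nu_{\max})^{-1}$.
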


\begin{proof}
Recall that
\begin{equation}
    L_\nu^*(\beta)
    =-\inf_{\gamma\ge\nu_{\max}}\phi_\nu(\gamma;\beta).
\end{equation}
Let $\gamma^*$ denote the minimizer of
$\phi_\nu(\cdot;\beta)$ on $[\nu_{\max},\infty)$.
Since
\begin{equation}
    \phi_\nu'(\gamma;\beta)
    =\beta-\frac12S_\nu(\gamma)
\end{equation}
is increasing, the function $\phi_\nu(\cdot;\beta)$ is
non-decreasing on $[\gamma^*,\infty)$.

If $\gamma^*\ge a+\delta$, then
\begin{equation}
    L_\nu^*(\beta)
    +\inf_{\gamma>a+\delta}\phi_\nu(\gamma;\beta)=0,
\end{equation}
Otherwise, $\gamma^*<a+\delta$, and
\begin{equation}
\begin{aligned}
    L_\nu^*(\beta)
    +\inf_{\gamma>a+\delta}\phi_\nu(\gamma;\beta)
    &=\phi_\nu(a+\delta;\beta)-\phi_\nu(\gamma^*;\beta)\\
    &\le\beta(a+\delta-\gamma^*)\\
    &\le\beta(a+\delta-\nu_{\max}),
\end{aligned}
\end{equation}
where we used that $\phi_\nu'(\gamma;\beta)\le\beta$.
The lower bound follows directly by restricting the domain
of the infimum. Thus
\begin{equation}
    0\le
    L_\nu^*(\beta)
    +\inf_{\gamma>a+\delta}\phi_\nu(\gamma;\beta)
    \le\beta(a+\delta-\nu_{\max}).
\end{equation}

Finally, suppose that $\delta<(2\beta)^{-1}$.
By Lemma \ref{lem:phi_min_UB},
\begin{equation}
    \phi_\nu'\bigl(a+(2\beta)^{-1};\beta\bigr)\ge0.
\end{equation}
Since $\phi_\nu'(\cdot;\beta)$ is increasing,
$\phi_\nu(\cdot;\beta)$ is non-decreasing on
$[a+(2\beta)^{-1},\infty)$. Consequently,
\begin{equation}
    \inf_{\gamma>a+\delta} \phi_\nu(\gamma;\beta)
    =
    \inf_{a+\delta<\gamma\le a+(2\beta)^{-1}}
        \phi_\nu(\gamma;\beta),
\end{equation}
which proves the assertion for the restricted infimum.
\end{proof}



\begin{lemma}\label{lem:inf_phi_N<=inf_phi}
    Let $\beta,\e > 0$, and $0<\delta<(2\beta)^{-1}$. Then for any probability measures $\nu$ and $\mu$
    \begin{equation}\label{eq:inf_phi_N<=inf_phi}
        \inf_{\gamma > \lambda_N}
        \left[
            \vf_\nu(\gamma;\beta)
            +
            \e |\gamma|
        \right]
        \les
        -L^*_\mu(\beta)
        +
        c_4 \e
        +
        c_5 \abs{\lM - \lambda_N}
        +
        c_5 \delta
        +
        \frac{1}{\delta}W_1(\nu,\mu),
    \end{equation}
    where $c_4 =2\Lambda + (2\beta)^{-1}$ and $c_5 = \beta$.
\end{lemma}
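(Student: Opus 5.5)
Set $a:=\max(\lambda_N,\lM)$. The plan is to bound the infimum on the left of \eqref{eq:inf_phi_N<=inf_phi} from above by restricting it to the window $\gamma\in(a+\delta,\,a+(2\beta)^{-1}]$. Every such $\gamma$ satisfies $\gamma>\lambda_N$, so the restricted infimum dominates the full one. On this window we have $|\gamma|\le\Lambda+(2\beta)^{-1}+|\lM-\lambda_N|$. We absorb $|\lM-\lambda_N|\le 2\Lambda$ into the constant, or more simply bound $a\le\Lambda$, which gives $\e|\gamma|\le(\Lambda+(2\beta)^{-1})\e\le c_4\e$. This is what fixes $c_4=2\Lambda+(2\beta)^{-1}$. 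The window is nonempty because $\delta<(2\beta)^{-1}$.

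Next we replace $\vf_\nu$ by $\vf_\mu$ using \Cref{lem:sum_convergence_fixedR} with $k=0$. Every $\gamma$ in the window satisfies $\gamma\ge\max(\lambda_N,\lM)+\delta$, so $\vf_\nu(\gamma;\beta)\le\vf_\mu(\gamma;\beta)+\frac{1}{2\delta}W_1(\nu,\mu)$. This yields at most the term $\delta^{-1}W_1(\nu,\mu)$. Consequently
\begin{equation*}
\inf_{\gamma>\lambda_N}\bigl[\vf_\nu(\gamma;\beta)+\e|\gamma|\bigr]\le \inf_{a+\delta<\gamma\le a+(2\beta)^{-1}}\vf_\mu(\gamma;\beta)+c_4\e+\frac1\delta W_1(\nu,\mu).
\end{equation*}

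The remaining step is to compare this restricted infimum for $\mu$ with $-L_\mu^*(\beta)$. We apply \Cref{lem:limit_range_gamma} to the measure $\mu$ with the choice $a=\max(\lambda_N,\lM)\ge\lM$; this $a$ is admissible since it is at least $\mu_{\max}$. The restricted version of that lemma applies because $\delta<(2\beta)^{-1}$, and it gives
\begin{equation*}
\inf_{a+\delta<\gamma\le a+(2\beta)^{-1}}\vf_\mu(\gamma;\beta)\le -L_\mu^*(\beta)+\beta(a+\delta-\lM).
\end{equation*}
Since $a-\lM\le|\lM-\lambda_N|$, the error term is at most $c_5|\lM-\lambda_N|+c_5\delta$ with $c_5=\beta$. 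Combining the two displays yields \eqref{eq:inf_phi_N<=inf_phi}.

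The main subtlety is choosing the window so that three requirements hold at once: it lies above $\lambda_N$, as the left-hand infimum requires; it lies at distance at least $\delta$ above both supports, so that \Cref{lem:sum_convergence_fixedR} applies; and it still contains the near-optimizer of $\vf_\mu$. Taking $a=\max(\lambda_N,\lM)$ inside \Cref{lem:limit_range_gamma} handles all three simultaneously. The only cost is the shift $\beta(a-\lM)$, which is the source of the $c_5|\lM-\lambda_N|$ term. Everything else reduces to bookkeeping of constants.
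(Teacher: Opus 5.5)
Your proposal is correct and follows the paper's proof essentially step for step. You take $a=\max(\lambda_N,\lM)$, shrink the infimum to the window above $a$, bound $\e|\gamma|$ by $c_4\e$ using $|a|\le\Lambda$, replace $\vf_\nu$ by $\vf_\mu$ via \Cref{lem:sum_convergence_fixedR} with $k=0$, and finish with the restricted form of \Cref{lem:limit_range_gamma} applied to $\mu$, whose error $\beta(a+\delta-\lM)$ produces the two $c_5$ terms. Your half-closed window $(a+\delta,a+(2\beta)^{-1}]$ matches the restricted statement of \Cref{lem:limit_range_gamma} exactly, which is marginally cleaner than the open interval written in the paper.
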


\begin{proof}
    Let $a = \max\{\lambda_N,\lM\}$. Consider the infimum in the l.h.s. of \eqref{eq:inf_phi_N<=inf_phi}.
    Shrinking the range of the infimum to $(a+\delta,a+(2\beta)^{-1})$, bounding $|\gamma|\les |a|+(2\beta)^{-1} \les c_4$, and using \Cref{lem:sum_convergence_fixedR} with $k=0$, we deduce that the l.h.s. of \eqref{eq:inf_phi_N<=inf_phi} is at most
    \begin{equation}\label{eq:using_sum_conv}
        \inf_{\gamma \in (a + \delta, a + (2\beta)^{-1}) }
        \vf_\mu(\gamma;\beta)
        +
        c_4 \e
        +
        \frac{1}{\delta}W_1(\nu,\mu).
    \end{equation}
    Bounding the infimum in \eqref{eq:using_sum_conv} using \Cref{lem:limit_range_gamma} then yields the claim.
\end{proof}

Armed with this collection of technical lemmas we are ready to prove Lemma \ref{lem:fe_upper_bound}.

\begin{proof}[Proof of Lemma \ref{lem:fe_upper_bound}]

    The proof is trivial for $\beta=0$ or $N=0$. Suppose $\beta>0$, $N>0$. By \Cref{lem:UB_putting_together_2} and \Cref{lem:inf_phi_N<=inf_phi} we obtain, for $0<\delta<(2\beta)^{-1}$,
    \begin{equation}
        \Ffin{\nu}(\beta)
        \les
        \Flim{\mu}(\beta)
        +
        (c_2 + c_1 c_3) \e
        +
        c_4 (N\e)^{-1}
        +
        c_5 |\lM - \lambda_N|
        +
        c_5 \delta 
        +
        \frac{1}{\delta}W_1(\nu,\mu),
    \end{equation}
    where $A= e^{4(1+\beta\Lambda)}$, $c_1 = 4 A^2 \beta$, $c_2 = \beta \Lambda + A$, $c_3 = 2\Lambda + (2\beta)^{-1}$, $c_4 = 5 + 4 \log A$ and $c_5 = \beta$. Remark that all the constant factors appearing in the above equation are upper bounded by
    \begin{equation}
        16(\beta\Lambda+1)(A+1)^2\les 64[1+\beta\Lambda]e^{8(1+\beta\Lambda)}.
    \end{equation}
    
    If $\sqrt{W_1(\nu,\mu)}<(2\beta)^{-1}$, then, setting $\delta = \sqrt{W_1(\nu,\mu)}$ is possible and choosing $\e=1/\sqrt{N}$ yields the claim with $C=128[1+\beta\Lambda]e^{8(1+\beta\Lambda)}$ (if $W_1(\nu,\mu)=0$, pick $\delta=1/N$).

    Now assume that $\sqrt{W_1(\nu,\mu)}\ges (2\beta)^{-1}$. Then, also
    \begin{equation}
        \Ffin{\nu}(\beta)-\Flim{\mu}(\beta)\les
        C\left(
        \sqrt{W_1(\nu,\mu)}
        +
        \abs{\nu_{\max} - \lM}
        +
        \frac{1}{\sqrt{N}}
        \right),
    \end{equation}
    Indeed, we have
    \begin{equation}
        \Ffin{\nu}(\beta)\les\frac{1}{N}\log\Es e^{N\beta\max\limits_{1\les i\les N} \abs{\lambda_i}}\les\beta\Lambda,
    \end{equation}
    and by \Cref{cor:free_energy_trivial_bounds}
    \begin{equation}
        \Flim{\mu}(\beta)\ges -\beta\Lambda,
    \end{equation}
    so that
    \begin{equation}
        \Ffin{\nu}(\beta)-\Flim{\mu}(\beta)\les2\beta\Lambda\les 4\beta^2\Lambda\sqrt{W_1(\nu,\mu)}.
    \end{equation}
    Furthermore,
    \begin{equation}
        4\beta^2\Lambda\sqrt{W_1(\nu,\mu)}\les 
        C\left(
        \sqrt{W_1(\nu,\mu)}
        +
        \abs{\nu_{\max} - \lM}
        +
        \frac{1}{\sqrt{N}}
        \right),
    \end{equation}
    which proves the claim.

    Combining the two cases depending on the relation between $\sqrt{W_1(\nu,\mu)}$ and $(2\beta)^{-1}$, we obtain the desired result.
\end{proof}

\subsection{Lower bound}

After proving the upper bound, we can now prove the lower bound part of Theorem \ref{thm:sec2_main}.


\begin{lemma}\label{lem:fe_lower_bound}
    Suppose that Assumption \eqref{eq:assumption_sec3} holds. Then, for any $\beta\in (0,\beta_c)$, there exits a constant $C>0$ such that
    \begin{equation}\label{eq:fe_lower_bound}
        \Ffin{\nu}(\beta)\ges \Flim{\mu}(\beta)-C\left(\sqrt{W_1(\nu,\mu)}+\abs{\lambda_N-\lM}+\frac{1}{\sqrt{N}}\right).
    \end{equation}
\end{lemma}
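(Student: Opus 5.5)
Fix $\beta\in(0,\beta_c)$. Since $\beta<\beta_c=s_\mu/2$, the maximizer $x^*(\beta)$ of Lemma~\ref{lem:xstar_legendre} satisfies $\gamma^*:=x^*(\beta)>\lM$: indeed $\varphi_\mu'(\lM;\beta)=\beta-\tfrac12 s_\mu<0$. Thus $\gamma^*$ is the interior minimizer of $\varphi_\mu(\cdot;\beta)$ and solves $\tfrac12 S_\mu(\gamma^*)=\beta$. Set $\delta_0=(\gamma^*-\lM)/2>0$, which depends only on $\mu,\beta$. The plan is a tilting (change of measure) argument with the Gaussian representation \eqref{eq:replaceUniformByGaussian}, the reverse of Lemma~\ref{lem:discreteFE_UB}. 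If $W_1(\nu,\mu)$ or $|\lambda_N-\lM|$ exceeds a small constant $c(\mu,\beta)$, the claim follows trivially, exactly as in the proof of Lemma~\ref{lem:fe_upper_bound}, because $|\Ffin\nu|,|\Flim\mu|\le\beta\Lambda$. So assume both are small, in particular $\lambda_N\le\gamma^*-\delta_0$.

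Choose $\gamma\ge\max(\lambda_N,\lM)+\delta_0$ as follows. Take $\gamma=\gamma^*$ if $\gamma^*$ is admissible, or more precisely the minimizer $\gamma_\nu$ of $\varphi_\nu(\cdot;\beta)$, which by Lemma~\ref{lem:sum_convergence_fixedR} with $k=1$ and strict monotonicity of $S_\mu$ near $\gamma^*$ is within $O(W_1)$ of $\gamma^*$. With $R=1$ and $\eta=\tfrac12-\beta\gamma$, define the tilted Gaussian measure under which the $X_i$ are independent centered Gaussians with variances $v_i=(2\beta(\gamma-\lambda_i))^{-1}$. Then
\[
\E\Big[e^{\beta\sum_i\lambda_iX_i^2}\indf_{\{||X|^2-N|\le N\e\}}\Big]
=e^{N(-\frac12\log(2\beta)-L_\nu(\gamma)+\beta\gamma)}\,e^{-N\eta}\,\tilde{\mathbb P}\big(||X|^2-N|\le N\e\big)\,e^{-O(N\e)},
\]
using Lemma~\ref{lem:change_of_var}. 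On this event, $N\sum\lambda_iX_i^2/|X|^2$ differs from $\sum\lambda_iX_i^2$ by at most $\e N\Lambda$, as in \eqref{eq:bound_on_BNk}. Collecting exponents gives
\[
\Ffin\nu(\beta)\ge -\tfrac12-\tfrac12\log(2\beta)-\varphi_\nu(\gamma;\beta)\cdot(-1)\ldots
\]
More cleanly, $\Ffin\nu(\beta)\ge-\tfrac12-\tfrac12\log(2\beta)+\varphi_\nu(\gamma;\beta)-C\e+\tfrac1N\log\tilde{\mathbb P}(\cdot)$. Comparing $\varphi_\nu(\gamma;\beta)$ with $\varphi_\mu(\gamma^*;\beta)=-L_\mu^*(\beta)$ via Lemma~\ref{lem:sum_convergence_fixedR} with $k=0$ costs $W_1/(2\delta_0)$, plus $\beta|\gamma-\gamma^*|$-type Lipschitz errors. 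This yields $\Flim\mu(\beta)-C(W_1+\e)+\tfrac1N\log\tilde{\mathbb P}$.

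The main obstacle is showing that $\tilde{\mathbb P}(||X|^2-N|\le N\e)$ is bounded below. This needs the tilted mean $\tilde\E|X|^2=\sum_iv_i=N\cdot\tfrac{1}{2\beta}S_\nu(\gamma)$ to be within $N\e/2$ of $N$. This is exactly why $\gamma$ should be the critical point: with $\gamma=\gamma_\nu$ solving $\tfrac12S_\nu(\gamma_\nu)=\beta$, the mean is exactly $N$. The existence of $\gamma_\nu\in(\lambda_N+\delta_0,\infty)$ follows from $S_\nu(\lambda_N+\delta_0)$ being close to $S_\mu(\lM+\delta_0)>S_\mu(\gamma^*)=2\beta$ by Lemma~\ref{lem:sum_convergence_fixedR}, with a margin depending on $\mu,\beta$ once $W_1,|\lambda_N-\lM|$ are small. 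Then $\tilde\Var|X|^2=2\sum v_i^2\le N/(2\beta^2\delta_0^2)$, so Chebyshev gives $\tilde{\mathbb P}\ge\tfrac12$ once $\e=1/\sqrt N$ and $N$ is large, with small $N$ absorbed into the constant. It then remains to bound $|\varphi_\nu(\gamma_\nu)-\varphi_\mu(\gamma^*)|$. Here $\varphi_\nu(\gamma_\nu)=\min\varphi_\nu$ over $[\lambda_N+\delta_0,\infty)$, and $\varphi_\nu$ and $\varphi_\mu$ are uniformly $W_1/(2\delta_0)$-close there. Also, both minimizers lie in that range (the one for $\mu$ since $\gamma^*\ge\lM+2\delta_0\ge\lambda_N+\delta_0$), so the minima differ by at most $W_1/(2\delta_0)$. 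Altogether this gives $\Ffin\nu(\beta)\ge\Flim\mu(\beta)-C(W_1+N^{-1/2})$, which is stronger than the claimed bound since $W_1\le 2\Lambda$ implies $W_1\le\sqrt{2\Lambda}\sqrt{W_1}$.
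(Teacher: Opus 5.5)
Your argument is essentially the paper's proof. The tilt to the critical point $\gamma_\nu$ of $\vf_\nu(\cdot;\beta)$ is the paper's minimizer $\eta^\ast$ of $g_\nu$ (Lemma~\ref{lem:LB_cov_eta_star}), and your existence argument for $\gamma_\nu>\lambda_N+\delta_0$ plays the role of Lemma~\ref{lem:away_from_lambda_max}. The Chebyshev step with $\Var\abs{X}^2\le N/(2\beta^2\delta_0^2)$ and the reduction of the bad cases to the trivial bound $2\beta\Lambda$ are also the same. Comparing the two minima on the fixed window $[\lambda_N+\delta_0,\infty)$ uses that $x^*(\beta)$ is interior for $\beta<\beta_c$. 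This is a mild sharpening that gives an error linear in $W_1$, rather than the $\sqrt{W_1}$ produced by the generic Lemma~\ref{lem:g_lower_bound}.

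One step needs fixing. With $\e=1/\sqrt N$, Chebyshev gives $\tilde{\mathbb P}\bigl(\abs{\abs{X}^2-N}>N\e\bigr)\le (2N\e^2\beta^2\delta_0^2)^{-1}=(2\beta^2\delta_0^2)^{-1}$ for every $N$. So taking $N$ large does not help, and you must choose $\e=(\beta\delta_0\sqrt N)^{-1}$, as the paper does; this still costs only $O(N^{-1/2})$, with small $N$ absorbed by the trivial bound. Also, your first display counts $\beta\gamma$ twice, but the corrected bound $-\tfrac12-\tfrac12\log(2\beta)+\vf_\nu(\gamma;\beta)$ that you then use is right.
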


The proof is based on several short technical lemmas.

\begin{lemma}
    For any $\beta \in (0, \beta_c)$, then there exists $\delta>0$ such that $\vf_\mu'(\lM+\delta;\beta)<0$.
\end{lemma}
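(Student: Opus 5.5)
We need to show: for $\beta<\beta_c=s_\mu/2$ there is $\delta>0$ with $\vf_\mu'(\lM+\delta;\beta)<0$. The natural route is through the explicit formula $\vf_\mu'(\gamma;\beta)=\beta-L_\mu'(\gamma)=\beta-\tfrac12 S_\mu(\gamma)$, valid for $\gamma>\lM$ (as already used in the proofs of Lemma \ref{lem:phi_min_UB} and Lemma \ref{lem:limit_range_gamma}). The claim then reduces to showing $S_\mu(\lM+\delta)>2\beta$ for some small $\delta>0$, i.e. that $S_\mu(\lM+\delta)$ gets close to $S_\mu(\lM)=s_\mu$ as $\delta\downarrow0$.

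The key step is therefore the right-continuity of the Stieltjes transform at the edge,
\begin{equation*}
    \lim_{\delta\downarrow 0} S_\mu(\lM+\delta)=s_\mu .
\end{equation*}
I will prove this by monotone convergence. For each $x\in[\lm,\lM)$ the integrand $(\lM+\delta-x)^{-1}$ is positive and increases to $(\lM-x)^{-1}$ as $\delta\downarrow0$. The point $x=\lM$ itself carries no $\mu$-mass, since $\mu$ has a density by Assumption \ref{main_assumption}(2), so it does not affect the integral. By monotone convergence, $S_\mu(\lM+\delta)\uparrow\int\frac{\mu(dx)}{\lM-x}=s_\mu$, and this limit is finite by Assumption \ref{main_assumption}(4).

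To conclude, since $2\beta<s_\mu$, pick $\delta>0$ with $S_\mu(\lM+\delta)>2\beta$. Then $\vf_\mu'(\lM+\delta;\beta)=\beta-\tfrac12S_\mu(\lM+\delta)<0$. The case $\beta\in(0,\beta_c)$ needs $\beta_c>0$, which holds automatically because $s_\mu>0$.

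There is no real obstacle here. The only points requiring care are justifying the derivative formula at $\gamma>\lM$, by differentiating under the integral, which is fine since the integrand is bounded there, and noting that no atom sits at $\lM$. Even without the density assumption the monotone limit would still equal $s_\mu$, with the convention used in \eqref{eq:def_stieltjes_transform}.
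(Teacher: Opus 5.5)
Your proof is correct and is essentially the paper's argument: the paper observes that $\vf_\mu'(\lM;\beta)=\beta-\tfrac12 s_\mu<\beta_c-\tfrac12 s_\mu=0$ and then appeals to continuity of $\vf_\mu'(\cdot;\beta)$. Your monotone-convergence argument makes explicit the right-continuity of $S_\mu$ at $\lM$ that this continuity appeal relies on, and it avoids having to interpret $\vf_\mu'$ at the endpoint $\lM$ itself.
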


\begin{proof}
    For $\beta \in (0, \beta_c)$, we have
    \begin{equation}
        \vf_\mu'(\lM;\beta)<\vf_\mu'(\lM;\beta_c)=\beta_c-\frac{1}{2}S_\mu(\lM)=0.
    \end{equation}
    The statement now follows from the continuity of the function $\vf_\mu'(\cdot;\beta)$.
\end{proof}

\begin{lemma}\label{lem:away_from_lambda_max}
For any $\beta\in (0,\beta_c)$, let $\delta>0$ be such that
    \begin{gather}
        \abs{\lambda_N-\lM}\les\frac{1}{2}\delta,\\
        \vf'_\mu(\lM + 2\delta;\beta) \les -\frac{2}{\delta^2}W_1(\nu,\mu).
    \end{gather}
    Then
    \begin{equation}
        \vf'_\nu(\lambda_N + \delta;\beta)\les 0.
    \end{equation}
\end{lemma}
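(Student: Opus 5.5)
The plan is to compare $\vf'_\nu$ with $\vf'_\mu$ at the point $\gamma=\lambda_N+\delta$, and then move from $\gamma$ to $\lM+2\delta$ using monotonicity. Recall that $\vf'_\nu(\gamma;\beta)=\beta-\tfrac12 S_\nu(\gamma)$, and that $S_\nu$ is decreasing on $(\nu_{\max},\infty)$. Hence $\vf'_\nu(\cdot;\beta)$ is increasing there, and likewise for $\mu$. It therefore suffices to bound $\vf'_\nu(\lambda_N+\delta;\beta)$ above by $\vf'_\mu$ at a point no larger than $\lM+2\delta$, up to an error that the hypothesis absorbs.

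First, I would apply \Cref{lem:sum_convergence_fixedR} with $k=1$. To do so, I need $\gamma=\lambda_N+\delta\ges\max(\lambda_N,\lM)+\delta'$ for some $\delta'>0$. Since $|\lambda_N-\lM|\les\delta/2$, we have $\lambda_N+\delta\ges\lM+\delta/2$ and also $\lambda_N+\delta\ges \lambda_N+\delta/2$, so $\delta'=\delta/2$ works. The lemma then gives
\begin{equation}
    \vf'_\nu(\lambda_N+\delta;\beta)\les \vf'_\mu(\lambda_N+\delta;\beta)+\frac{1}{2(\delta/2)^2}W_1(\nu,\mu)=\vf'_\mu(\lambda_N+\delta;\beta)+\frac{2}{\delta^2}W_1(\nu,\mu).
\end{equation}
The constant $2/\delta^2$ matches the hypothesis exactly, which confirms that this is the intended route.

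Second, I would use $\lambda_N+\delta\les\lM+\tfrac32\delta\les\lM+2\delta$ together with the monotonicity of $\vf'_\mu(\cdot;\beta)$ on $(\lM,\infty)$. The point $\lambda_N+\delta\ges\lM+\delta/2>\lM$ lies in this range, so $\vf'_\mu(\lambda_N+\delta;\beta)\les\vf'_\mu(\lM+2\delta;\beta)\les -\tfrac{2}{\delta^2}W_1(\nu,\mu)$. Summing the two displays gives $\vf'_\nu(\lambda_N+\delta;\beta)\les0$.

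The only delicate point is the bookkeeping of the separation parameter in \Cref{lem:sum_convergence_fixedR}. One must use $\delta/2$ rather than $\delta$ there, because $\lM$ may exceed $\lambda_N$ by up to $\delta/2$. One must also check that the Lipschitz constant of $x\mapsto\tfrac12(\gamma-x)^{-1}$ indeed yields the factor $\tfrac{1}{2}(\delta/2)^{-2}=2/\delta^2$. Everything else follows from monotonicity of the Stieltjes transform.
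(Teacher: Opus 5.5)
Your proposal is correct and follows the paper's proof essentially verbatim. You apply \Cref{lem:sum_convergence_fixedR} with $k=1$ and separation $\delta/2$, which gives the $2\delta^{-2}W_1(\nu,\mu)$ error, and then use $\lambda_N+\delta\le\lM+2\delta$ together with the monotonicity of $\vf'_\mu(\cdot;\beta)$ to conclude.
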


\begin{proof}
    It follows from the assumptions that
    \begin{equation}
        \max(\lambda_N,\lM)
        +
        \frac{1}{2}\delta
        \les
        \lambda_N
        + 
        \delta
        \les
        \lM
        + 
        2\delta.      
    \end{equation}
    Thus, by \Cref{lem:sum_convergence_fixedR}
    \begin{equation}
        \vf'_\nu(\lambda_N + \delta;\beta)
        \les
        \vf'_\mu(\lambda_N + \delta;\beta)
        +
        \frac{2}{\delta^2}
        W_1(\nu,\mu)
        \les
        \vf'_\mu(\lM + 2\delta;\beta)
        +
        \frac{2}{\delta^2}
        W_1(\nu,\mu)
        \les
        0,
    \end{equation}
where we used that $\vf'_\mu(\cdot;\beta)$ is increasing.
\end{proof}

For the remainder of the proof, we consider a probability measure $\Q_\eta$ with density
\begin{equation}\label{eq:Q_def}
    \frac{d\Q_\eta}{d\mbP}
    =
    \frac{
        \exp\left(\frac{\beta}{R^2}\sum_{i=1}^N \lambda_iX_i^2+\eta(\norm{X}^2-NR^2)\right)
    }{
        \E\left[\exp\left(\frac{\beta}{R^2}\sum_{i=1}^N \lambda_iX_i^2+\eta(\norm{X}^2-NR^2)\right)\right]
    },    
\end{equation}
and the function
\begin{equation}\label{eq:def_g_mu}
    g_\nu(\eta)
    =
    \log\E\left[
        \exp\left(
            \frac{\beta}{R^2}\sum_{i=1}^N\lambda_iX_i^2+\eta(\norm{X}^2-NR^2)
        \right)
    \right],\quad\eta<\frac{1}{2}-\frac{\beta\lambda_N}{R^2}.
\end{equation}

\begin{lemma}\label{lem:LB_cov_eta_star}
    For any $\beta\in (0,\beta_c)$, let $\delta>0$ be such that the conditions of \Cref{lem:away_from_lambda_max} are satisfied. Then the function $g_\nu$ has a unique minimizer $\eta^\ast<\frac{1}{2}-\frac{\beta\lambda_N}{R^2}$, which 
    satisfies
    \begin{equation}\label{eq:Q_moment_bounds}
        \Q_{\eta^\ast}[\norm{X}^2]
        =
        NR^2,
        \quad\quad
        \Var_{\Q_{\eta^\ast}}[\norm{X}^2]
        \les
        \frac{NR^4}{2\beta^2\delta^2},
    \end{equation}
    and
    \begin{equation}\label{eq:eta_star_bounds}
        \frac{1}{2}
        -
        \frac{\beta\lambda_N}{R^2}
        -
        \frac{1}{2R^2}
        \les
        \eta^\ast
        <
        \frac{1}{2}-\frac{\beta\lambda_N}{R^2}.
    \end{equation}
\end{lemma}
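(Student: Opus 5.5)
The plan is to make everything explicit through the Gaussian moment generating function and the change of variables \eqref{eq:change_of_var_eta_to_gamma} from Lemma~\ref{lem:change_of_var}. This turns the statement into facts about the zero of $\vf_\nu'(\cdot;\beta)$, which Lemmas~\ref{lem:away_from_lambda_max} and~\ref{lem:phi_min_UB} already locate.

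First, exactly as in the proof of Lemma~\ref{lem:discreteFE_UB}, for $\eta<\tfrac12-\tfrac{\beta\lambda_N}{R^2}$ we have
\begin{equation*}
    g_\nu(\eta)=-N\eta R^2-\frac12\sum_{i=1}^N\log\Bigl(1-\frac{2\beta\lambda_i}{R^2}-2\eta\Bigr).
\end{equation*}
Differentiating gives
\begin{equation*}
    g_\nu'(\eta)=-NR^2+\sum_{i=1}^N\Bigl(1-\frac{2\beta\lambda_i}{R^2}-2\eta\Bigr)^{-1},
    \qquad
    g_\nu''(\eta)=\sum_{i=1}^N 2\Bigl(1-\frac{2\beta\lambda_i}{R^2}-2\eta\Bigr)^{-2}>0 .
\end{equation*}
So $g_\nu$ is strictly convex. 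Moreover $g_\nu'\to -NR^2<0$ as $\eta\to-\infty$, and $g_\nu'\to+\infty$ as $\eta$ increases to $\tfrac12-\tfrac{\beta\lambda_N}{R^2}$, because the $i=N$ term blows up. Hence there is a unique minimizer $\eta^\ast$ in the open range, characterized by $g_\nu'(\eta^\ast)=0$.

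Since $g_\nu$ is the log-Laplace transform of $\norm{X}^2-NR^2$ under the tilted law, its derivatives are the cumulants under $\Q_\eta$. Concretely, $\Q_\eta[\norm{X}^2]-NR^2=g_\nu'(\eta)$ and $\Var_{\Q_\eta}[\norm{X}^2]=g_\nu''(\eta)$. The first identity in \eqref{eq:Q_moment_bounds} is then immediate at $\eta=\eta^\ast$.

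For the variance and for \eqref{eq:eta_star_bounds}, I would substitute $\eta=\tfrac12-\tfrac{\beta\gamma}{R^2}$, so that $1-\tfrac{2\beta\lambda_i}{R^2}-2\eta=\tfrac{2\beta}{R^2}(\gamma-\lambda_i)$. Then
\begin{equation*}
    g_\nu'(\eta)=NR^2\Bigl(\frac{S_\nu(\gamma)}{2\beta}-1\Bigr)=-\frac{NR^2}{\beta}\,\vf_\nu'(\gamma;\beta).
\end{equation*}
Thus $\eta^\ast$ corresponds to the unique $\gamma^\ast>\lambda_N$ with $\vf_\nu'(\gamma^\ast;\beta)=0$.

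Recall that $\vf_\nu'(\cdot;\beta)$ is increasing. Lemma~\ref{lem:away_from_lambda_max} gives $\vf_\nu'(\lambda_N+\delta;\beta)\le0$, hence $\gamma^\ast\ge\lambda_N+\delta$. Lemma~\ref{lem:phi_min_UB} with $a=\lambda_N$ gives $\vf_\nu'(\lambda_N+(2\beta)^{-1};\beta)\ge0$, hence $\gamma^\ast\le\lambda_N+(2\beta)^{-1}$.

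Translating back through $\eta^\ast=\tfrac12-\tfrac{\beta\gamma^\ast}{R^2}$, the upper bound on $\gamma^\ast$ yields the lower bound in \eqref{eq:eta_star_bounds}. The upper bound there is just the domain constraint.

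Finally, the variance is
\begin{equation*}
    g_\nu''(\eta^\ast)=\sum_{i=1}^N\frac{2R^4}{4\beta^2(\gamma^\ast-\lambda_i)^2}.
\end{equation*}
Since $\gamma^\ast-\lambda_i\ge\gamma^\ast-\lambda_N\ge\delta$ for every $i$, this is at most $\tfrac{NR^4}{2\beta^2\delta^2}$, as claimed.

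The only point requiring care is the localization $\gamma^\ast\ge\lambda_N+\delta$, which is exactly what gives the uniform-in-$N$ variance bound. It rests entirely on Lemma~\ref{lem:away_from_lambda_max}, i.e.\ on $\beta<\beta_c$. Everything else is explicit calculus.
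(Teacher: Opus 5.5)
Your proof is correct and follows essentially the same route as the paper. The shared steps are the change of variables $\eta=\tfrac12-\beta\gamma/R^2$ that turns $g_\nu'$ into $-\tfrac{NR^2}{\beta}\vf_\nu'(\gamma;\beta)$, Lemma~\ref{lem:away_from_lambda_max} to localize $\gamma^\ast\ge\lambda_N+\delta$ for the variance bound, and Lemma~\ref{lem:phi_min_UB} for the lower bound on $\eta^\ast$. The only difference is cosmetic: you get existence of the zero of $g_\nu'$ from the blow-up of the $i=N$ term at the edge of the domain, whereas the paper gets it from $g_\nu'(\eta_\delta)\ge 0$ via the same localization lemma, and both work.
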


\begin{proof}
    We note that the function $g_\nu$ satisfies
    \begin{equation}\label{eq:phi_N_derivatives}
        g'_\nu(\eta)
        =
        \Q_\eta[\norm{X}^2] - NR^2,
        \qquad
        g''_\nu(\eta)
        =
        \Var_{\Q_\eta}[\norm{X}^2]\ges 0.
    \end{equation}
    and is convex. By \Cref{lem:change_of_var}, for all $\eta<\frac{1}{2}-\frac{\beta\lambda_N}{R^2}$ we have
    \begin{equation}
        \frac{1}{N}g_\nu(\eta) =
            \log R
            -
            \frac{R^2}{2}
            -
            \frac{1}{2}\log(2\beta)
            +
            \vf_\nu(\gamma;\beta),
    \end{equation}
    where
    \begin{equation}\label{eq:LB_change_of_var}
        \gamma = \gamma(\eta) = \frac{(1-2\eta)R^2}{2\beta},
    \end{equation}
    and so
    \begin{equation}
        g_\nu'(\eta) = -\frac{NR^2}{\beta} \vf_\nu'(\gamma;\beta).
    \end{equation}
    Therefore, by \Cref{lem:away_from_lambda_max}, $g_\nu'(\eta_\delta)\ges 0$ for $\eta_\delta$ such that $\gamma(\eta_\delta)=\lambda_N+\delta$. Furthermore, $\vf'_\nu(\cdot;\beta)$ is strictly increasing and $\vf_\nu'(\gamma;\beta) \to \beta$ as $\gamma\to +\infty$, so $g_\nu'$ is strictly increasing and $ g_\nu'(\eta) \to -NR^2$ as $\eta \to -\infty$. Thus, $g_\nu$ has a unique minimizer $\eta^*\les\eta_\delta$, and this minimizer satisfies $g_\nu'(\eta^*)=0$ and $\gamma^*=\gamma(\eta^*)\ges \gamma(\eta_\delta)=\lambda_N+\delta$.
    Therefore, \eqref{eq:phi_N_derivatives} implies that
    \begin{equation}
        \Q_{\eta_*}[\norm{X}^2]=NR^2.
    \end{equation}
    Furthermore,
    \begin{equation}
        0
        \les
        \vf_\nu''(\gamma;\beta)
        =
        -L_\nu''(\gamma)
        =
        \frac{1}{2N}\sum_{i=1}^N \frac{1}{(\gamma-\lambda_i)^2}
        \les
        \frac{1}{2(\gamma-\lambda_N)^2},
    \end{equation}
    and so
    \begin{equation}
        0
        \les
        g_\nu''(\eta^*)
        =
        \frac{NR^4}{\beta^2}
        \vf_\nu''(\gamma^*;\beta)
        \les
        \frac{NR^4}{2\beta^2\delta^2}.
    \end{equation}

    Lastly, using \Cref{lem:phi_min_UB}, we get
    \begin{equation}
        \eta^\ast=\frac{1}{2}-\frac{\beta\gamma^*}{R^2}\ges\frac{1}{2}-\frac{\beta}{R^2}\left(\lambda_N+\frac{1}{2\beta}\right)=\left(\frac{1}{2}-\frac{\beta\lambda_N}{R^2}\right)-\frac{1}{2R^2},
    \end{equation}
    which implies \eqref{eq:eta_star_bounds} and completes the proof.
\end{proof}

\begin{lemma}\label{lem:g_lower_bound}
    For $\eta^\ast$ defined in \Cref{lem:LB_cov_eta_star} and $R=1$
    \begin{equation}\label{eq:g_lower_bound}
        \frac{1}{N}g_\nu(\eta^\ast)\ges \Flim{\mu}(\beta)-(1+\beta)\sqrt{W_1(\nu,\mu)}-\beta\abs{\lambda_N-\lM}.
    \end{equation}
\end{lemma}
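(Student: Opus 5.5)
Since $\eta^\ast$ minimizes $g_\nu$, the plan is to compute $g_\nu(\eta^\ast)$ exactly through \Cref{lem:change_of_var} and then compare $\vf_\nu$ with $\vf_\mu$ at the minimizer. With $R=1$, \Cref{lem:change_of_var} gives
\begin{equation*}
    \frac1N g_\nu(\eta^\ast)
    =
    -\frac12-\frac12\log(2\beta)+\vf_\nu(\gamma^\ast;\beta),
    \qquad
    \gamma^\ast=\frac{1-2\eta^\ast}{2\beta}.
\end{equation*}
By \eqref{eq:F_def} we have $\Flim{\mu}(\beta)=-\frac12-\frac12\log(2\beta)-\Lstar{\mu}(\beta)$. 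Moreover $-\Lstar{\mu}(\beta)=\inf_{\gamma\ges\lM}\vf_\mu(\gamma;\beta)$, as recalled in the proof of \Cref{lem:limit_range_gamma}. It therefore suffices to show
\begin{equation*}
    \vf_\nu(\gamma^\ast;\beta)\ges \inf_{\gamma\ges\lM}\vf_\mu(\gamma;\beta)-(1+\beta)\sqrt{W_1(\nu,\mu)}-\beta\abs{\lambda_N-\lM}.
\end{equation*}

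From the proof of \Cref{lem:LB_cov_eta_star} we know that $\gamma^\ast\ges\lambda_N+\delta$, where $\delta$ satisfies the hypotheses of \Cref{lem:away_from_lambda_max}. The natural way to transfer the estimate is \Cref{lem:sum_convergence_fixedR} with $k=0$, which requires $\gamma^\ast\ges\max(\lambda_N,\lM)+\delta'$. Since $\abs{\lambda_N-\lM}\les\delta/2$, we get $\gamma^\ast\ges\max(\lambda_N,\lM)+\delta/2$. This yields $\vf_\nu(\gamma^\ast)\ges\vf_\mu(\gamma^\ast)-\delta^{-1}W_1(\nu,\mu)\ges\inf_{\gamma\ges\lM}\vf_\mu-\delta^{-1}W_1$. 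The step $\gamma^\ast\ges\lM$ needed for the last inequality holds automatically here.

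The main obstacle is the dependence of the error on $\delta$: we need $\delta^{-1}W_1\lesssim\sqrt{W_1}$, i.e.\ $\delta\approx\sqrt{W_1}$, but the hypotheses of \Cref{lem:away_from_lambda_max} may force $\delta$ to be of order one. I would therefore avoid \Cref{lem:away_from_lambda_max} in this step and treat $\gamma^\ast$ directly by splitting into two cases.

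\textbf{Case $\gamma^\ast\ges\max(\lambda_N,\lM)+\sqrt{W_1}$.} Apply \Cref{lem:sum_convergence_fixedR} with $\delta'=\sqrt{W_1}$. The error is at most $\tfrac12\sqrt{W_1}$.

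\textbf{Case $\gamma^\ast<\max(\lambda_N,\lM)+\sqrt{W_1}$.} Set $\gamma_1=\max(\lambda_N,\lM)+\sqrt{W_1}$. Since $\vf_\nu'\les\beta$, we have $\vf_\nu(\gamma^\ast)\ges\vf_\nu(\gamma_1)-\beta(\gamma_1-\gamma^\ast)$. Using $\gamma^\ast>\lambda_N$, this gives $\gamma_1-\gamma^\ast\les\sqrt{W_1}+\abs{\lambda_N-\lM}$. Then apply the first case at $\gamma_1\ges\lM$.

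Altogether the error is $(\tfrac12+\beta)\sqrt{W_1}+\beta\abs{\lambda_N-\lM}$, which is within the stated bound. If $W_1=0$, let $\delta'\downarrow0$.
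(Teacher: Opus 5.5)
Your proof is correct and is essentially the paper's argument. The paper lower-bounds $\inf_{\gamma>\lambda_N}\vf_\nu(\gamma;\beta)$ by first restricting the infimum to $\gamma>\max(\lambda_N,\lM)+\delta$ via \Cref{lem:limit_range_gamma}, whose proof is exactly your two-case split using $\vf_\nu'\le\beta$ at cost $\beta((\lM-\lambda_N)_++\delta)$, and then applying \Cref{lem:sum_convergence_fixedR} with $\delta=\sqrt{W_1(\nu,\mu)}$; you carry out the same steps pointwise at $\gamma^\ast$, which only needs $\gamma^\ast>\lambda_N$ and not its minimality.
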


\begin{proof}
    By the definition of $\eta^\ast$, we have
    \begin{equation}
        \frac{1}{N}g_\nu(\eta^\ast)=\inf_{\eta<\frac{1}{2}-\beta\lambda_N} \frac{1}{N}\log\E\left[\exp\left(\beta\sum_{i=1}^N \lambda_iX_i^2+\eta(\norm{X}^2-N)\right)\right].
    \end{equation}
    Changing the variable to $\gamma=\tfrac{1-2\eta}{2\beta}$, we get
    \begin{equation}
        \begin{aligned}
        \frac{1}{N}g_\nu(\eta^\ast)=\inf_{\gamma>\lambda_N}
        \left\{
        -\frac{1}{2}-\frac{1}{2}\log(2\beta)+\beta\gamma-\frac{1}{2N}\sum_{i=1}^N \log(\gamma-\lambda_i)
        \right\}.
        \end{aligned}
    \end{equation}
    
    Next, fix some $\delta>0$ and apply the result of Lemma \ref{lem:limit_range_gamma} with measure $\nu$, $(\lM-\lambda_N)_++\delta$ instead of $\delta$ and $a=\lambda_N$ to get
    \begin{equation}
        \begin{aligned}
        \frac{1}{N}g_\nu(\eta^\ast)\ges \ & \left(-\frac{1}{2}-\frac{1}{2}\log(2\beta)
        +\inf_{\gamma>\max(\lambda_N,\lM)+\delta}
        \left\{
        \beta \gamma -\frac{1}{2N}\sum_{i=1}^N \log(\gamma-\lambda_i)
        \right\}\right)\\
        &-\beta\abs{\lambda_N-\lM}-\beta\delta.
        \end{aligned}
    \end{equation}
    Finally, using Lemma \ref{lem:sum_convergence_fixedR} for the expression in parentheses, we obtain
    \begin{equation}
        \frac{1}{N}g_\nu(\eta^\ast)\ges \Flim{\mu}(\beta)-\frac{1}{\delta}W_1(\nu,\mu)-\beta\abs{\lambda_N-\lM}-\beta\delta,
    \end{equation}
    and taking $\delta=\sqrt{W_1(\nu,\mu)}$ finishes the proof.
\end{proof}

Armed with the abover technical Lemmas we can prove the lower bound part of Theorem \ref{thm:sec2_main}.

\begin{proof}[Proof of Lemma \ref{lem:fe_lower_bound}]
    First, we note that
    \begin{equation}
        \Ffin{\nu}(\beta)\ges\frac{1}{N}\log\Es e^{-N\beta\max\limits_{1\les i\les N} \abs{\lambda_i}}\ges -\beta\Lambda,
    \end{equation}
    and by \Cref{cor:free_energy_trivial_bounds}
    \begin{equation}
        \Flim{\mu}(\beta)\les\beta\abs{\lM}\les\beta\Lambda,
    \end{equation}
    Therefore,
    \begin{equation}
        \Ffin{\nu}(\beta)-\Flim{\mu}(\beta)\ges -2\beta\Lambda.
    \end{equation}

    Now, for arbitrary $R>0$ and $\e\in (0,1)$, define $I_{R,\e}=[R^2(1-\e),R^2(1+\e)]$; later we pick $R=1$. By \eqref{eq:bound_on_BNk}, we have
    \begin{equation}\label{eq:fe_lower_bound_first_step}
        \E\left[
            \exp\left(\beta N\sum_{i=1}^N \lambda_i\frac{X_i^2}{\norm{X}^2}\right)
        \right]
        \ges
        \E\left[
            \exp\left(\frac{\beta}{R^2}\sum_{i=1}^N \lambda_iX_i^2\right)
            \indf_{ N^{-1}\norm{X}^2\in I_{R,\e} }
        \right]
        \exp\left(
            -N \beta \Lambda \e
        \right).
    \end{equation}
    For arbitrary $\eta\in\mbR$, for which the expectation in the denominator is finite, let us consider the measure $\Q_\eta$ defined in \eqref{eq:Q_def}. Using it to rewrite the expectation in the r.h.s. of \eqref{eq:fe_lower_bound_first_step} shows that the expectation is at least
    \begin{equation}
        e^{
            -NR^2\abs{\eta}\e
        }\cdot
        \Q_\eta[N^{-1}\norm{X}^2\in I_{R,\e}]
        \cdot\E\left[\exp\left(\frac{\beta}{R^2}\sum_{i=1}^N \lambda_iX_i^2+\eta(\norm{X}^2-NR^2)\right)\right].
    \end{equation}
    
    Fix an arbitrary $\delta>0$ such that the conditions of \Cref{lem:away_from_lambda_max} are satisfied, and let $\eta^\ast$ be defined as in \Cref{lem:LB_cov_eta_star}. Using Chebyshev's inequality and \eqref{eq:Q_moment_bounds}, we get
    \begin{equation}
        \begin{aligned}
        \Q_{\eta^\ast}[N^{-1}\norm{X}^2\in I_{R,\e}]
        &=
        \Q_{\eta^\ast} \left[\abs{\norm{X}^2-\Q_{\eta^\ast}[\norm{X}^2]}\les NR^2\e\right]
        \\
        &\ges
        1
        -
        \frac{1}{N^2R^4\e^2}
        \Var_{\eta^\ast}[\norm{X}^2]
        \\
        &\ges
        1
        -
        \frac{1}{2N\beta^2\delta^2\e^2},
        \end{aligned}
    \end{equation}
    and so we conclude that
    \begin{equation}
        \Q_{\eta^\ast}[N^{-1}\norm{X}^2\in I_{R,\e}]\ges\frac{1}{2},
    \end{equation}
    if we take
    \begin{equation}
        \label{eq:epsilon_def}
        \e=\frac{1}{\beta\delta\sqrt{N}}
    \end{equation}
    with $N$ large enough for $\e$ to be in $(0,1/2)$, i.e.
    \begin{equation}
        \label{eq:condition_on_N}
        N\ges\frac{4}{\beta^2\delta^2}.
    \end{equation}
    Therefore, picking $R=1$ we arrive at the bound
    \begin{equation}
        \frac{1}{N}\log\E\left[\exp\left(\beta N\sum_{i=1}^N \lambda_i\frac{X_i^2}{\norm{X}^2}\right)\right]
        \ges -4\beta\Lambda\e-\abs{\eta^\ast}\e+\frac{1}{N}\log\frac{1}{2}+\frac{1}{N}g_{\hat{\mu}}(\eta^\ast)
    \end{equation}
    with $\e$ given in \eqref{eq:epsilon_def}. Using \eqref{eq:eta_star_bounds}, we get
    \begin{equation}
        \abs{\eta^\ast}
        <
        \frac{1}{2}+\frac{\beta\Lambda}{R^2}+\frac{1}{2R^2}.
    \end{equation}
    Hence, for $R=1$ and all $N\ges 4/(\beta^2\delta^2)$, we get
    \begin{equation}
        \Ffin{\nu}(\beta)\ges -\frac{1+5\beta\Lambda}{\beta\delta\sqrt{N}}-\frac{\log 2}{N}+\frac{1}{N}g_\nu(\eta^\ast).
    \end{equation}
    Using \Cref{lem:g_lower_bound}, we obtain
    \begin{equation}
        \Ffin{\nu}(\beta)\ges\Flim{\mu}(\beta)-\left(
        \frac{1+5\beta\Lambda}{\beta\delta\sqrt{N}}
        +
        \frac{\log 2}{N}
        +
        (1+\beta)\sqrt{W_1(\nu,\mu)}
        +
        \beta\abs{\nu_{\max}-\lM}
        \right),
    \end{equation}
    where $\delta>0$ is arbitrary for which the conditions of \Cref{lem:away_from_lambda_max} are satisfied.

    Finally, suppose that $\delta>0$ is such that $\vf'_\mu(\lM + 2\delta;\beta)<0$, but does not satisfy the conditions of \Cref{lem:away_from_lambda_max} or the inequality \eqref{eq:condition_on_N}. This means that at least one of the inequalities
    \begin{gather}
        \frac{2\abs{\nu_{\max}-\lM}}{\delta}\ges 1,\\
        \frac{2W_1(\nu,\mu)}{\delta^2\abs{\vf'_\mu(\lM + 2\delta;\beta)}}\ges 1,\\
        \frac{4}{\beta^2\delta^2N}>1,
    \end{gather}
    holds. However, in this case
    \begin{equation}
        \left(\frac{2\abs{\nu_{\max}-\lM}}{\delta}
        +
        \sqrt{\frac{2W_1(\nu,\mu)}{\delta^2\abs{\vf'_\mu(\lM + 2\delta;\beta)}}}
        +
        \frac{4}{\beta^2\delta^2N}\right)
        \cdot
        2\beta\Lambda
        \ges
        2\beta\Lambda,
    \end{equation}
    and so $\e_\delta$ defined in \eqref{eq:epsilon_delta} is not less than the rough bound $2\beta\Lambda$.

    Thus, we get that for any $\delta>0$ such that $\vf'_\mu(\lM + 2\delta;\beta)<0$
    \begin{equation}\label{eq:fe_lower_bound_bis}
        \Ffin{\nu}(\beta)\ges \Flim{\mu}(\beta)-\e_\delta,
    \end{equation}
    where
    \begin{equation}\label{eq:epsilon_delta}
        \begin{aligned}
        \e_\delta
        &=
        \frac{1+5\beta\Lambda}{\beta\delta\sqrt{N}}
        +
        \frac{\log 2}{N}
        +
        (1+\beta)\sqrt{W_1(\nu,\mu)}
        +
        \beta\abs{\lambda_N-\lM}\\
        &+
        \left(\frac{2\abs{\lambda_N-\lM}}{\delta}
        +
        \sqrt{\frac{2W_1(\nu,\mu)}{\delta^2\abs{\vf'_\mu(\lM + 2\delta;\beta)}}}
        +
        \frac{4}{\beta^2\delta^2N}\right)
        2\beta\Lambda.
        \end{aligned}
    \end{equation}
    The statement of the lemma then follows trivially.
\end{proof}

\begin{proof}[Proof of \Cref{thm:sec2_main}]
    The first statement of the theorem follows directly from Lemmas~\ref{lem:fe_upper_bound} and~\ref{lem:fe_lower_bound}.

    It remains to prove the uniform convergence assertion.
    Assumption \ref{main_assumption} implies that the spectra are uniformly bounded,
    that $W_1(\mu_N,\mu)\to0$, and that
    $\lambda_N\to\lM$.
    The preceding estimates therefore give
    \begin{equation}
        \widehat F_{\mu_N}(\beta)\longrightarrow F_\mu(\beta)
        \qquad\text{for every }\beta\in(0,\beta_c).
    \end{equation}
    The same holds at $\beta=0$, since both functions vanish there.
    
    Moreover, for all $s,t\ge0$,
    \begin{equation}
        |\widehat F_{\mu_N}(t)-\widehat F_{\mu_N}(s)|
        \le\Lambda|t-s|,
    \end{equation}
    while Lemma~2.5 gives
    \begin{equation}
        |F_\mu(t)-F_\mu(s)|\le\Lambda|t-s|.
    \end{equation}
    Hence, for every $\eta\in(0,\beta_c)$,
    \begin{equation}
        \begin{aligned}
        |\widehat F_{\mu_N}(\beta_c)-F_\mu(\beta_c)|
        &\le
        |\widehat F_{\mu_N}(\beta_c-\eta)
          -F_\mu(\beta_c-\eta)|
          +2\Lambda\eta.
    \end{aligned}
    \end{equation}
    Taking first $N\to\infty$ and then $\eta\downarrow0$ proves
    convergence at $\beta_c$. Finally, the common Lipschitz bound
    and a finite-net argument imply
    \begin{equation}
        \sup_{\beta\in[0,\beta_c]}
        |\widehat F_{\mu_N}(\beta)-F_\mu(\beta)|
        \longrightarrow0.
    \end{equation}
\end{proof}

\section{A spectral partition of the sphere}\label{sec:partition}

In this section we introduce a partition of the sphere $S^{N-1}$, over which we bound the free energy. It consists of $N$ sets corresponding to the minimal number of coordinates of $\sigma$ (coordinates in the top eigendirections) needed to reach the given contribution $q^\ast>0$. This partition allows us to isolate the contribution to the free energy of different configurations of the spins and to isolate an effective low temperature and effective high temperature contribution. Throughout this section, fix $\beta>\beta_c$, so that
$q^*=1-\beta_c/\beta\in(0,1)$. The case $\beta\le\beta_c$ is covered by Theorem \ref{thm:sec2_main}. We also assume $N\ge4$, which is sufficient for the asymptotic statements below. In the remainder of the paper we use that, consequently to Assumption \ref{main_assumption}, there exists $\Lambda$ such that uniformly in $N$,

\begin{equation}\label{eq:def_Lambda}
    \supp(\mu)\cup \supp(\mu_N) \subset[-\Lambda,\Lambda].
\end{equation}
Define for $K \in \{0,\ldots,N-1\}$
\begin{equation}\label{eq:def_q_tauLR}    
    \tau_{L,K}(\sigma)
    =
    \sum_{i=1}^{N-K-1} \sigma_i^2,
    \qquad
    q_K(\sigma)
    :=
    \sum_{i=N-K}^{N} \sigma_{i}^2,
    \qquad
    \tau_{R,K}(\sigma)
    =
    \sum_{i=N-K+1}^N \sigma_i^2,
\end{equation}
with the convention
\begin{equation}\label{eq:def_q_tau_LR_convention}
    \tau_{L,N-1}(\sigma)
    =
    \tau_{R,0}(\sigma)
    =
    0,
\end{equation}
so that
\begin{equation}
    q_K(\sigma)
    =
    \sigma_{N-K}^2
    +
    \tau_{R,K}(\sigma)
\end{equation}
and
\begin{equation}
    \tau_{L,K}(\sigma)
    +
    \sigma_{N-K}^2
    +
    \tau_{R,K}(\sigma)
    =
    1
    ,\quad K=0,\ldots,N-1.
\end{equation}
Furthermore, let
\begin{equation}\label{eq:def_K}
    K(\sigma)
    =
    \min
    \big\{
        K\in\{0,1,\ldots,N-1\}
        \mid
        q_K(\sigma) \ge q^\ast
    \big\}.
\end{equation}
Note that by construction
\begin{equation}\label{eq:q_tau_LR_K(sigma)}
    \forall \sigma \in S^{N-1}\colon
    \qquad
    q_{K(\sigma)} \ge q^\ast,
    \qquad
    \tau_{L,K(\sigma)} \le 1-q^\ast,
    \qquad
    \tau_{R,K(\sigma)} < q^\ast.
\end{equation}
We consider the partition
\begin{equation}
    S^{N-1} = \bigcup_{K=0}^{N-1} A_K,
\end{equation}
where
\begin{equation}\label{eq:def_AK}
    A_K
    =
    \left\{
    \sigma\in S^{N-1}\mid K(\sigma)=K
    \right\}
    ,\quad K=0,1,\ldots,N-1,
\end{equation}
Defining
\begin{equation}\label{eq:def_ZN_A}
    Z_N(\beta,A) = \Es[e^{\beta H_N(\sigma)} \indf_A(\sigma)],
\end{equation}
we get
\begin{equation}
    \Es[e^{\beta H_N(\sigma)}]
    =
    \sum_{K=0}^{N-1}
     Z_N(\beta,A_K)
    \les N\max_{0\les K\les N-1} 
     Z_N(\beta,A_K),
\end{equation}
since the sets $A_K$ are disjoint. We do not directly estimate the free energy restricted to the sets $A_K$ but further partition them depending on how big the spin $\sigma_{N-K}^2$ is.

Note that $A_K$ can be written
\begin{equation}\label{eq:A_def}
    A_K
    =
    \{
        \sigma \in S^{N-1}        
        \mid        
        \tau_{K,R}(\sigma) < q^\ast \leq q_K(\sigma)
    \},
    \quad K=0,1,\ldots,N-1,
\end{equation}
A natural way to further partition $A_K$ is to define for $\tau_L,\tau_R\ge 0$ and $\e>0$
\begin{equation}\label{eq:defAkqlqrdelta}
    A_{K,\e}^{\tau_L,\tau_R}
    =
    \left\{
        \sigma\in A_K
        \mid
        \tau_L-\e \leq \tau_{L,K}(\sigma) \leq \tau_L+\e,
        \;
        \tau_R-\e \leq \tau_{R,K}(\sigma) \leq \tau_R+\e
    \right\}
\end{equation}
so that recalling \eqref{eq:q_tau_LR_K(sigma)} we have
\begin{equation}
    A_K
    =
    \bigcup_{
        \substack{
            \tau_L\in\pi_L
            \\            
            \tau_R\in\pi_R      
        }
    }
    A_{K,\e}^{\tau_L,\tau_R}.
\end{equation}
where

\begin{equation}\label{eq:def_set_piL_piR}
\begin{aligned}
    \pi_L=\{0,2\e,\dots,2\e\lfloor(1-q^*)/(2\e) \rfloor\} \cup \{1-q^*\},\\
    \pi_R=\{0,2\e,\dots,2\e\lfloor q^*/(2\e) \rfloor\}\cup \{q^*\}.
\end{aligned}
\end{equation}
Then also
\begin{equation}\label{eq:covering_sphere_AK}
    S^{N-1}
    =
    \bigcup_{K=0,\dots,N-1}
    \bigcup_{
        \substack{
            \tau_L\in \pi_L 
            \\            
            \tau_R\in \pi_R     
        }
    }
    A_{K,\e}^{\tau_L,\tau_R}.
\end{equation}
Note that because of this decomposition, in the following we often only prove results for $\tau_L,\tau_R=0,2\epsilon,\dots$. We are effectively separating the contribution of the directions associated with the large eigenvalues and those associated with the small eigenvalues. It corresponds to the part effectively at high and low temperature.


We introduce the truncated spectral measures
\begin{equation}\label{eq:def_mu_hatLR}
    \hat{\mu}_{L,K}
    =
    \frac{1}{N-K-1}
    \sum_{i=1}^{N-K-1} \delta_{\lambda_i},
    \qquad \qquad
    \hat{\mu}_{R,K}
    =
    \frac{1}{K}
    \sum_{i=N-K+1}^N \delta_{\lambda_i}.
\end{equation}
We also introduce the entropy of the discrete distribution $(\alpha,0,1-\alpha)$ relative to $(\tau_R,1-\tau_L-\tau_R,\tau_L)$. Recall that for two distributions $P$ and $Q$ over $[n]$ we have
\begin{equation}
    D\big( P|| Q\big)
    =
    \sum_{x=1}^n P(x)\log\left(\frac{P(x)}{Q(x)}\right).
\end{equation}
In our case, we denote in short 
\begin{equation}\label{eq:def_entropy}
    \begin{array}{rcl}
        D(\alpha||\tau_L,\tau_R)
        &:=&
        D\big((\alpha,0,1-\alpha)||(\tau_R,1-\tau_L-\tau_R,\tau_L)\big)\\
        & =&
        \alpha\log\left(\frac{\alpha}{\tau_R}\right)
        +
        (1-\alpha)\log\left(\frac{1-\alpha}{\tau_L}\right),
    \end{array}    
\end{equation}
for $\alpha\in(0,1)$ and $\tau_L,\tau_R<0$, $\tau_L+\tau_R\les1$. The measure of the set $A_{K,\e}^{\tau_L,\tau_R}$ is approximated by $\exp(-\frac{N}2 D(\alpha||\tau_L,\tau_R))$ (cf. Lemma \ref{lem:measure_AK_bulk}). For $\alpha=0$ we use the convention
\begin{equation}\label{eq:def_entropy_alpha=0}
    D(0||\tau_L,\tau_R)
    :=
    \begin{cases}
        \infty & \text{ if }\tau_R>0,\\
        \log \tau_L^{-1} & \text{otherwise}.
    \end{cases}
\end{equation}
and for $\alpha=1$
\begin{equation}\label{eq:def_entropy_alpha=1}
    D(1||\tau_L,\tau_R)
    :=
    \begin{cases}
        \infty & \text{ if }\tau_L>0,\\
        \log \tau_R^{-1} & \text{otherwise}.
    \end{cases}    
\end{equation}
In addition $D(\alpha||\tau_L,\tau_R)=\infty$ if $\alpha\in(0,1)$ and either $\tau_L=0$ or $\tau_R=0$.


In the following proposition, we provide an upper bound for the free energy on the set $A_{K,\e}^{\tau_L,\tau_R}$ in terms of the truncated free energies and relative entropy in the range $K\in [N-2]$. For $K=1,\dots,N-2$, let
\begin{equation}\label{eq:def_MNK}
    \begin{aligned}
        M_{N,K}(\tau_L,\tau_R)
        = \ &
        \frac{N-K-1}{N}\Ffin{\hat{\mu}_{L,K}}\left( \frac{N}{N-K-1}\beta \tau_L \right)
        +
        \beta \lambda_{N-K}(1-\tau_L-\tau_R)\\
        &
        +
        \frac{K}{N}\Ffin{\hat{\mu}_{R,K}}\left( \frac{N}{K}\beta \tau_R \right)
        -
        \frac{1}{2}D(\alpha||\tau_L,\tau_R),
    \end{aligned}
\end{equation}
where $\alpha = \frac{K}{N}$.
For $K=0$ let
\begin{equation}\label{eq:def_MN0}
    M_{N,0}(\tau_L,\tau_R)
    =
    M_{N,0}(\tau_L)
    =
    \frac{N-1}{N}
    \Ffin{\hat{\mu}_{L,0}}\left(\frac{N}{N-1} \beta \tau_L\right)
    +
    \beta\lambda_N(1-\tau_L)
    -
    \frac12 D(0||\tau_L,0),
\end{equation}
and for $K=N-1$ let
\begin{equation}\label{eq:def_MNN}
    M_{N,N-1}(\tau_L,\tau_R)
    =
    M_{N,N-1}(\tau_R)
    =
    \frac{N-1}{N}
    \Ffin{\hat{\mu}_{R,N-1}}\left(\frac{N}{N-1} \beta \tau_R\right)
    +
    \beta\lambda_1(1-\tau_R)
    -
    \frac12 D(1||0,\tau_R).
\end{equation}

\begin{proposition}\label{prop:sec4main}
    There exists $c>0$ such that for any $\beta>\beta_c$, $0<\epsilon<\min\{q^*,1-q^*\}$, $\tau_L\in [0,1-q^*]$, $\tau_R\in [0,q^*]$ and $K=1,\dots N-2$
    \begin{equation}
        \frac{1}{N}
        \log
        Z_N (\beta,A_{K,\e}^{\tau_L,\tau_R})
        \les
        \sup_{x,y\in C_\epsilon(\tau_L,\tau_R)} M_{N,K}(x,y)
        +
        c\frac{\log N \epsilon^{-1}}{N}
        +
        6\beta\Lambda\epsilon,
    \end{equation}

    \begin{equation}
        \frac{1}{N}
        \log
        Z_N (\beta,A_{0,\e}^{\tau_L,\tau_R})
        \les
        \sup_{x,y\in C_\epsilon(\tau_L,\tau_R)} M_{N,0}(x,0)
        +
        c\frac{\log N \epsilon^{-1}}{N}
        +
        6\beta\Lambda\epsilon,
    \end{equation}
    and
    \begin{equation}
        \frac{1}{N}
        \log
        Z_N (\beta,A_{N-1,\e}^{\tau_L,\tau_R})
        \les
        \sup_{x,y\in C_\epsilon(\tau_L,\tau_R)} M_{N,N-1}(0,y)
        +
        c\frac{\log N \epsilon^{-1}}{N}
        +
        6\beta\Lambda\epsilon,
    \end{equation}
    where

    \begin{equation}\label{eq:simplex_tauL_tau_R}
        C_{\epsilon}(\tau_L,\tau_R)
        =
        \left\{
        x,y\in[0,1-q^*]\times [0,q^*]: \ \abs{x-\tau_L}\les\e, \  \abs{y-\tau_R}\les\e 
        \right\}
    \end{equation}
\end{proposition}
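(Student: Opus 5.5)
The plan is to compute $Z_N(\beta,A_{K,\e}^{\tau_L,\tau_R})$ essentially exactly, using the polar (Dirichlet) decomposition of the uniform measure on $S^{N-1}$ adapted to the three spectral blocks. Since the Hamiltonian is diagonal, $H_N(\sigma)=N\sum_i\lambda_i\sigma_i^2$.

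\textbf{Step 1: decomposition.} For $1\le K\le N-2$, split $\sigma=(u,s,v)$ with $u\in\R^{N-K-1}$ the first $N-K-1$ coordinates, $s=\sigma_{N-K}$, and $v\in\R^{K}$ the last $K$ coordinates. Writing $\sigma$ as $X/|X|$ with $X$ standard Gaussian, we get
\begin{equation*}
u=\sqrt{a}\,\tilde u,\qquad v=\sqrt{b}\,\tilde v,\qquad s^2=1-a-b .
\end{equation*}
Here $\tilde u,\tilde v$ are uniform on $S^{N-K-2}$ and $S^{K-1}$. They are independent of each other and of $(a,b)$, and $(a,s^2,b)$ has the Dirichlet law with parameters $\bigl(\tfrac{N-K-1}{2},\tfrac12,\tfrac K2\bigr)$. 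In these variables $\tau_{L,K}(\sigma)=a$ and $\tau_{R,K}(\sigma)=b$. Hence $A_{K,\e}^{\tau_L,\tau_R}$ is exactly the event that $(a,b)$ lies in $C_\e(\tau_L,\tau_R)$, intersected with the constraints $a\le 1-q^*$ and $b<q^*$ from \eqref{eq:A_def}, up to the harmless boundary case $q_K=q^*$.

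\textbf{Step 2: conditioning on $(a,b)$.} Conditionally on $(a,b)$ the exponential factorizes:
\begin{equation*}
\Es\bigl[e^{\beta H_N}\,\big|\,a,b\bigr]=\exp\Bigl((N-K-1)\Ffin{\hat\mu_{L,K}}\bigl(\tfrac{N}{N-K-1}\beta a\bigr)+N\beta\lambda_{N-K}(1-a-b)+K\,\Ffin{\hat\mu_{R,K}}\bigl(\tfrac{N}{K}\beta b\bigr)\Bigr).
\end{equation*}
This uses the definition \eqref{eq:F_nu_def} applied to the truncated measures \eqref{eq:def_mu_hatLR}. Therefore
\begin{equation*}
Z_N(\beta,A_{K,\e}^{\tau_L,\tau_R})\le \int_{C_\e(\tau_L,\tau_R)}\exp\bigl(N[\,\cdots]\bigr)\,\rho(a,b)\,da\,db,
\end{equation*}
where $\rho$ is the Dirichlet density
\begin{equation*}
\rho(a,b)=\frac{\Gamma(N/2)}{\Gamma(\frac{N-K-1}{2})\Gamma(\frac12)\Gamma(\frac K2)}\,a^{\frac{N-K-1}{2}-1}(1-a-b)^{-1/2}b^{\frac K2-1}.
\end{equation*}

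\textbf{Step 3: the density produces the relative entropy.} Using Stirling in the form $\log\Gamma(x)=x\log x-x+O(\log(x+2))$, uniformly for $x\ge 1/2$, the log of the normalizing constant is $-\tfrac N2\bigl[(1-\alpha)\log(1-\alpha)+\alpha\log\alpha\bigr]+O(\log N)$, with $\alpha=K/N$. The powers $a^{(N-K-1)/2}b^{K/2}$ give $\tfrac N2[(1-\alpha)\log a+\alpha\log b]$ plus $O(1)$ corrections, since $(N-K-1)/N$ differs from $1-\alpha$ by $1/N$. The remaining factors $a^{-1}(1-a-b)^{-1/2}b^{-1}$ are handled as follows. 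I pull out $\sup_{C_\e}$ of the exponential part, leaving $\int a^{\frac{N-K-1}{2}-1}(1-a-b)^{-1/2}b^{\frac K2-1}\,da\,db$. Every exponent is at least $-\tfrac12$, so this is bounded by $a^{\frac{N-K-1}{2}}b^{\frac K2}$ evaluated at the supremum, times $\mathrm{poly}(N)$ times $\e^{-O(1)}$, controlling $a^{-1}$ and $b^{-1}$ by crude bounds when $a,b\gtrsim N^{-1}$. When $a$ or $b$ is below that scale, the corresponding term contributes $-\infty$ to $D$ anyway and only a polynomial loss arises. Collecting terms, $\tfrac1N\log\rho$ is at most $-\tfrac12 D(\alpha\|a,b)+c\,\tfrac{\log(N\e^{-1})}{N}$.

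\textbf{Step 4: assembling the bound.} Taking $\tfrac1N\log$ and the supremum over $(a,b)\in C_\e$ gives $\sup_{C_\e}M_{N,K}+c\log(N\e^{-1})/N$. The additional $6\beta\Lambda\e$ slack absorbs the mismatch between the grid cell $C_\e$ and the constraints of $A_K$ and the $O(\e)$ perturbation of the arguments of $\Ffin{}$. The latter is controlled by the $\Lambda$-Lipschitz property of $\Ffin{}$ in $\beta$, since the scaled arguments change by at most $N\beta\e/(N-K-1)$ and the prefactors compensate. The cases $K=0$ and $K=N-1$ are identical, with one block absent. For $K=0$ it is a Beta$\bigl(\tfrac{N-1}{2},\tfrac12\bigr)$ law for $a$ with $b\equiv0$, giving $D(0\|a,0)=\log a^{-1}$. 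For $K=N-1$ the roles of the blocks are swapped, giving $D(1\|0,b)$.

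\textbf{Main obstacle.} The hard part is making the Stirling/density estimate in Step 3 uniform in $K\in\{1,\dots,N-2\}$, including the extreme regimes $K=1$ or $K=N-2$, where a Gamma argument equals $\tfrac12$ or $1$. One also has to handle the singular factor $(1-a-b)^{-1/2}$ and the behavior near $a=0$ or $b=0$ without losing more than $O(\log(N\e^{-1}))$ in the exponent. I would treat this by bounding every integral over $C_\e$ explicitly via Beta-function identities rather than by pointwise density bounds.
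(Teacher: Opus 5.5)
Your proposal is correct and follows the paper's proof essentially step by step: the conditional factorization over the block norms is Lemma~\ref{lem:Z_Nslice_decompo} (the Lipschitz bound on $\tilde M_{N,K}$ supplies the $\beta\Lambda\epsilon$-type term), the Dirichlet density with Stirling for $C_{N,K}$ is Lemmas~\ref{lem:measure_AK_bulk} and~\ref{lem:bound_CNK}, and the Beta laws for $K=0,N-1$ are Lemma~\ref{lem:measure_AK_K=0_N}. The paper handles the singular factors by your ``evaluate at the supremum'' idea: it bounds the density on the cell by its value at the corner $(u_L,u_R)=(\min(\tau_L+\epsilon,1-q^*),\min(\tau_R+\epsilon,q^*))$ at a cost $\epsilon^{-3}$ (using $u_L,u_R\ge\epsilon$), and integrates the leftover $x^{-1/2}$, $y^{-1/2}$, $(1-x-y)^{-1/2}$ factors, so your separate treatment of the scale $a,b\lesssim N^{-1}$ is unnecessary.
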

The proof of this proposition takes place in a collection of lemmas. We define for $K=1,\dots,N-2$

\begin{equation}\label{eq:def_tildeMNK}
    \begin{aligned}
        \tilde{M}_{N,K}(\tau_L,\tau_R)
        = \ &
        \frac{N-K-1}{N}\Ffin{\hat{\mu}_{L,K}}\left( \frac{N}{N-K-1}\beta \tau_L \right)
        +
        \beta \lambda_{N-K}(1-\tau_L-\tau_R)\\
        &
        +
        \frac{K}{N}\Ffin{\hat{\mu}_{R,K}}\left( \frac{N}{K}\beta \tau_R \right),
    \end{aligned}
\end{equation}

\noindent and analogously for $K=0,N-1$. It is morally $M_{N,K}+\tfrac{1}{2}D$, an approximation of the free energy conditioned on $A_{K,\e}^{\tau_L,\tau_R}$. We start by bounding $Z_N(\beta,A_{K,\e}^{\tau_L,\tau_R})$ in terms of $\tilde{M}_{N,K}(\tau_L,\tau_R)$ and the measure of $A_{K,\e}^{\tau_L,\tau_R}$.

\begin{lemma}\label{lem:Z_Nslice_decompo}
    For any $\beta>0$, $\tau_L\in [0,1-q^*]$, $\tau_R\in [0,q^*]$ and $K=0,\dots N-1$ such that $A_{K,\e}^{\tau_L,\tau_R}$ has non-zero measure
    \begin{equation}
        \left|
        \frac{1}{N}
        \log
        Z_N (\beta,A_{K,\e}^{\tau_L,\tau_R})
        -
        \tilde{M}_{N,K}(\tau_L,\tau_R)
        -
        \frac{1}{N}\log \Es \left[ A_{K,\e}^{\tau_L,\tau_R} \right]
        \right|
        \leq
        6\beta\Lambda\epsilon.
    \end{equation}
\end{lemma}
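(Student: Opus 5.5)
We decompose a uniform point on $S^{N-1}$ along the three spectral blocks $\{1,\dots,N-K-1\}$, $\{N-K\}$ and $\{N-K+1,\dots,N\}$. Write $\sigma=(\sigma^L,\sigma_{N-K},\sigma^R)$ with $\sigma^L\in\R^{N-K-1}$ and $\sigma^R\in\R^{K}$. If $\sigma$ is uniform on the sphere, then conditionally on the block norms $(\tau_{L,K}(\sigma),\sigma_{N-K}^2,\tau_{R,K}(\sigma))$ the normalized vectors $\sigma^L/|\sigma^L|$ and $\sigma^R/|\sigma^R|$ are independent and uniform on $S^{N-K-2}$ and $S^{K-1}$. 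They are also independent of the block norms. This follows from invariance of the uniform measure under $O(N-K-1)\times O(1)\times O(K)$, or directly from the Gaussian representation $X/|X|$ used in Section~\ref{sec:Legendre_FE_high_temp}. Moreover, the membership $\sigma\in A^{\tau_L,\tau_R}_{K,\e}$ depends only on these block norms, since $A_K$ is defined through $\tau_{R,K}$ and $q_K=1-\tau_{L,K}$.

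Next we write the Hamiltonian in the eigenbasis. Set $u=\sigma^L/|\sigma^L|$ and $v=\sigma^R/|\sigma^R|$. Then
\begin{equation*}
\beta H_N(\sigma)=\beta N\tau_{L,K}\sum_{i\le N-K-1}\lambda_i u_i^2+\beta N\lambda_{N-K}\sigma_{N-K}^2+\beta N\tau_{R,K}\sum_{i>N-K}\lambda_i v_i^2 .
\end{equation*}
On the set $A^{\tau_L,\tau_R}_{K,\e}$ we have $|\tau_{L,K}-\tau_L|\le\e$, $|\tau_{R,K}-\tau_R|\le\e$ and $|\sigma_{N-K}^2-(1-\tau_L-\tau_R)|\le 2\e$. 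We replace the random block norms by $\tau_L$, $\tau_R$ and $1-\tau_L-\tau_R$. Because every $|\lambda_i|\le\Lambda$ and $\sum u_i^2=\sum v_i^2=1$, the three replacements cost at most $\beta N\Lambda\e$, $\beta N\Lambda\e$ and $2\beta N\Lambda\e$ in the exponent. The total is $4\beta N\Lambda\e$ (and $2\beta N\Lambda\e$ on each side gives room up to $6\beta\Lambda\e$). The bound holds pointwise in both directions.

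After this replacement, the conditional independence lets the expectation factorize:
\begin{equation*}
Z_N(\beta,A^{\tau_L,\tau_R}_{K,\e})= e^{\pm 6\beta\Lambda\e N}\,\Es[\indf_{A}]\, e^{\beta N\lambda_{N-K}(1-\tau_L-\tau_R)}\,\Es_{S^{N-K-2}}\!\Big[e^{\beta N\tau_L\sum\lambda_iu_i^2}\Big]\,\Es_{S^{K-1}}\!\Big[e^{\beta N\tau_R\sum\lambda_iv_i^2}\Big].
\end{equation*}
We then rewrite each spherical expectation in terms of \eqref{eq:F_nu_def}. Since $\beta N\tau_L=(N-K-1)\cdot\frac{N}{N-K-1}\beta\tau_L$, the first factor equals $\exp\big((N-K-1)\Ffin{\hat\mu_{L,K}}(\frac{N}{N-K-1}\beta\tau_L)\big)$. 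The second factor is treated the same way. Taking $\frac1N\log$ gives exactly $\tilde M_{N,K}(\tau_L,\tau_R)+\frac1N\log\Es[\indf_A]$, up to $\pm6\beta\Lambda\e$.

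For the edge cases $K=0$ and $K=N-1$, one block is empty and its factor is simply omitted. These cases correspond to the versions of $\tilde M$ "defined analogously".

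The main point requiring care is the conditional independence and uniformity claim. The cleanest route is the Gaussian representation. Take $X=(X^L,X_{N-K},X^R)$. Then $X^L/|X^L|$ and $X^R/|X^R|$ are uniform on their spheres and independent of $(|X^L|,|X_{N-K}|,|X^R|)$, by Gaussian rotation invariance within each block. The block norms of $\sigma=X/|X|$ are functions of these radii alone. This justifies factoring the indicator $\indf_A$ and the energy terms of the two blocks. The hypothesis that $A$ has nonzero measure ensures the logarithm is finite.
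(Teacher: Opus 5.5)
Your proof is correct and follows essentially the paper's route. You condition on the block norms $(\tau_{L,K},\sigma_{N-K}^2,\tau_{R,K})$, use that the two blocks are then independent and uniform on their spheres so that the restricted partition function factorizes into the $\Ffin{\hat\mu_{L,K}}$ and $\Ffin{\hat\mu_{R,K}}$ terms times the measure of the set, and absorb the $\e$-fluctuations of the norms into the error. The only cosmetic difference is the order of steps: the paper first writes the exact identity $Z_N(\beta,A_{K,\e}^{\tau_L,\tau_R})=\Es[e^{N\tilde M_{N,K}(\tau_{L,K}(\sigma),\tau_{R,K}(\sigma))}\indf_{A_{K,\e}^{\tau_L,\tau_R}}(\sigma)]$ and then uses Lipschitzness of $\tilde M_{N,K}$, whereas you freeze the norms pointwise in the Hamiltonian before factorizing, which even gives the constant $4\beta\Lambda\e$.
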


\begin{proof}
     We start by fixing some $K=1,\dots,N-2$. Define the $\sigma$-algebra $\cF=\sigma(\tau_{L,K}(\sigma),\sigma_{N-K}^2,\tau_{R,K}(\sigma))$, and note that
    \begin{equation}
        Z_N(\beta,A_{K,\e}^{\tau_L,\tau_R})
        =
        \Es\left[
            \Es[e^{\beta H_N(\sigma)}\mid\cF]
            \indf_{A_{K,\e}^{\tau_L,\tau_R}(\sigma)}
        \right].
    \end{equation}
    Under $\Es[\cdot| \cF]$, the random variable $\sigma_{N-K}^2$ is a.s. constant and $(\sigma_i)_{i=1}^{N-K-1}$ and $(\sigma_i)_{i=N-K+1}^N$ are independent. Thus
    \begin{equation}
        \Es[e^{\beta H_N(\sigma)}\mid\cF]
        =
        \Es\left[e^{N\beta\sum_{i=1}^{N-K-1} \lambda_i\sigma_i^2}\mid\cF\right]
        e^{N\beta\lambda_{N-K}\sigma_{N-K}^2}
        \Es\left[e^{ N\beta\sum_{i=N-K+1}^N \lambda_i\sigma_i^2}\mid\cF\right].
    \end{equation}
    Furthermore, under $\Es[\cdot| \cF]$ the vectors $(\sigma_i)_{i=1}^{N-K-1}$ and $(\sigma_i)_{i=N-K+1}^N$ are uniform on $S_{N-K-2}(\tau_{L,K}^{1/2})$ and $S_{K-1}(\tau_{R,K}^{1/2})$ respectively. Thus    
    \begin{equation}
        \Es\left[e^{N\beta\sum_{i=1}^{N-K-1} \lambda_i\sigma_i^2}\mid\cF\right]
        =
        E_{N-K-2}\left[ e^{ N \beta \tau_{L,K} \sum_{i=1}^{N-K-1} \lambda_i\sigma_i^2} \right]
        \overset{\eqref{eq:F_nu_def},\eqref{eq:def_mu_hatLR}}{=}
        e^{(N-K-1)\Ffin{\hat{\mu}_{L,K}}\left(\frac{N}{N-K-1} \beta \tau_{L,K} \right)},
    \end{equation}
    and similarly
    \begin{equation}
        \Es\left[e^{ N\beta\sum_{i=N-K+1}^N \lambda_i\sigma_i^2}\mid\cF\right]
        \overset{\eqref{eq:F_nu_def},\eqref{eq:def_mu_hatLR}}{=}
        e^{K \Ffin{\hat{\mu}_{R,K}}\left(\frac{N}{K} \beta \tau_{R,K} \right)}.
    \end{equation}    
    Combining the above and using that $\sigma_{N-K}^2 = 1 - \tau_{L,K} - \tau_{R,K}$ we obtain
    \begin{equation}\label{eq:ZNA}
        Z_N(\beta,A_{K,\e}^{\tau_L,\tau_R})
        =
        \Es\left[
            e^{N\tilde{M}_{N,K}(\tau_{L,K}(\sigma),\tau_{R,K}(\sigma))
            }
        \indf_{A_{K,\e}^{\tau_L,\tau_R}}(\sigma)
        \right].
    \end{equation}
    The proof of the lemma for $K=1,\dots,N-2$ is complete by recalling that $\tilde{M}_{N,K}$ is $3\beta\Lambda$-Lipschitz. Indeed, $x\to\frac{N-K-1}{N}\Ffin{\hat{\mu}_{L,K}}\left(\frac{N}{N-K-1}\beta x\right)$ and $x\mapsto \frac{K}{N}\Ffin{\hat{\mu}_{R,K}}\left(\frac{N}{K}\beta x\right)$
    are $\beta\Lambda$-Lipschitz. Then, from the definition \eqref{eq:def_tildeMNK}, $\tilde{M}_{N,K}$ is $3\beta\Lambda$-Lipschitz. The proof for $K=0,N-1$ is analogous, using the corresponding definition of $\tilde{M}_{N,K}$.
\end{proof}

To conclude it remains to bound the measure of the sets $A_{K,\e}^{\tau_L,\tau_R}$ in term of the entropy.

\begin{lemma}\label{lem:measure_AK_bulk}
    There exits $c>0$ such that for any $0<\epsilon<\min(q^*,1-q^*)$, $\tau_L\in [0,1-q^*]$, $\tau_R\in [0,q^*]$ and $K=1,\dots N-2$
    \begin{equation}
        \frac{1}{N}\log \Es \left[ A_{K,\e}^{\tau_L,\tau_R} \right]
        \les
        - 
        \frac{1}{2}D(\alpha||u_L,u_R)
        +
        c\frac{\log N \epsilon^{-1}}{N},
    \end{equation}
    where $u_L=\min(\tau_L+\epsilon,1-q^*)$ and $u_R=\min(\tau_R+\epsilon,q^*)$.
\end{lemma}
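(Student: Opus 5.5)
Since the uniform measure on $S^{N-1}$ is the law of $X/|X|$ with $X$ standard Gaussian, we can describe $(\tau_{L,K},\sigma_{N-K}^2,\tau_{R,K})$ through Dirichlet/Beta laws. Write $\sigma_i^2 = X_i^2/|X|^2$. Then $(\tau_{L,K},\sigma_{N-K}^2,\tau_{R,K})$ is Dirichlet with parameters $\big(\tfrac{N-K-1}{2},\tfrac12,\tfrac K2\big)$. Its density on the simplex $\{a+b+c=1\}$ is
\begin{equation*}
\frac{\Gamma(N/2)}{\Gamma(\frac{N-K-1}{2})\Gamma(\frac12)\Gamma(\frac K2)}\,a^{\frac{N-K-1}{2}-1}b^{-1/2}c^{\frac K2-1}.
\end{equation*}
The set $A_{K,\e}^{\tau_L,\tau_R}$ is contained in $\{a\le u_L,\ c\le u_R\}$, using $\tau_{L,K}\le 1-q^*$ and $\tau_{R,K}<q^*$ from \eqref{eq:q_tau_LR_K(sigma)} together with the defining constraints. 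So it suffices to bound the Dirichlet probability of $\{a\le u_L,\, c\le u_R\}$, or more precisely of the box $[\tau_L-\e,u_L]\times[\tau_R-\e,u_R]$ in $(a,c)$.

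The first step is the normalizing constant. By Stirling,
\begin{equation*}
\log\frac{\Gamma(N/2)}{\Gamma(\frac{N-K-1}{2})\Gamma(\frac K2)} \le \frac N2\Big[-(1-\alpha)\log(1-\alpha)-\alpha\log\alpha\Big]+O(\log N),
\end{equation*}
with $\alpha=K/N$. The shift $K+1$ versus $K$ costs only $O(\log N)$, because $\Gamma(x+\tfrac12)/\Gamma(x)\le\sqrt{x+1}$. The second step bounds the integrand on the box by $a^{\frac{N-K-1}{2}-1}c^{\frac K2-1}\le u_L^{\frac{N-K-1}{2}-1}u_R^{\frac K2-1}$, which is valid since both exponents are $\ge -1$ and the region has $a\le u_L$, $c\le u_R$. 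The exponents equal to $-1/2$ or $-1$ occur only when $K$ or $N-K-1$ is at most $2$, and these cases need separate handling. The third step integrates $b^{-1/2}$ over the region. Writing $b=1-a-c$ gives $\int\!\!\int (1-a-c)^{-1/2}\,da\,dc\le C$, a bounded constant. Taking $\tfrac1N\log$ of the three factors produces
\begin{equation*}
\tfrac12\Big[(1-\alpha)\log u_L+\alpha\log u_R-(1-\alpha)\log(1-\alpha)-\alpha\log\alpha\Big]=-\tfrac12 D(\alpha\|u_L,u_R),
\end{equation*}
up to $O(\log N/N)$ plus correction terms of the form $\tfrac1N\log u_L^{-1}$ and $\tfrac1N\log u_R^{-1}$ coming from the "$-1$" in the exponents.

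The main obstacle is these correction terms, since $u_L$ and $u_R$ may be as small as $\e$. For $K\ge 2$ the exponent $\frac K2-1$ is nonnegative, and the region's $c$-length is at most $2\e\le 2u_R$. Integrating $c^{\frac K2-1}$ exactly over $[0,u_R]$ gives $\tfrac2K u_R^{K/2}$, so there is no loss at all; the same holds for $a$. For $K=1$ the exponent is $-\tfrac12$, and the exact integral $\int_0^{u_R}c^{-1/2}\,dc=2u_R^{1/2}$ again matches $u_R^{\alpha N/2}$ with $\alpha N=1$. So the cleanest route is to integrate exactly, with $\int_0^{u}x^{p-1}\,dx=u^p/p$ and $p\ge\tfrac12$, rather than take the sup. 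This yields the bound with error $c\log N/N$. The $\log\e^{-1}$ in the statement then only serves as slack, for instance if one prefers the crude sup bound multiplied by the box area $4\e^2$. Finally, uniformity of $c$ over $K$ follows because all Stirling errors are uniform, $O(\log N)$.
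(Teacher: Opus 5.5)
Your ingredients (the Dirichlet law of $(\tau_{L,K},\sigma_{N-K}^2,\tau_{R,K})$, Stirling for $C_{N,K}$, the bounded integral of $(1-a-c)^{-1/2}$) are the paper's. However, the step you single out as the ``cleanest route'' does not work, and your conclusion that the $\log\e^{-1}$ term is only slack is false.

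First, the integrand contains $(1-a-c)^{-1/2}$, which couples $a$ and $c$. You cannot integrate $a^{p_L-1}$ and $c^{p_R-1}$ exactly (with $p_L=\tfrac{N-K-1}{2}$, $p_R=\tfrac K2$) and also bound $\iint(1-a-c)^{-1/2}\,da\,dc$ by a constant, because the integral does not factor.

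Second, even where $(1-a-c)^{-1/2}\approx 1$, the claim ``the same holds for $a$'' fails. The integral $\int_0^{u_L}a^{p_L-1}\,da$ gives $u_L^{(N-K-1)/2}$, whereas $e^{-\frac N2 D(\alpha\|u_L,u_R)}$ contains $u_L^{(1-\alpha)N/2}=u_L^{(N-K)/2}$; the coordinate $\sigma_{N-K}^2$ absorbs half a unit of the Dirichlet parameter. This loss of $u_L^{-1/2}$ is real. For $\tau_L=\tau_R=0$ one has $A_{K,\e}^{0,0}=\{\tau_{L,K}\le\e,\ \tau_{R,K}\le\e\}$, and its probability is at least $C_{N,K}\,\e^{(N-1)/2}/(p_Lp_R)$. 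That is a factor of order $\e^{-1/2}$, times an $\e$-independent constant, above $e^{-\frac N2 D(\alpha\|\e,\e)}$. Letting $\e\to0$ at fixed $N$ rules out any bound with error $c\log N/N$ uniform in $\e$.

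Also, the bound $a^p\le u_L^p$ on $\{a\le u_L\}$ needs $p\ge 0$, not $p\ge -1$. For $1\le K\le N-2$ the only negative exponents are $-\tfrac12$, at $K=1$ and $K=N-2$; $-1$ never occurs.

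The repair is what the paper does, and it uses a fact you did not exploit. Since $\tau_L,\tau_R\ge 0$ and $\e<\min(q^*,1-q^*)$, you have $u_L,u_R\ge\e$, so the correction terms you called the main obstacle are exactly what the $\log\e^{-1}$ in the statement pays for. Concretely, on the domain $a\le u_L$, $c\le u_R$:
\begin{itemize}
\item For $2\le K\le N-3$, bound $a^{(N-K-3)/2}\le \e^{-3/2}u_L^{(N-K)/2}$ and $c^{(K-2)/2}\le \e^{-1}u_R^{K/2}$.
\item For $K=1$, keep $c^{-1/2}$ in the integrand next to $(1-a-c)^{-1/2}$ and use $1\le \e^{-1/2}u_R^{1/2}$.
\item For $K=N-2$, keep $a^{-1/2}$ in the integrand next to $(1-a-c)^{-1/2}$ and use $1\le \e^{-1}u_L$.
\end{itemize}
What is left to integrate is $(1-a-c)^{-1/2}$, possibly times $c^{-1/2}$ or $a^{-1/2}$, over $[0,1-q^*]\times[0,q^*]$, and this is bounded by an absolute constant (e.g.\ $\iint a^{-1/2}(1-a-c)^{-1/2}\,da\,dc\le\pi$ over the simplex). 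Combining with Lemma~\ref{lem:bound_CNK} gives the claim, with a total $\e$-loss of at most $\e^{-3}$.
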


\begin{proof}
    We start by noting that if $\tau_{L,K}(\sigma),\tau_{R,K}(\sigma)$ have joint density $p_{N,K}(x,y)$ under $E$ then
    \begin{equation}\label{eq:measure_set_as_integral}
        \Es \left[ A_{K,\e}^{\tau_L,\tau_R}\right]
        =
        \iint\limits_{ \substack{\abs{x-\tau_L}\les\e, \ x\les 1-q^*\\ \abs{y-\tau_R}\les\e, \ y\les q^*}} p_{N,K}(x,y) dx dy.
    \end{equation}
    For $K=1,\dots,N-2$ 
    \begin{equation}\label{eq:qL_qR_density}
        p_{N,K}(x,y)
        =
        C_{N,K} x^{\frac{N-K-3}{2}}y^{\frac{K-2}{2}}(1-x-y)^{-\frac12} \indf_{\{x>0; \ y>0; \ x+y < 1\}},
    \end{equation}
    (see \cite[Lemma~8]{BK} or \cite[Theorem~1]{harman2010}), where
    \begin{equation}\label{eq:C_NK_def}
        C_{N,K}
        =
        \frac{\Gamma\left(\frac{N}{2}\right)}{\Gamma\left(\frac{N-K-1}{2}\right)\Gamma\left(\frac{K}{2}\right)\Gamma\left(\frac{1}{2}\right)}.
    \end{equation}
    A control of $\tfrac{1}{N}\log C_{N,K}$ is given at the end of this section, in Lemma \ref{lem:bound_CNK}. Because both $u_L\ges \epsilon$ and $u_R\ges \epsilon$ we obtain, for $x,y$ in the integration domain of \eqref{eq:measure_set_as_integral}

    \begin{equation}
         p_{N,K}(x,y)
        \le
        C_{N,K} \cdot
        \epsilon^{-3}
        u_L^{(N-K)/2}u_R^{K/2} \cdot
        \frac{\xi_{K,N}(x,y)}{\sqrt{1-x-y}} \indf_{\{x+y < 1\}}.
    \end{equation}
    where 

    \begin{equation}
        \xi_{K,N}(x,y)
        =
        \begin{cases}
            1 \text{ if } 2\les K \les N-3,\\
            x^{-1/2} \text{ if } K=N-2,\\
            y^{-1/2} \text{ if } K=1.
        \end{cases}
    \end{equation}
    Using Lemma \ref{lem:bound_CNK} and that there exists $c>0$ such that   \begin{equation}\label{eq:uniform_bound_inverse_root_2D_integral}
        \int_0^t\int_0^{1-t}\frac{\xi_{K,N}(x,y)}{\sqrt{1-x-y}}  dx dy \les c \quad \forall t \in[0,1],
    \end{equation} yields
    
    \begin{equation}
        \frac{1}{N}\log \Es \left[ A_{K,\e}^{\tau_L,\tau_R} \right]
        \les
        - 
        \frac{1}{2}D(\alpha||u_L,u_R)
        +
        c\frac{\log N \epsilon^{-1}}{N},
    \end{equation}
    for some possibly larger constant $c$.
\end{proof}

\begin{lemma}\label{lem:measure_AK_K=0_N}
    There exits $c>0$ such that for any $0<\epsilon<\min(q^*,1-q^*)$, $\tau_L\in \pi_L$ and $\tau_R\in \pi_R$ 
    \begin{equation}
        \frac{1}{N}\log \Es \left[ A_{0,\e}^{\tau_L,\tau_R} \right]
        \les
        - 
        \frac{1}{2}D(0||u_L,0)
        +
        c\frac{\log N \epsilon^{-1}}{N},
    \end{equation}
    and
    \begin{equation}
        \frac{1}{N}\log \Es \left[ A_{N-1,\e}^{\tau_L,\tau_R} \right]
        \les
        - 
        \frac{1}{2}D(1||0,u_R)
        +
        c\frac{\log N \epsilon^{-1}}{N}
    \end{equation}
    where $u_L=\min(\tau_L+\epsilon,1-q^*)$ and $u_R=\min(\tau_R+\epsilon,q^*)$.
\end{lemma}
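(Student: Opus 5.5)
For $K=0$ we have $\tau_{R,0}\equiv 0$, so $A_{0,\e}^{\tau_L,\tau_R}$ is determined by the single statistic $\tau_{L,0}(\sigma)=\sum_{i=1}^{N-1}\sigma_i^2 = 1-\sigma_N^2$. The plan is to redo the proof of Lemma~\ref{lem:measure_AK_bulk} with the two-dimensional density replaced by the one-dimensional density of $\tau_{L,0}$. The case $K=N-1$ is symmetric: $\tau_{L,N-1}\equiv 0$, the set is determined by $\tau_{R,N-1}=1-\sigma_1^2$, and the same argument applies with $L$ and $R$ interchanged.

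Under the uniform measure on $S^{N-1}$, $\tau_{L,0}=1-\sigma_N^2$ has the Beta$\big(\tfrac{N-1}{2},\tfrac12\big)$ law, with density
\begin{equation*}
    p_N(x)=\frac{\Gamma(N/2)}{\Gamma\big(\tfrac{N-1}{2}\big)\Gamma\big(\tfrac12\big)}\,x^{\frac{N-3}{2}}(1-x)^{-\frac12}\,\indf_{\{0<x<1\}} .
\end{equation*}
This follows from the same reference as \eqref{eq:qL_qR_density} (or directly from the Gaussian representation $X/|X|$, since $X_N^2$ and $\sum_{i<N}X_i^2$ are independent chi-squares). The normalizing constant is at most $\Gamma(N/2)/\Gamma((N-1)/2)\le \sqrt{N}$, so it contributes only $O(\log N/N)$ to $\frac1N\log$.

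Now integrate over $x\in[\tau_L-\e,\tau_L+\e]\cap[0,1-q^*]$. On this range $x\le u_L$, and $N\ge4$ gives $\frac{N-3}{2}\ge0$, so $x^{(N-3)/2}\le u_L^{(N-3)/2}\le \e^{-3/2}u_L^{N/2}$, using $u_L\ge\e$. Since $x\le 1-q^*$, the factor $(1-x)^{-1/2}$ is integrable; in fact it is bounded by $(q^*)^{-1/2}$, and $\int_0^1(1-x)^{-1/2}dx=2$ handles it uniformly. Taking $\frac1N\log$ then gives
\begin{equation*}
    \frac1N\log\Es\big[A_{0,\e}^{\tau_L,\tau_R}\big]\le \frac12\log u_L + c\,\frac{\log N\e^{-1}}{N}=-\frac12 D(0\|u_L,0)+c\,\frac{\log N\e^{-1}}{N},
\end{equation*}
by the convention \eqref{eq:def_entropy_alpha=0}.

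The main obstacle is bookkeeping rather than analysis. First, one checks that $\tau_R$ plays no role when $K=0$: since $\tau_{R,0}\equiv0$, the set is either empty (when $\tau_R>\e$, and then the bound is trivial) or is described by $\tau_L$ alone. Second, one makes sure the constant $c$ can be chosen uniformly for the two endpoint cases and matches the one from Lemma~\ref{lem:measure_AK_bulk}; taking the maximum of the constants suffices. The boundary grid points $\tau_L=1-q^*$ and $\tau_R=q^*$ in $\pi_L,\pi_R$ are covered because $u_L$ and $u_R$ are defined via the minimum with $1-q^*$ and $q^*$.
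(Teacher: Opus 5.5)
Your proposal is correct and takes the same approach as the paper. The paper notes that the set is empty when $\tau_R>\epsilon$ (resp.\ $\tau_L>\epsilon$), then integrates the one-dimensional Beta$\big(\tfrac{N-1}{2},\tfrac12\big)$ density of $\tau_{L,0}$ (resp.\ $\tau_{R,N-1}$) exactly as in Lemma~\ref{lem:measure_AK_bulk}. It writes ``omitting details'', and your bounds on the Gamma ratio, the estimate $x^{(N-3)/2}\le\epsilon^{-3/2}u_L^{N/2}$ using $u_L\ge\epsilon$, and $\int_0^1(1-x)^{-1/2}\,dx=2$ supply precisely those omitted details.
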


\begin{proof}
    If $\tau_R>\epsilon$ the proof is trivial for $K=0$, while for $\tau_L>\epsilon$ it is trivial for $K=N-1$. Indeed, in this range the sets $A_{0,\e}^{\tau_L,\tau_R}$ and $A_{N-1,\e}^{\tau_L,\tau_R}$ are empty by \eqref{eq:defAkqlqrdelta} and \eqref{eq:def_q_tau_LR_convention}. In the following, fix $K=0$, $\tau_R=0$. Recall that $\tau_{R,0}=0$ a.s and that under $\Es$, $\tau_{L,K}$ has density (see \cite[Lemma~8]{BK} or \cite[Theorem~1]{harman2010})
    \begin{equation}
        p_{N,0}(\tau_L)
            =
            \dfrac{\Gamma\left(\frac{N}{2}\right)}{\Gamma\left(\frac{N-1}{2}\right)\Gamma\left(\frac{1}{2}\right)}
            \tau_L^{\frac{N-3}{2}}
            (1-\tau_L)^{-\frac12}
            \indf_{\{0<\tau_L< 1\}}.
    \end{equation}
    Omitting details, using \eqref{eq:measure_set_as_integral} as in Lemma \ref{lem:measure_AK_bulk} yields
    
    \begin{equation}
        \frac{1}{N}\log \Es \left[ A_{0,\e}^{\tau_L,0} \right]
        \les
        - 
        \frac{1}{2}D(0||u_L,0)
        +
        c\frac{\log N \epsilon^{-1}}{N},
    \end{equation}
    and concludes the proof of the lemma for $K=0$. The result for $K=N-1$ follows analogously.
\end{proof}

We can now prove Proposition \ref{prop:sec4main}.

\begin{proof}[Proof of Proposition \ref{prop:sec4main}]
    The proof follows from combining Lemma \ref{lem:Z_Nslice_decompo}, \ref{lem:measure_AK_bulk} and \ref{lem:measure_AK_K=0_N}.
\end{proof}

We conclude this section with the proof of a technical result.

\begin{lemma}\label{lem:bound_CNK}
    There exists a positive constant $c$ such that for any $N$ and $K\in [0,N-1]$,
    \begin{equation}
        \left|
        \frac{1}{N}\log C_{N,K}
        +
        \frac{1-\alpha}{2}
        \log (1-\alpha)
        +
        \frac{\alpha}{2}
        \log \alpha
        \right|
        \les
        \frac{c}{N}\log N,
    \end{equation}
    where for $K=1,\dots,N-2$ $C_{N,K}$ is defined in \eqref{eq:C_NK_def} and $C_{N,0}=C_{N,N-1}=\frac{\Gamma\left(\frac{N}{2}\right)}{\Gamma\left(\frac{N-1}{2}\right)\Gamma\left(\frac{1}{2}\right)}$ .
\end{lemma}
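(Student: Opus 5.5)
The plan is to use Stirling's formula in its quantitative form, $\log\Gamma(x) = (x-\tfrac12)\log x - x + \tfrac12\log(2\pi) + O(1/x)$ for $x\ge \tfrac12$. Near $0$ we only need the crude bound $|\log\Gamma(x)|\le c(1+ |\log x|)$, which handles the arguments $\tfrac12$ and $\tfrac K2$ when $K$ is small. The boundary cases $K=0$ and $K=N-1$ are treated separately. There $C_{N,0}=\Gamma(\tfrac N2)/(\Gamma(\tfrac{N-1}2)\Gamma(\tfrac12))$, and the ratio $\Gamma(\tfrac N2)/\Gamma(\tfrac{N-1}{2})$ is of order $\sqrt N$. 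Hence $\tfrac1N\log C_{N,0}=O(\log N/N)$. The entropy term vanishes at $\alpha=0$, and at $\alpha=(N-1)/N$ it equals $\tfrac{1}{2N}\log N+O(1/N)$. Both boundary cases are therefore within $c\log N/N$.

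For $1\le K\le N-2$, write $a=\tfrac{N-K-1}{2}$, $b=\tfrac K2$, $n=\tfrac N2$, so that $a+b=n-\tfrac12$. I will use the form $\log\Gamma(x)=x\log x - x + O(\log(x+2))$, valid uniformly for $x\ge\tfrac12$. Applying it to all three Gamma functions in the definition of $C_{N,K}$, the linear terms cancel up to $O(1)$, since $n-a-b=\tfrac12$. This leaves
\begin{equation*}
\log C_{N,K} = n\log n - a\log a - b\log b + O(\log N).
\end{equation*}
Next, replace $a$ by $\tfrac{N-K}{2}=n(1-\alpha)$ and $b$ by $n\alpha$. The function $x\mapsto x\log x$ has derivative $\log x+1$, which is $O(\log N)$ on $[\tfrac12,N]$, so shifting $a$ by $\tfrac12$ costs only $O(\log N)$. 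Then
\begin{equation*}
n\log n - n(1-\alpha)\log(n(1-\alpha)) - n\alpha\log(n\alpha) = -\frac N2\bigl[(1-\alpha)\log(1-\alpha)+\alpha\log\alpha\bigr].
\end{equation*}
Dividing by $N$ gives the claim with error $c\log N/N$.

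The main point needing care is uniformity when $a$ or $b$ is of order one, that is, $K$ near $1$ or near $N-2$. There Stirling's relative error is not small. However, the absolute error in $\log\Gamma$ remains $O(1)$ for arguments in $[\tfrac12, 2]$, and $x\log x$ is bounded there, so each such term contributes only $O(1)$. This is absorbed into $O(\log N)$. I will state a single uniform inequality $|\log\Gamma(x)-(x\log x-x)|\le c\log(2+x)$ for all $x\ge\tfrac12$, which covers all regimes at once.
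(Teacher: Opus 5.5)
Your proof is correct and follows essentially the paper's route: a uniform Stirling bound for arguments $x\ge\frac12$, cancellation of the linear terms, and a mean-value (Lipschitz) estimate of order $\log N$ to absorb the half-integer shift $\frac{N-K-1}{2}\to\frac{N-K}{2}$. The paper phrases that last step via the Lipschitz constant $1+\log N$ of $x\mapsto(1-x)\log(1-x)$ on $[0,1-N^{-1}]$. One harmless slip: at $\alpha=(N-1)/N$ the entropy term equals $-\frac{\log N}{2N}+O(1/N)$, not $+\frac{\log N}{2N}+O(1/N)$, which does not affect the bound.
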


\begin{proof}
  
    By Stirling's formula, there exist positive universal constant $c_1$ and $c_2$ such that $c_1\le\Gamma(x)/(x^{-\frac{1}{2}}(\frac{x}{e})^{x})\le c_2$
    for all $x\ge\frac{1}{2}$. Thus letting $\delta=N^{-1}$, we have for $K=1,\dots,N-2$
    \begin{equation}
        \left|
        \frac{1}{N}\log C_{N,K}
        +
        \frac12
        (1 - \alpha - \delta)
        \log (1-\alpha - \delta)
        +
        \frac12
        \alpha
        \log \alpha
        \right|
        \les
        \frac{c}{N}\log N
    \end{equation}
    for $\alpha=K/N \le 1 - 2\delta$. Using that $x \to (1-x) \log (1-x)$ is $L$-Lipschitz on $[0,1-\delta]$, with constant $L=1 + \log \delta^{-1}$ concludes the proof of the lemma for $K=1,\dots,N-2$. The proof for $K=0$, $K=N-1$ follows analogously.
\end{proof}

\section{Variational upper bound of the truncated free energy}\label{sec:intermediate_TAP}

The goal of this section is to bound $F_N(\beta)$ using a control of the free energy restricted to the sets $A^{\tau_L,\tau_R}_{K,\e}$, which cover the whole sphere $S^{N-1}$. We start by introducing necessary notations. Let $I(x)$ denote the inverse of the CDF of $\mu$, so that
\begin{equation}\label{eq:eqdef_Ialpha}
    \mu( [\lm, I(x)] ) = x
    \quad
    \forall x \in [0, 1].
\end{equation}
Let
\begin{equation}\label{eq:eqdef_muLalpha_muRalpha}
    \mu_{L,\alpha}( A )
    =
    (1-\alpha)^{-1}
    \mu\big( A \cap (\lm,I(1-\alpha)) \big),
    \qquad
    \mu_{R,\alpha}(A)
    =
    \alpha^{-1}
    \mu\big(A \cap (I(1-\alpha), \lM )\big)
\end{equation}
be the truncated probability measures obtained by splitting $\mu$ into parts left and right of $I(1-\alpha)$. We also introduce two master error terms, encapsulating the speed of convergence of $\mu_N$ towards $\mu$ by adding several terms. It will make the following bounds more compact, but detailed bounds (not using the whole master error) are easily retrievable. 
\begin{equation}\label{eq:def_master_error}
\begin{aligned}
     \mathrm{Err}_N^{(*)} \big(\mu_N,\mu\big)
    =& \ 
    \sqrt{W_1(\mu_N,\mu)}
    +
    \sup_{1\les k \les N} |\lambda_k-I(k/N)|
    +
    \sup_{ |t-v|\les 1/N} |I(t)-I(v)|
    +
    \sqrt{\frac{1}{N}}.
\end{aligned}
\end{equation}
We prove in Lemma \ref{lem:error_term_goes_to_zero} that this error term indeed goes to $0$. We also let 

\begin{equation}\label{eq:def_master_error_bis}
\begin{aligned}
     \mathrm{Err}_N^{(**)} \big(\mu_N\big)
    =& 
     | \lambda_{\lceil \delta_N N \rceil}-\lambda_1| 
        +
        | \lambda_{1}-\lm|
        +
        |\lambda_N-\lM|
        +
        |\lM-I(1-\delta_N)|,
\end{aligned}
\end{equation}
where $\delta_N=\log(1/\mathrm{Err}_N^{(*)}(\mu_N,\mu))^{-1/2}\vee\frac1{\sqrt N}$. Because $\delta_N$ goes to $0$ it is direct that $\mathrm{Err}_N^{(**)}\big(\mu_N\big)$ goes to $0$. Finally, for $\alpha\in(0,1)$ we define a TAP free energy with top and bottom eigenvalue contributions separated as

\newcommand{\fTAPalpha}{M_\TAP}

\begin{equation}\label{eq:def_ftap}
    \begin{aligned}
        \fTAPalpha(\beta, \tau_L, \tau_R, \alpha)
        \coloneqq \ & 
        (1-\alpha)
        \Flim{\mu_{L,\alpha}}
        \left(\frac{ \beta \tau_L }{1-\alpha}\right)
        +
        \beta I(1-\alpha) (1- \tau_L - \tau_R)
        +
        \alpha
        \Flim{\mu_{R,\alpha}}
        \left(
            \frac{ \beta \tau_R }{\alpha}
        \right)\\
        &-
        \frac{1}{2} D(\alpha||\tau_L,\tau_R),
    \end{aligned}
\end{equation}
while for $\alpha=0$ we use the convention
\begin{equation}\label{eq:def_ftap_alpha=0}
    \fTAPalpha(\beta, \tau_L, \tau_R, 0)
    =
    \begin{cases}                
        \Flim{\mu}( \beta \tau_L )
        +
        \beta \lM (1 - \tau_L)
        -
        \frac{1}{2} D(0||\tau_L,0)
        &
        \text{ if } \tau_R=0,\\
        -\infty &
        \text{ otherwise},
    \end{cases}
\end{equation}
and for $\alpha=1$
\begin{equation}\label{eq:def_ftap_alpha=1}
    \fTAPalpha(\beta, \tau_L, \tau_R, 1)
    =
    \begin{cases}                
        \Flim{\mu}( \beta \tau_R )
        +
        \beta \lm (1-\tau_R)
        -
        \frac{1}{2} D(1||0,\tau_R)
        &
        \text{ if } \tau_L=0,\\
        -\infty &
        \text{ otherwise}.
    \end{cases}
\end{equation}

\noindent We define $\mathfrak{S}_{\beta}$ the range of parameters $\alpha,\tau_L,\tau_R$ 
\begin{equation}\label{eq:def_sup_set_sec5}
    \mathfrak{S}_{\beta}
    :=
    [0,1] \times [0,1-q^*]
    \times
    [0,q^*].
\end{equation}

The main result of this section is as follows.
\begin{proposition}\label{prop:aux}
 For any $B>\beta_c$ and $\beta \in (\beta_c,B]$
    \begin{equation}    
        F_N(\beta)
        \les
        \sup_{(\alpha,\tau_L,\tau_R) \in \mathfrak{S}_{\beta}}
        \fTAPalpha(\beta,\tau_L,\tau_R,\alpha)
        +
        \mathrm{Err}_N(B,\mu_N),
    \end{equation}
    where $\fTAPalpha$ is defined in \eqref{eq:def_ftap} and $\mathrm{Err}_N(B,\mu_N)$ goes to $0$ with $N$.  
\end{proposition}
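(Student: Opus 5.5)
The plan is to combine the covering \eqref{eq:covering_sphere_AK} with Proposition~\ref{prop:sec4main}, and then replace each finite-$N$ quantity $M_{N,K}$ by its limiting counterpart $\fTAPalpha$ with $\alpha=K/N$, uniformly in $K$.

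\textbf{Reduction to a supremum over $M_{N,K}$.} Fix $\e=\e_N\downarrow0$ slowly, say $\e_N=\mathrm{Err}_N^{(*)}(\mu_N,\mu)^{1/2}\vee N^{-1/2}$. The covering uses at most $N\,|\pi_L|\,|\pi_R|\le N(\e^{-1}+2)^2$ sets. Therefore
\begin{equation*}
F_N(\beta)\le \max_{K,\tau_L,\tau_R}\frac1N\log Z_N(\beta,A_{K,\e}^{\tau_L,\tau_R})+\frac{3\log(N/\e)}{N}.
\end{equation*}
By Proposition~\ref{prop:sec4main}, this is at most $\sup_K\sup_{(x,y)}M_{N,K}(x,y)+o(1)$, where $(x,y)$ ranges over $[0,1-q^*]\times[0,q^*]$. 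It therefore suffices to show
\begin{equation*}
\sup_{K,x,y}\big(M_{N,K}(x,y)-\fTAPalpha(\beta,x,y,K/N)\big)\le o(1),
\end{equation*}
uniformly in $\beta\in(\beta_c,B]$. The entropy term $D(\alpha\|x,y)$ is identical in $M_{N,K}$ and $\fTAPalpha$, so only the three energy terms need comparison.

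\textbf{Term-by-term comparison for bulk $K$.} Let $\alpha=K/N\in[\delta_N,1-\delta_N]$. The middle term is easy: $|\lambda_{N-K}-I(1-\alpha)|\le \mathrm{Err}_N^{(*)}$. For the two free-energy terms I use only the upper bound, Lemma~\ref{lem:fe_upper_bound}. This matters because the effective inverse temperatures $\beta x/(1-\alpha)$ and $\beta y/\alpha$ need not be below the critical values of the truncated measures, so the lower bound would not be available. Lemma~\ref{lem:fe_upper_bound} applies at any temperature, but the constant $C_\beta$ grows like $e^{8\beta\Lambda}$, which blows up as the rescaled temperature $\beta y/\alpha$ becomes large when $\alpha$ is small. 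This is why small $\alpha$ must be cut off at $\delta_N$ and handled separately. For $\alpha\ge\delta_N$ one has $\beta y/\alpha\le B/\delta_N$, so $C\lesssim e^{cB/\delta_N}$. Meanwhile $W_1(\hat\mu_{R,K},\mu_{R,\alpha})$ is controlled by $\alpha^{-1}\big(\sup_k|\lambda_k-I(k/N)|+\sup_{|t-v|\le1/N}|I(t)-I(v)|\big)$, using a quantile coupling, and the maxima of the supports agree up to $\mathrm{Err}_N^{(*)}$. The choice $\delta_N=\log(1/\mathrm{Err}_N^{(*)})^{-1/2}$ is designed so that
\begin{equation*}
e^{cB/\delta_N}\,\delta_N^{-1}\,\sqrt{\mathrm{Err}_N^{(*)}}\to0,
\end{equation*}
since $e^{cB\sqrt{\log(1/E)}}E^{1/2}\to0$. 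After multiplying by the prefactor $\alpha$, this makes the right-block error vanish. The left block is handled symmetrically.

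\textbf{Edge regimes and the main obstacle.} The edge regimes $K<\delta_N N$ and $K>(1-\delta_N)N$ are the main obstacle. For $K\le\delta_N N$, I drop the requirement to match $\fTAPalpha$ at $\alpha=K/N$ and instead compare with $\fTAPalpha(\beta,x',0,0)$ for suitable $x'$. Take $x'=x+y\le1$, but if $x+y>1-q^*$ then use monotonicity of the relevant expressions, or accept a boundary adjustment. Bound the right block crudely, using that all top eigenvalues are at most $\lambda_N\le\lM+\mathrm{Err}_N^{(**)}$:
\begin{equation*}
\alpha\Ffin{\hat\mu_{R,K}}(\beta y/\alpha)\le\beta y\lambda_N.
\end{equation*}
Next, $\lambda_{N-K}\le\lambda_N$. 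The left measure $\hat\mu_{L,K}$ differs from $\mu_N$ by mass at most $\delta_N$, and $\lM-I(1-\delta_N)\to0$, so its free energy is close to $F_\mu(\beta x)$ by Lemma~\ref{lem:fe_upper_bound} and the Lipschitz property in Lemma~\ref{lem:FE_lipschitz}. For the entropy, use $D(\alpha\|x,y)\ge -\log x-h(\alpha)$ up to small terms, where $h$ is the binary entropy and $h(\delta_N)\to0$. This gives
\begin{equation*}
M_{N,K}(x,y)\le F_\mu(\beta x)+\beta\lM(1-x)+\tfrac12\log x+o(1)=\fTAPalpha(\beta,x,0,0)+o(1),
\end{equation*}
and $x\le1-q^*$ keeps $(0,x,0)\in\mathfrak S_\beta$. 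The regime $K\ge(1-\delta_N)N$ is analogous, bounding by $\lambda_1$ and comparing with $\alpha=1$; there the left block is tiny and its contribution is bounded using $\lambda_{N-K}\le\lambda_{\lceil\delta_N N\rceil}\approx\lm$, which is exactly why $\mathrm{Err}_N^{(**)}$ contains these terms. Collecting the errors gives $\mathrm{Err}_N(B,\mu_N)\to0$, uniformly for $\beta\le B$. I expect the delicate part to be the bookkeeping in the small-$\alpha$ edge, specifically ensuring that the entropy comparison holds uniformly when $y$ is of order $\alpha$.
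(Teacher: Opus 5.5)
Your reduction (the covering \eqref{eq:covering_sphere_AK} plus Proposition~\ref{prop:sec4main}) and your bulk regime $\alpha\in[\delta_N,1-\delta_N]$ are the paper's argument. This is Lemmas~\ref{lem:FL_UB_delta_macro} and~\ref{lem:FR_UB_delta_macro}, with the same $\delta_N$ chosen to absorb the $e^{8\beta\Lambda/\delta}$ growth of $C_\beta$. The edges are handled differently.

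The paper never leaves $\alpha=K/N$. For $K\le\delta N$ (Lemma~\ref{lem:FR_UB}), it bounds the finite right block above by $\beta\tau_R\lambda_N$. It bounds the limiting one below by $\alpha F_{\mu_{R,\alpha}}(\beta\tau_R/\alpha)\ge\beta\tau_R I(1-\alpha)$, using Corollary~\ref{cor:free_energy_trivial_bounds} for the truncated measure. Lemma~\ref{lem:FL_UB_delta_micro} does the same for the left block when $K\ge(1-\delta)N$. Thus $D(\alpha\|\tau_L,\tau_R)$ cancels identically, and the losses $\mathrm{Err}_N^{(3)},\mathrm{Err}_N^{(4)}$ are uniform in $(\tau_L,\tau_R)$.

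You instead pass to the boundary slices $\alpha=0$ (resp.\ $\alpha=1$). This is valid, and it identifies the near-edge pieces directly with boundary TAP slices. The price is an extra free-energy comparison of $\hat\mu_{L,K}$ with the full $\mu$, and an entropy comparison that is not uniform.

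Two repairs are needed.

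\textbf{The entropy comparison.} Since $\alpha\log(\alpha/y)\ge\alpha\log\alpha$,
\[
D(\alpha\|x,y)\ \ge\ -h(\alpha)-(1-\alpha)\log x\ =\ -\log x-h(\alpha)-\alpha\log(1/x).
\]
So $y$ is harmless: it enters with the favourable sign, contrary to the worry you flag. The non-uniform loss is $\frac{\alpha}{2}\log(1/x)$ as $x\to0$. Split at a fixed $x_0=x_0(B)$.
\begin{itemize}
\item For $x<x_0$ and $\alpha\le\frac12$, the crude bound $M_{N,K}(x,y)\le\beta\Lambda+\frac12\log 2+\frac14\log x_0$ lies below $M_{\mathrm{TAP}}(\beta,1-q^*,0,0)\ge-\beta\Lambda+\frac12\log(\beta_c/B)$ once $x_0$ is small.
\item For $x\ge x_0$, the loss is at most $\frac{\delta_N}{2}\log(1/x_0)\to0$.
\end{itemize}
Treat small $y$ the same way at the other edge.

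\textbf{The range near $\beta_c$.} Proposition~\ref{prop:sec4main} requires $\e<q^*$, which fails for $\beta\in(\beta_c,\beta_c/(1-\e_N)]$. There, do as the paper does for $\beta\le\beta_c/(1-1/N)$. Use the upper bound of Theorem~\ref{thm:sec2_main}, which holds for all $\beta\le B$, together with the identity $F_\mu(\beta)=M_{\mathrm{TAP}}(\beta,1-q^*,0,0)$. This identity in fact holds for every $\beta\ge\beta_c$, so the stated inequality already follows from Theorem~\ref{thm:sec2_main}; the partition argument is there for its structural content.
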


Because we aim to bound the free energy by the contribution of each of the sets $A^{\tau_L,\tau_R}_{K,\e}$ we build on the results of the previous section. There, $Z_N(\beta,A^{\tau_L,\tau_R}_{K,\e})$ was bounded with respect to the truncated free energies and $M_{N,K}$. Hence, our first task is to give explicit bounds for $M_{N,K}$. The main technical result of this section is the following.

\begin{lemma}\label{lem:sec5main}
    For any $\beta>0$, $ 0 \les K\les N-1$ and  $\tau_L\in[0,1-q^*]$, $\tau_R\in [0,q^*]$ with $\tau_R=0$ when $K=0$, and $\tau_L=0$ when $K=N-1$, it holds that
    \begin{equation}
        M_{N,K}(\tau_L,\tau_R)
        \les
        \fTAPalpha(\beta, \tau_L, \tau_R, \alpha)
        +
        \mathrm{Err}_N^{(5)}
        \big(\beta,\mu_N\big),
    \end{equation}
    where $\alpha=K/N$ if $K=0,\dots,N-2$, $\alpha=1$ if $K=N-1$, and $\mathrm{Err}_N^{(5)}$ is given by 
    
    \begin{equation}
    \begin{aligned}
        \mathrm{Err}_N^{(5)} \big(\beta,\mu_N\big)
        =& \ 
        \frac{3\beta\Lambda}{\sqrt{N}}
        +
        256 (\beta\Lambda+1)^2 \sqrt{\mathrm{Err}_N^{(*)}(\mu_N,\mu)}
        +
        \beta\mathrm{Err}_N^{(**)} \big(\mu_N\big).
    \end{aligned}
    \end{equation}
\end{lemma}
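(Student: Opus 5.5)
We compare $M_{N,K}$ and $\fTAPalpha$ term by term. By \eqref{eq:def_MNK} and \eqref{eq:def_ftap}, both are a sum of a left free energy, a middle energy term, a right free energy and the same entropy term $-\frac12 D(\alpha\|\tau_L,\tau_R)$ with $\alpha=K/N$. The entropy cancels exactly for $1\le K\le N-2$. For $K=0$ and $K=N-1$ it also matches the conventions \eqref{eq:def_entropy_alpha=0}--\eqref{eq:def_entropy_alpha=1}, since $\tau_R=0$, respectively $\tau_L=0$, is imposed. So it suffices to bound three differences.

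\textbf{Middle term.} We have
\[
\beta|\lambda_{N-K}-I(1-\alpha)|(1-\tau_L-\tau_R)\le \beta\,\mathrm{Err}_N^{(*)},
\]
using $\sup_k|\lambda_k-I(k/N)|$ together with the $1/N$-modulus of $I$ (note $N-K=N(1-\alpha)$). For $K=0$ we instead use $|\lambda_N-\lM|\le \mathrm{Err}_N^{(**)}$, and for $K=N-1$ we use $|\lambda_1-\lm|$.

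\textbf{Left and right terms.} We apply the upper bound of Theorem~\ref{thm:sec2_main}, i.e.\ Lemma~\ref{lem:fe_upper_bound}, with $\nu=\hat\mu_{L,K}$ (on $N-K-1$ atoms) and the limit measure $\mu_{L,\alpha}$, and similarly with $\nu=\hat\mu_{R,K}$ and $\mu_{R,\alpha}$. Since the upper bound holds for every $\beta$, Plefka-type restrictions do not arise.

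\begin{itemize}
\item \emph{Mismatched arguments.} The arguments differ: $\frac{N}{N-K-1}\beta\tau_L$ versus $\frac{\beta\tau_L}{1-\alpha}$. Each $x\mapsto \frac{N-K-1}{N}\Ffin{\nu}(\frac{N}{N-K-1}\beta x)$ is $\beta\Lambda$-Lipschitz, and Lemma~\ref{lem:FE_lipschitz} gives the same for the limit. Together with the prefactor mismatch $\frac{N-K-1}{N}$ versus $1-\alpha$, this costs $O(\beta\Lambda/N)$ per term, which explains the $3\beta\Lambda/\sqrt N$ contribution.
\item \emph{Size of the Lemma~\ref{lem:fe_upper_bound} error.} The error is $C\bigl(\sqrt{W_1(\nu,\mu_{\cdot,\alpha})}+|\nu_{\max}-\max\supp\mu_{\cdot,\alpha}|+n^{-1/2}\bigr)$, with $n$ the number of atoms. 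The constant must be evaluated at the effective inverse temperature $N\beta\tau/n$, which can blow up when $n$ is small. This must be controlled, which leads to the split below.
\end{itemize}

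\textbf{Main obstacle: small blocks.} The hard step is handling $\alpha$ near $0$ or $1$, where a block has few atoms. There the effective temperature $\beta\tau_R/\alpha$ is large and $W_1(\hat\mu_{R,K},\mu_{R,\alpha})$ is not small relative to $\alpha$. Fix $\delta_N$ as in \eqref{eq:def_master_error_bis} and split into two regimes.

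\begin{itemize}
\item \emph{Regime $\alpha\ge\delta_N$ (right block; the left block is symmetric).} Both atom sets are quantile samples. Pairing $\lambda_i$ with the $\mu$-quantile on $[(i-1)/N,i/N]$ gives $\alpha W_1(\hat\mu_{R,K},\mu_{R,\alpha})\lesssim \mathrm{Err}^{(*)}_N$ and hence $W_1\lesssim \mathrm{Err}^{(*)}_N/\delta_N$. Multiplying by the prefactor $\alpha$ and working with the constant of Lemma~\ref{lem:fe_upper_bound} yields an error of order $(\beta\Lambda+1)^2\sqrt{\mathrm{Err}^{(*)}_N}$. This uses that $\delta_N$ decays slowly, like $\log(1/\mathrm{Err}^{(*)})^{-1/2}$, so that the exponential constant $e^{8(1+\beta\Lambda/\delta_N)}$ is absorbed.

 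I expect this absorption of the exponential constant to be delicate. If it fails, the fallback is to redo the Lemma~\ref{lem:fe_upper_bound} argument scaled by $\alpha$: rewrite $\alpha\Ffin{\nu}(\beta\tau/\alpha)$ directly as a Legendre-type infimum in which $\alpha$ multiplies the log-potential. The errors then scale with $\alpha$ and the blow-up is avoided.
\item \emph{Regime $\alpha<\delta_N$.} Use the crude bounds of Corollary~\ref{cor:free_energy_trivial_bounds} on both sides. These give $\alpha\Flim{\mu_{R,\alpha}}(\beta\tau_R/\alpha)\in[\beta\tau_R I(1-\alpha),\beta\tau_R\lM]$, and the same sandwich with $\lambda_{N-K}$ and $\lambda_N$ for $\frac KN\Ffin{\hat\mu_{R,K}}$. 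The difference is therefore at most
\[
\beta\bigl(|\lambda_N-\lM|+|\lM-I(1-\delta_N)|+|\lambda_{N-K}-I(1-\alpha)|\bigr)\le \beta\,\mathrm{Err}_N^{(**)}+\beta\,\mathrm{Err}_N^{(*)}.
\]
The regime $1-\alpha<\delta_N$ is symmetric and uses $|\lambda_{\lceil\delta_N N\rceil}-\lambda_1|$ and $|\lambda_1-\lm|$.
\end{itemize}

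In both regimes the large-block term is handled by the first regime's argument. Summing the three differences gives $\mathrm{Err}_N^{(5)}$.
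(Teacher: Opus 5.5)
Your plan follows the paper's proof. It uses the same $\delta_N$-split, with Lemma~\ref{lem:fe_upper_bound} on any block with at least $\delta_N N$ atoms (constant evaluated at effective inverse temperature at most $\beta/\delta_N$) and the trivial sandwich $\beta\tau\nu_{\min}\le\cdot\le\beta\tau\nu_{\max}$ on a smaller block. These are exactly Lemmas~\ref{lem:FL_UB_delta_macro}, \ref{lem:FR_UB_delta_macro}, \ref{lem:FL_UB_delta_micro} and \ref{lem:FR_UB}. You also make explicit the middle-term comparison $\beta|\lambda_{N-K}-I(1-\alpha)|\le\beta\sup_k|\lambda_k-I(k/N)|$, which the paper passes over.

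The step that does not work as written is the absorption of $C_{\beta/\delta_N}$. The problem is not the exponential itself but which power of $\mathrm{Err}^{(*)}_N$ it multiplies.
\begin{itemize}
\item From your estimate $W_1(\hat\mu_{R,K},\mu_{R,\alpha})\lesssim\mathrm{Err}^{(*)}_N/\delta_N$, the $\sqrt{W_1}$ term of Lemma~\ref{lem:fe_upper_bound} contributes $C_{\beta/\delta_N}\sqrt{\mathrm{Err}^{(*)}_N}$, up to powers of $\delta_N$.
\item Since $e^{8\beta\Lambda/\delta_N}=e^{8\beta\Lambda\sqrt{\log(1/\mathrm{Err}^{(*)}_N)}}\to\infty$, this still vanishes but is not bounded by $256(\beta\Lambda+1)^2\sqrt{\mathrm{Err}^{(*)}_N}$.
\end{itemize}

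The fix is to keep the square root on $W_1(\mu_N,\mu)$, which already enters $\mathrm{Err}^{(*)}_N$ as $\sqrt{W_1(\mu_N,\mu)}$.
\begin{itemize}
\item On the right block, the monotone coupling you describe, restricted to the top $K$ atoms, gives $\alpha W_1(\hat\mu_{R,K},\mu_{R,\alpha})\le W_1(\mu_N,\mu)$.
\item On the left block, use the same bound with $\alpha'=(K+1)/N$, plus $W_1(\mu_{L,\alpha'},\mu_{L,\alpha})\le 2\Lambda/((1-\alpha')N)$, as in Lemma~\ref{lem:FL_UB_delta_macro}.
\item Either way $\sqrt{W_1}\lesssim\delta_N^{-1/2}\mathrm{Err}^{(*)}_N$. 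Together with $|\lambda_N-\lM|\le\mathrm{Err}^{(*)}_N$ and $n^{-1/2}\lesssim\delta_N^{-1/2}\mathrm{Err}^{(*)}_N$, every term of Lemma~\ref{lem:fe_upper_bound} becomes linear in $\mathrm{Err}^{(*)}_N$.
\end{itemize}
Then $e^{O(\sqrt{\log(1/\mathrm{Err}^{(*)}_N)})}\mathrm{Err}^{(*)}_N\le\sqrt{\mathrm{Err}^{(*)}_N}$ for $N$ large, so your rescaled fallback is unnecessary. The explicit form of $\mathrm{Err}^{(5)}_N$ holds only for large $N$, in the paper as well, and only its vanishing is used in Proposition~\ref{prop:aux}.

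Similarly, the argument mismatch on the left block costs about $2\beta\Lambda/((1-\alpha)N)$, not $O(\beta\Lambda/N)$ uniformly in $K$. It is controlled only in the regime $1-\alpha\ge\delta_N$. That, together with $\delta_N\ge N^{-1/2}$, is where the $3\beta\Lambda/\sqrt N$ term comes from.
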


The proof of Lemma \ref{lem:sec5main} is given after a collection of lemmas bounding each component of $M_{N,K}$ in different regimes of the parameters.



Recalling the definition of $M_{N,K}$ given in \eqref{eq:def_MNK}, we start by bounding $\Ffin{\hat{\mu}_{L,K}}$ in terms of $\Flim{\mu_{L,\alpha}}$. 

\begin{lemma}\label{lem:FL_UB_delta_macro}
    For any $\beta>0$, $\delta>2/N$, $0 \les K\les (1-\delta)N$ and $\tau_L\in(0,1-q^*]$ we have
    \begin{equation}
        \frac{N-K-1}{N}\Ffin{\hat{\mu}_{L,K}}\left(
            \frac{N}{N-K-1}\beta \tau_L
        \right)
        \les
        (1-\alpha)
        \Flim{\mu_{L,\alpha}}
        \left(
            \frac{ \beta \tau_L }{1-\alpha}
        \right)
        +
        \mathrm{Err}_N^{(1)} \big(\beta,\delta,\mu_N\big)
        ,
    \end{equation}
    where $\alpha=K/N\les 1-\delta$  and the error term $\mathrm{Err}_N^{(1)}$ is given by
    \begin{equation}\label{eq:lemma_FLK_main_regime}
        \begin{aligned}
            \mathrm{Err}_N^{(1)} \big(\beta,\delta,\mu_N\big)
            =&
            \frac{1}{2N}
            +
            \frac{2\Lambda \beta}{\delta N}
            +
            C_{\beta/\delta}
            \left(\frac{\beta}{\delta}+1\right)
            \mathrm{Err}_N^{(*)}(\mu_N,\mu)
        \end{aligned}
    \end{equation}
    for $C_{\beta/\delta}=128[1+\beta\Lambda/\delta]e^{8(1+\beta\Lambda/\delta)}$.
\end{lemma}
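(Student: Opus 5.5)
The plan is to apply the upper bound of \Cref{thm:sec2_main} (in the explicit form of \Cref{lem:fe_upper_bound}) with $\nu=\hat{\mu}_{L,K}$, inverse temperature $\beta' = \frac{N}{N-K-1}\beta\tau_L$, and target measure $\mu_{L,\alpha}$. Then we control $W_1(\hat{\mu}_{L,K},\mu_{L,\alpha})$ and the top-edge discrepancy by $\mathrm{Err}_N^{(*)}$. Since $K\le(1-\delta)N$ and $\delta>2/N$, we have $N-K-1\ge \delta N/2$, so $\beta'\le 2\beta/\delta$ (up to the harmless factor from $\tau_L\le1$). The constant in \Cref{lem:fe_upper_bound} is therefore bounded by something of the form $C_{\beta/\delta}$, after adjusting constants. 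Both $\hat\mu_{L,K}$ and $\mu_{L,\alpha}$ are supported in $[-\Lambda,\Lambda]$ by \eqref{eq:def_Lambda}, so \eqref{eq:assumption_sec3} holds.

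The first step is the change of normalization. We want to replace $\beta'$ by $\beta\tau_L/(1-\alpha)$ and the prefactor $\frac{N-K-1}{N}$ by $1-\alpha$. These differ by $O(1/(\delta N))$ in the argument and by $1/N$ in the prefactor. Using that $\Ffin{\nu}$ is $\Lambda$-Lipschitz in $\beta$ (as noted in the proof of \Cref{thm:sec2_main}) and bounded by $\beta'\Lambda$, this produces the $\frac{1}{2N}+\frac{2\Lambda\beta}{\delta N}$ terms. To be safe we can do the replacement on $\Flim{\mu_{L,\alpha}}$ instead, which is Lipschitz by \Cref{lem:FE_lipschitz}.

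The second step, which is the main one, is the Wasserstein estimate. We couple $\hat\mu_{L,K}$, uniform on $\lambda_1,\dots,\lambda_{N-K-1}$, with $\mu_{L,\alpha}$, which is the pushforward of the uniform law on $[0,1-\alpha]$ by $I$ and then renormalized. We use the quantile coupling: map the atom $\lambda_k$ to $I(t)$ for $t\in[(k-1)/N,k/N]$. Each pair then differs by at most $|\lambda_k-I(k/N)|+\sup_{|t-v|\le1/N}|I(t)-I(v)|$. The normalization mismatch between $(N-K-1)/N$ and $1-\alpha$ is one atom of mass $1/N$, so after dividing by $1-\alpha\ge\delta$ it costs $O(\Lambda/(\delta N))$. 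This gives $W_1\lesssim \mathrm{Err}_N^{(*)}/\delta$ plus a $1/(\delta N)$ term. The edge term is handled the same way: $\nu_{\max}=\lambda_{N-K-1}$, while the top of $\supp\mu_{L,\alpha}$ is $I(1-\alpha)$, and their difference is again bounded by the two sup terms in $\mathrm{Err}_N^{(*)}$.

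The main obstacle is matching the exact form of the error, in particular the square root. \Cref{lem:fe_upper_bound} gives $\sqrt{W_1}$, whereas the stated error is linear in $\mathrm{Err}_N^{(*)}$. This is consistent because $\mathrm{Err}_N^{(*)}$ already contains $\sqrt{W_1(\mu_N,\mu)}$, and the truncated-measure $W_1$ is bounded via quantiles by sup terms that enter without a square root. So I would bound $\sqrt{W_1(\hat\mu_{L,K},\mu_{L,\alpha})}\le \delta^{-1/2}(\dots)$ and absorb $\delta^{-1/2}\le \beta/\delta+1$ into the prefactor $(\beta/\delta+1)$. If this absorption fails, I would fall back to redefining the constants, which the statement leaves somewhat flexible. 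The case $\tau_L$ near $0$ causes no problem, since both sides tend to $0$ linearly.
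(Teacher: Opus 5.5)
Your setup coincides with the paper's. You apply Lemma~\ref{lem:fe_upper_bound} with $\nu=\hat\mu_{L,K}$, $N-K-1$ in place of $N$, target $\mu_{L,\alpha}$, and inverse temperature $\frac{N}{N-K-1}\beta\tau_L=\frac{\beta\tau_L}{1-\alpha'}$, where $\alpha'=\frac{K+1}{N}$. You then pass from $\alpha'$ to $\alpha$ on $\Flim{\mu_{L,\alpha}}$ using $|\Flim{\mu_{L,\alpha}}(z)|\le\Lambda z$ and Lipschitz continuity. The edge term $|\lambda_{N-K-1}-I(1-\alpha)|$ enters linearly, so bounding it by the two supremum terms of $\mathrm{Err}_N^{(*)}$ is fine. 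Constant-level slack (e.g.\ $C_{\beta'}$ with $\beta'\le 2\beta/\delta$ versus $C_{\beta/\delta}$) is shared with the paper and is not the issue.

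The gap is the Wasserstein term. Lemma~\ref{lem:fe_upper_bound} charges $\sqrt{W_1(\hat\mu_{L,K},\mu_{L,\alpha})}$. You bound $W_1$ by the worst pointwise discrepancy of the quantile coupling, $\sup_k|\lambda_k-I(k/N)|+\sup_{|t-v|\le 1/N}|I(t)-I(v)|$, or by $\mathrm{Err}_N^{(*)}/\delta$. After the square root you are left with the square roots of these quantities, whereas $\mathrm{Err}_N^{(*)}$ contains them only to the first power. Since $\sqrt{x}/x\to\infty$ as $x\to0$, no adjustment of constants absorbs this. So the step you call ``consistent'' does not hold, and your argument only yields an error of order $\sqrt{\mathrm{Err}_N^{(*)}}$.

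This is a real loss, not a technicality. If $\mu_N$ lumps the eigenvalues with $I(k/N)\in[x_0,x_0+s]$ onto $x_0+s$, the supremum term is about $s$, while $W_1(\mu_N,\mu)$ is of order $s^2$. Separately, $\delta^{-1/2}\le\beta/\delta+1$ fails for small $\beta$.

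The missing idea is to compare the truncated distance with the \emph{full} one, since what $\mathrm{Err}_N^{(*)}$ actually contains is $\sqrt{W_1(\mu_N,\mu)}$. Your own coupling does this if you average the discrepancy instead of taking its supremum. With $U$ uniform on $[0,1-\alpha']$, the pair $(\lambda_{\lceil NU\rceil},I(U))$ couples $\hat\mu_{L,K}$ with $\mu_{L,\alpha'}$. By the one-dimensional quantile formula $W_1(\mu_N,\mu)=\int_0^1|\lambda_{\lceil Ns\rceil}-I(s)|\,ds$,
\begin{equation*}
(1-\alpha')\,W_1(\hat\mu_{L,K},\mu_{L,\alpha'})\le\int_0^{1-\alpha'}\bigl|\lambda_{\lceil Ns\rceil}-I(s)\bigr|\,ds\le W_1(\mu_N,\mu),
\end{equation*}
which is the paper's appeal to \cite[(2.48)]{villani2021topics}. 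The whole error of Lemma~\ref{lem:fe_upper_bound} is multiplied by $\frac{N-K-1}{N}=1-\alpha'$, so the resulting factor $(1-\alpha')^{-1/2}$ cancels and no power of $\delta^{-1}$ appears.

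For the remaining mismatch, use the mass-counting bound $W_1(\mu_{L,\alpha'},\mu_{L,\alpha})\le\frac{2\Lambda}{(1-\alpha')N}$, keeping $1-\alpha'$ rather than $\delta$ in the denominator. After the prefactor it contributes $\sqrt{2\Lambda(1-\alpha')/N}=O(N^{-1/2})$, matching the $\sqrt{1/N}$ term in $\mathrm{Err}_N^{(*)}$. The quantile bound $\sup_{|t-v|\le 1/N}|I(t)-I(v)|$ for this mismatch would again leave the square root of a term that appears only linearly.
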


\begin{proof}
    We apply Lemma \ref{lem:fe_upper_bound} with $\nu=\hat{\mu}_{L,K}$, $\mu = \mu_{L,\alpha}$, $N-K-1$ in place of $N$, and $\frac{N}{N-K-1}\beta\tau_L$ in place of $\beta$. This yields
    \begin{equation}
        \frac{N-K-1}{N}\Ffin{\hat{\mu}_{L,K}}\left(\frac{N \beta \tau_L}{N-K-1}\right)
        \les
        (1-\alpha')\Flim{\mu_{L,\alpha}}
        \left(
            \frac{\beta \tau_L}{1-\alpha'}
        \right)
        +
        (1-\alpha')\mathrm{Err}_N,
    \end{equation}
    where $\alpha' = \frac{K+1}{N}$ and the error term $\mathrm{Err}_N$ is given by
    \begin{equation}
        \begin{aligned}
            \mathrm{Err}_N
            = & \            
            C
            \left(
                \tfrac{\beta\tau_L}{1-\alpha'} + 1
            \right) \left(
                \sqrt{W_1\left(\hat{\mu}_{L,K}, \mu_{L,\alpha}\right)} 
                +
                \left|
                    \lambda_{N-K-1}
                    -
                    I(1-\alpha)
                \right|
                +
                \sqrt{\tfrac{1}{(1-\alpha')N}}
            \right).
        \end{aligned}
    \end{equation}
    It remains to use that $\Flim{\mu_{L,\alpha}}(z)$ is both bounded by $z\Lambda $ and $\Lambda$-Lipschitz to get

    \begin{equation}\label{eq:FLalpha_lipschitz}
        (1-\alpha')\Flim{\mu_{L,\alpha}}
        \left(\frac{\beta \tau_L}{1-\alpha'}\right)
        -
        (1-\alpha)\Flim{\mu_{L,\alpha}}
        \left(\frac{\beta \tau_L}{1-\alpha}\right)
        \leq
        \frac{2\Lambda \beta}{\delta N}.
    \end{equation}
    To conclude, note that
    
    \begin{equation}
        \left|
        \lambda_{N-K-1}
        -
        I(1-\alpha)
        \right|
        \les
        \sup_{1\les k \les N} |\lambda_k-I(k/N)|
        +
        \sup_{ |h-u|\les 1/N} |I(h)-I(u)|,
    \end{equation}
    and use the triangle inequality: $W_1\left(\hat{\mu}_{L,K}, \mu_{L,\alpha}\right)\les W_1\left(\hat{\mu}_{L,K}, \mu_{L,\alpha'}\right)+W_1\left(\mu_{L,\alpha'}, \mu_{L,\alpha}\right)$. It is a direct consequence of \cite[equation 2.48]{villani2021topics} that
    \begin{equation}
        (1-\alpha')W_1\left(\hat{\mu}_{L,K}, \mu_{L,\alpha'}\right)\les W_1\left(\mu_N, \mu\right),
    \end{equation}
    while a simple coupling argument yields $W_1\left(\mu_{L,\alpha'}, \mu_{L,\alpha}\right)\leq \frac{2\Lambda}{(1-\alpha')N}$,
    leading to the error bound $\mathrm{Err}^{(1)}$.
\end{proof}

\noindent Next, we bound $\Ffin{\hat{\mu}_{R,K}}$ in terms of $\Flim{\mu_{R,\alpha}}$. 

\begin{lemma}\label{lem:FR_UB_delta_macro}
    For any $\beta>0$, $\delta>0$, $\delta N\les K\les N-1$, $\tau_R\in [0,q^*]$
    \begin{equation}
        \frac{K}{N}\Ffin{\hat{\mu}_{R,K}}\left( \frac{N}{K}\beta \tau_R \right) 
        \les
        \alpha
        \Flim{\mu_{R,\alpha}}
        \left(
            \frac{ \beta \tau_R }{\alpha}
        \right)
        +
        \mathrm{Err}_N^{(2)}\big(\beta,\delta,\mu_N\big),
    \end{equation}
    where $\alpha=K/N\ges \delta$ and the error term $\mathrm{Err}_N^{(2)}$ is given by
    \begin{equation}
        \mathrm{Err}_N^{(2)} \big(\beta,\delta,\mu_N\big)
        =
         C_{\beta/\delta} \left(\frac{\beta}{\delta}+1\right)
         \mathrm{Err}_N^{(*)}(\mu_N,\mu)
    \end{equation}
    for $C_{\beta/\delta}=128[1+\beta\Lambda/\delta]e^{8(1+\beta\Lambda/\delta)}$.
\end{lemma}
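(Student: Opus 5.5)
The argument mirrors the proof of Lemma \ref{lem:FL_UB_delta_macro}, applied to the right block. I will apply the upper bound of Lemma \ref{lem:fe_upper_bound} (equivalently part (1) of Theorem \ref{thm:sec2_main}) with the following choices:
\begin{itemize}[label={}]
\item $\nu=\hat\mu_{R,K}$, which has $K$ atoms, so $K$ plays the role of $N$;
\item $\mu_{R,\alpha}$ in place of $\mu$;
\item $\frac{N}{K}\beta\tau_R$ in place of $\beta$.
\end{itemize}
Since $K\ges\delta N$ and $\tau_R\les q^*\les 1$, the effective inverse temperature is at most $\beta/\delta$. The constant of Lemma \ref{lem:fe_upper_bound} is then bounded by $C_{\beta/\delta}$, and the support bound \eqref{eq:def_Lambda} holds with the same $\Lambda$. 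Multiplying by $K/N=\alpha$ gives
\begin{equation*}
\frac{K}{N}\Ffin{\hat\mu_{R,K}}\Bigl(\frac{N}{K}\beta\tau_R\Bigr)\les \alpha\Flim{\mu_{R,\alpha}}\Bigl(\frac{\beta\tau_R}{\alpha}\Bigr)+\alpha C_{\beta/\delta}\Bigl(\sqrt{W_1(\hat\mu_{R,K},\mu_{R,\alpha})}+|\lambda_N-\mu_{R,\alpha,\max}|+\tfrac{1}{\sqrt K}\Bigr).
\end{equation*}
Unlike the left block, no shift $\alpha'$ versus $\alpha$ arises here, because $\hat\mu_{R,K}$ has exactly $K=\alpha N$ atoms. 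The factor $(\beta/\delta+1)$ in $\mathrm{Err}^{(2)}_N$ is slack that absorbs the constants collected below.

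Next I control the three error terms by $\mathrm{Err}_N^{(*)}$.

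\emph{Finite-size term.} We have $\alpha/\sqrt K=\sqrt{\alpha/N}\les 1/\sqrt N$.

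\emph{Top of the support.} The maximum of the support of $\mu_{R,\alpha}$ is $\lM=I(1)$. Taking $k=N$ in the definition of $\mathrm{Err}_N^{(*)}$ gives $|\lambda_N-I(1)|\les\sup_k|\lambda_k-I(k/N)|$.

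\emph{Wasserstein term.} I want $\alpha W_1(\hat\mu_{R,K},\mu_{R,\alpha})\les W_1(\mu_N,\mu)$, up to a boundary correction of order $\Lambda/N$. I would argue this through the quantile representation $W_1=\int_0^1|F^{-1}-G^{-1}|$ in one dimension, the same tool as \cite[eq.~2.48]{villani2021topics}. The quantile function of $\hat\mu_{R,K}$ is the restriction of the quantile function of $\mu_N$ to $(1-\alpha,1]$, rescaled to $(0,1]$. Likewise, the quantile function of $\mu_{R,\alpha}$ is $I$ restricted to $(1-\alpha,1]$. Hence
\[
\alpha W_1(\hat\mu_{R,K},\mu_{R,\alpha})=\int_{1-\alpha}^1|F_{\mu_N}^{-1}-I|\les W_1(\mu_N,\mu)
\]
exactly. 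The only care needed is that the top $K$ atoms occupy precisely the quantile interval $(1-K/N,1]$, which holds for an empirical measure with equal weights.

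\emph{Combining.} Since $\alpha\les1$, the square root gives $\alpha\sqrt{W_1(\hat\mu_{R,K},\mu_{R,\alpha})}=\sqrt{\alpha}\sqrt{\alpha W_1}\les\sqrt{W_1(\mu_N,\mu)}$. Summing the three bounds, the error is at most $C_{\beta/\delta}\,\mathrm{Err}_N^{(*)}(\mu_N,\mu)\les \mathrm{Err}^{(2)}_N$.

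I expect the main obstacle to be justifying that Lemma \ref{lem:fe_upper_bound} applies uniformly, since it is stated with constants depending on $\beta$. The point to check is that its constant is monotone in $\beta\Lambda$, so it can be bounded uniformly over effective temperatures up to $\beta/\delta$ by $C_{\beta/\delta}$. Also, the lemma as stated concerns $\mu$ satisfying the main assumption, whereas here the target is $\mu_{R,\alpha}$. Inspecting its proof shows that only bounded support in $[-\Lambda,\Lambda]$ is used, together with the identification $\lM=\mu_{\max}$; both hold for $\mu_{R,\alpha}$. The case $\tau_R=0$ is trivial, since both sides then vanish.
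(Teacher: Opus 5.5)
Your proposal is correct and follows the paper's proof. You apply Lemma~\ref{lem:fe_upper_bound} in dimension $K$ with $\nu=\hat\mu_{R,K}$, target $\mu_{R,\alpha}$ and inverse temperature $\frac{N}{K}\beta\tau_R\le\beta/\delta$, then control the Wasserstein term through the one-dimensional quantile representation; no $\alpha'$ shift is needed because $\hat\mu_{R,K}$ has exactly $K$ atoms. Your version also supplies details the paper leaves implicit, and they check out: the exact identity $\alpha W_1(\hat\mu_{R,K},\mu_{R,\alpha})=\int_{1-\alpha}^1|F_{\mu_N}^{-1}-I|$, the bounds on the edge and finite-size terms, and the observation that the factor $(\beta/\delta+1)$ is slack.
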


\begin{proof}
    The proof is similar to that of Lemma \ref{lem:FL_UB_delta_macro}, without the substitution of $\alpha'$ by $\alpha$. Hence, we do not give all details.  We apply Lemma \ref{lem:fe_upper_bound} with $\nu=\hat{\mu}_{R,K}$, $\mu = \mu_{R,\alpha}$, $K$ in place of $N$, and $\frac{N}{K}\beta\tau_R$ in place of $\beta$.
\end{proof}

It remains to give a bound in the corner cases of $\alpha\coloneqq K/N$ either close to $0$ or close to $1$. To this effect, we use ad-hoc bounds to obtain the following lemmas.

\begin{lemma}\label{lem:FL_UB_delta_micro}
    For any $\beta>0$, $2/N<\delta< \mu \big( [\lm, \lm+1]\big)$, $(1-\delta)N \les K\les N-2$ and $\tau_L\in(0,1-q^*]$ we have
    \begin{equation}
        \frac{N-K-1}{N}\Ffin{\hat{\mu}_{L,K}}\left(
            \frac{N}{N-K-1}\beta \tau_L
        \right)
        \les
        (1-\alpha)
        \Flim{\mu_{L,\alpha}}
        \left(
            \frac{ \beta \tau_L }{1-\alpha}
        \right)
        +
        \mathrm{Err}_N^{(3)} \big(\beta,\delta,\mu_N\big)
        ,
    \end{equation}
    where $\alpha=K/N\ges 1-\delta$  and the error term $\mathrm{Err}_N^{(3)}$ is given by
    \begin{equation}
        \mathrm{Err}_N^{(3)} \big(\beta,\delta,\mu_N\big)
        =
        \beta  | \lambda_{\lceil \delta N \rceil}-\lambda_1| 
        +
        \beta  | \lambda_{1}-\lm|
    \end{equation}
\end{lemma}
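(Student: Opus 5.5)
The plan is to avoid the machinery of Lemma~\ref{lem:fe_upper_bound}. In this regime the truncated measure $\hat{\mu}_{L,K}$ has only $N-K-1\le \delta N$ atoms, all in $[\lambda_1,\lambda_{\lceil\delta N\rceil}]$. Likewise $\mu_{L,\alpha}$ is supported on $[\lm, I(1-\alpha)]\subset[\lm,I(\delta)]$. So both sides are essentially $\beta\tau_L$ times an eigenvalue close to $\lm$, and we compare them through the crude sandwich bounds.

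\emph{Upper bound on the left-hand side.} Write $n=N-K-1$ and $z=\frac{N}{n}\beta\tau_L$. From the definition \eqref{eq:F_nu_def} we have
\[
n\,\Ffin{\hat\mu_{L,K}}(z)=\log \Es_{n-1}\Bigl[e^{nz\sum_{i\le n}\lambda_i\sigma_i^2}\Bigr]\le nz\,\lambda_n .
\]
Multiplying by $1/N$ gives
\[
\frac{N-K-1}{N}\Ffin{\hat{\mu}_{L,K}}(z)\le \beta\tau_L\lambda_{N-K-1}.
\]
Since $N-K-1\le \delta N\le\lceil\delta N\rceil$, monotonicity of the eigenvalues gives $\lambda_{N-K-1}\le\lambda_{\lceil\delta N\rceil}$. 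Hence the left-hand side is at most $\beta\tau_L\lambda_1+\beta\tau_L|\lambda_{\lceil\delta N\rceil}-\lambda_1|$.

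\emph{Lower bound on the right-hand side.} By Corollary~\ref{cor:free_energy_trivial_bounds} applied to $\mu_{L,\alpha}$, whose minimum of support is $\lm$, we get $\Flim{\mu_{L,\alpha}}(w)\ge w\lm$. Therefore
\[
(1-\alpha)\Flim{\mu_{L,\alpha}}\Bigl(\tfrac{\beta\tau_L}{1-\alpha}\Bigr)\ge \beta\tau_L\lm .
\]
Subtracting, the difference of the two sides is at most $\beta\tau_L\bigl(|\lambda_{\lceil\delta N\rceil}-\lambda_1|+|\lambda_1-\lm|\bigr)$. Using $\tau_L\le 1$ this is at most $\mathrm{Err}_N^{(3)}$.

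\emph{Checks.} We need $\alpha<1$ so that $\mu_{L,\alpha}$ is a genuine probability measure; this holds because $K\le N-2$. The left-hand side requires $n\ge1$, which holds for the same reason. The hypothesis $\delta<\mu([\lm,\lm+1])$ is not needed for this crude argument and only guarantees a nontrivial regime. The one delicate point is the direction of the inequality in the corollary: it is the lower bound $\beta\lm\le\Flim{}$ that is used, applied at the rescaled inverse temperature. Its proof via Lemma~\ref{lem:Legendre_trans_trivial_bounds} holds for every $z>0$ and any bounded-support measure. The case $\tau_L=0$ is excluded, but it would be trivial anyway.
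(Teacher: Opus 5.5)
Your proof is correct and follows the paper's argument. The paper likewise bounds the left side by $\beta\tau_L\lambda_{\lceil\delta N\rceil}$ directly from \eqref{eq:F_nu_def}, bounds the right side from below by $\beta\tau_L\lm$ via Corollary~\ref{cor:free_energy_trivial_bounds}, and subtracts; your version is slightly more careful in noting that $\lambda_{N-K-1}\le\lambda_{\lceil\delta N\rceil}$ requires $K\ge(1-\delta)N$ and that $\tau_L\le 1$ absorbs the prefactor.
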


\begin{proof}
    First, from the definition \eqref{eq:F_nu_def}, for any $\beta>0$, any $1\les K\les N-1$ and any $\tau_L\in (0,1-q^*]$ we have
    
    \begin{equation}\label{eq:adhoc_FL_UB}
         \frac{N-K-1}{N}\Ffin{\hat{\mu}_{L,K}}\left(
            \frac{N}{N-K-1}\beta \tau_L
        \right)
        \les
        \beta \tau_L \lambda_{\lceil \delta N \rceil}.
    \end{equation}
    Next, Corollary \ref{cor:free_energy_trivial_bounds} 
    evaluated at $\mu=\mu_{L,\alpha}$ and $z=\tfrac{\beta\tau_L}{1-\alpha}$ yields

    \begin{equation}
        (1-\alpha)\Flim{\mu_{L,\alpha}}
        \left(
            \frac{ \beta \tau_L }{1-\alpha}
        \right)
        \geq 
        \beta \tau_L \lm,
    \end{equation}
    where we used that the support of $\mu=\mu_{L,\alpha}$ is $[\lm,I(1-\alpha)]$. The proof of the lemma follows.

\end{proof}

\begin{lemma}\label{lem:FR_UB}
    For any $\beta>0$, any $0<\delta\leq \mu\big([\lM-1,\lM]\big)$, any $1\les K\les \delta N$ and $\tau_R\in [0,q^*]$, letting $\alpha=K/N\les \delta$ we have
    \begin{equation}
        \frac{K}{N}\Ffin{\hat{\mu}_{R,K}}\left(\frac{N}{K}\beta \tau_R\right)
        \les
        \alpha\Flim{\mu_{R,\alpha}}( \beta \tau_R/\alpha ) +
        \mathrm{Err}_N^{(4)} \big(\beta,\delta,\mu_N\big),
    \end{equation}
    where the error term $\mathrm{Err}_N^{(4)}$ is given by

    \begin{equation}
        \mathrm{Err}_N^{(4)}\big(\beta,\delta,\mu_N\big)
        =
        \beta  |\lambda_N-\lM|
        +
        \beta  |\lM-I(1-\delta)|
    \end{equation}
\end{lemma}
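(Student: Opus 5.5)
Lemma \ref{lem:FR_UB} is a crude ad hoc bound in the spirit of Lemma \ref{lem:FL_UB_delta_micro}: an upper bound for the left-hand side and a lower bound for the right-hand side, both of the form $\beta\tau_R\times(\text{an eigenvalue near }\lM)$.

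\textbf{Upper bound for the finite free energy.} By \eqref{eq:F_nu_def}, $\frac{K}{N}\Ffin{\hat{\mu}_{R,K}}\big(\frac{N}{K}\beta\tau_R\big)=\frac1N\log E_{K-1}\big[\exp\big(N\beta\tau_R\sum_{i=N-K+1}^N\lambda_i\sigma_i^2\big)\big]$. Since $\sum_i\sigma_i^2=1$ and every eigenvalue involved is at most $\lambda_N$, the exponent is at most $N\beta\tau_R\lambda_N$. This gives
\begin{equation*}
\frac{K}{N}\Ffin{\hat{\mu}_{R,K}}\Big(\frac{N}{K}\beta\tau_R\Big)\les\beta\tau_R\lambda_N .
\end{equation*}

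\textbf{Lower bound for the limiting free energy.} Apply Corollary \ref{cor:free_energy_trivial_bounds} to $\mu_{R,\alpha}$ at $z=\beta\tau_R/\alpha$. Its support is $[I(1-\alpha),\lM]$, so its minimum is $I(1-\alpha)$, and we get
\begin{equation*}
\alpha\Flim{\mu_{R,\alpha}}(\beta\tau_R/\alpha)\ges\beta\tau_R I(1-\alpha).
\end{equation*}
Because $I$ is nondecreasing and $\alpha\les\delta$, we have $I(1-\alpha)\ges I(1-\delta)$. The hypothesis $\delta\les\mu([\lM-1,\lM])$ only guarantees $I(1-\delta)\ges\lM-1$, so this quantile sits in the right place and is finite.

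\textbf{Combining.} The difference between the two sides is at most
\begin{equation*}
\beta\tau_R\big(\lambda_N-I(1-\delta)\big)\les\beta\tau_R\big(|\lambda_N-\lM|+|\lM-I(1-\delta)|\big)\les\mathrm{Err}_N^{(4)}(\beta,\delta,\mu_N),
\end{equation*}
where the last step uses $\tau_R\les q^*<1$. The case $\tau_R=0$ is trivial, since both sides vanish (using $F_\mu(0)=0$).

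The only point needing care is the lower bound for the limiting free energy. First, the support of $\mu_{R,\alpha}$ must be a subset of $[I(1-\alpha),\lM]$, which holds by \eqref{eq:eqdef_muLalpha_muRalpha}. Second, the monotonicity of $I$ must be used in the correct direction. I do not expect any genuine obstacle.
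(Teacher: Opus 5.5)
Your proof is correct and follows the paper's own argument: the crude bound $\frac{K}{N}\Ffin{\hat{\mu}_{R,K}}(\frac{N}{K}\beta\tau_R)\les\beta\tau_R\lambda_N$ comes straight from the definition, and Corollary \ref{cor:free_energy_trivial_bounds} applied to $\mu_{R,\alpha}$ gives $\alpha\Flim{\mu_{R,\alpha}}(\beta\tau_R/\alpha)\ges\beta\tau_R I(1-\alpha)$. You additionally spell out the steps the paper leaves implicit, namely $I(1-\alpha)\ges I(1-\delta)$ by monotonicity of $I$, the bound $\tau_R\les 1$, and the trivial case $\tau_R=0$.
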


\begin{proof}
    First, from the definition \eqref{eq:F_nu_def}, for any $\beta>0$, any $1\les K\les N-1$ and any $\tau_R\in [0,q^*]$ we have
    \begin{equation}\label{eq:adhoc_FR_UB}
        \frac{K}{N}\Ffin{\hat{\mu}_{R,K}}\left(\frac{N}{K}\beta \tau_R\right)\les \beta \tau_R \lambda_N.
    \end{equation}
    Next, Corollary \ref{cor:free_energy_trivial_bounds} evaluated at $\mu=\mu_{R,\alpha}$ and $z=\tfrac{\beta \tau_R}{\alpha}$ yields

    \begin{equation}
        \alpha \Flim{\mu_{R,\alpha}}\left( \frac{\beta \tau_R}{\alpha} \right)
        \geq
        \beta \tau_R I(1-\alpha),
    \end{equation}
    where we used that the support of $\mu=\mu_{R,\alpha}$ is $[I(1-\alpha),\lM]$. The proof of the lemma follows.

    
\end{proof}

We are now ready to prove Lemma \ref{lem:sec5main}.

\begin{proof}[Proof of Lemma \ref{lem:sec5main}]
    First, fix some $2/N<\delta< \mu \big( [\lm, \lm+1]\big)\wedge\mu\big([\lM-1,\lM]\big)$. We start by treating the case $K\in [1,N-2]$. In the definition of $M_{N,K}$ given in \eqref{eq:def_MNK}, the terms $\Ffin{\hat{\mu}_{L,K}}$ and $\Ffin{\hat{\mu}_{R,K}}$ appear. Depending on the range of $\alpha=K/N$, we proved the corresponding upper bounds in Lemmas \ref{lem:FL_UB_delta_macro}, \ref{lem:FR_UB_delta_macro}, \ref{lem:FL_UB_delta_micro} and \ref{lem:FR_UB}. By combining these bounds and recognizing the expression \eqref{eq:def_ftap} in the resulting bound, we obtain, for $\tau_L\in[0,1-q^*]$ and $\tau_R\in[0,q^*]$   \begin{equation}\label{eq:Mn_is_Ftap_minus_CNK}
        M_{N,K}(\tau_L,\tau_R)
        \les
        \fTAPalpha(\beta, \tau_L, \tau_R, \alpha)
        +
        \sum_{i=1}^4 \mathrm{Err}_N^{(i)}(\beta,\delta,\mu_N).
    \end{equation}
    
    Turning to the case $K=0$, we similarly apply Lemma \ref{lem:FL_UB_delta_macro} to the r.h.s. of \eqref{eq:def_MN0}, recognize the definition of $\fTAPalpha(\beta,\tau_L,\tau_R,0)$ from \eqref{eq:def_ftap_alpha=0} and conclude that    \begin{equation}\label{eq:Mn_is_Ftap_minus_CN0}
        M_{N,0}(\tau_L)
        \les
        \fTAPalpha(\beta, \tau_L, \tau_R, 0)
        +
        \mathrm{Err}_N^{(1)} \big(\beta,\delta,\mu_N\big).
    \end{equation}
    The same holds for $K=N-1$, and combining results we prove that

    \begin{equation}
        M_{N,K}(\tau_L,\tau_R)
        \les
        \fTAPalpha(\beta, \tau_L, \tau_R, \alpha)
        +
        \sum_{i=1}^4\mathrm{Err}_N^{(i)} \big(\beta,\delta,\mu_N\big),
    \end{equation}
    and
    \begin{equation}\label{eq:def_err7}
    \begin{aligned}
        \sum_{i=1}^4\mathrm{Err}_N^{(i)} \big(.\big)
        = & \ 
       \frac{1}{2N}
        +
        \frac{2\Lambda \beta}{\delta N}
        +
        256[1+\beta\Lambda/\delta] e^{8(1+\beta\Lambda/\delta)}
        \left(\frac{\beta}{\delta}+1\right)
        \mathrm{Err}_N^{(*)}(\mu_N,\mu)\\
        &+
        \beta \left(  | \lambda_{\lceil \delta N \rceil}-\lambda_1| 
        +
        | \lambda_{1}-\lm|
        +
        |\lambda_N-\lM|
        +
        |\lM-I(1-\delta)|
        \right).
    \end{aligned}
    \end{equation}
    It remains to choose $\delta_N=\log(1/\mathrm{Err}_N^{(*)}(\mu_N,\mu))^{-1/2}\vee\frac1{\sqrt N}$ to conclude the proof of the lemma. Remark that this choice verifies $2/N<\delta_N<\mu \big( [\lm, \lm+1]\big)\wedge\mu\big([\lM-1,\lM]\big)$ for $N$ large enough because $\mathrm{Err}_N^{(*)}(\mu_N,\mu)$ vanishes (see Lemma \ref{lem:error_term_goes_to_zero}).

\end{proof}

\subsection{Proof of the variational upper bound}

It is now possible to combine the bound on $M_{N,K}$ with the analysis of $F_N$ to prove the main result of this section, Proposition \ref{prop:aux}.

\begin{proof}[Proof of Proposition \ref{prop:aux}]
    We start by fixing some $\beta>\frac{\beta_c}{1-1/N}$ and $2/N<\delta<\mu \big( [\lm, \lm+1]\big)\wedge\mu\big([\lM-1,\lM]\big)$. Then, we fix $\epsilon=1/N$ (which satisfies $\epsilon\leq q^*$ from the choice of $\beta$) and recall \eqref{eq:covering_sphere_AK}, that is, a collection of at most $N^3$ sets of the form $A_{K,1/N}^{\tau_L,\tau_R}$ covers $S^{N-1}$. Then, putting together the results of Proposition \ref{prop:sec4main} and Lemma \ref{lem:sec5main} leads to 
    \begin{equation}
        F_N(\beta)
        \les
        \sup_{(\alpha,\tau_L,\tau_R) \in \mathfrak{S}_{\beta}}
        \fTAPalpha(\beta,\tau_L,\tau_R,\alpha)
        +
        \mathrm{Err}_N^{(5)} \big(\beta,\mu_N\big)
        +
        \frac{c}{N}\log N.
    \end{equation}
    for $\beta_c\les\beta\les\frac{\beta_c}{1-1/N}$, use Theorem \ref{thm:sec2_main} to get

    \begin{equation}
        F_N(\beta)
        \les
        \Flim{\mu}(\beta)+\mathrm{Err}^{(*)}(\mu_N\mu)
        =
        \fTAPalpha(\beta,1-q^*,0,0)+\mathrm{Err}^{(*)}(\mu_N\mu).
    \end{equation}
    Because $\mathrm{Err}_N^{(5)} \big(\beta,\mu_N\big)+\mathrm{Err}_N^{(*)} \big(\mu_N,\mu\big)
    +
    \frac{c}{N}\log N$ is increasing in $\beta$, fixing

    \begin{equation}
        \mathrm{Err}_N^{(6)}(B,\mu_N)
        = 
        \mathrm{Err}_N^{(5)} \big(B,\mu_N\big)
        +
        \mathrm{Err}_N^{(*)} \big(\mu_N,\mu\big)
        +
        \frac{c}{N}\log N.
    \end{equation}
    concludes the proof of Proposition \ref{prop:aux}.
\end{proof}

We conclude the section with the following technical lemma, proving that $\mathrm{Err}_N^{(*)}$ vanishes.

\begin{lemma}\label{lem:error_term_goes_to_zero}
    The master error term $\mathrm{Err}_N^{(*)}$ defined in \eqref{eq:def_master_error} goes to $0$.
\end{lemma}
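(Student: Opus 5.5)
We treat the four summands of $\mathrm{Err}_N^{(*)}(\mu_N,\mu)$ separately and show that each tends to $0$. The term $N^{-1/2}$ is trivial. For $\sqrt{W_1(\mu_N,\mu)}$: by Assumption~\ref{main_assumption} and \eqref{eq:def_Lambda}, all $\mu_N$ and $\mu$ are supported in the fixed compact interval $[-\Lambda,\Lambda]$. On a compact set, weak convergence is equivalent to convergence in $W_1$ (bounded first moments plus weak convergence, or directly Kantorovich--Rubinstein with Arzelà--Ascoli on $1$-Lipschitz functions vanishing at $0$). Hence $W_1(\mu_N,\mu)\to0$.

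For the modulus term $\sup_{|t-v|\le 1/N}|I(t)-I(v)|$: since $\mu$ has a density positive on $(\lm,\lM)$, its CDF $F_\mu$ is continuous and strictly increasing on $I=[\lm,\lM]$. Its inverse $I(\cdot)$ is therefore a continuous map $[0,1]\to[\lm,\lM]$, with $I(0)=\lm$ and $I(1)=\lM$. A continuous function on a compact interval is uniformly continuous, so this modulus tends to $0$.

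The main step is the quantile term $\sup_{1\le k\le N}|\lambda_k-I(k/N)|$. Note that $\lambda_k$ is the $k/N$-quantile of $\mu_N$: $\lambda_k=\inf\{x: F_{\mu_N}(x)\ge k/N\}$. We argue by contradiction. Suppose there exist $\eta>0$ and, along a subsequence, indices $k_N$ with $|\lambda_{k_N}-I(k_N/N)|\ge\eta$. Pass to a further subsequence with $k_N/N\to t\in[0,1]$; by continuity of $I$, $I(k_N/N)\to I(t)$.

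Take first the case $t\in(0,1)$ and $\lambda_{k_N}\ge I(t)+\eta/2$. Set $x=I(t)+\eta/4<\lM$ (if $x\ge\lM$, use $\lambda_{k_N}\le\lambda_N\to\lM$ from Assumption~\ref{main_assumption}(3) instead). Then $F_{\mu_N}(x)<k_N/N$. But $F_{\mu_N}(x)\to F_\mu(x)$, since $F_\mu$ is continuous. Moreover $F_\mu(x)>t$ by strict monotonicity, which is a contradiction. The case $\lambda_{k_N}\le I(t)-\eta/2$ is symmetric: there $F_{\mu_N}(I(t)-\eta/4)\ge k_N/N$, whereas the limit is strictly less than $t$.

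At the endpoints $t=0$ or $t=1$, the strict-monotonicity argument still gives one direction. The other direction is supplied by $\lambda_1\to\lm$ and $\lambda_N\to\lM$ (Assumption~\ref{main_assumption}(3)) together with the monotonicity $\lambda_1\le\lambda_{k_N}\le\lambda_N$. For example, if $t=1$ then $\lambda_{k_N}\le\lambda_N\to\lM=I(1)$. This endpoint handling is exactly where assumption (3) enters, and it is the only delicate point: without it, outlier eigenvalues would break uniformity of the quantile convergence.

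Combining the four estimates gives $\mathrm{Err}_N^{(*)}(\mu_N,\mu)\to0$.
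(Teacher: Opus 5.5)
Your proof is correct, but for the main step it takes a different route from the paper.

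\textbf{The paper's route.} For $\sup_{k}|\lambda_k-I(k/N)|$, the paper cites Theorem~1.2 of Bogoya--B\"ottcher--Maximenko. That result converts weak convergence into uniform convergence of the ordered atoms to the quantile function, provided all atoms lie in the support. To meet this hypothesis, the paper clips the eigenvalues to $\tilde\lambda_i\in[\lm,\lM]$ and pays $|\lambda_1-\lm|+|\lambda_N-\lM|$ for the clipping.

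\textbf{Your route.} You prove the uniform quantile convergence directly. You use the identity $\lambda_k=\inf\{x:F_{\mu_N}(x)\ge k/N\}$ and a subsequence argument with $k_N/N\to t$. You then combine convergence of $F_{\mu_N}$ at the (everywhere) continuity points of $F_\mu$ with strict monotonicity of $F_\mu$ coming from the positive density.

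\textbf{Comparison.} Assumption~(3) plays the same role in both arguments, namely excluding outliers. In your proof it enters through the cases $x\ge\lM$, $y\le\lm$ and the endpoints $t\in\{0,1\}$, rather than through a clipping error. The paper's route buys brevity. Yours is self-contained and shows exactly which hypotheses are used: continuity and strict monotonicity of $F_\mu$, plus no outliers.

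You also treat the $W_1$ and $N^{-1/2}$ terms, which the paper leaves implicit. For the modulus-of-continuity term, you correctly use only continuity (hence uniform continuity) of $I$ on $[0,1]$, which is all that is needed; the paper appeals to absolute continuity instead.

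One small precision about your parenthetical on $W_1$. In general, $W_1$-convergence requires weak convergence plus \emph{convergence}, not mere boundedness, of first moments. Under the uniform support bound \eqref{eq:def_Lambda} this is automatic, so your conclusion stands.
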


\begin{proof}
    Recalling the definition \eqref{eq:def_master_error}, we start by noting that $\sup_{ |h-u|\les 1/N} |I(h)-I(u)|$ is the modulus of continuity, because $I$ is absolutely continuous it goes to $0$. Then, it suffices to check that

    \begin{equation}
        \lim_{N\to \infty}\sup_{1\les k\les N} \left\{
        \left| \lambda_{k}-I(k/N) \right| \right\}
        =
        0.
    \end{equation}
    We use \cite[Theorem 1.2]{bogoya2016convergence}, with the caveat that their result only holds if the $\lambda_i$ are all in the support of $\mu$. We deal with this by considering the modified eigenvalues $\tilde{\lambda}_i$ defined by

    \begin{equation}
    \tilde{\lambda}_i
    =
    \begin{cases}
        \lm \quad \text{if } \lambda_i<\lm\\
        \lM \quad \text{if } \lambda_i>\lM\\
        \lambda_i \quad \text{otherwise.}
    \end{cases}
    \end{equation}
    Doing so, \cite[Theorem 1.2]{bogoya2016convergence} can be applied to obtain
    
    \begin{equation}
        \lim_{N\to \infty}\sup_{1\les k\les N} \left\{
        \left| \tilde{\lambda}_{k}-I(k/N) \right| \right\}
        =
        0.
    \end{equation}
    It remains to note that

    \begin{equation}
        \sup_{1\les k\les N} \left\{
        \left| \lambda_{k}-I(k/N) \right| \right\}
        \les
        \sup_{1\les k\les N} \left\{
        \left| \tilde{\lambda}_{k}-I(k/N) \right| \right\}
        +
        |\lambda_1-\lm|
        +
        |\lambda_N-\lM|
    \end{equation}
\end{proof}

\section{The zero-field TAP formula}\label{sec:where_is_TAP_max}

In the previous section we proved a variational formula, linking the free energy and the supremum of the $\TAP$ free energy over some given intervals. In this section we give an explicit expression for this supremum. First, we give a simplified expression for the TAP free energy, stemming directly from \eqref{eq:def_ftap}.

\begin{lemma}\label{lem:ftap_explicit_pre}
    For any $\beta>0, \ \alpha \in (0,1)$ and $\tau_L,\tau_R>0$ s.t. $\tau_L+\tau_R\le1$
    \begin{equation}\label{eq:def_ftap_explicit_pre}
    \begin{aligned}
        \fTAPalpha(\beta, \tau_L, \tau_R, \alpha)
        = &\ 
        -\frac{1}{2}
        -
        \frac{1}{2} \log (2\beta)        
        -
        (1-\alpha)L^*_{\mu_{L,\alpha}}\left(\frac{\beta\tau_L}{1-\alpha}\right)
        -
        \alpha L^*_{\mu_{R,\alpha}}\left(\frac{\beta\tau_R}{\alpha}\right)\\
        &+
        \beta I(1-\alpha) 
        (1 - \tau_L - \tau_R).
    \end{aligned}
    \end{equation}
    Also for $\alpha=0,\tau_L\in(0,1]$
    \begin{equation}\label{eq:f_TAP_q_star_alpha=0_pre}
    \begin{aligned}
        \fTAPalpha(\beta, \tau_L, 0, 0)
        =& \ 
        -
        \frac{1}{2}
        -
        \frac{1}{2} \log (2\beta)
        -L^*_{\mu_{L,\alpha}}\left(\beta\tau_L\right)
        +
        \beta \lM (1-\tau_L).
    \end{aligned}
    \end{equation}
\end{lemma}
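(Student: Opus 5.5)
The identity is a purely algebraic consequence of the definitions. I will substitute the definition \eqref{eq:F_def} of $F_\nu(z)=-L^*_\nu(z)-\tfrac12\log(2z)-\tfrac12$ into each free-energy term of \eqref{eq:def_ftap}. I will then check that the logarithms produced this way cancel exactly against the relative entropy term $-\tfrac12 D(\alpha\|\tau_L,\tau_R)$ from \eqref{eq:def_entropy}. The hypotheses $\alpha\in(0,1)$ and $\tau_L,\tau_R>0$ ensure that every argument $z$ is strictly positive, so \eqref{eq:F_def} applies and all logarithms are finite. No analytic input beyond the definitions is needed.

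\emph{Case $\alpha\in(0,1)$.} With $z_L=\beta\tau_L/(1-\alpha)$, the left term contributes
\begin{equation*}
-(1-\alpha)L^*_{\mu_{L,\alpha}}(z_L)-\tfrac{1-\alpha}{2}\log(2\beta)-\tfrac{1-\alpha}{2}\log\tfrac{\tau_L}{1-\alpha}-\tfrac{1-\alpha}{2}.
\end{equation*}
Likewise, with $z_R=\beta\tau_R/\alpha$, the right term contributes
\begin{equation*}
-\alpha L^*_{\mu_{R,\alpha}}(z_R)-\tfrac{\alpha}{2}\log(2\beta)-\tfrac{\alpha}{2}\log\tfrac{\tau_R}{\alpha}-\tfrac{\alpha}{2}.
\end{equation*}
The constants combine into $-\tfrac12-\tfrac12\log(2\beta)$. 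The remaining logarithms sum to $-\tfrac{\alpha}{2}\log\tfrac{\tau_R}{\alpha}-\tfrac{1-\alpha}{2}\log\tfrac{\tau_L}{1-\alpha}$. This is exactly $+\tfrac12 D(\alpha\|\tau_L,\tau_R)$, so it cancels the entropy term in \eqref{eq:def_ftap}. The term $\beta I(1-\alpha)(1-\tau_L-\tau_R)$ is untouched, and \eqref{eq:def_ftap_explicit_pre} follows.

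\emph{Case $\alpha=0$.} Here I use \eqref{eq:def_ftap_alpha=0} with $\tau_R=0$, together with $D(0\|\tau_L,0)=\log\tau_L^{-1}$ from \eqref{eq:def_entropy_alpha=0}. I also note that $\mu_{L,0}=\mu$ by \eqref{eq:eqdef_muLalpha_muRalpha}, since $I(1)=\lM$ and $\mu$ has a density, so the boundary points carry no mass. Expanding $F_\mu(\beta\tau_L)=-L^*_\mu(\beta\tau_L)-\tfrac12\log(2\beta)-\tfrac12\log\tau_L-\tfrac12$, the term $-\tfrac12\log\tau_L$ cancels against $-\tfrac12 D(0\|\tau_L,0)=+\tfrac12\log\tau_L$. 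This gives \eqref{eq:f_TAP_q_star_alpha=0_pre}.

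There is no real obstacle here. The only points needing care are bookkeeping the signs in the entropy cancellation and remarking that $\mu_{L,0}$ coincides with $\mu$.
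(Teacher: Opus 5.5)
Your proposal is correct and matches the paper's proof: substitute \eqref{eq:F_def} into \eqref{eq:def_ftap}, or into \eqref{eq:def_ftap_alpha=0} when $\alpha=0$, and observe that the leftover logarithms equal $+\tfrac12 D(\alpha\|\tau_L,\tau_R)$ (resp.\ $+\tfrac12\log\tau_L^{-1}$), which cancels the entropy term. You also note that $\mu_{L,0}=\mu$ because $I(1)=\lM$ and $\mu$ has no atoms. The paper leaves this implicit, but it is exactly what reconciles the $L^*_{\mu_{L,\alpha}}$ in the statement with the $F_\mu$ in the $\alpha=0$ definition.
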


\begin{proof}
    The claim \eqref{eq:def_ftap_explicit_pre} follows from substituting \eqref{eq:F_def} into \eqref{eq:def_ftap}, and noting that some of the resulting terms cancel $-\frac{1}{2} D(\alpha||\tau_L,\tau_R)$ (recall \eqref{eq:def_entropy}). The claim \eqref{eq:f_TAP_q_star_alpha=0_pre} similarly follows from \eqref{eq:def_ftap_alpha=0}, \eqref{eq:F_def} and \eqref{eq:def_entropy_alpha=0}.
\end{proof}

Define
\begin{equation}\label{eq:def_G_LR}
    \begin{aligned}
        G_L(\beta,\tau,\alpha)
        &= \inf_{\eta\ges I(1-\alpha)}
        \left\{
            \beta\tau\eta
            -
            \frac{1}{2}\int^{I(1-\alpha)}_{\lm}\log(\eta-x)\mu(dx)
        \right\},\\
        G_R(\beta,\tau,\alpha)
        &= \inf_{\eta\ges\lM}\left\{
            \beta\tau\eta
            -
            \frac{1}{2}
            \int^{\lM}_{I(1-\alpha)}
            \log(\eta-x)\mu(dx)
        \right\}.
    \end{aligned}
\end{equation}

\begin{lemma}\label{lem:ident_F_star_LR_G_LR}
    For all $\beta,\tau>0$
    \begin{equation}\label{eq:ident_F_star_LR_G_LR}
        -(1-\alpha)L^*_{\mu_{L,\alpha}}\left(\frac{\beta\tau}{1-\alpha}\right)
        \overset{\forall \alpha\in[0,1)}{=}
        G_L(\beta,\tau,\alpha),
        \qquad
        -\alpha L^*_{\mu_{R,\alpha}}\left(\frac{\beta\tau}{\alpha}\right)
        \overset{\forall \alpha\in(0,1]}{=}
        G_R(\beta,\tau,\alpha).
    \end{equation}
\end{lemma}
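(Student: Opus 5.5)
Both identities come from unfolding the Legendre transform \eqref{eq:LegendreLstar} and using that $\mu_{L,\alpha}$ and $\mu_{R,\alpha}$ are restrictions of $\mu$ renormalized by $(1-\alpha)$ and $\alpha$. The plan is to compute the left-hand sides directly and match them to the infima in \eqref{eq:def_G_LR}.

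For the left identity, fix $\alpha\in[0,1)$. The measure $\mu_{L,\alpha}$ has maximal support point $I(1-\alpha)$, since $\mu$ has a positive density on the interior of $I$. Set $z=\beta\tau/(1-\alpha)$. Then
\begin{equation*}
    -(1-\alpha)L^*_{\mu_{L,\alpha}}(z)
    =
    \inf_{\eta\ge I(1-\alpha)}
    \Bigl\{(1-\alpha)z\eta-(1-\alpha)L_{\mu_{L,\alpha}}(\eta)\Bigr\}.
\end{equation*}
Here $(1-\alpha)z\eta=\beta\tau\eta$. By \eqref{eq:eqdef_muLalpha_muRalpha},
\begin{equation*}
    (1-\alpha)L_{\mu_{L,\alpha}}(\eta)
    =
    \frac12\int_{(\lm,I(1-\alpha))}\log(\eta-x)\,\mu(dx),
\end{equation*}
which is exactly the integral appearing in $G_L$. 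The endpoints carry no $\mu$-mass because $\mu$ is absolutely continuous, so open versus closed intervals do not matter. Multiplying the supremum by $-(1-\alpha)>0$-scaled sign turns it into the infimum over the same range $\eta\ge I(1-\alpha)$, which is $G_L(\beta,\tau,\alpha)$. At $\alpha=0$ we have $\mu_{L,0}=\mu$, $I(1)=\lM$, and the formula reduces consistently to the definition.

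For the right identity, fix $\alpha\in(0,1]$. The measure $\mu_{R,\alpha}$ lives on $[I(1-\alpha),\lM]$ with maximal point $\lM$, so its Legendre transform is a supremum over $\eta\ge\lM$. Set $z=\beta\tau/\alpha$. Then $\alpha z\eta=\beta\tau\eta$ and
\begin{equation*}
    \alpha L_{\mu_{R,\alpha}}(\eta)=\frac12\int_{I(1-\alpha)}^{\lM}\log(\eta-x)\,\mu(dx),
\end{equation*}
which gives $G_R(\beta,\tau,\alpha)$. At $\alpha=1$ we get $\mu_{R,1}=\mu$ and $I(0)=\lm$, again consistent.

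The main point needing care is that the domain of the Legendre transform for each truncated measure must match the range of $\eta$ in \eqref{eq:def_G_LR}. The definition \eqref{eq:LegendreLstar} uses $x\ge\lM$ for the measure in question, so it must be read with that measure's own maximal support point: $I(1-\alpha)$ for $\mu_{L,\alpha}$ and $\lM$ for $\mu_{R,\alpha}$. Once this is settled via the positive-density assumption, the rest is substitution. Finiteness of the integrals at $\eta$ equal to the endpoint is not an issue, since values of $+\infty$ or $-\infty$ are handled identically on both sides.
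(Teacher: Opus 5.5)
Your proposal is correct and follows the same route as the paper. The paper substitutes $\mu_{L,\alpha}(dx)=(1-\alpha)^{-1}\mathbf{1}_{[\lm,I(1-\alpha)]}\mu(dx)$ and $\mu_{R,\alpha}(dx)=\alpha^{-1}\mathbf{1}_{[I(1-\alpha),\lM]}\mu(dx)$ into the log-potential and its Legendre transform, and reads off $G_L$ and $G_R$. Your remark that each truncated measure's Legendre transform must be taken over $x$ at least that measure's own upper support point ($I(1-\alpha)$, resp.\ $\lM$, using the positive-density assumption) matches the convention the paper uses implicitly (cf.\ Lemma~\ref{lem:limit_range_gamma}).
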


\begin{proof}
    Recall that $\mu_{L,\alpha}(dx)=\frac{1}{1-\alpha}1_{[\lm,I(1-\alpha)]}\mu(dx)$
    and $\mu_{R,\alpha}(dx)=\frac{1}{\alpha}1_{[I(1-\alpha),\lM]}\mu(dx)$. Using this, \eqref{eq:def_log_potential}, \eqref{eq:LegendreLstar}, and  \eqref{eq:def_G_LR} yields \eqref{eq:ident_F_star_LR_G_LR}.
\end{proof}

\begin{lemma}\label{lem:f_TAP_q_star_alpha=0}
    For any $\beta>0,\ \alpha \in (0,1)$ and $\tau_L,\tau_R>0$ s.t. $\tau_L+\tau_R\leq1$
    \begin{equation}\label{eq:def_ftap_explicit}
        \fTAPalpha(\beta, \tau_L, \tau_R, \alpha)
        =
        -\frac{1}{2}
        -
        \frac{1}{2} \log (2\beta)        
        +
        G_L(\beta,\tau_L,\alpha)
        +
        \beta I(1-\alpha) 
        (1 - \tau_L - \tau_R)
        +
        G_R(\beta,\tau_R,\alpha).
    \end{equation}
    Also for $\alpha=0$
    \begin{equation}\label{eq:f_TAP_q_star_alpha=0}
        \fTAPalpha(\beta, 1-q^\ast, 0, 0)
        =
        -
        \frac{1}{2}
        -
        \frac{1}{2} \log (2\beta)
        +
        G_L(\beta,1-q^\ast,0)
        +
        \beta q^\ast \lM.
    \end{equation}
\end{lemma}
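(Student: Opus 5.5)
The plan is a direct substitution: combine the explicit form of Lemma~\ref{lem:ftap_explicit_pre} with the identification of Lemma~\ref{lem:ident_F_star_LR_G_LR}.

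For the first claim, fix $\beta>0$, $\alpha\in(0,1)$ and $\tau_L,\tau_R>0$ with $\tau_L+\tau_R\le 1$. By \eqref{eq:def_ftap_explicit_pre}, $\fTAPalpha(\beta,\tau_L,\tau_R,\alpha)$ equals $-\tfrac12-\tfrac12\log(2\beta)$, plus the two Legendre terms $-(1-\alpha)L^*_{\mu_{L,\alpha}}(\beta\tau_L/(1-\alpha))$ and $-\alpha L^*_{\mu_{R,\alpha}}(\beta\tau_R/\alpha)$, plus $\beta I(1-\alpha)(1-\tau_L-\tau_R)$. Since $\alpha\in(0,1)$ lies in both ranges in \eqref{eq:ident_F_star_LR_G_LR}, and $\tau_L,\tau_R>0$, Lemma~\ref{lem:ident_F_star_LR_G_LR} applies to each Legendre term. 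The first is replaced by $G_L(\beta,\tau_L,\alpha)$ and the second by $G_R(\beta,\tau_R,\alpha)$, which gives \eqref{eq:def_ftap_explicit} directly.

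For the $\alpha=0$ identity, I start from \eqref{eq:f_TAP_q_star_alpha=0_pre} with $\tau_L=1-q^*$. This is admissible because $q^*\in(0,1)$ for $\beta>\beta_c$, so $\tau_L\in(0,1)$. Here $\mu_{L,0}=\mu$ up to the null endpoint convention, so the term is $-L^*_\mu(\beta(1-q^*))$. Lemma~\ref{lem:ident_F_star_LR_G_LR} at $\alpha=0$ identifies this with $G_L(\beta,1-q^*,0)$. The last term is $\beta\lM(1-\tau_L)=\beta q^*\lM$, which yields \eqref{eq:f_TAP_q_star_alpha=0}.

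There is no real obstacle. The only points to check are that the hypotheses of the earlier lemmas hold, namely $\tau>0$ and the $\alpha$-range in each identity, and that the endpoint conventions agree. For the latter, $G_L$ at $\alpha=0$ is the infimum over $\eta\ge I(1)=\lM$ of the integral over $[\lm,\lM]$, which matches the Legendre transform \eqref{eq:LegendreLstar} of $L_\mu$. This uses that $\mu$ has a density, so endpoints carry no mass.
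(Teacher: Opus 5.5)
Your proposal is correct and follows the paper's own argument: substitute Lemma~\ref{lem:ident_F_star_LR_G_LR} into the two expressions of Lemma~\ref{lem:ftap_explicit_pre}. One small point: the $\alpha=0$ identity is claimed for every $\beta>0$, and when $\beta\le\beta_c$ you have $q^*=0$, so $\tau_L=1$; this still lies in the admissible range $(0,1]$ of \eqref{eq:f_TAP_q_star_alpha=0_pre}, so restricting your justification to $\beta>\beta_c$ is unnecessary, though it causes no harm.
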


\begin{proof}
    These follow by combining \Cref{lem:ftap_explicit_pre} and \Cref{lem:ident_F_star_LR_G_LR}.
\end{proof}

\begin{lemma}\label{lem:F_TAP_max_tau}
    For any $\beta>0,\alpha\in(0,1)$ the following holds.
    \begin{enumerate}
        \item
            If $\tau_R>0$:
            \begin{equation}\label{eq:F_TAP_max_tau_L}
                \sup_{\tau_L\in[0,1-q^*]} \fTAPalpha(\beta,\tau_L,\tau_R,\alpha)
                =
                \fTAPalpha(\beta,1-q^\ast,\tau_R,\alpha).
            \end{equation}
        \item
            for any $\tau_L>0$:
            \begin{equation}\label{eq:F_TAP_max_tau_R}
                \sup_{\tau_R\in [0,q^*]} \fTAPalpha(\beta,\tau_L,\tau_R,\alpha)
                =
                \fTAPalpha(\beta,\tau_L,q^*,\alpha).
            \end{equation}
        \item           
            \begin{equation}\label{eq:F_TAP_max_tau_LR}
                \sup_{\tau_L\in[0,1-q^*],\tau_R\in[0,q^*]}
                \fTAPalpha(\beta,\tau_L,\tau_R,\alpha)
                =
                \fTAPalpha(\beta,1-q^*,q^*,\alpha).
            \end{equation}            
    \end{enumerate}
\end{lemma}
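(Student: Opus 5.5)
The plan is to work with the explicit representation of \Cref{lem:f_TAP_q_star_alpha=0}. For $\alpha\in(0,1)$ and $\tau_L,\tau_R>0$ it gives
\begin{equation*}
    M_\TAP(\beta,\tau_L,\tau_R,\alpha)
    =
    -\tfrac12-\tfrac12\log(2\beta)
    +G_L(\beta,\tau_L,\alpha)
    +\beta I(1-\alpha)(1-\tau_L-\tau_R)
    +G_R(\beta,\tau_R,\alpha).
\end{equation*}
This separates $M_\TAP$ into a function of $\tau_L$ plus a function of $\tau_R$. Parts (1) and (2) therefore reduce to one-variable monotonicity statements. Part (3) follows by applying (1) and then (2), or equivalently by maximizing each separable piece independently, since the constraint set is a product.

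For part (1), I will show that $\tau\mapsto G_L(\beta,\tau,\alpha)-\beta I(1-\alpha)\tau$ is nondecreasing on $(0,1-q^*]$. Each bracket in the infimum defining $G_L$ is affine in $\tau$, so $G_L$ is concave in $\tau$. Its right derivative at $\tau$ equals $\beta\eta^*(\tau)$, where $\eta^*$ is the minimizer; this is the envelope argument of \Cref{lem:legendre_diff}, and it gives $\partial_\tau G_L=\beta x^*$ in the rescaled variables. Because the infimum runs over $\eta\ge I(1-\alpha)$, we have $\eta^*\ge I(1-\alpha)$, so the derivative of $G_L(\beta,\tau,\alpha)-\beta I(1-\alpha)\tau$ is $\beta(\eta^*-I(1-\alpha))\ge0$. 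Hence the supremum over $\tau_L$ is attained at the right endpoint $1-q^*$.

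Part (2) is handled the same way with $G_R$, but here the infimum runs over $\eta\ge\lM\ge I(1-\alpha)$. The derivative $\beta(\eta^*_R-I(1-\alpha))$ is again nonnegative, so the maximum over $\tau_R$ is attained at $q^*$. The endpoint $\tau=0$ is included by continuity: by \Cref{lem:FE_lipschitz}, the maps $\tau\mapsto (1-\alpha)F_{\mu_{L,\alpha}}(\beta\tau/(1-\alpha))$ and its $R$-analogue are continuous at $0$. The entropy term $-\tfrac12 D(\alpha\|\tau_L,\tau_R)$ tends to $-\infty$ as $\tau_L\downarrow0$ or $\tau_R\downarrow0$, consistent with the convention $D=\infty$. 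So the boundary value $0$ never beats the interior, which is why (1) is stated for $\tau_R>0$ and (2) for $\tau_L>0$.

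The main obstacle is justifying the derivative formula, or simply the monotonicity, rigorously; in particular, $G_L$ and $G_R$ involve infima over half-lines rather than a Legendre transform of a normalized measure in the standard form. My first attempt avoids differentiability entirely. For $\tau<\tau'$ and any $\eta\ge I(1-\alpha)$,
\begin{equation*}
    \beta\tau'\eta-\tfrac12\int\log(\eta-x)\,\mu(dx)-\beta I(1-\alpha)\tau'
    \;\ge\;
    \beta\tau\eta-\tfrac12\int\log(\eta-x)\,\mu(dx)-\beta I(1-\alpha)\tau,
\end{equation*}
because $\beta(\tau'-\tau)(\eta-I(1-\alpha))\ge0$. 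Taking infima over $\eta$ yields the monotonicity directly, and the $R$ case is identical using $\eta\ge\lM\ge I(1-\alpha)$. This pointwise comparison is clean and is the route I would commit to.
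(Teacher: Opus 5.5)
Your committed argument is exactly the paper's proof. It uses the separable form from Lemma~\ref{lem:f_TAP_q_star_alpha=0}, the pointwise bound $\beta(\tau'-\tau)(\eta-I(1-\alpha))\ge 0$ for every admissible $\eta$ (with $\eta\ge\lM\ge I(1-\alpha)$ in the $R$ case), then takes infima, and gets (3) from (1) and (2). The endpoint $\tau=0$ needs no continuity argument: for $\alpha\in(0,1)$ the convention $D=\infty$ makes $M_\TAP=-\infty$ there, so the suprema effectively run over $(0,1-q^*]$ and $(0,q^*]$, where the explicit formula applies.
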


\begin{proof}
    The terms in \eqref{eq:def_ftap_explicit} that depend on $\tau_L$ are $G_L(\beta,\tau_L,\alpha)-\beta I(1-\alpha)\tau_L$, and this expression equals    \begin{equation}\label{eq:F_TAP_max_tau_step_1}    
        \inf_{\eta\ges I(1-\alpha)}
        \left\{
            \beta\tau_L(\eta-I(1-\alpha))
            -
            \frac{1}{2}\int_{\lm}^{I(1-\alpha)}
            \log(\eta-x)\mu(dx)
        \right\}.
    \end{equation}    
    The term $\beta\tau_L(\eta-I(1-\alpha))$ is non-decreasing in $\tau_L$ for all $\eta$ in the range of the infmimum, so \eqref{eq:F_TAP_max_tau_step_1} is clearly non-decreasing in $\tau$. This proves \eqref{eq:F_TAP_max_tau_L}.
    
    Similarly, the terms in \eqref{eq:def_ftap_explicit} that depend on $\tau_R$ are $ G_R(\beta,\tau_R,\alpha)-\beta I(1-\alpha)\tau_R$, and this expression is non-decreasing in $\tau_R$, proving \eqref{eq:F_TAP_max_tau_R}.

    Finally \eqref{eq:F_TAP_max_tau_LR} follows from \eqref{eq:F_TAP_max_tau_L} and \eqref{eq:F_TAP_max_tau_R}.
\end{proof}

\begin{lemma}
    If $\beta>\beta_c$, then
    \begin{equation}
        G_L(\beta,1-q^\ast,0)
        =
        \beta (1-q^\ast) \lM
        -
        \frac{1}{2}
        \int_{\lm}^{\lM}
        \log(\lM-y)
        \mu(dy).
    \end{equation}
\end{lemma}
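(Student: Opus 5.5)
At $\alpha=0$ we have $I(1)=\lM$, so by \eqref{eq:def_G_LR}
\begin{equation*}
    G_L(\beta,1-q^*,0)=\inf_{\eta\ges\lM}\psi(\eta),
    \qquad
    \psi(\eta):=\beta(1-q^*)\eta-\frac12\int_{\lm}^{\lM}\log(\eta-x)\,\mu(dx).
\end{equation*}
The plan is to show that this infimum is attained at the left endpoint $\eta=\lM$. Evaluating $\psi(\lM)$ then gives exactly the claimed expression. The integral at $\eta=\lM$ is finite, since $s_\mu<\infty$ forces $\int\log(\lM-x)\,\mu(dx)>-\infty$. Alternatively, one can use that $\log(\lM-x)\les\log(\lM-\lm)$ together with monotone convergence as $\eta\downarrow\lM$.

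The key observation is that, since $\beta>\beta_c$, we have $q^*=1-\beta_c/\beta$, and hence
\begin{equation*}
    \beta(1-q^*)=\beta_c=\frac{s_\mu}{2}.
\end{equation*}
So $\psi(\eta)=\beta_c\eta-L_\mu(\eta)=\vf_\mu(\eta;\beta_c)$ in the notation of \eqref{eq:vf_mu_def}. For $\eta>\lM$ its derivative is
\begin{equation*}
    \psi'(\eta)=\beta_c-\frac12 S_\mu(\eta).
\end{equation*}
The Stieltjes transform $S_\mu(\eta)$ is strictly decreasing in $\eta$ on $(\lM,\infty)$, and by monotone convergence it increases to $S_\mu(\lM)=s_\mu$ as $\eta\downarrow\lM$. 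Therefore $S_\mu(\eta)<s_\mu$ for all $\eta>\lM$, which gives $\psi'(\eta)>\beta_c-\frac{s_\mu}{2}=0$ there.

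Thus $\psi$ is nondecreasing on $(\lM,\infty)$. To conclude that the infimum over $[\lM,\infty)$ equals $\psi(\lM)$, we need continuity of $\psi$ from the right at $\lM$. On the interval $(\lM,\lM+1]$, the integrand $\log(\eta-x)$ decreases to $\log(\lM-x)$ as $\eta\downarrow\lM$ and is bounded above by $\log(\lM+1-\lm)$. Monotone convergence therefore gives $\psi(\eta)\to\psi(\lM)$.

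Equivalently, one may invoke Lemma \ref{lem:xstar_legendre} together with Lemma \ref{lem:ident_F_star_LR_G_LR} at $\alpha=0$. At $z=\beta_c=s_\mu/2$, the derivative of $L_\mu(x)-zx$ at $x=\lM$ is $\le0$, so the maximizer is $x^*(z)=\lM$. This yields $G_L=-L^*_\mu(\beta_c)=\beta_c\lM-L_\mu(\lM)$, which is the same conclusion.

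The only delicate point is the boundary behaviour at $\lM$. This consists of the finiteness of $L_\mu(\lM)$ and the right-continuity of $\psi$, both of which are handled by monotone convergence using Assumption \ref{main_assumption}(4).
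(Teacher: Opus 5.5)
Your proof is correct and follows the paper's argument. Since $\beta(1-q^*)=\beta_c=s_\mu/2$, the derivative of $\eta\mapsto\beta(1-q^*)\eta-\frac12\int\log(\eta-y)\,\mu(dy)$ is nonnegative on $(\lM,\infty)$, so the infimum defining $G_L(\beta,1-q^*,0)$ is attained at $\eta=\lM$; your treatment of the finiteness of the integral at $\lM$ and of right-continuity there fills in boundary details the paper leaves implicit.
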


\begin{proof}
    If $\beta>\beta_c$, then $q^\ast=q^\ast(\beta)>0$, and the function
    \begin{equation}\label{eq:eta_to}
        \eta
        \mapsto
        \beta(1-q^\ast)\eta
        -
        \dfrac{1}{2}
        \int_{\lm}^{\lM}
        \log(\eta-y)\mu(dy)
    \end{equation}
    has derivative
    \begin{equation}\label{eq:eta_to_deriv}
        \beta (1-q^\ast)
        -
        \dfrac{1}{2}
        \int_{\lm}^{\lM} \dfrac{\mu(dy)}{\eta-y}
        \ges
        \beta(1-q^\ast)-\dfrac{1}{2}\int_{\lm}^{\lM} \dfrac{\mu(dy)}{\lM-y}
        \overset{\eqref{eq:def_smu}}{=}
        \beta(1-q^\ast)
        -
        \dfrac{1}{2} s_{\mu}.
    \end{equation}
    By the definitions \eqref{eq:def_beta_c}, \eqref{eq:def_q_star}, the right-hand side of \eqref{eq:eta_to_deriv} is zero. Thus \eqref{eq:eta_to} is non-decreasing.    
\end{proof}

\begin{lemma}\label{lem:GL_plus_GR_alpha_ineq}
    For all $\beta>0$ and $\alpha\in(0,1)$
    \begin{equation}\label{eq:GL_plus_GR_alpha_ineq}
        G_L(\beta,1-q^\ast,\alpha)
        +
        G_R(\beta,q^\ast,\alpha)
        \le
        G_L(\beta,1-q^\ast,0) + \beta q^\ast \lM.
    \end{equation}
\end{lemma}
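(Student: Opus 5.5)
The plan is to bound each infimum on the left by evaluating it at a well-chosen test point, and then compare with the explicit form of the right-hand side. Write $a=I(1-\alpha)\in(\lm,\lM)$ and split $\mu=\mu|_{[\lm,a]}+\mu|_{[a,\lM]}$. For $\beta\le\beta_c$ we have $q^*=0$, so $G_R(\beta,0,\alpha)=\inf_{\eta\ge\lM}\{-\tfrac12\int_a^{\lM}\log(\eta-x)\mu(dx)\}=-\infty$, because the integrand grows without bound as $\eta\to\infty$. The inequality is then trivial, and we assume $\beta>\beta_c$, i.e. $q^*>0$.

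\textbf{Step 1 (right-hand side).} By the preceding lemma, the right-hand side equals
\begin{equation*}
\beta(1-q^*)\lM-\tfrac12\int_{\lm}^{\lM}\log(\lM-y)\mu(dy)+\beta q^*\lM=\beta\lM-\tfrac12\int_{\lm}^{\lM}\log(\lM-y)\mu(dy).
\end{equation*}

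\textbf{Step 2 (test points).} In $G_L$ take $\eta=\lM\ge a$, which is admissible. This gives
\begin{equation*}
G_L(\beta,1-q^*,\alpha)\le\beta(1-q^*)\lM-\tfrac12\int_{\lm}^{a}\log(\lM-x)\mu(dx).
\end{equation*}
In $G_R$ take $\eta=\lM$, giving
\begin{equation*}
G_R(\beta,q^*,\alpha)\le\beta q^*\lM-\tfrac12\int_a^{\lM}\log(\lM-x)\mu(dx).
\end{equation*}
Adding the two bounds yields exactly the expression in Step 1, which proves \eqref{eq:GL_plus_GR_alpha_ineq}.

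\textbf{Main obstacle: finiteness of the integrals.} The choice $\eta=\lM$ in $G_R$ requires $\int_a^{\lM}\log(\lM-x)\mu(dx)>-\infty$. This holds because $\log(\lM-x)\ge -1/(\lM-x)$ for $\lM-x\le 1$, and $s_\mu=\int\mu(dy)/(\lM-y)<\infty$ by \Cref{main_assumption}; so the integral is finite. If one prefers to avoid evaluating at the boundary, take $\eta=\lM+\epsilon$ in both infima and let $\epsilon\downarrow0$; monotone convergence of $\log(\lM+\epsilon-x)$ gives the same conclusion.

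Note that the inequality holds for every $\alpha$ as soon as the infima are evaluated at the common point $\lM$. Its content is that splitting the spectrum cannot beat $\alpha=0$.
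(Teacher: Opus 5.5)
Your proof is correct and follows essentially the paper's argument. Both evaluate the two infima at the common admissible point $\eta=\lM$; the paper phrases this as restricting $G_L$ to $\eta\ge\lM$ and using that a sum of infima is at most the infimum of the sum. Both then identify the result with the right-hand side via the preceding lemma. Your treatment of the case $q^*=0$ through $G_R(\beta,0,\alpha)=-\infty$, and your explicit check that the integrals are finite using $s_\mu<\infty$, are harmless refinements of the paper's version.
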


\begin{proof}    
    Since the sum of infima is not greater than the infimum of the sum
    \begin{equation}\label{eq:lhs_bound}
        \begin{aligned}
            & G_L(\beta,1-q^\ast,\alpha)
            +
            G_R(\beta,q^\ast,\alpha)
            \\
            &\les
            \inf_{\eta\ges\lM} \left\{\beta(1-q^\ast)\eta-\frac{1}{2}\int^{I(1-\alpha)}_{\lm}\log(\eta-x)\mu(dx)\right\}+
            \inf_{\eta\ges\lM}\left\{\beta q^\ast\eta-\frac{1}{2}\int^{\lM}_{I(1-\alpha)}\log(\eta-x)\mu(dx)\right\}
            \\
            &\les\inf_{\eta\ges\lM} \left\{\beta\eta-\dfrac{1}{2}\int_{\lm}^{\lM} \log(\eta-x)\mu(dx)\right\}
        \end{aligned}
    \end{equation}
    If $q^\ast=0$, then the bottom line equals  the r.h.s. of \eqref{eq:GL_plus_GR_alpha_ineq}. If $q^\ast>0$, then by considering the case $\eta = \lM$ we see that the bottom line is bounded above by
    \begin{equation}
        \beta \lM
        -
        \dfrac{1}{2}\int_{\lm}^{\lM}
        \log(\lM-x)\mu(dx).
    \end{equation}
    which by \eqref{eq:eta_to} equals the r.h.s. of \eqref{eq:GL_plus_GR_alpha_ineq}. Thus in either case \eqref{eq:GL_plus_GR_alpha_ineq} holds.    
\end{proof}

\begin{lemma}\label{lem:f_TAP_q_star_sup_in_alpha}
    For any $\beta>0$
    \begin{equation}\label{eq:f_TAP_q_star_sup_in_alpha}
        \sup_{\alpha\in(0,1)} \fTAPalpha(\beta,1-q^*,q^*,\alpha)
        \les
        \fTAPalpha(\beta,1-q^*,0,0).
    \end{equation}
\end{lemma}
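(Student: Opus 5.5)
The plan is to combine the explicit representation of Lemma~\ref{lem:f_TAP_q_star_alpha=0} with the inequality of Lemma~\ref{lem:GL_plus_GR_alpha_ineq}. Throughout we take $\beta>\beta_c$, so that $q^*\in(0,1)$; the case $q^*=0$ is degenerate, because then $\tau_R=0$ and the relative entropy in \eqref{eq:def_entropy} is infinite for $\alpha\in(0,1)$, which makes the left-hand side $-\infty$.

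\emph{Step 1 (left-hand side).} Fix $\alpha\in(0,1)$. Since $\tau_L=1-q^*>0$, $\tau_R=q^*>0$ and $\tau_L+\tau_R=1$, formula \eqref{eq:def_ftap_explicit} applies. The middle term $\beta I(1-\alpha)(1-\tau_L-\tau_R)$ vanishes, which leaves
\begin{equation*}
  \fTAPalpha(\beta,1-q^*,q^*,\alpha)
  =-\frac12-\frac12\log(2\beta)+G_L(\beta,1-q^*,\alpha)+G_R(\beta,q^*,\alpha).
\end{equation*}
This cancellation is the point of taking $\tau_L+\tau_R=1$. The entropy term has already been absorbed by Lemma~\ref{lem:ftap_explicit_pre}.

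\emph{Step 2 (right-hand side).} By \eqref{eq:f_TAP_q_star_alpha=0},
\begin{equation*}
  \fTAPalpha(\beta,1-q^*,0,0)=-\frac12-\frac12\log(2\beta)+G_L(\beta,1-q^*,0)+\beta q^*\lM .
\end{equation*}

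\emph{Step 3 (comparison).} Subtracting the two expressions, the claim for each fixed $\alpha\in(0,1)$ reduces exactly to \eqref{eq:GL_plus_GR_alpha_ineq}. Taking the supremum over $\alpha\in(0,1)$ then gives \eqref{eq:f_TAP_q_star_sup_in_alpha}.

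The only real content is Lemma~\ref{lem:GL_plus_GR_alpha_ineq}, which is already available. The main thing to check carefully is Step 1: one has to verify that Lemma~\ref{lem:ftap_explicit_pre} legitimately covers the boundary case $\tau_L+\tau_R=1$. In that case the entropy \eqref{eq:def_entropy} still involves only $\tau_L$ and $\tau_R$, and it is finite since both are positive, so the cancellation there goes through unchanged. One also has to check that the identification of $-\alpha L^*$ with $G_R$ in Lemma~\ref{lem:ident_F_star_LR_G_LR} holds for all $\alpha\in(0,1)$, which it does by its statement.
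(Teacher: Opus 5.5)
Your proposal is correct and is the paper's proof: rewrite both sides with the $G_L/G_R$ representation of Lemma~\ref{lem:f_TAP_q_star_alpha=0}, where the middle term drops out because $\tau_L+\tau_R=1$, and then conclude with Lemma~\ref{lem:GL_plus_GR_alpha_ineq}. Your separate handling of $q^*=0$ is a small extra the paper leaves implicit: there the left side is $-\infty$ by the entropy convention, and the explicit formula, which needs $\tau_R>0$, does not apply.
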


\begin{proof}
    This follows from Lemma \ref{lem:f_TAP_q_star_alpha=0} and \ref{lem:GL_plus_GR_alpha_ineq}.
\end{proof}

\begin{corollary}\label{cor:f_TAP_alpha_ineq}
    For all $\beta>0$
    \begin{equation}\label{eq:f_TAP_alpha_ineq}
        \sup_{\alpha, \tau_L,\tau_R\in \mathfrak{S}_{\beta}}
        \fTAPalpha (\beta,\tau_L,\tau_R,\alpha)
        =
        \fTAPalpha(\beta,1-q^\ast,0,0).
    \end{equation}
\end{corollary}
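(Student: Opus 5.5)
The plan is to prove the two inequalities separately. The lower bound $\ge$ is immediate once we observe that $(0,1-q^\ast,0)\in\mathfrak{S}_\beta$. So the work is in the upper bound: for every $(\alpha,\tau_L,\tau_R)\in\mathfrak{S}_\beta$ we must show $\fTAPalpha(\beta,\tau_L,\tau_R,\alpha)\le \fTAPalpha(\beta,1-q^\ast,0,0)$. We split according to $\alpha$.

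\emph{Interior case $\alpha\in(0,1)$.} If $\tau_L=0$ or $\tau_R=0$, then $D(\alpha\|\tau_L,\tau_R)=\infty$ by convention, so $\fTAPalpha=-\infty$ and there is nothing to prove. If $\tau_L,\tau_R>0$, then $\tau_L+\tau_R\le 1$ holds automatically on $\mathfrak{S}_\beta$. Lemma \ref{lem:F_TAP_max_tau}(3) bounds the value by $\fTAPalpha(\beta,1-q^\ast,q^\ast,\alpha)$, and Lemma \ref{lem:f_TAP_q_star_sup_in_alpha} bounds that by $\fTAPalpha(\beta,1-q^\ast,0,0)$.

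\emph{Case $\alpha=0$.} Here $\fTAPalpha=-\infty$ unless $\tau_R=0$. When $\tau_R=0$, by \eqref{eq:f_TAP_q_star_alpha=0_pre} together with Lemma \ref{lem:ident_F_star_LR_G_LR}, the value equals
\[
-\tfrac12-\tfrac12\log(2\beta)+G_L(\beta,\tau_L,0)-\beta\lM\tau_L+\beta\lM.
\]
As in the proof of Lemma \ref{lem:F_TAP_max_tau}, we write $G_L(\beta,\tau_L,0)-\beta\lM\tau_L$ as an infimum over $\eta\ge\lM$ of expressions nondecreasing in $\tau_L$. It is therefore maximized at $\tau_L=1-q^\ast$. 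The point $\tau_L=0$ gives $-\infty$ through $D(0\|0,0)=\log 0^{-1}$, or is handled by monotonicity and continuity.

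\emph{Case $\alpha=1$.} Here we need $\tau_L=0$, and the value is $\Flim{\mu}(\beta\tau_R)+\beta\lm(1-\tau_R)+\tfrac12\log\tau_R$. Using \eqref{eq:F_def}, this equals
\[
-\tfrac12-\tfrac12\log(2\beta)-L^\ast_\mu(\beta\tau_R)+\beta\lm(1-\tau_R).
\]
The plan is to bound $-L^\ast_\mu(\beta\tau_R)\le \inf_{\eta\ge\lM}\{\beta\tau_R\eta-L_\mu(\eta)\}$ and, since $\lm\le\eta$, to add $\beta(1-\tau_R)\eta$ inside the infimum. This gives at most
\[
-\tfrac12-\tfrac12\log(2\beta)+\inf_{\eta\ge\lM}\{\beta\eta-L_\mu(\eta)\}.
\]
The chain ending \eqref{eq:lhs_bound} shows that this is $\le\fTAPalpha(\beta,1-q^\ast,0,0)$: evaluate at $\eta=\lM$ and use the preceding lemma identifying $G_L(\beta,1-q^\ast,0)$. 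The same device covers $\tau_R=0$ trivially.

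I expect the boundary cases $\alpha\in\{0,1\}$ and degenerate $\tau$'s to be the only real obstacle. Lemma \ref{lem:F_TAP_max_tau} and \eqref{eq:def_ftap_explicit} were stated only for $\alpha\in(0,1)$ with $\tau_L,\tau_R>0$. For these cases one has to track the conventions \eqref{eq:def_entropy_alpha=0}--\eqref{eq:def_ftap_alpha=1} and reprove the monotonicity directly, as sketched above.
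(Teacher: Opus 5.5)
Your proposal is correct and follows the paper's proof. The paper also gets the lower bound by evaluating at $(0,1-q^\ast,0)$, and the upper bound for $\alpha\in(0,1)$ via Lemma \ref{lem:F_TAP_max_tau}(3) followed by Lemma \ref{lem:f_TAP_q_star_sup_in_alpha}. Your additional treatment of $\alpha\in\{0,1\}$ and of $\tau_L=0$ or $\tau_R=0$ is valid and fills in cases that the paper's two-line proof passes over, since both cited lemmas are stated only for $\alpha\in(0,1)$: you use monotonicity in $\tau_L$ at $\alpha=0$, and at $\alpha=1$ the bound by $\inf_{\eta\ge\lM}\{\beta\eta-L_\mu(\eta)\}$ evaluated at $\eta=\lM$.
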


\begin{proof}
    The upper bound follows from statement (3) of Lemma \ref{lem:F_TAP_max_tau} and Lemma \ref{lem:f_TAP_q_star_sup_in_alpha}, while the lower bound simply follows from evaluating $\fTAPalpha$ at $(\beta,1-q^*,0,0)$.
\end{proof}

\begin{lemma}\label{lem:MTAP_fTAP}
    For all $\beta>0$
    \begin{equation}\label{eq:MTAP_fTAP}
        \fTAPalpha(\beta,1-q^\ast,0,0) = f_\TAP(\beta,\lM q^\ast,q^\ast)
        =
        \sup_{
        q \in [0,1)
        :
        \;
        \beta(1-q)\les\beta_c
        }
        f_\TAP(\beta,q \lM,q).
    \end{equation}
\end{lemma}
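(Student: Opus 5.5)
The plan is to verify both equalities in \eqref{eq:MTAP_fTAP} by direct computation from the definitions. The only analytic input is the explicit form of $L_\mu^*$ in the regime $z\ge \beta_c$, taken from the proofs of Lemma~\ref{lem:xstar_legendre} and Lemma~\ref{lem:legendre_diff}.

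\emph{First equality.} Substitute $\tau_L=1-q^*$ into the $\alpha=0$ convention \eqref{eq:def_ftap_alpha=0}; the case $\tau_R=0$ applies. By \eqref{eq:def_entropy_alpha=0}, $D(0\|1-q^*,0)=\log\frac{1}{1-q^*}$, so
\begin{equation*}
\fTAPalpha(\beta,1-q^*,0,0)=F_\mu\bigl(\beta(1-q^*)\bigr)+\beta\lM q^*+\tfrac12\log(1-q^*).
\end{equation*}
By \eqref{eq:OnsagerDefinition} and \eqref{eq:ftapDefinition}, this is exactly $f_\TAP(\beta,\lM q^*,q^*)$. This includes the case $\beta\le\beta_c$, where $q^*=0$, the logarithm vanishes and $F_\mu(\beta)$ remains.

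\emph{Second equality.} Set $g(q)=f_\TAP(\beta,q\lM,q)$. The constraint $\beta(1-q)\le\beta_c$ together with $q\in[0,1)$ is equivalent to $q\in[q^*,1)$ (take $\beta>0$; the case $\beta=0$ is trivial since then $q^*=0$ and $g\equiv \frac12\log(1-q)\le 0=g(0)$). On this range $z:=\beta(1-q)\in(0,\beta_c]=(0,s_\mu/2]$.

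For such $z$, the proof of Lemma~\ref{lem:xstar_legendre} (see also Lemma~\ref{lem:legendre_diff}) shows $x^*(z)=\lM$. Hence
\begin{equation*}
L_\mu^*(z)=L_\mu(\lM)-z\lM,\qquad F_\mu(z)=z\lM-L_\mu(\lM)-\tfrac12\log(2z)-\tfrac12 .
\end{equation*}
Plugging $z=\beta(1-q)$ into $g$, the terms $\beta q\lM$ and $\beta(1-q)\lM$ sum to $\beta\lM$. Likewise $\frac12\log(1-q)-\frac12\log(2\beta(1-q))=-\frac12\log(2\beta)$. Therefore
\begin{equation*}
g(q)=\beta\lM-L_\mu(\lM)-\tfrac12\log(2\beta)-\tfrac12\qquad\text{for all }q\in[q^*,1).
\end{equation*}
So $g$ is constant on the feasible set. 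This is a clean form of marginality. The supremum therefore equals $g(q^*)$ and is attained at $q^*$ (indeed at every feasible $q$). Equivalently, one can note that $g'(q)=\beta\lM-\frac{1}{2(1-q)}-\beta F_\mu'(\beta(1-q))=0$ using $F_\mu'(z)=x^*(z)-\frac1{2z}$ from Lemma~\ref{lem:FE_lipschitz}, then invoke continuity of $g$ at $q^*$.

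\emph{Main obstacle.} The only delicate point is justifying $x^*(z)=\lM$ up to and including $z=\beta_c$. At that endpoint one needs $s_\mu<\infty$ from Assumption~\ref{main_assumption}, so that $\partial_x\{L_\mu(x)-zx\}$ at $x=\lM$ equals $\frac12 s_\mu-z\le 0$. Concavity then puts the maximizer at $\lM$. This is exactly the case treated in the proof of Lemma~\ref{lem:xstar_legendre}, so I would simply cite it.
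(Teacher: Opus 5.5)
Your first equality is correct. The second, however, rests on a reversed inequality. You claim $x^*(z)=\lM$ for all $z\in(0,\beta_c]$, and in your ``main obstacle'' paragraph you write $\frac12 s_\mu-z\le 0$. In fact $\partial_x\{L_\mu(x)-zx\}|_{x=\lM}=\frac12 s_\mu-z$ is $\ge 0$ when $z\le\beta_c=\frac12 s_\mu$.

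As the proofs of Lemma~\ref{lem:xstar_legendre} and Lemma~\ref{lem:legendre_diff} state, $x^*(z)=\lM$ holds exactly for $z\ge\beta_c$, while $x^*(z)>\lM$ for every $z\in(0,\beta_c)$. On the feasible set $z=\beta(1-q)\le\beta_c$, so $x^*=\lM$ only at the endpoint $q=q^*$ (when $\beta>\beta_c$). The closed form $F_\mu(z)=z\lM-L_\mu(\lM)-\frac12\log(2z)-\frac12$ is the low-temperature formula, and you are using it on the wrong range. It cannot hold there: it tends to $+\infty$ as $z\downarrow 0$, contradicting $F_\mu(z)\le z\lM$ from Corollary~\ref{cor:free_energy_trivial_bounds}.

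Correspondingly, $g$ is not constant on $[q^*,1)$. In fact $g(q)\to-\infty$ as $q\uparrow 1$, since $F_\mu(\beta(1-q))\to 0$ while $\frac12\log(1-q)\to-\infty$. For the semicircle law, $g(q)=\sqrt2\,\beta q+\frac12\log(1-q)+\frac12\beta^2(1-q)^2$, which by AM--GM on $\frac1{2(1-q)}+\beta^2(1-q)$ is strictly decreasing on $(q^*,1)$. The constant you computed, $\beta\lM-L_\mu(\lM)-\frac12\log(2\beta)-\frac12$, is the correct value of $g(q^*)$ only. So the ``marginality'' reading you attach to this step is misplaced.

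The repair is the one-line argument the paper uses, which your ``equivalently'' remark nearly reaches. Using $F_\mu'(z)=x^*(z)-\frac{1}{2z}$ (Lemma~\ref{lem:legendre_diff}), one gets, for $q\in(q^*,1)$,
\[
g'(q)=\beta\bigl(\lM-x^*(\beta(1-q))\bigr).
\]
This is $\le 0$, not $=0$, simply because $x^*$ is a maximizer over $[\lM,\infty)$, and it is strictly negative for $q>q^*$. Since $g$ is continuous at $q^*$ ($F_\mu$ is Lipschitz), $g$ is non-increasing on $[q^*,1)$. The supremum is therefore $g(q^*)$, attained uniquely at $q^*$. Combined with your (correct) first equality, this proves the lemma.
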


\begin{proof}
    The first equality follows from \eqref{eq:def_ftap_alpha=0} and \eqref{eq:ftapDefinition} while the second follows from

    \begin{equation}
        \frac{\partial  f_\TAP(\beta,q \lM,q)}{\partial q}
        =\beta\big(\lM-x^*(\beta(1-q))\big)\leq 0,
    \end{equation}
    where we used \eqref{eq:ftapDefinition}, Lemma \ref{lem:legendre_diff} and $x^*\geq \lM$.
\end{proof}

\begin{proof}[Proof of \Cref{thm:main}]
    Fix some $B>0$. The upper bound
    
    \begin{equation}
        F_N(\beta)
        \leq
        \sup_{
        q \in [0,1)
        :
        \;
        \beta(1-q)\les\beta_c
        }
        f_\TAP(\beta,q \lM,q)
        +o(1),
    \end{equation}
    which holds uniformly for any $\beta\in[0,B]$ follows by combining \Cref{prop:aux}, Corollary \ref{cor:f_TAP_alpha_ineq} and Lemma \ref{lem:MTAP_fTAP} for $\beta>\beta_c$. For $\beta\les \beta_c$, note that $q=0$ is a valid choice in the above supremum and that $f_\TAP(\beta,0,0)=\Flim{\mu}(\beta)$, using Theorem \ref{thm:sec2_main} concludes the proof of the upper bound. For the lower bound, it also follows from Theorem \ref{thm:sec2_main} for $\beta\les \beta_c$. For $\beta>\beta_c$ simply note that
    \begin{equation}
        F_N(\beta)
        \geq 
        \frac{1}{N}
        \log
        Z_N (\beta,A_{0,\e}^{1-q^*,0}).
    \end{equation}
    Moreover, Lemma \ref{lem:Z_Nslice_decompo} proves

    \begin{equation}
    \begin{aligned}
        \frac{1}{N}
        \log Z_N (\beta,A_{0,\e}^{1-q^*,0})
        \ges &
        \beta \lambda_N q^*
        +
        \frac{N-1}{N}
        F_{\hat{\mu}_{L,0}}\left(\frac{N}{N-1} \beta (1-q^*)\right) \\
        &+\frac{1}{N}\log \Es \big[ A_{0,\e}^{1-q^*,0} \big]
        -
        6\beta \Lambda\epsilon.
    \end{aligned}
    \end{equation}
    A straightforward adaptation of Lemma \ref{lem:measure_AK_K=0_N} proves that 

    \begin{equation}
        \frac{1}{N}\log \Es \left[ A_{0,\e}^{1-q^*,0} \right]
        \ges
        - 
        \frac{1}{2}D(0||1-q^*,0)
        -
        c\frac{\log N \epsilon^{-1}}{N}
        -
        \frac{\epsilon}{1-q^*},
    \end{equation}
    while using Lipschitzness and boundedness of $F_{\hat{\mu}_{L,0}}$ yields

    \begin{equation}
    \begin{aligned}
        \frac{1}{N}
        \log Z_N (\beta,A_{0,\e}^{1-q^*,0})
        \ges &
        \beta \lambda_N q^*
        +
        F_{\hat{\mu}_{L,0}}\left( \beta (1-q^*)\right)
        - 
        \frac{1}{2}D(0||1-q^*,0)\\
        &-
        c\frac{\log N \epsilon^{-1}}{N}
        -
        \frac{\epsilon}{1-q^*}
        -2\frac{\Lambda \beta(1-q^*)}{N}
        -6\beta\Lambda\epsilon.
    \end{aligned}
    \end{equation}
    Choosing $\epsilon=N^{-1}$ and lower bounding $F_{\hat{\mu}_{L,0}}$ with Theorem \ref{thm:sec2_main} concludes the proof of the Theorem for pointwise convergence (the sequence $\hat{\mu}_{L,0}$ satisfies Assumption \ref{main_assumption}). Because both $F_N$ and $F_\mu$ are $\Lambda$-Lipschitz in $\beta$, a finite-net argument upgrades pointwise convergence to uniform convergence on every compact temperature interval.
\end{proof}

\section{The TAP formula with external fields}\label{sec:external_field_elimination}

To prove Theorem~\ref{thm:main_with_ext_field} we must extend our results to the presence of external field. The proof has two main components, both relying on a partition of the sphere $S^{N-1}$. The high level idea of the proof is to first introduce a partition of the sphere in sets for which the external field and so called \textit{effective external field} are near constants. This is achieved using the construction introduced in~\cite{belius2022high}, partitioning the sphere into a collection of \textit{slices}, which are the intersection between $S^{N-1}$ and carefully chosen affine subspaces. Then, we must compute the free energy on each of these slices, which is done using the partition introduced in Section \ref{sec:partition}. This partition separates the free energy into an effective high-temperature and effective low-temperature contribution. We start by presenting a partition of the sphere into a collection of slices where the effective external field effect vanishes. Then, we estimate the free energy if restricted to a slice where the external field effect vanishes using our previous results. Putting the two together concludes the proof of Theorem~\ref{thm:main_with_ext_field}.

\subsection{A covering by affine slices}\label{subsec:slice_partition} 

For the sake of clarity, we write the proof for an external field of the form $N\gamma \langle u,\sigma \rangle $ for some $u\in S^{N-1}$ and $\gamma\geq 0$. The general setting of Theorem \ref{thm:main_with_ext_field} follows from the exact same proof.

\vspace{0.5cm}

The goal of this section is to introduce a partition of the sphere $S^{N-1}$ in sets where the external field is constant. The goal being to make the external field effect vanish, it is natural to start with an equator $\mathcal{E}:=\{ \sigma\in B_N, \ \langle\sigma.u\rangle = 0 \}$ and its thickened version $\bar{\mathcal{E}}:=\{ \sigma\in B_N, \ | \langle\sigma.u\rangle| \les \e \}$. On this equator, the external field effect is small, that is

\begin{equation}
    \left| H_N^f(\sigma)-H_N(\sigma)\right| \les N\gamma \e.
\end{equation}
To cover the whole sphere, one would expect to repeat this argument directly on the collection of thickened affine subspaces 

\begin{equation}\label{eq:def_calE_alpha1}
    \overline{\mathcal{E}_{a_1}}:=\{ \sigma, \ a_1-\e \les \langle \sigma.u\rangle \les a_1+\e \},
\end{equation}
where the external field is near constant. However, even if the external field is constant on each of the $\overline{\mathcal{E}_{a_1}}$, an \textit{effective external field} appears. Indeed, for $\sigma\in\mathcal{E}_{a_1}$ we write $\sigma=a_1 u +\widehat{\sigma}$, with $\langle u, \widehat{\sigma}_1\rangle=0$. Then 

\begin{equation}\label{eq:hamiltonian_center_decompo}
    H^f_N(\sigma)= H_N(a_1 u)+H_N(\hat{\sigma})+2a_1 N u^T M_N \hat{\sigma}+N\gamma a_1. 
\end{equation}
An effective external field of the form $2a_1 N\langle M_N u, \hat{\sigma} \rangle $ appears. To use the same argument as on the equator $\mathcal{E}$, we must restrict ourselves to an equator of $\mathcal{E}_{a_1}$, that is $\mathcal{E}_{a_1}'=\{ \sigma \in \e_{a_1}, |\langle M_Nu,\sigma  \rangle| =0 \}$, and more precisely to its thickened version $\overline{\mathcal{E}_{a_1}'}$. There, we have

\begin{equation}
    \left| 
    H_N^f(\sigma)-H_N(\sigma)-N\gamma a_1
    \right|
    \les
    N\gamma \e
\end{equation}
The goal being to cover the whole sphere, we are falling short because, in general, the union of the equators $\mathcal{E}_{a_1}'$ does not cover $S^{N-1}$. The natural following step is to decompose each of the $\mathcal{E}_{a_1}$ by the collection of affine subspaces

\begin{equation}\label{eq:def_calE_alpha12}
    \mathcal{E}_{a_1,a_2}:=\{ \sigma\in \mathcal{E}_{a}, \  \langle |M_n u|^{-1}M_N  u,\sigma\rangle = a_2 \}
\end{equation}
and their thickened versions $\overline{\mathcal{E}_{a_1,a_2}}$. There, again, an effective external field is appearing, and hence a restriction to a carefully chosen equator is needed. The idea is to repeat this construction until the whole sphere has been covered in sets where the effective external field is near constant. The detailed sequential definition of this partition is exposed in details in ~\cite[Section 2]{belius2022high} and we skip it for brevity. After $k$ steps of the construction, we obtain $k+1$ orthogonal directions $u,u_2,\dots,u_{k+1}$ and for $a=(a_1,a_2,\dots,a_k)\in [-1,1]^k$, the respective set $\mathcal{E}_{a}$ is given by

\begin{equation}\label{eq:def_calE_alpha}
    \mathcal{E}_{a}=\left\{ \sigma \in \mathcal{E}_{a_1,\dots,a_{k}}, \ \langle \sigma , u_k \rangle =a_k \ \text{for} \ k\in [k] \right\},
\end{equation}
its equator by

\begin{equation}\label{eq:def_calE_alphaprime}
    \mathcal{E}_{a}'=\left\{ \sigma \in \mathcal{E}_{a}, \ \langle \sigma , u_{k+1} \rangle =0 \right\},
\end{equation}
and its center $m_{a}$ is given by

\begin{equation}\label{eq:def_m_alpha}
    m_{a}=m_{(a_1,\dots,a_{k-1})}+a_k u_k.
\end{equation}
The key to this proof is that, by sequentially repeating the above construction, we end up covering the whole sphere by thickened equators of the form $\overline{\mathcal{E}_{a}'}$ in finitely many steps, as proven in ~\cite[Proposition 5.3]{belius2022high}. If we denote by $N_{\e}$ the number of sets associated, there exist a finite $K_{\epsilon}$ and a set $\mathcal{A}\subset \cup_{k\leq K_{\e}}[-1,1]^{k}$ with $|\mathcal{A}|=N_{\e}$ such that 

\begin{equation}\label{eq:sphere_covering_by_slices}
    S^{N-1} \subset \bigcup_{a\in \mathcal{A}} \overline{\mathcal{E}_{a}'}.
\end{equation}
Remark that in this construction all the sets $\mathcal{E}_{a}'$ have high dimension, in particular $\mathrm{Dim}\big(\mathcal{E}_{a}'^\perp \big)\leq K_\epsilon +1$. Because, by construction, on each of the set $\overline{\mathcal{E}_{a}'}$ we have $|H_N^f(\sigma)-H_N(\sigma)-N\gamma\langle m_{a},u\rangle|\leq N\gamma\e$, the partition \eqref{eq:sphere_covering_by_slices} leads to

\begin{equation}\label{eq:partition_vanishing_extfield_main}
    F_N^f(\beta)
    \les
    \sup_{a\in \mathcal{A}} F_N^f\big(\beta,\overline{\mathcal{E}_{a}'}\big)
    +
    \frac{1}{N}\log(N_{\e})
    \les
    \sup_{a\in \mathcal{A}} F_N\big(\beta,\overline{\mathcal{E}_{a}'}\big)+ \gamma\beta\langle m_{a},u\rangle
    +
    \beta \gamma \e
    +
    \frac{1}{N}\log(N_{\e}).
\end{equation}
The natural following step is to estimate, for each $a\in \mathcal{A}$, the free energy $F_N\big(\beta,\overline{\mathcal{E}_{a}'}\big)$.

\subsection{Free-energy estimates on affine slices}\label{subsec:contribution_slice}

In this section, we present technical lemmas giving a more tractable expression for the free energy on slices. Recall that the expectation denoted by $E$ is for $\sigma$ uniformly distributed on the sphere $S^{N-1}$ and, for $\mathcal{E}$ an affine subspace, $E_{\mathcal{E}}$ denotes the expectation for $\sigma$ uniform on $S^{N-1}\cap \mathcal{E}$. We also recall that for any set $\mathcal{S}$ we let

\begin{equation}\label{eq:def_FN_S}
    F_N(\beta,\mathcal{S})
    =
    \frac{1}{N}\log E\left[ \exp\left(\beta H_N(\sigma)\right)\indf_{\mathcal{S}} \right]
\end{equation}
be the free energy restricted to set $S$, and $F_N^f(\beta,\mathcal{S})$ the corresponding quantity when an external field is present. We will need to control $F_N(\beta,\mathcal{S})$ when $S$ is a \textit{thickened} affine subspace. The following two lemmas simply prove that we can forget the effect of thickening the affine subspace.

\begin{lemma}\label{lem:TAP_on_subspace}
    Let $\mathcal{E}$ be any affine subspace of  $\ \R^N$ with $d(0,\mathcal{E})=\alpha$, $\alpha<1$ and fix some $0<\e<1-\alpha$. Let $\overline{\mathcal{E}}=\{x\in \R^n, \ d(x,\mathcal{E})\leq \e\}$ be its thickened version. Then

    \begin{equation}
        \left|
        F^f_N (\beta,\overline{\mathcal{E}})
        -
        \frac{1}{N}\log \left(E\left[ \indf_{\overline{\mathcal{E}}} \right]E_{\mathcal{E}}\left[ \exp\left(\beta H^f_N(\sigma)\right) \right]\right)
        \right|
        \les
        \frac{2\beta(\Lambda+\gamma)\e}{\sqrt{1-(\alpha+\e)^2}}.
    \end{equation}
\end{lemma}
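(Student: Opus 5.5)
The plan is to disintegrate the uniform measure on $S^{N-1}\cap\overline{\mathcal{E}}$ along the direction normal to $\mathcal{E}$, and to compare each resulting fiber with the central slice $S^{N-1}\cap\mathcal{E}$ through an explicit map that moves points by $O(\e)$. The bound we want then follows from the Lipschitz property of the Hamiltonian.

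Write $\mathcal{E}=p+V$, where $p$ is the point of $\mathcal{E}$ closest to the origin, so $|p|=\alpha$ and $p\perp V$. For $\sigma\in S^{N-1}$ decompose $\sigma=w+v$ with $w=P^{V^\perp}\sigma$ and $v=P^V\sigma$. Then $\sigma\in\overline{\mathcal{E}}$ exactly when $|w-p|\le\e$.

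Conditionally on $w$, the component $v$ is uniform on the sphere of radius $\sqrt{1-|w|^2}$ in $V$. This is the same disintegration property already used in \Cref{lem:Z_Nslice_decompo}, through the densities of \cite[Lemma~8]{BK}. On the set $|w-p|\le\e$ we have $|w|\le\alpha+\e<1$, so these fibers are nondegenerate. The slice $S^{N-1}\cap\mathcal{E}$ is the fiber with $w=p$, and its $v$-radius is $\sqrt{1-\alpha^2}$. This gives
\begin{equation*}
E\left[e^{\beta H^f_N(\sigma)}\indf_{\overline{\mathcal{E}}}\right]=E\left[\indf_{\overline{\mathcal{E}}}\,\Phi(w)\right],
\qquad
\Phi(w)=E_{v}\left[e^{\beta H^f_N(w+\sqrt{1-|w|^2}\,\theta)}\right],
\end{equation*}
where $\theta$ is uniform on the unit sphere of $V$. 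Moreover $\Phi(p)=E_{\mathcal{E}}[e^{\beta H^f_N(\sigma)}]$.

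It therefore suffices to show that $|\log\Phi(w)-\log\Phi(p)|\le 2N\beta(\Lambda+\gamma)\e/\sqrt{1-(\alpha+\e)^2}$ whenever $|w-p|\le\e$. Both the upper and lower bounds then follow by integrating against $\indf_{\overline{\mathcal{E}}}$ and taking $\frac1N\log$.

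To compare the two fibers, couple them through the common $\theta$. The points $\sigma=w+r\theta$ and $\sigma'=p+r'\theta$, with $r=\sqrt{1-|w|^2}$ and $r'=\sqrt{1-\alpha^2}$, satisfy
\begin{equation*}
|\sigma-\sigma'|\le|w-p|+|r-r'|.
\end{equation*}
Since $x\mapsto\sqrt{1-x^2}$ has derivative bounded by $(\alpha+\e)/\sqrt{1-(\alpha+\e)^2}$ on $[0,\alpha+\e]$, and $\big||w|-\alpha\big|\le\e$, we get $|r-r'|\le\e/\sqrt{1-(\alpha+\e)^2}$. Hence
\begin{equation*}
|\sigma-\sigma'|\le \frac{2\e}{\sqrt{1-(\alpha+\e)^2}}.
\end{equation*}

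On $B_N$ the gradient of $N^{-1}H_N$ is $2M_N\sigma$, so $N^{-1}H_N$ is $2\Lambda$-Lipschitz. The field term $\gamma\langle u,\sigma\rangle$ is $\gamma$-Lipschitz. Thus $N^{-1}H^f_N$ is $(2\Lambda+\gamma)$-Lipschitz, and pointwise in $\theta$
\begin{equation*}
|\beta H^f_N(\sigma)-\beta H^f_N(\sigma')|\le \frac{N\beta(2\Lambda+\gamma)\,2\e}{\sqrt{1-(\alpha+\e)^2}}.
\end{equation*}
Exponentiating and averaging over $\theta$ gives the required comparison of $\Phi(w)$ with $\Phi(p)$, up to the value of the constant.

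The main obstacle is matching the constant $2\beta(\Lambda+\gamma)$ as stated. The crude estimate above gives something like $4\beta\Lambda+2\beta\gamma$. I would first tighten the radial step: compare $H_N(w+r\theta)$ with $H_N(w+r'\theta)$ using the quadratic structure, whose difference is at most $N\Lambda|r^2-r'^2|+\dots$. If that still falls short, I would accept a constant $c(\Lambda+\gamma)$, which is all that matters downstream since $\e\to0$.

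The rest is bookkeeping. One has to state the disintegration rigorously, and handle the case where $\dim V^\perp$ is larger than one through polar coordinates in $V^\perp$. The dimension of $V$ does not enter, because the comparison is pointwise.
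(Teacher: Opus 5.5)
Your argument is the paper's argument. The paper's projection $P$ of $S^{N-1}\cap\overline{\mathcal{E}}$ onto $S^{N-1}\cap\mathcal{E}$ is exactly your coupling $w+r\theta\mapsto p+r'\theta$. Its claim that $P(\sigma)$ is uniform on the central slice is your disintegration, and it concludes with the same Lipschitz comparison.

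The factor-two loss you flag does not need the quadratic-structure refinement, and you need not weaken the constant. It comes only from using the triangle inequality. Two changes fix it.
\begin{itemize}
\item The vectors $w-p\in V^\perp$ and $(r-r')\theta\in V$ are orthogonal, so use Pythagoras instead.
\item Keep the factor $\alpha+\e$ in your radial estimate, i.e.\ $|r-r'|\le \e(\alpha+\e)/\sqrt{1-(\alpha+\e)^2}$.
\end{itemize}
This gives
\[
|\sigma-\sigma'|^2\le \e^2+\frac{\e^2(\alpha+\e)^2}{1-(\alpha+\e)^2}=\frac{\e^2}{1-(\alpha+\e)^2},
\]
which is precisely the displacement bound $\e/\sqrt{1-(\alpha+\e)^2}$ used in the paper. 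Combined with your Lipschitz constant $2\Lambda+\gamma\le 2(\Lambda+\gamma)$, this yields exactly the stated right-hand side.
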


\begin{proof}
    Let us have $P$ the projection of $S^{N-1}\cap\overline{\mathcal{E}}$ onto $S^{N-1}\cap \mathcal{E}$. Because on $S_n$ the normalized Hamiltonian $H_N/N$ is $2(\Lambda+\gamma)$ Lipchitz and that for $\sigma \in S^{N-1}\cap\overline{\mathcal{E}}$ we have $|P(\sigma)-\sigma|\les \e/\sqrt{1-(\alpha+\e)^2}$

    \begin{equation}
        \left|
        \frac{1}{N}\log E\left[ \exp\left(\beta H^f_N(\sigma)\right)\indf_{\overline{\mathcal{E}}} \right]
        -
        \frac{1}{N}\log E\left[ \exp\left(\beta H^f_N(P(\sigma))\right)\indf_{\overline{\mathcal{E}}} \right] \right|
        \les
         \frac{2\beta(\Lambda+\gamma)\e}{\sqrt{1-(\alpha+\e)^2}}.
    \end{equation}
    Finally, remark that for $\sigma$ uniformly distributed on $S^{N-1} \cap \overline{\mathcal{E}}$, $P(\sigma)$ is uniformly distributed on $ S^{N-1} \cap \mathcal{E}$, concluding the proof of the lemma.  
\end{proof}

To use the previous lemma, one also needs a control on $\tfrac{1}{N}\log E\left[ \indf_{\overline{\mathcal{E}}} \right]$. Such a control can be found in ~\cite{belius2022high} but we state it here again adapted to our purpose.

\begin{lemma}\label{lem:measure_small_sphere}
Let $N\ge2$, $\mathcal{E}\subset\mathbb R^N$ be an affine subspace
with $\alpha=d(0,\mathcal{E})<1$, and let $\overline{\mathcal{E}}=\{x\in\mathbb R^N:d(x,\mathcal{E})\le\epsilon\}$. Then
\begin{equation}
    \frac{1}{N}\log \left(E\left[ \indf_{\overline{\mathcal{E}}} \right]\right)
    \les
    \frac12\log(1-\alpha^2)
    +
    \frac{\alpha\epsilon}{1-\alpha^2}
    -
    \frac{\log(1-\alpha^2)}{2N}.
\end{equation}
\end{lemma}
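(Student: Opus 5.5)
The plan is to reduce the estimate to a one-dimensional marginal of the uniform measure on $S^{N-1}$ and then bound the resulting explicit integral.

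\emph{Reduction to a slab.} Let $m$ be the point of $\mathcal{E}$ closest to the origin, so $|m|=\alpha$. First assume $\alpha>0$ and set $e=m/\alpha$. Since $m$ is the orthogonal projection of $0$ onto $\mathcal{E}$, every $x\in\mathcal{E}$ satisfies $\langle x-m,m\rangle=0$. Hence $\mathcal{E}$ lies in the hyperplane $\{x:\langle x,e\rangle=\alpha\}$. For any $x$, the distance to that hyperplane is at most $d(x,\mathcal{E})$, so
\begin{equation*}
    S^{N-1}\cap\overline{\mathcal{E}}\subset\{\sigma\in S^{N-1}:|\langle\sigma,e\rangle-\alpha|\le\epsilon\}.
\end{equation*}
By rotation invariance of $E$ we may take $e$ to be the first basis vector. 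Then $E[\indf_{\overline{\mathcal{E}}}]\le\P(|\sigma_1-\alpha|\le\epsilon)$. If $\alpha=0$ the same slab bound holds with any unit $e$ orthogonal to the direction space of $\mathcal{E}$; in that case the right-hand side of the lemma is $0$ and the claim is trivial.

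\emph{The one-dimensional integral.} The first coordinate $\sigma_1$ has density
\begin{equation*}
    c_N(1-t^2)^{(N-3)/2}\indf_{\{|t|<1\}},\qquad c_N=\frac{\Gamma(N/2)}{\Gamma((N-1)/2)\Gamma(1/2)}.
\end{equation*}
This is the $K=0$ marginal already used in Lemma \ref{lem:measure_AK_K=0_N}, and $c_N\le\sqrt{N/(2\pi)}$. On $[\alpha-\epsilon,\alpha+\epsilon]$, $|t|$ is at least $(\alpha-\epsilon)\vee 0$, so the density is maximised there. To compare with $\log(1-\alpha^2)$, I will use that $t\mapsto\log(1-t^2)$ has derivative $-2t/(1-t^2)$, whose absolute value is increasing in $t\ge0$. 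The mean value theorem then gives
\begin{equation*}
    \log\bigl(1-(\alpha-\epsilon)^2\bigr)\le\log(1-\alpha^2)+\frac{2\alpha\epsilon}{1-\alpha^2}.
\end{equation*}
Integrating the maximal density over an interval of length $2\epsilon$ gives
\begin{equation*}
    \frac1N\log E[\indf_{\overline{\mathcal{E}}}]\le\frac{N-3}{2N}\Bigl(\log(1-\alpha^2)+\frac{2\alpha\epsilon}{1-\alpha^2}\Bigr)+\frac1N\log(2\epsilon c_N).
\end{equation*}
Rewriting $\frac{N-3}{2N}\log(1-\alpha^2)=\frac12\log(1-\alpha^2)-\frac{3}{2N}\log(1-\alpha^2)$ produces the stated leading terms.

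\emph{Main obstacle: the $1/N$ bookkeeping.} The crude bound leaves a correction $-\frac{3}{2N}\log(1-\alpha^2)$ instead of $-\frac{1}{2N}\log(1-\alpha^2)$, plus the prefactor term $\frac1N\log(2\epsilon c_N)$. Merging these requires a sharper estimate than the sup-times-length bound. I would instead compute the integral directly. Substituting $u=1-t^2$, the integral is essentially $\int u^{(N-3)/2}\,\frac{du}{2t}$, which produces a factor $(1-(\alpha-\epsilon)^2)^{(N-1)/2}$ divided by $(N-1)t$. This recovers the exponent $(N-1)/2$ and cancels the $\sqrt N$ in $c_N$ against $1/N$, leaving only $-\frac{1}{2N}\log(1-\alpha^2)$ up to a term of order $\frac1N\log\frac{1}{\alpha-\epsilon}$. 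The regime $\alpha\le\epsilon$, or $\alpha$ close to $\epsilon$, where the $1/t$ factor degenerates, must be checked separately using only the trivial bound $E[\indf_{\overline{\mathcal{E}}}]\le1$. I have not verified that this bound suffices there for the stated constants; this case is the part of the plan I am least sure of, and it may require a slightly larger $O(1/N)$ correction than the one stated. That would not affect its later use, where only the $N\to\infty$ limit with $\epsilon$ fixed matters.
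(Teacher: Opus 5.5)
Your reduction to the marginal $\langle\sigma,e\rangle$ is the same as the paper's. However, the proposal does not prove the lemma. The statement is a non-asymptotic inequality with the exact correction $-\frac{\log(1-\alpha^2)}{2N}$, and your argument ends in an unverified sketch; you even suggest the constants might fail, which they do not.

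The sup-times-length bound on the two-sided slab is the wrong tool. It loses one power of $1-t^2$ (exponent $(N-3)/2$) and leaves the prefactor $2\epsilon c_N\sim\epsilon\sqrt N$, which in general cannot be absorbed into the stated error. The missing ingredient is the prefactor-free one-sided tail bound
\begin{equation*}
  \mathbb{P}\bigl(\langle\sigma,e\rangle\ge t\bigr)\le(1-t^2)^{(N-1)/2},\qquad t\in[0,1),
\end{equation*}
used at $t=(\alpha-\epsilon)_+$. You only need the half-space $\langle\sigma,e\rangle\ge\alpha-\epsilon$, not the full slab.

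The paper gets this bound from $\sigma=G/|G|$. With $S=\sum_{i\ge2}G_i^2$ and $t\in(0,1)$, the event is $\{G_1\ge t\sqrt S/\sqrt{1-t^2}\}$. The conditional tail $\mathbb{P}(G_1\ge a)\le e^{-a^2/2}$, together with $\mathbb{E}e^{-\theta S}=(1+2\theta)^{-(N-1)/2}$, gives exactly $(1-t^2)^{(N-1)/2}$. Now set $s=1-\alpha^2$ and $b=1-(\alpha-\epsilon)_+^2$. In all cases $s\le b\le s+2\alpha\epsilon$; this includes $\alpha\le\epsilon$, where $b=1$ and $\alpha^2\le\alpha\epsilon$. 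Then
\begin{equation*}
  \frac{N-1}{2N}\log b=\frac12\log s+\frac12\log\frac bs-\frac1{2N}\log b\le\frac12\log s+\frac{\alpha\epsilon}{s}-\frac{\log s}{2N}.
\end{equation*}
So the exponent $(N-1)/2$ is exactly what produces the $-\frac{\log s}{2N}$ term, and all regimes are handled at once.

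Your density-based route can also be closed, but it needs an input you did not supply. Inserting $u/t\ge1$ into the integral gives $\mathbb{P}(\sigma_1\ge t)\le\frac{c_N}{(N-1)t}(1-t^2)^{(N-1)/2}$. This is at most $(1-t^2)^{(N-1)/2}$ once $t\ge c_N/(N-1)$. For $0<t<c_N/(N-1)$ you must use the symmetry bound $\mathbb{P}(\sigma_1\ge t)\le\frac12$. That suffices, because $c_N\le\sqrt{N/(2\pi)}$ forces $(1-t^2)^{(N-1)/2}>\frac12$ in this range.

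The trivial bound $E[\indf_{\overline{\mathcal{E}}}]\le1$ alone does not cover this intermediate regime. For $\alpha=N^{-1/2}$, $\epsilon=N^{-1}$ and $N$ large, the right-hand side of the lemma is negative. By contrast, the case $\alpha\le\epsilon$ that worried you is harmless: the right-hand side is nonnegative there, since $\frac{N-1}{2N}\log(1-\alpha^2)\ge-\frac{\alpha^2}{2(1-\alpha^2)}$ and $\frac{\alpha\epsilon}{1-\alpha^2}\ge\frac{\alpha^2}{1-\alpha^2}$.

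You are right that a weaker $O(\log N/N)$ correction would suffice for the later application with $\epsilon$ fixed. That still does not prove the lemma as stated.
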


\begin{proof}
If $\alpha=0$ the result is trivial, so suppose $\alpha>0$. Let $m$ be the smallest norm point of $\mathcal{E}$ and set $e=m/|m|$. Since $m$ is orthogonal to $\mathcal{E}-m$, every $x\in \mathcal{E}$ satisfies $\langle x,e\rangle=\alpha$. Thus
\begin{equation}
    \overline{\mathcal{E}}\cap S^{N-1}
    \subset
    \{\sigma\in S^{N-1}; \ \langle\sigma,e\rangle\ge\alpha-\epsilon\}.
\end{equation}
By rotational invariance, write $\sigma=G/|G|$,
where $G=(G_1,\ldots,G_N)$ are independent standard Gaussian
variables, and take $e=e_1$. Setting
$S=\sum_{i=2}^N G_i^2$, the Gaussian tail bound gives
\begin{equation}
\begin{aligned}
\mathbb P(\langle\sigma,e\rangle\ge t)
&=
\mathbb P\left(
G_1\ge\frac{t}{\sqrt{1-t^2}}\sqrt S
\right)\\
&\le
\mathbb E\exp\left(-\frac{t^2S}{2(1-t^2)}\right)
=(1-t^2)^{(N-1)/2}.
\end{aligned}
\end{equation}
If $\epsilon<\alpha$, applying the above at $t=\alpha-\epsilon\in(0,1)$ proves 
\begin{equation}\label{eq:volume_slice_first_claim}
    \frac1N\log \left(E\left[ \indf_{\overline{\mathcal{E}}} \right]\right)
    \les
    \frac{N-1}{2N}
    \log\bigl(1-(\alpha-\epsilon)_+^2\bigr),
\end{equation}
and the above is also trivially true if $\epsilon\geq\alpha$. Then, set $s=1-\alpha^2$ and
$b=1-(\alpha-\epsilon)_+^2$. Then $s\le b\le s+2\alpha\epsilon$. Hence
\begin{equation}
    \begin{aligned}
    \frac{N-1}{2N}\log b
    &=
    \frac12\log s+\frac12\log(b/s)-\frac1{2N}\log b\\
    &\le
    \frac12\log s+\frac{\alpha\epsilon}{s}
    +\frac{|\log s|}{2N},
\end{aligned}
\end{equation}
which concludes the proof of the lemma.
\end{proof}

\begin{lemma}\label{lem:measure_sphere_LB}
    Let $N\geq k+2$, $\mathcal{E}\subset \R^N$ be an affine subspace of dimension $N-k$ with $\alpha=d(0,\mathcal{E})<1$ and, for $0<\epsilon < (1-\alpha^2)/6$, let $\overline{\mathcal{E}}=\{x\in\R^N \ : \ d(x,\mathcal{E})\les \epsilon\}$. Then

    \begin{equation}
    \frac{1}{N}\log \left(E\left[ \indf_{\overline{\mathcal{E}}} \right]\right)
    \ges
    \frac12\log(1-\alpha^2)
    -
    \frac{3\epsilon}{1-\alpha^2}
    +
    \frac{k}{N}\log \epsilon.
    \end{equation}
\end{lemma}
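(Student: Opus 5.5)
Write $\mathcal{E}=m+V$ with $V$ a linear subspace of dimension $N-k$, where $m$ is the point of $\mathcal{E}$ of minimal norm, so that $|m|=\alpha$ and $m\perp V$. The idea is to exhibit an explicit subset of $\overline{\mathcal{E}}\cap S^{N-1}$ whose measure is computable with the Gaussian representation $\sigma=G/|G|$.

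First, decompose $\sigma=P_V\sigma+P_{V^\perp}\sigma$. Then $d(\sigma,\mathcal{E})=|P_{V^\perp}\sigma-m|$, so it suffices to lower bound
\begin{equation}
\P\bigl(|P_{V^\perp}\sigma-m|\le\epsilon\bigr).
\end{equation}
Choose coordinates so that $V^\perp=\mathrm{span}(e_1,\dots,e_k)$ and $m=\alpha e_1$. We need $\sigma_1\in[\alpha-\epsilon/2,\alpha+\epsilon/2]$ (say) and $|\sigma_j|\le\epsilon/(2\sqrt k)$ for $2\le j\le k$. Replacing the cube by a ball is cleaner: require $(\sigma_1,\dots,\sigma_k)\in B(m,\epsilon)$. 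Then use the known density of the projection of a uniform point on $S^{N-1}$ onto $k$ coordinates, which is $c_{N,k}(1-|x|^2)^{(N-k-2)/2}$ on the unit ball of $\R^k$, with $c_{N,k}=\Gamma(N/2)/(\pi^{k/2}\Gamma((N-k)/2))$ (this is the same Harman/[BK, Lemma 8] type fact used in Lemma \ref{lem:measure_AK_bulk}; $N\ge k+2$ makes the exponent nonnegative). Hence
\begin{equation}
E[\indf_{\overline{\mathcal{E}}}]\ge c_{N,k}\,\mathrm{vol}_k(B(m,\epsilon))\inf_{x\in B(m,\epsilon)}(1-|x|^2)^{(N-k-2)/2}.
\end{equation}

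Next, bound each factor. For $x\in B(m,\epsilon)$ we have $|x|^2\le(\alpha+\epsilon)^2\le\alpha^2+3\epsilon$, so $1-|x|^2\ge(1-\alpha^2)(1-3\epsilon/(1-\alpha^2))$. Since $3\epsilon/(1-\alpha^2)<1/2$, we can use $\log(1-u)\ge-2u$ on $[0,1/2]$ and bound the exponent $(N-k-2)/2\le N/2$, which produces the term $\tfrac12\log(1-\alpha^2)-3\epsilon/(1-\alpha^2)$, provided one accounts for the lost factor $(1-\alpha^2)^{-(k+2)/2}\ge1$; this goes in the favorable direction. The volume contributes $\mathrm{vol}(B_1^k)\epsilon^k$, giving the $\frac kN\log\epsilon$ term.

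The main obstacle is the constant: we need $c_{N,k}\,\mathrm{vol}(B^k_1)\ge1$, or at least that it be absorbed. Note that $c_{N,k}\mathrm{vol}(B_1^k)=\Gamma(N/2)/(\Gamma((N-k)/2)\Gamma(k/2+1))$, which is a binomial-like quantity. I would check that it is $\ge1$ for $N\ge k+2$, for instance by log-convexity of $\Gamma$, or more simply by noting that it equals the ratio of integrals of the density over the unit ball, which is at least $1$ because the density is $\le c_{N,k}$ and integrates to $1$. If this fails for small cases, the slack in the exponent $(1-\alpha^2)^{-(k+2)/2}$ can be used instead, or the cube with the slightly worse constant can be absorbed by shrinking $\epsilon$.
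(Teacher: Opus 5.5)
Your proposal is correct and follows the paper's proof essentially step by step: project onto $V^\perp$, use the density $\propto(1-|y|^2)^{(N-k-2)/2}$ on the ball $B(m,\epsilon)$, bound $1-|y|^2\ge 1-\alpha^2-3\epsilon$, replace the exponent by $N/2$, and use $\Gamma(N/2)/(\Gamma((N-k)/2)\Gamma(k/2+1))\ge 1$. Your justification of that last inequality is valid: $p\le c_{N,k}$ on the unit ball when $N\ge k+2$, so $1=\int_{B_1}p\le c_{N,k}\,\mathrm{vol}(B_1^k)$. This supplies a step the paper leaves implicit, so the fallback options you list are unnecessary.
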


\begin{proof}
The statement is trivial if $k=0$, so suppose $k>0$. Letting $m$ be the smallest norm element of $\mathcal{E}$ we write $\mathcal{E}=m+V$, where $V$ is a linear subspace of dimension $N-k$ and $m\perp V$. Assume $1\le k\le N-2$ and $0<\epsilon\le (1-|m|^2)/6$. Since $d(\sigma,\mathcal E)=|P^{V^\perp}\sigma-m|$ we want to lower bound the probability of the ball $B=\{ y\in V^\perp; \ d(m,y)\leq \epsilon \}$ when $y=P^{V^\perp}\sigma$. Let $\ell_k$ denote Lebesgue measure on $V^\perp$, normalized so that $\ell_k(B(0,1))=1$. The beta-distribution identity
in \cite[Section~2, p.~403]{diaconis1987dozen} gives that $y$ has density, with respect to $\ell_k$,

\begin{equation}
    p(y)
    =
    \frac{\Gamma(N/2)}
    {\Gamma((N-k)/2)\Gamma(k/2+1)}
    (1-|y|^2)^{(N-k-2)/2}\mathbf1_{\{|y|<1\}}.
\end{equation}
Moreover, $1-|y|^2\ge 1-|m|^2-3\epsilon$ whenever $y\in B$ and this ball has volume $\epsilon^k$ with respect to $\ell_k$. Hence

\begin{equation}
    E[\mathbf1_{\overline{\mathcal E}}]
    \ge
    \frac{\Gamma(N/2)}
    {\Gamma((N-k)/2)\Gamma(k/2+1)}
    \epsilon^k\big(1-|m|^2-3\epsilon\big)^{(N-k-2)/2}
    \ges
    \epsilon^k \big(1-|m|^2-3\epsilon\big)^{N/2}.
\end{equation}
Therefore

\begin{equation}
\begin{aligned}
\frac1N\log E[\mathbf1_{\overline{\mathcal E}}]
&\ge
\frac12\log(1-|m|^2-3\epsilon)
+
\frac{k}{N}\log \epsilon\\
&\ge
\frac12\log(1-|m|^2)
-\frac{3\epsilon}{1-|m|^2}
+
\frac{k}{N}\log \epsilon.
\end{aligned}
\end{equation}
Replacing $|m|$ by $\alpha$ concludes the proof of the lemma.
\end{proof}

The next, and main, piece of the puzzle is to compute the free energy restricted to an affine subspace, that is $\tfrac{1}{N}\log E_{\mathcal{E}}\left[ \exp \beta H_N(\sigma) \right]$. We do not need to consider any such affine subspace but those where the effective external field vanishes. Generalizing \eqref{eq:hamiltonian_center_decompo} we write, for $\mathcal{E}$ an affine subspace intersecting $S^{N-1}$, $m$ its smallest norm element and $\sigma\in \mathcal{E}$ that $\sigma=m+\sigma'$. Then

\begin{equation}
    H_N(\sigma)= H_N(m)+H_N(\sigma')+2N m^T M_N \sigma'. 
\end{equation}
By the construction of Section \ref{subsec:slice_partition} we only consider affine subspaces where $m^T M_N \sigma'=0$, which we refer to as \textit{carefully chosen affine subspaces} hereafter. We introduce $H_N^m(\sigma)=H_N(\sigma-m)$ the recentered Hamiltonian and on carefully chosen affine subspaces

\begin{equation}\label{eq:hamiltonian_decompo}
    H_N(\sigma)= H_N(m)+H_N^m(\sigma). 
\end{equation}
To compute the free energy on affine subspaces where the external field vanishes, we partition them as in Section \ref{sec:partition}. To this effect, write $\mathcal{E}=m+V$, where $m\perp V$, and $n=\mathrm{Dim}(V)$. We assume $|m|<1$. Let $\lambda_1^{\mathcal{E}}\le\cdots\le\lambda_n^\mathcal{E}$ be the eigenvalues of the compression of $M_N$ to $V$, with corresponding orthonormal eigenvectors $e_1^{\mathcal{E}},\ldots,e_n^{\mathcal{E}}$. For $\sigma\in S^{N-1}\cap \mathcal{E}$, define
\begin{equation}
    \hat{\sigma}=\frac{\sigma-m}{\sqrt{1-|m|^2}},
    \qquad
    \hat{\sigma}_i= \langle\hat{\sigma},e_i^{\mathcal{E}}\rangle.
\end{equation}
Then $\hat{\sigma}$ belongs to the unit sphere of $V$. We also write $q^{*,\mathcal{E}}$ the critical $q$ at temperature $\beta(1-|m|^2)$, as defined in \eqref{eq:def_q_star}, with $q^*(0)=0$. Then, one can write

\begin{equation}
    S^{N-1}\cap \mathcal{E} = \bigcup_{K=0}^{n-1} A_K^{\mathcal{E}},
\end{equation}
where $A_K^{\mathcal{E}}$ is analogous to $A_K$ and is given by
\begin{equation}\label{eq:def_AK_calE}
    A_K^{\mathcal{E}}
    =
    \{\sigma\in S^{N-1}\cap\mathcal{E}\mid K^{\mathcal{E}}(\sigma)=K\},\quad K=0,1,\ldots,n-1,
\end{equation}
for
\begin{equation}
    K^{\mathcal{E}}(\sigma)=\max\bigg\{k\in\{0,1,\ldots,n-1\}\mid \sum_{i=n-k+1}^{n} \hat{\sigma}_i^2\les q^{*,\mathcal{E}}\bigg\}.
\end{equation}
Just as in Section \ref{sec:partition}, we introduce, for $\sigma\in S^{N-1}\cap\mathcal{E}$

\begin{equation}\label{eq:def_tau_q_calE}
    \tau_{L,K}^{\mathcal{E}}(\sigma)
    =
    \sum_{i=1}^{n-K-1} \hat{\sigma}_i^2,\qquad
    \tau_{R,K}^{\mathcal{E}}(\sigma)
    =
    \sum_{i=n-K+1}^{n} \hat{\sigma}_i^2,
\end{equation}
so that
\begin{equation}
    \tau_{L,K}^{\mathcal{E}}(\sigma)+\hat{\sigma}_{n-K}^2+\tau_{R,K}^{\mathcal{E}}(\sigma)=1,
\end{equation}
Then, for any $K\in[0,n-1]$
\begin{equation}
    A_K^{\mathcal{E}}
    =
    \left\{\sigma\in S^{N-1}\cap \mathcal{E}
    \mid
    \tau_{L,K}^{\mathcal{E}}(\sigma)<1-q^{*,\mathcal{E}}, \;
    \tau_{R,K}^{\mathcal{E}}(\sigma)\les q^{*,\mathcal{E}}\right\}.
\end{equation}
It is then possible to further partition each of the $A_K^{\mathcal{E}}$ by approximate value of $\tau_{L,K}$ and $\tau_{R,K}$, as done in Section \ref{sec:partition}. In particular, inspired by \eqref{eq:defAkqlqrdelta} we define

\begin{equation}\label{eq:def_A_tau_q_calE}
    A_{K,\tau_L,\tau_R,\delta}^{\mathcal{E}}
    =
    \left\{
    \sigma\in A_K^{\mathcal{E}}\mid \tau_L-\delta \les\tau_{L,K}^{\mathcal{E}}(\sigma)\les\tau_L+\delta
    ,\; 
    \tau_R-\delta\les\tau_{R,K}^{\mathcal{E}}(\sigma)\les\tau_R+\delta
    \right\}
\end{equation}
In the following lemma we check that the free energy, restricted to each of the 
$A_{K,\tau_L,\tau_R,\delta}^{\mathcal{E}}$, is approximated by the TAP free energy

\begin{equation}\label{eq:final_def_Ftap}
F_{\TAP}\big(\beta,m,\tau_L,\tau_R,\alpha\big)
:=
\frac{1}{N}\beta H_N(m)
+
\frac{1}{2}\log\left( 1-|m|^2 \right)
+
\On\big(\beta(1-|m|^2),\tau_L,\tau_R,\alpha\big),
\end{equation}
where the Onsager term is $\On\big(\beta,\tau_L,\tau_R,\alpha\big)=\fTAPalpha(\beta,\tau_L,\tau_R,\alpha)$ and is defined in \eqref{eq:def_ftap}. We decide to use an extra notation rather than reuse $\fTAPalpha$ to illustrate that in the above, the TAP free energy plays the role of the Onsager term. In the following lemma, the free energy for $\sigma$ conditioned to lie in $\mathcal{E}$ is bounded. Because doing so the probability of the slice is normalized to be $1$, an extra term $-\tfrac{1}{2}\log\left( 1-|m|^2 \right)$ appears (canceling the one present in the definition of $F_{\TAP}$).

\begin{lemma}\label{lem:sec7.1_main}
    Let $\mathcal{E}$ be any affine subspace of dimension $n$ intersecting $S^{N-1}$, of uniformly bounded codimension and chosen so that the effective external field on $\mathcal{E}$ is null. Let $m$ be its smallest norm element. Then

    \begin{equation}
    \begin{aligned}
        \frac{1}{N}\log E_{\mathcal{E}} \left[ \exp\left(\beta H_N(\sigma)\right)  \right]
        \les \ &
        \sup_{\alpha,\tau_L,\tau_R\in S_{\mathcal{E}}}F_{\TAP}\big(\beta,m,\tau_L,\tau_R,\alpha\big)\\
        &-
        \frac{1}{2}\log\left( 1-|m|^2 \right)
        +
        2\frac{N-n}{n}\beta\Lambda
        +
        \mathrm{Err}_n \big(\beta,\mu_N^{\mathcal{E}}\big),
    \end{aligned}
    \end{equation}
    where $\mu_N^{\mathcal{E}}$ is the empirical distribution stemming from the eigenvalues of $M_N$ restricted to the linear subspace $\mathcal{E}-m$, $\mathrm{Err}_N$ is as in Proposition \ref{prop:aux} and the supremum is taken over the set

    \begin{equation}\label{eq:def_sup_set}
        S_{\mathcal{E}}
        :=
        [0,1] \times [0,1-q^{*,\mathcal{E}}]
        \times
        [0,q^{*,\mathcal{E}}].
    \end{equation}
    

\end{lemma}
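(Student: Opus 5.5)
We reduce the restricted free energy to the zero-field problem of the previous sections, applied to the compressed matrix on $V=\mathcal{E}-m$ at the effective temperature $\beta(1-|m|^2)$.

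\emph{Step 1: recentering.} For $\sigma\in S^{N-1}\cap\mathcal{E}$ write $\sigma=m+\sqrt{1-|m|^2}\,\hat\sigma$ with $\hat\sigma$ on the unit sphere of $V$. Since the effective external field vanishes on $\mathcal{E}$, \eqref{eq:hamiltonian_decompo} gives
\begin{equation*}
H_N(\sigma)=H_N(m)+(1-|m|^2)\,N\,\hat\sigma^{\mathsf T}M_N\hat\sigma .
\end{equation*}
Here $\hat\sigma^{\mathsf T}M_N\hat\sigma=\sum_{i\le n}\lambda_i^{\mathcal{E}}\hat\sigma_i^2$. Under $E_{\mathcal{E}}$, the vector $\hat\sigma$ is uniform on $S^{n-1}$, so
\begin{equation*}
\frac1N\log E_{\mathcal{E}}\bigl[e^{\beta H_N(\sigma)}\bigr]=\frac{\beta}{N}H_N(m)+\frac1N\log E_{S^{n-1}}\Bigl[\exp\Bigl(n\beta'\sum_i\lambda_i^{\mathcal{E}}\hat\sigma_i^2\Bigr)\Bigr],
\qquad \beta'=\frac{N}{n}\beta(1-|m|^2).
\end{equation*}
The last term equals $\frac{n}{N}F_n(\beta')$ for the $n$-dimensional zero-field model with spectral measure $\mu_N^{\mathcal{E}}$.

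\emph{Step 2: apply Proposition \ref{prop:aux}.} We apply it in dimension $n$ at temperature $\beta(1-|m|^2)$, which gives the bound $\sup_{S_{\mathcal{E}}}M_{\TAP}(\beta(1-|m|^2),\cdot)+\mathrm{Err}_n$. The choice of threshold $q^{*,\mathcal{E}}$ in the definitions of $A_K^{\mathcal{E}}$ and $\tau^{\mathcal{E}}_{L,K},\tau^{\mathcal{E}}_{R,K}$ exactly mirrors Section~\ref{sec:partition}, so the covering by $A^{\mathcal{E}}_{K,\tau_L,\tau_R,\delta}$, Proposition~\ref{prop:sec4main} and Lemma~\ref{lem:sec5main} transfer verbatim.

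Two discrepancies must be paid for. First, the temperature is $\beta'$ rather than $\beta(1-|m|^2)$, and the prefactor is $n/N$ rather than $1$. Since $\hat F$ is $\Lambda$-Lipschitz in temperature and bounded by $\Lambda$ times the temperature, we get
\begin{equation*}
\Bigl|\tfrac nN\hat F(\beta')-\hat F\bigl(\beta(1-|m|^2)\bigr)\Bigr|\le 2\,\tfrac{N-n}{n}\,\beta\Lambda ,
\end{equation*}
which is the stated extra term. Second, $\mu_N^{\mathcal{E}}$ need not be literally a member of a sequence satisfying Assumption~\ref{main_assumption}. Here I would argue by Cauchy interlacing: the compression to a subspace of bounded codimension has eigenvalues interlacing those of $M_N$ up to a shift of $N-n=O(1)$ indices. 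Hence $W_1(\mu_N^{\mathcal{E}},\mu)\to0$, the extreme eigenvalues still converge to $\lm,\lM$, and the quantile errors in $\mathrm{Err}^{(*)},\mathrm{Err}^{(**)}$ vanish. The limiting objects $\mu,I,\mu_{L,\alpha},\mu_{R,\alpha}$, and hence $M_{\TAP}$, are unchanged, and $\mathrm{Err}_n(\beta,\mu_N^{\mathcal{E}})\to0$.

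\emph{Step 3: rewrite.} By definition, $M_{\TAP}(\beta(1-|m|^2),\tau_L,\tau_R,\alpha)=\On(\beta(1-|m|^2),\tau_L,\tau_R,\alpha)$. Adding $\frac{\beta}{N}H_N(m)$, and adding and subtracting $\frac12\log(1-|m|^2)$, gives exactly $F_{\TAP}(\beta,m,\tau_L,\tau_R,\alpha)-\frac12\log(1-|m|^2)$, with the supremum over $S_{\mathcal{E}}$.

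\emph{Edge cases.} When $\beta(1-|m|^2)\le\beta_c$, Proposition~\ref{prop:aux} does not apply as stated. In that case $q^{*,\mathcal{E}}=0$, and I would instead use Theorem~\ref{thm:sec2_main}(1): its upper bound equals $M_{\TAP}(\cdot,1,0,0)\le\sup_{S_{\mathcal{E}}}$. For the near-critical strip I would reuse the argument in the proof of Proposition~\ref{prop:aux}.

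\emph{Main obstacle.} The hardest step is making the error term uniform over the slice family. The constant in Proposition~\ref{prop:aux} depends on $\mathrm{Err}^{(*)}$ and $\mathrm{Err}^{(**)}$ of $\mu_N^{\mathcal{E}}$, and these must be controlled uniformly in $\mathcal{E}$ through the interlacing bounds, for instance $|\lambda_k^{\mathcal{E}}-\lambda_{k+j}|$ with $0\le j\le N-n$. This uniformity is what is needed when taking the supremum over $a\in\mathcal{A}$.
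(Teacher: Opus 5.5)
Your proposal is correct and follows the paper's proof essentially step by step. The shared steps are: recentering using the vanishing effective field; absorbing the $N/n$ rescaling via Lipschitzness and boundedness of $\hat F$ to produce the $2\frac{N-n}{n}\beta\Lambda$ term; verifying Assumption~\ref{main_assumption} for $\mu_N^{\mathcal{E}}$ by Cauchy interlacing; and then invoking Proposition~\ref{prop:aux} when $\beta(1-|m|^2)>\beta_c$, and Theorem~\ref{thm:sec2_main} with $(0,1,0)\in S_{\mathcal{E}}$ otherwise.

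The separate near-critical strip is unnecessary, since Proposition~\ref{prop:aux} already covers all of $(\beta_c,B]$. The uniformity over slices that you flag is handled later in the paper, through the $\sup_{a\in\mathcal{A}}$ in $\mathrm{Err}_N^{(8)}$, rather than inside this lemma.
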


\begin{proof}
    Because $S^{N-1} \cap \mathcal{E}$ is also a sphere, but of smaller dimension and radius, the natural first step is to remark that for $\sigma$ uniform on $S^{N-1}\cap \mathcal{E}$, $\sigma-m$ is uniform on a sphere of radius $\sqrt{1-|m|^2}$. Hence using the
    decomposition of \eqref{eq:hamiltonian_decompo}, valid on affine subspace where effective external field vanishes, and the homogeneity of the Hamiltonian
    
    \begin{equation}
        \frac{1}{N}\log E_{\mathcal{E}}\left[ \exp \beta H_N(\sigma) \right]
        =
        \frac{1}{N}\beta H_N(m)
        +
        \frac{1}{N}\log E_{V}\left[ \exp \beta (1-|m|^2) H_N(\sigma) \right].
    \end{equation}
    We start by taking into account that the dimension of $V=\mathcal{E}-m$ is $n$ rather than $N$. We get

    \begin{equation}
    \begin{aligned}
        \frac{1}{N}\log E_{V}\left[ \exp \beta (1-|m|^2) H_N(\sigma) \right]
        = &
        \frac{n}{N}\frac{1}{n}\log E_{V}\left[ \exp \tfrac{N}{n}\beta (1-|m|^2) H_n(\sigma) \right]\\
        \les &
        \frac{1}{n}\log E_{V}\left[ \exp\beta (1-|m|^2) H_n(\sigma) \right]
        +
        2\frac{N-n}{n}\beta\Lambda,
    \end{aligned}
    \end{equation}
    where we used Lipschitzness and the abuse of notation $H_n(\sigma)=\frac{n}{N}H_N(\sigma)=n\sum_{i=1}^{\mathrm{Dim}(\mathcal{E})}\lambda_i^{\mathcal{E}}\sigma_i^2$. Then, the reasoning is as in Section \ref{sec:intermediate_TAP} but at temperature $\beta (1-|m|^2)$ and dimension $n$. We start by checking that $\mu_N^{\mathcal{E}}$ satisfies Assumption \ref{main_assumption}. Poincaré separation Theorem (also known as the Cauchy interlacing theorem), states that for $i\in [\mathrm{Dim}(\mathcal{E})]$ 
    
    \begin{equation}
        \lambda_i \les
        \lambda_i^{ \mathcal{E}}
        \les
        \lambda_{i+N-n}.
    \end{equation}
    As a direct consequence, if $\mathrm{Dim}(\mathcal{E}^\perp)$ is bounded then $\mu_N^{\mathcal{E}}$ converges to $\mu$ and satisfies all the assumptions of Assumption \ref{main_assumption}. Start by supposing $\beta(1-|m|^2)>\beta_c$ and fix some $\delta>0$. Recall the partition 

    \begin{equation}
        S^{N-1}\cap \mathcal{E}
        \subset
        \bigcup_{K,\tau_L,\tau_R\in \widehat{S}_{\mathcal{E}}}
        A_{K,\tau_L, \tau_R,\delta}^{\mathcal{E}}
    \end{equation}
    where $\widehat{S}_{\mathcal{E}}$ is a discretized version of $S_{\mathcal{E}}$

    \begin{equation}\label{eq:def_sup_set_discrete}
        \widehat{S}_{\mathcal{E}}
        :=
        [0,n-1] \times [0,2\delta,\dots,1-q^{*,\mathcal{E}}]
        \times
        [0,2\delta, \dots,q^{*,\mathcal{E}}].
    \end{equation}
    Proposition \ref{prop:aux} with an appropriate choice of $\delta>0$ gives

    \begin{equation}    
    \begin{aligned}
        \tfrac{1}{n}\log E_{V}\left[ \exp \left( \beta (1-|m|^2) H_n(\sigma)\right)  \right]
        \les \ &
        \sup_{\alpha,\tau_L,\tau_R\in S_{\mathcal{E}}}\On(\beta(1-|m|^2),\tau_L,\tau_R,\alpha)\\
        &+
        \mathrm{Err}_n \big(\beta,\mu_N^{\mathcal{E}}\big),
    \end{aligned}
    \end{equation}
    where, as noted earlier, the Onsager term is $\fTAPalpha(\beta,\tau_L,\tau_R,\alpha)$ and is defined in \eqref{eq:def_ftap}. On the other hand, if $\beta(1-|m|^2)\le\beta_c$, Theorem \ref{thm:sec2_main} gives the required bound since 
    
    \begin{equation}
        F_\mu\big(\beta(1-|m|^2)\big)=
        \On\big(\beta(1-|m|^2),1,0,0\big).
    \end{equation}
    and $(0,1,0)\in\mathcal S_{\mathcal{E}}$. Recalling the definition of $F_{\TAP}$ \eqref{eq:final_def_Ftap} concludes the proof of the Lemma.
\end{proof}

It is now possible to put together the results of Lemma \ref{lem:TAP_on_subspace}, \ref{lem:measure_small_sphere} and \ref{lem:sec7.1_main} to obtain the following corollary, giving an approximation of the free energy on the set $\overline{\mathcal{E}}$ as the supremum of the contribution of the free energy on the $A_{K,\tau_L,\tau_R,\delta}^{\mathcal{E}}$.

\begin{corollary}\label{cor:external_field_main}
    Let $\mathcal{E}$ be any affine subspace of dimension at least $N-k$ intersecting $S^{N-1}$, chosen so that the effective external field on $\mathcal{E}$ is null. Let $m$ be the center of $\mathcal{E}$ and $\overline{\mathcal{E}}=\left\{x\in \mathbb{R}^N, \ d(x,\mathcal{E})\les \e \right\}$ be its thickened version. Then, for any $0<\e<1-|m|$

    \begin{equation}
    \begin{aligned}
        F_N(\beta,\overline{\mathcal{E}})
        \les \ &
        \sup_{\alpha,\tau_L,\tau_R \in S_{\mathcal{E}}}F_{\rm TAP}\big(\beta,m,\tau_L,\tau_R,\alpha\big)
        +
        \mathrm{Err}_N^{(7)} \big(\beta,\mu_N^{\mathcal{E}},|m|,\e\big),
    \end{aligned}
    \end{equation}
    where the error term is given by

    \begin{equation}
    \begin{aligned}
        \mathrm{Err}_N^{(7)} \big(\beta,\mu_N^{\mathcal{E}},|m|,\e\big)
        = &
        \frac{2\beta(\Lambda+\gamma)\e}{\sqrt{1-(|m|+\e)^2}}
        +
        \frac{|m|\epsilon}{1-|m|^2}
        -
        \frac{\log(1-|m|^2)}{2N}\\
        &+
        2\frac{k}{N-k}\beta\Lambda
        +
        \mathrm{Err}_{N-k} \big(\beta,\mu_N^{\mathcal{E}}\big)
    \end{aligned}
    \end{equation}
\end{corollary}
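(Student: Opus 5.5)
The plan is to chain the three preceding results in the order Lemma \ref{lem:TAP_on_subspace}, Lemma \ref{lem:measure_small_sphere}, Lemma \ref{lem:sec7.1_main}. Since the corollary concerns $F_N$ without external field, apply Lemma \ref{lem:TAP_on_subspace} with $\gamma$ kept as in its statement; the resulting error is then only larger than needed. With $\alpha=d(0,\mathcal{E})=|m|$ (the center $m$ is the smallest-norm element of $\mathcal{E}$), this gives
\begin{equation*}
    F_N(\beta,\overline{\mathcal{E}})
    \les
    \frac1N\log E\left[\indf_{\overline{\mathcal{E}}}\right]
    +
    \frac1N\log E_{\mathcal{E}}\left[e^{\beta H_N(\sigma)}\right]
    +
    \frac{2\beta(\Lambda+\gamma)\e}{\sqrt{1-(|m|+\e)^2}},
\end{equation*}
valid because $0<\e<1-|m|$.

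Next I bound the volume term by Lemma \ref{lem:measure_small_sphere}:
\begin{equation*}
    \frac1N\log E\left[\indf_{\overline{\mathcal{E}}}\right]
    \les
    \frac12\log(1-|m|^2)+\frac{|m|\e}{1-|m|^2}-\frac{\log(1-|m|^2)}{2N}.
\end{equation*}
This produces the second and third error terms in $\mathrm{Err}_N^{(7)}$.

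The Gibbs term on the slice is handled by Lemma \ref{lem:sec7.1_main}. That lemma applies because $\mathcal{E}$ has null effective field and codimension at most $k$, so $n\ge N-k$. Its bound contains $-\tfrac12\log(1-|m|^2)$, which cancels exactly the $+\tfrac12\log(1-|m|^2)$ from the volume estimate. What remains is $\sup_{S_{\mathcal{E}}}F_{\TAP}(\beta,m,\tau_L,\tau_R,\alpha)$ plus $2\tfrac{N-n}{n}\beta\Lambda+\mathrm{Err}_n(\beta,\mu_N^{\mathcal{E}})$.

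It remains to express these last terms through $k$. Since $N-n\le k$ and $n\ge N-k$, we get $\frac{N-n}{n}\le\frac{k}{N-k}$. For $\mathrm{Err}_n$, I use that the error from Proposition \ref{prop:aux} is built from the quantities in \eqref{eq:def_master_error} and is monotone in the relevant parameters, so it can be written as $\mathrm{Err}_{N-k}$. This is partly notational: it is enough to note that $n\ge N-k$, and that by Cauchy interlacing $\mu_N^{\mathcal{E}}$ satisfies Assumption \ref{main_assumption}, so this error vanishes uniformly over slices of codimension at most $k$.

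I expect this last bookkeeping to be the main obstacle. One must check that the error of Proposition \ref{prop:aux}, applied in dimension $n$ at inverse temperature $\beta(1-|m|^2)\le\beta$, is controlled uniformly by the stated term. The monotonicity in $\beta$ noted in the proof of Proposition \ref{prop:aux} handles the change of temperature, and interlacing handles the change of spectrum. Summing the four contributions then gives the corollary.
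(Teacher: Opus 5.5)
Your proposal is correct and is the paper's argument: the paper's proof just combines Lemmas \ref{lem:TAP_on_subspace}, \ref{lem:measure_small_sphere} and \ref{lem:sec7.1_main}, with the $\tfrac12\log(1-|m|^2)$ terms cancelling and $\frac{N-n}{n}\le\frac{k}{N-k}$. The uniformity issue you flag at the end is not needed for the corollary itself, because $\mathrm{Err}_{N-k}(\beta,\mu_N^{\mathcal{E}})$ is simply the term $\mathrm{Err}_n(\beta,\mu_N^{\mathcal{E}})$ inherited from Lemma \ref{lem:sec7.1_main}; uniformity over slices only matters later, when the supremum over $a\in\mathcal{A}$ is taken.
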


\begin{proof}
    The proof is immediate combining the results of  Lemma \ref{lem:TAP_on_subspace}, \ref{lem:measure_small_sphere} and \ref{lem:sec7.1_main}.
\end{proof}
Before the above corollary can be plugged into the variational formula of \eqref{eq:partition_vanishing_extfield_main}, remark that the error term $\mathrm{Err}_N^{(7)}$ diverges as $|m|$ approaches $1$. To deal with this corner case, we use an ad-hoc bound to prove that, if $|m|$ is too large, the contribution to the total free energy is negligible.

\begin{lemma}\label{lem:extfield_exclude_small_alpha}
    For $N\ges 2$, $\e,\rho$ small enough that $\epsilon,\rho<\exp\big(-8\beta(\Lambda+\gamma)-6\big)$, for any affine subset with center $m$, if $|m|>1-\rho$, then

    \begin{equation}
        F_N^f(\beta,\overline{\mathcal{E}})\les F_N^f(\beta)-1.
    \end{equation}
\end{lemma}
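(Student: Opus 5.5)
The plan is to bound the two sides separately: an upper bound on $F_N^f(\beta,\overline{\mathcal{E}})$ from the smallness of the slice, and a crude lower bound on $F_N^f(\beta)$, independent of the slice.

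\textbf{Upper bound on the slice.} On $S^{N-1}$ we have $|N^{-1}H_N^f(\sigma)|\le \Lambda+\gamma$. Hence
\begin{equation}
F_N^f(\beta,\overline{\mathcal{E}})\le \beta(\Lambda+\gamma)+\frac1N\log E[\indf_{\overline{\mathcal{E}}}].
\end{equation}
I will not use Lemma \ref{lem:measure_small_sphere} in its final form, since its $|\log(1-\alpha^2)|/(2N)$ term blows up. Instead I use its intermediate bound \eqref{eq:volume_slice_first_claim}, with $\alpha=|m|>1-\rho$:
\begin{equation}
\frac1N\log E[\indf_{\overline{\mathcal{E}}}]\le \frac{N-1}{2N}\log\bigl(1-(1-\rho-\epsilon)_+^2\bigr)\le \frac{N-1}{2N}\log\bigl(2(\rho+\epsilon)\bigr).
\end{equation}
For $N\ge2$ the prefactor is at least $1/4$, and the logarithm is negative once $\rho+\epsilon<1/2$. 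Since $\rho,\epsilon<e^{-8\beta(\Lambda+\gamma)-6}$, we get $\log(2(\rho+\epsilon))\le \log 4-8\beta(\Lambda+\gamma)-6$. Therefore
\begin{equation}
F_N^f(\beta,\overline{\mathcal{E}})\le \beta(\Lambda+\gamma)+\tfrac14\bigl(\log4-6-8\beta(\Lambda+\gamma)\bigr)=-\beta(\Lambda+\gamma)-\tfrac32+\tfrac12\log 2 .
\end{equation}

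\textbf{Lower bound on the full free energy.} The same pointwise bound gives $F_N^f(\beta)\ge -\beta(\Lambda+\gamma)$. Combining the two estimates,
\begin{equation}
F_N^f(\beta,\overline{\mathcal{E}})\le F_N^f(\beta)-\tfrac32+\tfrac12\log2\le F_N^f(\beta)-1 .
\end{equation}

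\textbf{Main obstacle.} The only delicate point is keeping the factor $(N-1)/N$ from degrading the bound at small $N$. This is why the hypothesis $N\ge2$ is needed, and why the bound must be taken from \eqref{eq:volume_slice_first_claim} rather than from the final statement of Lemma \ref{lem:measure_small_sphere}. In the degenerate case where $\epsilon\ge|m|$, the smallness conditions force $|m|$ to be tiny, which contradicts $|m|>1-\rho$, so this case does not occur. The remaining constants have enough slack to absorb any loss in the exact factors.
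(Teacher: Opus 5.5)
Your proof is correct and follows the paper's argument: bound the Hamiltonian pointwise by $N(\Lambda+\gamma)$, use the intermediate estimate \eqref{eq:volume_slice_first_claim} with the prefactor $\frac{N-1}{2N}\ge\frac14$ to get $\frac14\log(2\rho+2\epsilon)$, and compare with the crude lower bound $F_N^f(\beta)\ge-\beta(\Lambda+\gamma)$. The paper leaves the final arithmetic to the reader; you carry it out explicitly and obtain the margin $\frac32-\frac12\log 2>1$.
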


\begin{proof}
    Recall that the Hamiltonian is bounded by $-N(\Lambda+\gamma) \les H_N^f\les N( \Lambda+\gamma)$, in particular $F_N^f(\beta)\ges -\beta(\Lambda+\gamma)$. In the meantime, Lemma \ref{lem:measure_small_sphere} and \eqref{eq:volume_slice_first_claim} give an upper bound on the measure of $\overline{\mathcal{E}}$, which leads to

    \begin{equation}
        F_N^f (\beta,\overline{\mathcal{E}})
        \les
        \beta (\Lambda+ \gamma)
        +
        \frac{1}{4}\log\left( 1-(|m|-\epsilon)_+^2 \right)
        \les
        \beta (\Lambda+ \gamma)
        +
        \frac{1}{4}\log(2\rho+2\epsilon).
    \end{equation}
    The proof of the lemma follows. 
\end{proof}

\subsection{The auxiliary TAP upper bound}

We are now ready to use the decomposition of the free energy of \eqref{eq:partition_vanishing_extfield_main} alongside Corollary \ref{cor:external_field_main} and Lemma \ref{lem:extfield_exclude_small_alpha}, which leads to

\begin{equation}
\begin{aligned}
    F_N^{f}(\beta)
    \les 
    \sup_{a\in \mathcal{A}, |m_{a}|\leq 1-\rho} & \Bigg\{  \sup_{\alpha,\tau_L,\tau_R \in S_{\mathcal{E}_{a}}}F_{\rm TAP}\big(\beta,m_{a} ,\tau_L,\tau_R,\alpha\big) 
     +
    \beta\gamma\langle m_{a},u\rangle\\
    & \qquad+
    \beta \gamma \e
    +
    \frac{1}{N}\log(N_{\e})
    +
    \mathrm{Err}_N^{(7)} \big(\beta,\mu_N^{\mathcal{E}'_a},|m|,\e\big)
    \Bigg\},
\end{aligned}
\end{equation}
for some $\rho$ small enough for Lemma \ref{lem:extfield_exclude_small_alpha} to hold and $0<\epsilon<\rho/2$. Remark that $F_{\rm TAP}\big(\beta,m_{a} ,\tau_L,\tau_R,\alpha\big)+ \beta\gamma\langle m_{a},u\rangle$ is simply $F^f_\TAP$, the TAP free energy in the presence of the external field. Also remark that letting

\begin{equation}
    \begin{aligned}
        \mathrm{Err}_N^{(8)} \big(\beta,\e,\mu_N\big)
        = &
        \frac{8\beta(\Lambda+\gamma)\e}{\rho}
        +
        \frac{\epsilon}{\rho^2}
        -
        \frac{\log \rho^2}{2N}
        +
        \beta \gamma \e
        +
        \frac{1}{N}\log(N_{\e})\\
        &+
        2\frac{K_{\epsilon}}{N-K_{\epsilon}}\beta\Lambda
        +
        \sup_{a\in \mathcal{A}}
        \mathrm{Err}_N \big(\beta,\mu_N^{\mathcal{E}_{a}'}\big),
    \end{aligned}
    \end{equation}
we have

\begin{equation}
    \sup_{a\in \mathcal{A}, |m_{a}|\leq 1-\rho}
    \beta \gamma \e
    +
    \frac{1}{N}\log(N_{\e})
    +
    \mathrm{Err}_N^{(7)} \big(\beta,\mu_N^{\mathcal{E}'_a},m,\e\big)
    \les 
    \mathrm{Err}_N^{(8)}\big( \beta,\e,\mu_N\big).
\end{equation}
The main bound of this section is now proved, that is,

\begin{equation}\label{eq:external_field_main_variational_UB}
    F_N^{f}(\beta)
    \les 
    \sup_{m\in B_N} \sup_{\alpha,\tau_L,\tau_R \in S_{m}}  F^f_\TAP\big(\beta,m,\tau_L,\tau_R,\alpha\big) 
    +
    \mathrm{Err}_N^{(8)}(\beta,\e,\mu_N).
\end{equation}

\subsection{Reduction to the Plefka region and proof of Theorem \ref{thm:main_with_ext_field}}

To conclude, it remains to check that if $m$ does not respect the Plefka condition it has no bearing in the supremum of \eqref{eq:external_field_main_variational_UB}. Here, an analysis of the TAP free energy specific to the 2-spin model is used. First, recall the definition of the TAP free energy of \eqref{eq:final_def_Ftap} and that there, the Onsager term is equal to $\fTAPalpha$. Then, recall that Section \ref{sec:where_is_TAP_max} provides an explicit formula for the supremum of $\fTAPalpha$. Hence,

\begin{equation}
    \sup_{\alpha,\tau_L,\tau_R \in S_{m}}
    \On\big(\beta(1-|m|^2),\tau_L,\tau_R,\alpha\big)
    =
    f_\TAP\big(\beta(1-|m|^2),q^{*,m}\lM,q^{*,m}\big),
\end{equation}
translating to


\begin{equation}\label{eq:external_field_main_variational_UB_bis}
\begin{aligned}
    F_N^{f}(\beta)
    \les &
    \sup_{m\in B_N} 
    \frac{1}{2}\log (1-|m|^2)
    +
    \beta N^{-1} H_N^f(m)  
    +
    f_\TAP\big(\beta(1-|m|^2),q^{*,m}\lM,q^{*,m}\big)\\
    &+
    \mathrm{Err}_N^{(8)}(\beta,\e,\mu_N).
\end{aligned}
\end{equation}
It remains to prove that the supremum is reached for some $m$ respecting Plefka's condition. When $m$ satisfies Plefka's condition, that is when $|m|^2\ges q^*$, we have $\beta(1-|m|^2)\les \beta_c$, and hence $q^{*,m}=0$. We directly recognize

\begin{equation}
    f_\TAP\big(\beta(1-|m|^2),q^{*,m}\lM,q^{*,m}\big)
    =
    f_\TAP\big(\beta(1-|m|^2),0,0\big)
    =
    F(\beta(1-|m|^2))
\end{equation}
Then, because $F(\beta(1-|m|^2))$ is the Onsager term $\On(\beta,|m|^2)$, see \ref{eq:OnsagerDefinition}, we recognize the expression for $F_{\TAP}(m)$ (see \eqref{eq:FTAP_first_def}) in \eqref{eq:external_field_main_variational_UB_bis}. Since $|m|^2\ges q^*$ is equivalent to $\beta(1-|m|^2)\les \beta_c$ we write

\begin{equation}\label{eq:Good_part_sup_external_field}
\begin{aligned}
    \sup_{m\in B_N: \ |m|^2\ges q^*} \left\{ \frac{1}{2}\log (1-|m|^2)
    +
    \beta N^{-1} H_N^f(m)  
    +
    F\big(\beta(1-|m|^2)\big) \right\}\\
    =
    \sup_{m\in B_N: \ \beta(1-|m|^2)\les \beta_c}  \left\{F_{\TAP}(m)\right\}.
\end{aligned}
\end{equation}
If $\beta\le\beta_c$, every $m\in B_N$ satisfies the
Plefka condition. For the remaining analysis, suppose $\beta>\beta_c$. When $m$ does not satisfy Plefka's condition, namely when $|m|^2\les q^*$, by the expression of $f_{\TAP}$ given in \eqref{eq:ftapDefinition} we get

\begin{equation}
\begin{aligned}
    f_{\TAP}\big(\beta(1-|m|^2),q^{*,m}\lM,q^{*,m}\big)
    = \ &
    \beta(1-|m|^2) q^{*,m} \lM
    -
    \frac{1}{2}\log\left(2\beta(1-|m|^2)\right)
    \\
    &-
    L^*_{\mu}\left(\beta(1-|m|^2)(1-q^{*,m})\right)
    -
    \frac{1}{2},
\end{aligned}
\end{equation}
where $q^{*,m}$ is the critical $q$ at temperature $\beta (1-|m|^2)$ and so $\beta(1-|m|^2)(1-q^{*,m})=\beta_c$. If we let $q=|m|^2$ and denote by

\begin{equation}\label{eq:def_energy_2spin}
    E_N(q)=\sup_{m\in B_N: \ |m|^2=q} N^{-1} H_N^f(m)
\end{equation}
we can bound

\begin{equation}\label{eq:supremum_non_plefka}
\begin{aligned}
    \sup_{m\in B_N: \ |m|^2\les q^*} & \left\{ \frac{1}{2}\log (1-|m|^2)
    +
    \beta N^{-1} H_N^f(m)  
    +
    f_{\TAP}\big(\beta(1-|m|^2),q^{*,m}\lM,q^{*,m}\big) \right\}\\
    \les &
    \sup_{ 0\les q \les q^* }
    \left\{
    \beta E_N(q)-\frac{1}{2}-\frac{1}{2}\log(2\beta)-L_{\mu}^*\left( \beta_c \right) +\beta(1-q)\lM -\beta_c\lM
    \right\}.
\end{aligned}
\end{equation}
The only dependence on $q$ comes from $E_N(q)-q\lM$. The following lemma proves that the supremum of this function for $q\in[0,q^*]$ is close to its value at $q^*$.

\begin{lemma}\label{lem:energy_at_norm_q_increases}
Suppose that Assumption \ref{main_assumption} holds. Let $U_N \subseteq \mathbb R^N$
be a subspace of dimension $d_N=O(1)$, and let
$f_N:\overline{B_N}\to\mathbb R$ be continuous and satisfy
\begin{equation}
    f_N(m)=f_N(P^{U_N}m)
    \qquad\text{for all }m\in\overline{B_N}.
\end{equation}
Define the normalized maximal energy at squared norm $q$ by
\begin{equation}
    E_N(q)
    :=\frac1N\max_{|m|^2=q}H_N^f(m)
    =\max_{|m|^2=q}
      \bigl\{m^\top M_Nm+f_N(m)\bigr\},
    \qquad q\in[0,1).
\end{equation}
For every $N\ge d_N+3$ and $0\le q\le q'<1$,
\begin{equation}
    E_N(q')
    \ge E_N(q)+(q'-q)\lambda_{N-d_N-2}.
\end{equation}
Consequently, for every $q^*\in[0,1)$,

\begin{equation}
    0
    \le
    \sup_{q\in[0,q^*]}
       \bigl\{E_N(q)-q\lM\bigr\}
       -\bigl\{E_N(q^*)-q^*\lM\bigr\}
    \le
    q^*\bigl|\lambda_{N-d_N-2}-\lM\bigr|
    =o(1).
\end{equation}
\end{lemma}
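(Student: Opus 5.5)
Given a maximizer $m$ of $E_N(q)$ (it exists by compactness and continuity), the plan is to build a competitor at squared norm $q'$ by adding to $m$ a vector $v$ orthogonal to everything that could change the field term or create cross terms. Set
\begin{equation}
W=\mathrm{span}\bigl(U_N\cup\{m,\,M_Nm\}\bigr),
\end{equation}
so that $\dim W\le d_N+2$. By Courant--Fischer, the restriction of the quadratic form of $M_N$ to $W^\perp$ has top eigenvalue at least $\lambda_{N-\dim W}\ge\lambda_{N-d_N-2}$. Indeed, any subspace of codimension $d_N+2$ meets the span of the top $d_N+3$ eigenvectors nontrivially; this is where $N\ge d_N+3$ is used. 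Take a unit vector $v\in W^\perp$ with $v^\top M_N v\ge\lambda_{N-d_N-2}$ and set $m'=m+\sqrt{q'-q}\,v$.

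The competitor $m'$ has the required properties.
\begin{itemize}
\item Since $v\perp m$, we get $|m'|^2=q'$.
\item Since $v\perp M_Nm$, the cross term vanishes: $m'^\top M_Nm'=m^\top M_Nm+(q'-q)\,v^\top M_Nv$.
\item Since $v\perp U_N$, we have $P^{U_N}m'=P^{U_N}m$, hence $f_N(m')=f_N(m)$.
\end{itemize}
Therefore $E_N(q')\ge m'^\top M_N m'+f_N(m')\ge E_N(q)+(q'-q)\lambda_{N-d_N-2}$.

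For the consequence, the lower bound $0\le\cdots$ holds because $q^*$ lies in the range of the supremum. For the upper bound, fix any $q\le q^*$ and apply the first part with $q'=q^*$:
\begin{equation}
E_N(q)-q\lM\le E_N(q^*)-q^*\lM+(q^*-q)\bigl(\lM-\lambda_{N-d_N-2}\bigr).
\end{equation}
The last term is at most $q^*|\lambda_{N-d_N-2}-\lM|$, uniformly in $q$. It remains to show this is $o(1)$. The sandwich $\lambda_{N-d_N-2}\le\lambda_N\to\lM$ gives one side. For the other side, since $d_N=O(1)$ and $\mu$ has positive density up to $\lM$, the index $N-d_N-2$ corresponds to quantile level $1-O(1/N)$. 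Then $\sup_k|\lambda_k-I(k/N)|\to0$ from Lemma \ref{lem:error_term_goes_to_zero}, together with continuity of $I$ at $1$, gives $\lambda_{N-d_N-2}\to\lM$.

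I expect no step to be a serious obstacle. The only delicate point is the dimension count in Courant--Fischer, namely checking that $\dim W\le d_N+2$ yields exactly the index $N-d_N-2$. A second small check is the edge case $q=0$ (where $m=0$), which is harmless.
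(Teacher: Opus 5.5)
Your proof is correct and follows the same route as the paper. You take a maximizer $m$ at squared norm $q$, choose a unit vector $v$ orthogonal to $U_N$, $m$ and $M_Nm$ inside the span of the top $d_N+3$ eigenvectors (your Courant--Fischer step), form $m'=m+\sqrt{q'-q}\,v$, and then rearrange with $q'=q^*$; the only extra is that you justify $\lambda_{N-d_N-2}\to\lM$ via Lemma \ref{lem:error_term_goes_to_zero}, whereas the paper simply cites Assumption~\ref{main_assumption}.
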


\begin{proof}
Fix $N\ge d_N+3$ and $0\le q<q'<1$. By continuity and
compactness, there exists $m\in\mathbb R^N$ such that

\begin{equation}
    |m|^2=q,
    \qquad
    E_N(q)=m^\top M_Nm+f_N(m).
\end{equation}
Let $V_N$ be the span of $d_N+3$ orthonormal eigenvectors
corresponding to the largest eigenvalues of $M_N$. Since

\begin{equation}
    \dim\bigl(U_N+\operatorname{span}\{m,M_Nm\}\bigr)
    \le d_N+2,
\end{equation}
there exists a unit vector

\begin{equation}
    v\in V_N\cap
    \bigl(U_N+\operatorname{span}\{m,M_Nm\}\bigr)^\perp.
\end{equation}
In particular,

\begin{equation}
    \langle v,m\rangle=\langle v,M_Nm\rangle=0,
    \qquad P^{U_N}v=0,
    \qquad v^\top M_Nv\ge\lambda_{N-d_N-2}.
\end{equation}
Set $m'=m+\sqrt{q'-q}\,v$. Then $|m'|^2=q'$ and $P^{U_N}m'=P^{U_N}m$, so
$f_N(m')=f_N(m)$. Moreover, the cross term vanishes:

\begin{equation}
    (m')^\top M_Nm'
    =m^\top M_Nm+(q'-q)v^\top M_Nv.
\end{equation}
It follows that

\begin{equation}
\begin{aligned}
    E_N(q')
    &\ge (m')^\top M_Nm'+f_N(m')\\
    &\ge E_N(q)+(q'-q)\lambda_{N-d_N-2}.
\end{aligned}
\end{equation}
The same inequality is immediate when $q=q'$.

Taking $q'=q^*$ and rearranging gives, for every $q\in[0,q^*]$,

\begin{equation}
\begin{aligned}
    E_N(q)-q\lM
    &\le E_N(q^*)-q^*\lM
       +(q^*-q)
         \bigl(\lM-\lambda_{N-d_N-2}\bigr)\\
    &\le E_N(q^*)-q^*\lM
       +q^*\bigl|\lM-\lambda_{N-d_N-2}\bigr|.
\end{aligned}
\end{equation}
Taking the supremum proves the claimed upper bound; the lower
bound follows by choosing $q=q^*$.
Finally, Assumption~1 implies
$\lambda_{N-d_N-2}\to\lM$, which completes the proof of the lemma.
\end{proof}

\begin{proof}[Proof of Theorem \ref{thm:main_with_ext_field}]

A direct consequence of Lemma \ref{lem:energy_at_norm_q_increases} is that the supremum in \eqref{eq:supremum_non_plefka}, up to a vanishing error, is reached at $|m|^2=q^*$. As it is equivalent to $\beta(1-|m|^2)=\beta_c$, this fact alongside \eqref{eq:Good_part_sup_external_field} and \eqref{eq:external_field_main_variational_UB_bis} yields


\begin{equation}\label{eq:external_field_final_UB}
    F_N^f(\beta)
    \les
    \sup_{m\in B_N: \ \beta(1-|m|^2)\les \beta_c}  F_{\TAP}(m)
    +o(1).
\end{equation}
To conclude, it suffices to prove the corresponding lower bound. We start by noting that $F_{\TAP}(m)\les \beta(2\Lambda+\gamma)+\frac{1}{2}\log(1-|m|^2)$, so there exists $\rho>0$ small enough that

\begin{equation}\label{eq:sup_m_less-1-rho}
    \sup_{m\in B_N: \ \beta(1-|m|^2)\les \beta_c}  F_{\TAP}(m)
    =
    \sup_{m\in (1-\rho) B_N: \ \beta(1-|m|^2)\les \beta_c}  F_{\TAP}(m).
\end{equation}
Next, remark that for any affine subspace $\mathcal{E}$, $m$ its point of smallest norm and $\overline{\mathcal{E}}$ its thickened version, Lemma \ref{lem:TAP_on_subspace} yields

\begin{equation}
    F_N^f(\beta)
    \ges
    F_N^f(\beta,\overline{\mathcal{E}})
    \ges 
    \frac1N\log\mathbb E_{\mathcal{E}}e^{\beta H_N^f(\sigma)}
    +
    \frac{1}{N}\log E\left[ \indf_{\overline{\mathcal{E}}} \right]    
    -
    \frac{2\beta(\Lambda+\gamma)\epsilon}{\sqrt{1-(|m|+\epsilon)^2}}
\end{equation}
We choose $\overline{\mathcal{E}}$ as follows. Fix $m$ satisfying $\beta(1-|m|^2)\le\beta_c$ and $|m|\le1-\rho$. Set $V=\operatorname{span}\{u,m,M_Nm\}^{\perp}$ and $n=\dim V$. On the slice $\mathcal{E}=m+V$, the external field is constant and the effective external field vanishes. Consequently, if $\nu$ denotes the empirical spectral distribution of the projection of $M_N$ to $V$, then

\begin{equation}
\begin{aligned}
    \frac1N\log E_{\mathcal{E}}e^{\beta H_N^f(\sigma)}
    =&
    \frac{\beta}{N}H_N^f(m)
    +
    \frac{n}{N}\frac1n\log E_{\mathcal{E}-m}\exp\left(\frac{N}{n}\beta\big(1-|m|\big)^2 H_n(\sigma)\right)\\
    \ges&
    \frac{\beta}{N}H_N^f(m)
    +\Ffin{\nu}\left(\beta (1-|m|^2) \right)
    -
    \frac{6}{N}\beta\Lambda,
\end{aligned}
\end{equation}
where we used boundedness and Lipschitzness to get rid of the $N/n$ terms. Since $N-n\le3$, the Cauchy interlacing Theorem and Theorem \ref{thm:sec2_main} imply, uniformly in $m$,

\begin{equation}
    \Ffin{\nu}\left(\beta (1-|m|^2)\right)
    =
    \Flim{\mu}\big(\beta (1-|m|^2)\big)+o(1).
\end{equation}
Moreover, Lemma \ref{lem:measure_sphere_LB} gives

\begin{equation}
    \frac{1}{N}\log E\left[ \indf_{\overline{\mathcal{E}}} \right]
    \geq
    \frac{1}{2}\log\big(1-|m|^2\big)
    -
    \frac{3\epsilon}{\rho}
    +
    \frac{3}{N}\log \epsilon
\end{equation}
Hence, choosing $\epsilon=1/N$ gives, uniformly in $m\in (1-\rho)B_N$,

\begin{equation}
\begin{aligned}
    F_N^f(\beta)
    \ges &
    \frac{\beta}{N}H_N^f(m)
    +\frac12\log\big(1-|m|^2\big)+\Flim{\mu}\big(\beta (1-|m|^2)\big)
    -o(1)\\
    \ges&
    F_{\mathrm{TAP}}(m)
    -o(1).
\end{aligned}
\end{equation}
Taking the supremum over $m$ with $|m|\leq 1-\rho$ and recalling \eqref{eq:sup_m_less-1-rho} proves

\begin{equation}\label{eq:external_field_final_LB}
    F_N^f(\beta)
    \ges
    \sup_{m\in B_N: \ \beta(1-|m|^2)\les \beta_c}  F_{\TAP}(m)
    -o(1),
\end{equation}
which together with \eqref{eq:external_field_final_UB} concludes the proof of Theorem \ref{thm:main_with_ext_field} for linear external field.

For the general external field, we may subtract $f_N(0)$,
since this shifts both sides of the claimed TAP formula by
$\beta f_N(0)$. Thus, assume $f_N(0)=0$, so that
$|f_N|\le c$ on $B_N$.

In the covering construction, include $U_N$ among the
constrained directions. The resulting slice directions
satisfy $V\subset U_N^\perp$, and hence
$f_N(m+v)=f_N(m)$ on every slice.
On its $\epsilon$-thickening,
\begin{equation}
    |f_N(\sigma)-f_N(m)|\le c\epsilon.
\end{equation}
The preceding estimates therefore apply with the Lipschitz
constant $c$ in place of $|\gamma|$.
\end{proof}

\section{Notation}

To help the reader, in this section we compile most of the quantities defined in the paper, and point to the equation where they are defined.

\begin{center}
\begin{tabular}{|c|c|| c|c|}
    \hline
     $H_N(\sigma)$ & \eqref{eq:def_HN} & $H_N^f(\sigma)$ & \eqref{eq:def_HN_ext_field}  \\
     \hline
     $\lambda_i^{(N)}:=\lambda_i$ & \eqref{eq:def_eigenvalues_MN} & $\mu_N$ & Assumption \ref{main_assumption} \\
     \hline
     $C_{\epsilon}(\tau_L,\tau_R)$ & \eqref{eq:simplex_tauL_tau_R} & $\Lambda$ & \eqref{eq:def_Lambda} \\
     \hline
     $F_N(\beta)$ & \eqref{eq:def_FE_no_ext_field} & $F_N^f(\beta)$ & \eqref{eq:def_FE_with_ext_field} \\
     \hline
     $Z_N(\beta)$ & \eqref{eq:def_FE_no_ext_field} & $Z_N^f(\beta)$ & \eqref{eq:def_FE_with_ext_field} \\
     \hline
     $F_N(\beta,A)$ & \eqref{eq:def_FN_S} & $Z_N(\beta,A)$ & \eqref{eq:def_ZN_A} \\
     \hline
     $S_{\mu}(\lambda)$ & \eqref{eq:def_stieltjes_transform} & $L_{\mu}(x)$ & \eqref{eq:def_log_potential} \\
     \hline
     $L_{\mu}^*(x)$ & \eqref{eq:LegendreLstar} & $\vf_{\nu}(z;\beta)$ & \eqref{eq:vf_mu_def} \\
     \hline
     $s_{\mu}$ & Assumption \ref{main_assumption} & $\beta_c$ & \eqref{eq:def_beta_c} \\
     \hline
     $q^*$ & \eqref{eq:def_q_star} & $I(1-\alpha)$ & \eqref{eq:eqdef_Ialpha} \\
     \hline
     $F_{\mu}(\beta)$ & \eqref{eq:F_def} & $\On(\beta,q)$ & \eqref{eq:OnsagerDefinition} \\
     \hline
     $f_{\TAP}(\beta,E,q)$ & \eqref{eq:ftapDefinition} & $F_{\TAP}(m)$ & \eqref{eq:FTAP_first_def} \\
     \hline
     $\hat{F}_\nu$ & \eqref{eq:F_nu_def} & $W_1(\mu_N,\mu)$ & Wasserstein-$1$ distance \\
     \hline
     $\mathbb{Q}_{\eta}$ & \eqref{eq:Q_def} & $g_\nu(\eta)$ & \eqref{eq:def_g_mu} \\
     \hline
     $\tau_{L,K}(\sigma)$ & \eqref{eq:def_q_tauLR} & $q_K(\sigma)$ & \eqref{eq:def_q_tauLR} \\
     \hline
     $A_K$ & \eqref{eq:def_AK} & $A_{K,\epsilon}^{\tau_L,\tau_R}$ & \eqref{eq:defAkqlqrdelta} \\
     \hline
     $\pi_L$ & \eqref{eq:def_set_piL_piR} & $\pi_R$ & \eqref{eq:def_set_piL_piR}\\
     \hline
     $D(\alpha||\tau_L,\tau_R)$ & \eqref{eq:def_entropy} & $\mathfrak{S}_{\beta}$ & \eqref{eq:def_sup_set_sec5} \\
     \hline
     $M_{N,K}(\tau_L,\tau_R)$ & \eqref{eq:def_MNK} & $\rho_{N,K}$ & \eqref{eq:qL_qR_density} \\
     \hline
     $C_{N,K}$ & \eqref{eq:C_NK_def} & $\fTAPalpha(\beta,\tau_L,\tau_R,\alpha)$ & \eqref{eq:def_ftap} \\
      \hline
     $\mu_{L,\alpha}$ & \eqref{eq:eqdef_muLalpha_muRalpha} & $\mu_{R,\alpha}$ & \eqref{eq:eqdef_muLalpha_muRalpha} \\
     \hline
     $\mathrm{Err}_N^{(*)} \big(\mu_N,\mu\big)$ & \eqref{eq:def_master_error} & $\mathrm{Err}_N^{(**)}\big(\mu_N\big)$ & \eqref{eq:def_master_error_bis} \\
     \hline
     $\lambda_i^{\mathcal{E}}$ & below \eqref{eq:hamiltonian_decompo} & $\overline{\mathcal{E}_{a_1}}$ & \eqref{eq:def_calE_alpha1} \\
     \hline
     $\mathcal{E}_{a_1,a_2}$ & \eqref{eq:def_calE_alpha12} & $\mathcal{E}_{a}$ & \eqref{eq:def_calE_alpha} \\
     \hline
     $\mathcal{E}_{a}'$ & \eqref{eq:def_calE_alphaprime} & $m_{a}$ & \eqref{eq:def_m_alpha} \\
     \hline
     $H_N^m(\sigma)$ & \eqref{eq:hamiltonian_decompo} & $A_K^{\mathcal{E}}$ & \eqref{eq:def_AK_calE} \\
     \hline
     $\tau_{L,K}^{\mathcal{E}}(\sigma)$ & \eqref{eq:def_tau_q_calE} & $\tau_{R,K}^{\mathcal{E}}(\sigma)$ & \eqref{eq:def_tau_q_calE} \\
     \hline
     $A^{\mathcal{E}}_{K,\tau_L,\tau_R,\epsilon}$ & \eqref{eq:def_A_tau_q_calE} & $F_{\TAP}(\beta,m,\tau_L,\tau_R,\alpha)$ & \eqref{eq:final_def_Ftap} \\
     \hline
     $\On(\beta,\tau_L,\tau_R,\alpha)$ & \eqref{eq:final_def_Ftap} & $S_{\mathcal{E}}$ & \eqref{eq:def_sup_set} \\
     \hline
     $\widehat{S}_{\mathcal{E}}$ & \eqref{eq:def_sup_set_discrete} & $E_N(q)$ & \eqref{eq:def_energy_2spin} \\
     \hline 
\end{tabular}
\end{center}

\printbibliography

@article{belius2022high,
  title={High temperature TAP upper bound for the free energy of mean field spin glasses},
  author={Belius, D.},
  journal={arXiv preprint arXiv:2204.00681},
  year={2022}
}

@article{BK,
  title = {The {{TAP}}\textendash{{Plefka}} Variational Principle for the Spherical {{SK}} Model},
  author = {Belius, David and Kistler, Nicola},
  year = {2019},
  volume = {367},
  pages = {991--1017},
  publisher = {{Springer}},
  journal = {Communications in Mathematical Physics},
  number = {3}
}

@article{genovese2015,
    title={Legendre duality of spherical and Gaussian spin glasses},
    author={Genovese, G. and Tantari, D.},
    journal={Mathematical Physics, Analysis and Geometry},
    volume={18},
    number={1},
    article={10},
    year={2015},
    pages={1--19}
}

@article{harman2010,
    title={On decompositional algorithms for uniform sampling from $n$-spheres and $n$-balls},
    author={Harman, R. and Lacko, V.},
    journal={Journal of Multivariate Analysis},
    volume={101},
    year={2010},
    pages={2297--2304}
}

@article{ghosh2021exponential,
  title={Exponential tail bounds for chisquared random variables},
  author={Ghosh, Malay},
  journal={Journal of Statistical Theory and Practice},
  volume={15},
  number={2},
  pages={35},
  year={2021},
  publisher={Springer}
}

@article{bogoya2016convergence,
  title={From convergence in distribution to uniform convergence},
  author={Bogoya, Johan Manuel and B{\"o}ttcher, Albrecht and Maximenko, Egor A},
  journal={Bolet{\'\i}n de la Sociedad Matem{\'a}tica Mexicana},
  volume={22},
  number={2},
  pages={695--710},
  year={2016},
  publisher={Springer}
}

@book{villani2021topics,
  title={Topics in optimal transportation},
  author={Villani, C{\'e}dric},
  volume={58},
  year={2021},
  publisher={American Mathematical Soc.}
}

@book{mezard1987spin,
  title={Spin glass theory and beyond: An Introduction to the Replica Method and Its Applications},
  author={M{\'e}zard, Marc and Parisi, Giorgio and Virasoro, Miguel Angel},
  volume={9},
  year={1987},
  publisher={World Scientific Publishing Company}
}

@book{talagrand2010mean,
  title={Mean field models for spin glasses: Volume I: Basic examples},
  author={Talagrand, Michel},
  year={2010},
  publisher={Springer Science \& Business Media}
}

@book{panchenko2013sherrington,
  title={The Sherrington-Kirkpatrick Model},
  author={Panchenko, Dmitry},
  year={2013},
  publisher={Springer Science \& Business Media}
}

@article{thouless1977solution,
  title={Solution of solvable model of a spin glass'},
  author={Thouless, David J and Anderson, Philip W and Palmer, Robert G},
  journal={Philosophical Magazine},
  volume={35},
  number={3},
  pages={593--601},
  year={1977},
  publisher={Taylor \& Francis}
}

@article {MR663708,
    AUTHOR = {Plefka, T.},
     TITLE = {Convergence condition of the {TAP} equation for the
              infinite-ranged {I}sing spin glass model},
   JOURNAL = {J. Phys. A},
  FJOURNAL = {Journal of Physics. A. Mathematical and General},
    VOLUME = {15},
      YEAR = {1982},
    NUMBER = {6},
     PAGES = {1971--1978},
      ISSN = {0305-4470,1751-8121},
   MRCLASS = {82A57},
  MRNUMBER = {663708},
       URL = {http://stacks.iop.org/0305-4470/15/1971},
}

@article {MR4588318,
    AUTHOR = {Chen, Wei-Kuo and Panchenko, Dmitry and Subag, Eliran},
     TITLE = {Generalized {TAP} free energy},
   JOURNAL = {Comm. Pure Appl. Math.},
  FJOURNAL = {Communications on Pure and Applied Mathematics},
    VOLUME = {76},
      YEAR = {2023},
    NUMBER = {7},
     PAGES = {1329--1415},
      ISSN = {0010-3640,1097-0312},
   MRCLASS = {82D30 (82B20)},
  MRNUMBER = {4588318},
MRREVIEWER = {Flora\ Koukiou},
       DOI = {10.1002/cpa.22040},
       URL = {https://doi.org/10.1002/cpa.22040},
}

@article{subag2023free,
  title={The free energy of spherical pure p-spin models: computation from the TAP approach},
  author={Subag, Eliran},
  journal={Probability Theory and Related Fields},
  volume={186},
  number={3},
  pages={715--734},
  year={2023},
  publisher={Springer}
}

@article {MR2132396,
    AUTHOR = {Guionnet, A. and Ma\"ida, M.},
     TITLE = {A {F}ourier view on the {$R$}-transform and related
              asymptotics of spherical integrals},
   JOURNAL = {J. Funct. Anal.},
  FJOURNAL = {Journal of Functional Analysis},
    VOLUME = {222},
      YEAR = {2005},
    NUMBER = {2},
     PAGES = {435--490},
      ISSN = {0022-1236,1096-0783},
   MRCLASS = {60F10 (46L54)},
  MRNUMBER = {2132396},
MRREVIEWER = {Sompong\ Dhompongsa},
       DOI = {10.1016/j.jfa.2004.09.015},
       URL = {https://doi.org/10.1016/j.jfa.2004.09.015},
}

@inproceedings{diaconis1987dozen,
  title={A dozen de Finetti-style results in search of a theory},
  author={Diaconis, Persi and Freedman, David},
  booktitle={Annales de l'IHP Probabilit{\'e}s et statistiques},
  volume={23},
  number={S2},
  pages={397--423},
  year={1987}
}

\end{document}